\DeclareMathOperator\supp{supp}
\newtheorem{thm}{Theorem}[section]
\newtheorem{corollary}[thm]{Corollary}
\newtheorem{lem}[thm]{Lemma}
\newtheorem{prop}[thm]{Proposition}
\theoremstyle{definition}
\newtheorem{defn}[thm]{Definition}
\theoremstyle{remark}
\newtheorem{rem}[thm]{Remark}
\newcommand\bC{\mathbb{C}}
\newcommand\bN{\mathbb{N}}
\newcommand\bR{\mathbb{R}}
\newcommand\bZ{\mathbb{Z}}
\newcommand\fR{\mathbb{R}}
\newcommand\fH{\mathbf{H}}
\newcommand\cD{\mathcal{D}}
\newcommand\cF{\mathcal{F}}
\newcommand\cM{\mathcal{M}}
\newcommand\cS{\mathcal{S}}
\newcommand\cT{\mathcal{T}}
\newcommand\cU{\mathcal{U}}
\DeclareMathOperator*{\esssup}{ess\,sup}
\DeclareMathOperator*{\essinf}{ess\,inf}
\newcommand{\mysection}[1]{\section{#1}
\setcounter{equation}{0}}
\begin{document}

\title[An existence and uniqueness result to evolution equations with P-D operators]
{An existence and uniqueness result to evolution equations with sign-changing pseudo-differential operators and its applications to logarithmic Laplacian operators and second-order differential operators without ellipticity}

\author[J.-H. Choi]{Jae-Hwan Choi}
\address[J.-H. Choi]{School of Mathematics, Korea Institute for Advanced Study, 85 Hoegiro Dongdaemun-gu, Seoul 02455, Republic of Korea}
\email{jhchoi@kias.re.kr}

\author[I. Kim]{Ildoo Kim}
\address[I. Kim]{Department of Mathematics, Korea University, 145 Anam-ro, Seongbuk-gu, Seoul, 02841, Republic of Korea}
\email{waldoo@korea.ac.kr}

\thanks{J.-H. Choi has been supported by a KIAS Individual Grant (MG102701) at Korea Institute for Advanced Study.
I. Kim has been supported by the National Research Foundation of Korea(NRF) grant funded by the Korea government(MSIT) (No.2020R1A2C1A01003959)}

\subjclass[2020]{35S05, 35S10, 35A01, 35D30, 47G30}

\keywords{Pseudo-differential operators with sign-changing symbols, Fourier transform beyond tempered distributions, logarithmic Laplacian, Cauchy problem without ellipticity, Weighted Bessel potential spaces}

\maketitle
\begin{abstract}
We broaden the domain of the Fourier transform to contain all distributions without using the Paley-Wiener theorem and devise a new weak formulation built upon this extension.
This formulation is applicable to evolution equations involving pseudo-differential operators, even when the signs of their symbols may vary over time.
Notably, our main operator includes the logarithmic Laplacian operator $\log (-\Delta)$ and a second-order differential operator whose leading coefficients are not positive semi-definite.
\end{abstract}

\mysection{Introduction}
\label{24.03.29.12.56}

Pseudo-differential operators emerge as a mathematical generalization of differential operators. 
An illustrative form of a pseudo-differential operator is expressed as
$$
P(x,-i\nabla)u(x) = \cF^{-1}\left[ \sigma(x,\xi)  \cF[u](\xi)\right](x),
$$
where $\cF$ and $\cF^{-1}$ denote the Fourier and the inverse Fourier transforms on $\bR^d$, respectively. 
In general, pseudo-differential operators lack local properties, which make conventional perturbation methods from partial differential equation theories inapplicable. Consequently, studying the well-posedness of the equation
\begin{align}
						\label{elliptic symbol equation}
P(x,-i\nabla)u(x)=f(x)
\end{align}
within a specific framework becomes highly challenging without imposing very strong assumptions on both the symbol $\sigma(x,\xi)$ and the data $f$.
However, if one considers pseudo-differential operators which are independent of the space variable $x$, 
there might be intriguing theories for addressing a general symbol $\sigma(\xi)$ and data $f$ within a weak formulation.
Specifically, when considering a polynomial symbol $\sigma(\xi)$, there exist exceptionally elegant theories regarding the existence and uniqueness of solutions to (elliptic) equations like \eqref{elliptic symbol equation} in the weak sense. Remarkably, these theories hold even in the absence of strong mathematical conditions on $\sigma$ and $f$.
In particular, a solution $u$ exists in the sense of distributions if the symbol $\sigma$ is a polynomial, $\sigma(\xi) \not\equiv 0$,  and $f$ is a distribution with a compact support in $\bR^d$.
Even more surprisingly, there always exists a distribution solution $u$ to \eqref{elliptic symbol equation} for any distribution $f$ if the symbol $\sigma$ is a non-zero polynomial, which is shown based on the Paley-Wiener theorem and duality arguments.
This existence result holds even though a nonzero polynomial symbol $\sigma(\xi)$ does not satisfy an ellipticity condition, since the set of all zeros of a non-zero polynomial has a lower dimension.
However, the uniqueness of a weak solution $u$ is not guaranteed if there is no ellipticity condition.
Thus ellipticity is necessary and sufficient to guarantee both existence and uniqueness of a weak solution to \eqref{elliptic symbol equation} even for a polynomial symbol which is independent of $x$.
Moreover, this well-posedness theory can be generalized to accommodate non-polynomial symbols $\sigma$ and tempered distributions $f$ by imposing supplementary conditions involving growth, regularity, and ellipticity on the symbol $\sigma$. 
For more in-depth information, we recommend that the reader consult the books \cite{Hormander 1990,Hormander 1990-2,Komech 1994,Krylov 1996,Krylov 2008}.
It is noteworthy that for a polynomial symbol $\sigma(\xi)$, the sufficient condition to ensure the weak existence of \eqref{elliptic symbol equation} with $f$ having a distribution with compact support is simply that $\sigma(\xi)$ is not identically zero. 
This condition is feasible through a translation of the polynomial $\sigma(\xi)$ and using properties of convolution and analytic functions (see, for instance, \cite[Theorem 7.3.10]{Hormander 1990} and \cite[Theorem 10.3.1]{Hormander 1990-2}).
However, this approach may not be applicable for non-polynomial symbols, particularly in scenarios where the symbols lack regularity. In such scenarios, an ellipticity condition on $\sigma$ becomes crucial, as it prompts the consideration of the reciprocal $\frac{1}{\sigma}$ to derive a fundamental solution (Green's function).

On the other hand, when we consider the evolutionary (or parabolic) counterparts of these theories, the reciprocal $\frac{1}{\sigma}$ could not emerge as a factor in acquiring a fundamental solution if the Fourier transforms are taken solely with respect to the space variable.
Moreover, this approach to take the Fourier transforms only with respect to the spatial variable facilitates examinations of time-dependent symbols.
Certainly, at least in a formal sense, a fundamental solution to the following simple second-order evolution equation
\begin{align*}
\partial_t u(t,x) = a^{ij}(t) u_{x^ix^j}(t,x), \quad u(0)=\delta_0
\end{align*}
is given by 
\begin{align}
								\label{second funda}
\cF_{\xi}^{-1} \left[   \exp\left( -\int_0^t a^{ij}(s) \xi^i\xi^j \mathrm{d}s \right)    \right],
\end{align}
where $\delta_0$ is the Dirac delta measure centered at the origin and $\cF_{\xi}^{-1}$ denotes the inverse Fourier transform with respect to the variable $\xi$. 
Similarly, we could claim that a solution $u$ to 
\begin{align}
										\notag
&\partial_tu(t,x)=\cF^{-1}\left[ \sigma(t,\xi)  \cF[u(t,\cdot)](\xi) \right]\\
									\label{simple pseudo}
&u(0,x)=\delta_0
\end{align}
is given by 
\begin{align*}
u(t,x)=\cF_{\xi}^{-1} \left[   \exp\left( \int_0^t \sigma(s,\xi) \mathrm{d}s \right)    \right](x).
\end{align*}
This leads us to ponder whether ellipticity can be eliminated in a weak formulation of evolution equations for general symbols.
We found that providing a positive answer to this question is indeed valid when investigating evolution equations through a novel weak formulation.
This paper is an outcome of developing a suitable weak formulation to eliminate all elliptic conditions from main operators in evolution equations even though symbols are not regular (thus definitely are not polynomials). 
This new weak formulation relies entirely on an extension of the Fourier and inverse Fourier transforms. 
Traditionally, the class of all tempered distributions has been recognized as an extensive class suitable for these transforms since both transforms become automorphisms on this class. 
However, there exists a remarkable method to extend the domain of the Fourier transform to include all distributions.
This is possible due to the Paley-Wiener theorem. 
More precisely, the Fourier transform of a distribution on $\bR^d$ becomes an element of the dual space of the subspace of entire functions $F$ so that for any multi-index $\alpha$ and $N>0$,
\begin{align}
								\label{Paley class}
|D^\alpha F(z)| \leq C (1+|z|)^{-N} \mathrm{e}^{B|\Im(z)|} \quad \forall z \in \bC^d,
\end{align}
where $C$ and $A$ are positive constants depending on $\alpha$ and $N$, and $\Im(z)$ denotes the imaginary part of the complex vector $z$
(\textit{cf}. \cite[Proposition 4.1]{Komech 1994}).
Nonetheless, this extension of the Fourier transform is not appropriate to deal with initial and inhomogeneous data on the Euclidean spaces
without having analytic continuations.
Thus we propose a new approach which enables the Fourier and inverse Fourier transforms to apply for all distributions without considering analytic continuations.
We will provide a straightforward explanation of these extensions later in the introduction, following the presentation of the main equations. Alternatively, you can examine the comprehensive formulations with all the specifics in Definitions \ref{defn fourier distribution}, \ref{fourier defn 1}, and \ref{fourier defn 2} in the upcoming section.

Now, we succinctly present our equations, which naturally arise as a generalization of \eqref{simple pseudo} :
\begin{equation}
\begin{cases}
								\label{ab eqn}
\partial_tu(t,x)=\psi(t,-i\nabla)u(t,x)+f(t,x),\quad &(t,x)\in(0,T)\times\mathbb{R}^d,\\
u(0,x)=u_0,\quad & x\in\mathbb{R}^d.
\end{cases}
\end{equation}
Here the main operator $\psi(t,-i\nabla)$ is given by
\begin{align}
								\label{main operator}
\psi(t,-i\nabla)u(t,x):=\cF^{-1}\left[\psi(t,\cdot)\cF[u(t,\cdot)] \right](x)
\end{align}
and called a \textit{time-measurable pseudo-differential operator},
where $\psi(t,\xi)$ represents a complex-valued (Borel) measurable function defined on $(0,T) \times \mathbb{R}^d$ and is commonly referred to as a symbol. 
To maintain consistency with the notations used in our previous findings, we substitute $\psi(t,\xi)$ for $\sigma(t,\xi)$. 
This adjustment is made since $\sigma$ conventionally represents the coefficients associated with random noise terms in the realm of stochastic partial differential equations (\textit{cf}. \cite{Krylov 1999}).

Numerous attempts have been undertaken to ease the conditions on symbols while still ensuring the well-posedness of partial differential equations that involve these time-measurable pseudo-differential operators.
It is reasonable to anticipate that the symbol may not require a regularity condition concerning the time variable $t$, as evidenced by many results pertaining to second-order parabolic equations and diffusion processes (\emph{cf}. \cite{Krylov 2008, Stroock Varadhan 1997}). However, a regularity condition on $\psi(t, \xi)$ with respect to $\xi$ appears to be essential for defining the operator $\psi(t, -i\nabla)$ in a strong space, such as an $L_p$-space or a H\"older space, as indicated by multiplier theories in Fourier analysis (\emph{cf}. \cite{Grafakos 2014, Grafakos 2014-2}).
One of the most famous examples is Mihlin's multiplier and its evolutionary counterpart is given by
\begin{align}
							\label{regular symbol}
|D_{\xi}^\alpha \psi(t,\xi)| \lesssim |\xi|^{\nu - |\alpha|} \quad \forall (t,\xi) \in (0,T) \times \bR_0^d
\end{align}
for all multi-index $\alpha$ so that $|\alpha| \leq \lfloor \frac{d}{2}  \rfloor  + 1$, where
$\bR_0^d = \bR^d \setminus \{0\}$, $\nu$ is a positive constant representing an order of the operator, and $\lfloor \frac{d}{2} \rfloor$ is the largest integer which is less than or equal to $\frac{d}{2}$.

 Furthermore, a regularity condition on $\psi(t, \xi)$ alone is insufficient to ensure a strong solution to \eqref{ab eqn}, as observed in the case of a fundamental solution for the second-order equation in \eqref{second funda}. In essence, the term $\exp\left( - \int_0^ta^{ij}(s)\xi^i\xi^j \mathrm{d}s\right)$ in \eqref{second funda} may exhibit exponential growth if there is no elliptic condition on the coefficients $a^{ij}$. 
Thus, we introduce the following strong elliptic condition: there exists a positive constant $\nu>0$ such that
\begin{align}
							\label{strong elliptic}
-\Re[\psi(t,\xi)] \geq \nu |\xi|^\gamma \quad \forall \xi \in \mathbb{R}^d,
\end{align}
where $\Re[\psi(t,\xi)]$ represents the real part of the complex number $\psi(t,\xi)$.

Drawing upon the criteria outlined in \eqref{regular symbol} and \eqref{strong elliptic}, extensive research conducted by the authors and collaborators have consistently shown robust well-posedness for the equation \eqref{ab eqn}.
Examples of such research can be found in \cite{CJH KID 2023,CJH LJB KID 2023,CJH 2024,KID SBL KHK 2015, KID SBL KHK 2016,KID KHK 2016,KID 2018}.
These operators exhibit a close connection to non-local operators and serve as generators of stochastic processes. 
Additionally, for a discussion on the strong well-posedness of these equations with non-local operators and generators of stochastic processes in $L_p$-spaces, we direct readers to the references \cite{CJH KJH PDH 2024,CJH KID 2023-1,HD YL 2023,STC2024,KJH PDH 2023,KJH PDH 2023-1,KID KKH KPK 2019,KKH PDH RJH 2021,RM CP 2017,RM CP 2019,XZ 2013}.

Historically, our attention has been primarily directed towards investigating the strong well-posedness in $L_p$ or H\"older spaces. 
This emphasis was rooted in a longstanding belief that the uniform ellipticity, as expressed in \eqref{strong elliptic}, was indispensable and could not be eliminated, even when considering weak solutions to equations like \eqref{ab eqn}.
The reasoning seems clear and uncomplicated.
If we examine the quantity $\exp[\psi(t, \xi)]$ without an ellipticity, then it becomes apparent that this function could have exponential growth in general, hindering it from being a tempered distribution on $\mathbb{R}^d$. 
Thus, the inverse Fourier transform of the function $\exp(\psi(t, \xi))$ loses its meaningfulness, particularly when considered as a fundamental solution to \eqref{ab eqn}, even in a weak sense if we do not consider extensions of equations to $\bC^d$ after taking the Fourier transform.
Moreover, it is impossible to find a nice analytic continuation of $\psi(t,\xi)$ if there is no regularity condition on $\psi(t,\xi)$ at all.
Hence, it becomes crucial to ensure that the real part of the symbol $\psi(t,\xi)$ remains non-positive in order to guarantee the existence of a fundamental solution even as a tempered distribution-valued function.
In other words, the strong elliptic condition outlined in \eqref{strong elliptic} (or non-positivity of the symbol) seems to be a suitable requirement, even in the context of a weak formulation.
Therefore, initially we were inclined to believe that a weak well-posedness theory for \eqref{ab eqn} with a strong elliptic condition \eqref{strong elliptic} might not be appealing for research, even if regularity conditions such as \eqref{regular symbol} could be eliminated in the weak formulation.
However, we recently discovered a method to extend the domain of the Fourier and the inverse Fourier transforms to all distributions on $\mathbb{R}^d$ without using the Paley-Wiener theorem. 
In particular, it becomes possible to assign a certain meaning to the term $\cF^{-1}\left[\exp(\psi(t,\xi)) \right]$ instead of an element in the dual space of entire functions $F$ on $\bC^{d}$ so that \eqref{Paley class} holds, even when the sign of the real part of the symbol $\psi(t,\xi)$ is strictly positive. 
With these extensions, we ultimately achieve a satisfactory weak well-posedness result for \eqref{ab eqn} without imposing any ellipticity and regularity conditions. A certain local integrability on the symbol $\psi(t,\xi)$ is sufficient to establish weak well-posedness of \eqref{ab eqn}. 
We emphasize that not only the existence, but also the uniqueness of a weak solution is obtained even though a weak solution to equation \eqref{elliptic symbol equation} is not unique even for a polynomial symbol $\sigma(\xi)$ without ellipticity.
For further details, see Theorem \ref{weak solution thm} below.

Next, we offer a brief overview of a fundamental idea behind our weak formulation. 
It is essential to recognize that trying to create a weak formulation across the entire domain $(0,T) \times \mathbb{R}^d$ is unfeasible due to the fact that a symbol $\psi(t,\xi)$ varies with respect to the time.
Additionally, $(0,T)$ could be a finite interval where Schwartz's functions do not work effectively.
As a result, our testing actions for the equations are limited to only the spatial variables.
The core aspect of our approach lies in utilizing a different class of test functions.
Specifically, we use $\cF^{-1}\cD(\bR^d)$ as a class of test functions instead of $\cD(\bR^d)$.
Here, $\cF^{-1}\cD(\bR^d)$ represents a subset of Schwartz functions whose Fourier transforms are belonging to the class $\cD(\bR^d)$ and $\cD(\bR^d)$ is the class of all functions that are infinitely differentiable and have compact supports.
We establish the Fourier transform and its inverse for a distribution $v$ defined on $\bR^d$ as linear functionals acting on $\cF^{-1}\cD(\bR^d)$. This is achieved by applying straightforward operations on $\cF^{-1}\cD(\bR^d)$, which are motivated by Plancherel's theorem. 
In other words, we define
\begin{align}
							\label{new inverse Fourier}
\langle \cF^{-1}[v] , \varphi \rangle = \langle v , \cF[\varphi] \rangle \quad \forall \varphi \in \cF^{-1}\cD(\bR^d).
\end{align}
and
\begin{align*}
\langle \cF[v] , \varphi \rangle = \langle v , \cF^{-1}[\varphi] \rangle \quad \forall \varphi \in \cF^{-1}\cD(\bR^d).
\end{align*}
Specifically, we have the option to consider the Fourier transform and its inverse for any function which is locally integrable.

Now, suppose that we have a solution $u$ to equation \eqref{ab eqn} such that the Fourier transform of $u$ has a realization on $\bR^d$. 
Furthermore, we also assume that the product of $\psi(t,\xi)$ and the Fourier transform of $u$ with respect to $\xi$ is locally integrable. 
Then, the function referred to in \eqref{main operator} becomes meaningful due to the revised definition of the inverse Fourier transform presented in \eqref{new inverse Fourier}.

To conclude, we can define $u$ as a weak solution to \eqref{ab eqn} if, for every $\varphi$ belonging to the space $\cF^{-1}\cD(\bR^d)$, the following equation holds:
\begin{align*}
\left\langle u(t,\cdot),\varphi \right\rangle 
= \langle u_0, \varphi \rangle +  \int_0^t \langle \psi(s,-i\nabla)u(s,\cdot) , \varphi \rangle \mathrm{d}s 
+ \int_0^t \left\langle f(s,\cdot),\varphi\right\rangle \mathrm{d}s
\quad (a.e.)~t\in (0,T).
\end{align*}
This straightforward concept enables us to naturally manage the exponential growth that may arise when the real part of the symbol $\psi(t,\xi)$ is sign-changing.

It is worth noting that we do not rely on any arguments related to weak pre-compactness to derive our main theorems. Instead, our results are solely grounded in elementary techniques, such as H\"older and Minkowski inequalities, combined with well-known properties of the Fourier and inverse Fourier transforms.

This paper comprises seven sections.
Section \ref{24.03.29.12.56} serves as an introductory section, presenting an overview of the results. 
Our primary theorems are introduced in Section \ref{main section}.
In Section \ref{fourier section}, we revisit the classical Bessel potential spaces and extend them to incorporate weighted variations.
Section \ref{24.03.29.12.58} is dedicated to establishing the uniqueness of a solution $u$ for \eqref{ab eqn} under a slightly relaxed condition.
The proof of the main theorem is provided in Section \ref{pf main thm 1}, along with noteworthy corollaries for practical applications.
Section \ref{log thm pf} focuses on handling logarithmic operators as a specific application of our theorem, while Section \ref{second thm pf} addresses second-order differential operators without ellipticity.

\vspace{2mm}
We finish this section with the notations used in the article.

\begin{itemize}
\item 
Let $\bN$, $\bZ$, $\bR$, $\bC$ denote the natural number system, the integer number system, the real number system, and the complex number system, respectively. For $d\in\bN$, $\bR^d$ denotes the $d$-dimensional Euclidean space. 
\item 
For $i=1,...,d$, a multi-index $\alpha=(\alpha_{1},...,\alpha_{d})$ with
$\alpha_{i}\in\{0,1,2,...\}$, and a function $g$, we set
$$
\frac{\partial g}{\partial x^{i}}=D_{x^i}g,\quad
D^{\alpha}g=D_{x^1}^{\alpha_{1}}\cdot...\cdot D^{\alpha_{d}}_{x^d}g,\quad |\alpha|:=\sum_{i=1}^d\alpha_i.
$$
For $\alpha_i =0$, we define $D^{\alpha_i}_{x^i} g = g$. 
If $g=g(t,x):\bR\times\bR^d\to\bR^d$, then we denote
$$
 D_x^{\alpha}g=D_{x^1}^{\alpha_{1}}\cdot...\cdot D^{\alpha_{d}}_{x^d}g,
$$
where $\alpha=(\alpha_1,\cdots,\alpha_d)$.

\item We employ the identical symbol ``$\sup$'' to represent both the supremum and the essential supremum
by slightly abusing the notation $\sup$. On occasion, ``$\esssup$'' may be utilized to emphasize its role as the essential supremum.

\item 
We use $\cD(\bR^d)$ to denote the space of all infinitely differentiable functions with compact supports.
$\cS(\bR^d)$ represents the Schwartz space on $\bR^d$ and the topology on $\cS(\bR^d)$ is generated by the Schwartz semi-norms $\sup_{x \in \bR^d} |x^\alpha (D^\beta f)(x)|$ for all multi-indices $\alpha$ and $\beta$.  
$\cS'(\bR^d)$ is used to denote the dual space of $\cS(\bR^d)$, \textit{i.e.} $\cS'(\bR^d)$ is the space of all tempered distributions on $\bR^d$. 
Additionally, we assume that $\cS'(\bR^d)$ is a topological space equipped with the weak*-topology if there is no special remark about the topology on $\cS'(\bR^d)$.

\item 
Let $F$ be a normed space and $(X,\mathcal{M},\mu)$ be a measure space.
\begin{itemize}
    \item $\mathcal{M}^{\mu}$ denotes the completion of $\cM$ with respect to the measure $\mu$.
    \item For $p\in[1,\infty)$, the space of all $\mathcal{M}^{\mu}$-measurable functions $f : X \to F$ with the norm 
\[
\left\Vert f\right\Vert _{L_{p}(X,\cM,\mu;F)}:=\left(\int_{X}\left\Vert f(x)\right\Vert _{F}^{p}\mu(\mathrm{d}x)\right)^{1/p}<\infty
\]
is denoted by $L_{p}(X,\cM,\mu;F)$. We also denote by $L_{\infty}(X,\cM,\mu;F)$ the space of all $\mathcal{M}^{\mu}$-measurable functions $f : X \to F$ with the norm
$$
\|f\|_{L_{\infty}(X,\cM,\mu;F)}:=\inf\left\{r\geq0 : \mu(\{x\in X:\|f(x)\|_F\geq r\})=0\right\}<\infty.
$$
We usually omit the given measure or $\sigma$-algebra in the notations of $L_p$-spaces if there is no confusion (\textit{e.g.} Lebesgue (or Borel) measure and $\sigma$-algebra).
We similarly leave out the representation of $F$ when it takes on the scalar values such as $\mathbb{R}$ or $\mathbb{C}$. 
\end{itemize}

\item Let $f(t)$ and $g(t)$ be Lebesgue measurable (or Borel measurable) functions on $\cU$.
We write
\begin{align*}
f(t)=g(t)~ (a.e.)~ t \in \cU
\end{align*}
if and only if there exists a measurable subset $\cT \subset \cU$ such that the Lebesgue measure $|\cU \setminus \cT|$ is zero and $f(t)=g(t)$ for all $t \in \cT$.
Moreover, we say that a function $f(t)$ is defined $(a.e.)$ on a Lebesgue measurable (or Borel measurable) set $\cU$ if 
 there exists a measurable subset $\cT \subset \cU$ such that the Lebesgue measure $|\cU \setminus \cT|=0$ and $f(t)$ is defined for all $t \in \cT$.
\item
For $R>0$,
$$
B_R(x):=\{y\in\bR^d:|x-y|< R\},\quad
\overline{B_R(x)}:=\{y\in\bR^d:|x-y|\leq R\}
$$

\item 
For a measurable function $f$ on $\bR^d$, we denote the $d$-dimensional Fourier transform of $f$ by 
\[
\cF[f](\xi) := \frac{1}{(2\pi)^{d/2}}\int_{\bR^{d}} \mathrm{e}^{-i\xi \cdot x} f(x) \mathrm{d}x
\]
and the $d$-dimensional inverse Fourier transform of $f$ by 
\[
\cF^{-1}[f](x) := \frac{1}{(2\pi)^{d/2}}\int_{\bR^{d}} \mathrm{e}^{ ix \cdot \xi} f(\xi) \mathrm{d}\xi.
\]
Moreover, for a function $f(t,x)$ defined on $(0,T)\times \bR^d$, we use the notation
\begin{align*}
\cF_x[f](\xi)
:=\cF[f(t,x)](\xi)
:=\cF[f(t,\cdot)](\xi)
=\frac{1}{(2\pi)^{d/2}}\int_{\bR^{d}} \mathrm{e}^{ -i\xi \cdot x} f(t,x) \mathrm{d}x
\end{align*}
and it is called the Fourier transform of $f$ with respect to the space variable.
On the other hand, for the inverse Fourier transform of $f(t,\xi)$ with respect to the space variable, we use the notation
\begin{align*}
\cF_\xi^{-1}[f](x)
:=\cF^{-1}[f(t,\xi)](x)
:=\cF^{-1}[f(t,\cdot)](x)
=\frac{1}{(2\pi)^{d/2}}\int_{\bR^{d}} \mathrm{e}^{  ix \cdot \xi} f(t,\xi) \mathrm{d}\xi.
\end{align*}
For the sake of simplicity, we often omit the subscripts denoted by $x$ and $\xi$.
It is well-known that both the Fourier transform and the inverse Fourier transform have extensions on $L_1(\bR^d)+ L_2(\bR^d)$.
By slightly abusing the notation, we use the same notation $\cF$ and $\cF^{-1}$ to denote these extensions.
We discuss further details and some properties of these extensions and corresponding inversion theorems in Section \ref{fourier section}.
\item 
We write $\alpha \lesssim \beta$ if there is a positive constant $N$ such that $ \alpha \leq N \beta$.
We use $\alpha \approx \beta$ if $\alpha \lesssim \beta$ and $\beta \lesssim \alpha$. 
In particular, we use the notation $\alpha \lesssim_{a,b,\cdots} \beta$  if the constant $N$ so that $\alpha \leq N \beta$  depends only on $a,b,\cdots$.
Moreover, if we write $N=N(a,b,\cdots)$, this means that the
constant $N$ depends only on $a,b,\cdots$. 
A generic constant $N$ may change from a location to a location, even within a line. 
The dependence of generic constants is usually specified in each statement of theorems, propositions, lemmas, and corollaries.
\item 
For $z\in\bC$, $\Re[z]$ denotes the real part of $z$, $\Im[z]$ is the imaginary part of $z$ and $\bar{z}$ is the complex conjugate of $z$.
\end{itemize}

\mysection{Settings and main results}
											\label{main section}

We fix $T \in (0,\infty]$ and $d \in \bN$ throughout the paper. 
Here $T$ and $d$ denote the terminal time of the evolution equation and the dimension of the space-variable, respectively.  
Note that $T= \infty$ is possible in our theory.
All functions are complex-valued if there is no special remark about the range of a function. 
In particular, $\psi(t,\xi)$ denotes a complex-valued measurable function defined $(a.e.)$ on $(0,T) \times \bR^d$ and denote
\begin{align}
									\label{20230522 01}
\psi(t,-i\nabla)u(t,x):=\cF^{-1}\left[\psi(t,\cdot)\cF[u(t,\cdot)]\right](x)
\end{align}
in the whole paper. 
All the Fourier and the inverse Fourier transforms in the paper are taken only with respect to the space variable and thus they are considered only on $\bR^d$ even in a weak sense.

The main goal of this paper is to show the existence and uniqueness of a weak solution $u$ to the Cauchy problem \eqref{ab eqn}. 
Note that it is not easy to recover a strong solution to \eqref{ab eqn} even in $L_p$-spaces if there are no strong mathematical conditions on the symbol and data.
A naive condition is that for each $t \in (0,T)$, $\psi(t,\cdot)\cF[u](t,\cdot) \in L_p(\bR^d)$
 with $p \in [1,2]$. Then the function $t \mapsto \psi(t,-i\nabla)u(t,\cdot)$ is well-defined as an $L_{p'}(\bR^d)$-valued function defined on $(0,T)$  due to the Riesz-Thorin Theorem, where $p'$ denotes the H\"older conjugate of $p$, \textit{i.e.}
\begin{align*}
p' =
\begin{cases}
&\frac{p}{p-1}  \quad \text{if $p \in (1,2]$} \\
&\infty \quad \text{if $p=1$}.
\end{cases}
\end{align*}
We revisit Riesz-Thorin's Theorem in Section \ref{fourier section}, providing more specific details about the constants involved.

On the other hand, 
it appears that there is no need for numerous mathematical conditions when approaching the operator described in \eqref{20230522 01} in a weak sense.
In particular, if $\psi(t,\cdot)\cF[u](t,\cdot) \in \cS'(\bR^d)$, then 
$t \mapsto \psi(t,-i\nabla)u(t,\cdot)$ is well-defined as an $\cS'(\bR^d)$-valued function defined on $(0,T)$, where $\cS'(\bR^d)$ denotes the class of all tempered distributions on $\bR^d$. 
Nevertheless, if the function $\psi(t,\cdot)\mathcal{F}[u]$ is only locally integrable, it does not qualify as a tempered distribution on $\mathbb{R}^d$. Consequently, the operator in \eqref{20230522 01} is not well-defined in this scenario, as it may have exponential growth. However, local integrability of $\psi(t,\cdot)\mathcal{F}[u]$ is sufficient to establish it as a distribution on $\mathbb{R}^d$.
Hence, if we suggest a new technique for expanding the domain of the inverse Fourier transform to encompass all distributions, 
then the operator defined in \eqref{20230522 01} could be meaningful even with only local integrability on $\psi(t,\cdot)\mathcal{F}[u]$. 
This extension is especially important when dealing with functions that exhibit exponential growth.
Additionally, it is well-known that there exists an elegant method to define the Fourier transform of a distribution on $\bR^d$ as an element in the dual space of a subspace of entire functions $F$ on $\bC^{d}$ so that \eqref{Paley class} holds.
However, this extension is not appropriate to fulfill our well-posedness theory since our symbol $\sigma(t,\xi)$ is not a polynomial type in general.
In other words, we cannot consider an analytic continuation of $\sigma(t,\xi)$ on $\bC^d$ for each $t$.
Thus, we suggest a new extension of the Fourier transform to include all distributions below in Definitions \ref{fourier defn 1} and \ref{fourier defn 2}.
Remarkably, we have not come across a suitable reference discussing this type extension of the Fourier transform beyond tempered distributions without using analytic continuations to the best of our knowledge.

Utilization of distribution theories has significantly advanced our understanding of partial differential equations on a broader scale, as noted in \cite{Gelfand 1968,Hormander 1990, Hormander 1990-2}. 
Nevertheless, a conventional weak formulation utilizing distributions appears ineffective in the absence of strong ellipticity on the symbols for our evolution equations due to weak mathematical conditions with respect to the time variable $t$ and possibility having exponential growth as highlighted in the introduction.
More specifically, if we allow the symbol $\psi(t,\xi)$ to change its sign, then our equation \eqref{ab eqn} includes second-order evolution equations without the parabolicity, which clearly shows that classical weak formulations taken for both time and space variables simultaneously, cannot be applicable since the coefficients are depending on the time, $(0,T)$ could be finite interval, and data $u_0$ and $f$ might have not compact supports in general. 
Moreover, exponential growth could emerge naturally as a result of the influence of exponential functions generated by time evolution when examining evolution equations instead of stationary equations if there is no ellipticity.
This occurrence is particularly evident when Fourier transforms are exclusively applied to the spatial variable to accommodate variations in symbols with respect to the time variable.

Thus, it is reasonable to anticipate that the standard framework of weak formulations typically tested with $\cD(\mathbb{R}^d)$ or $\cS(\mathbb{R}^d)$ is insufficient to address well-posedness of our equation \eqref{ab eqn}. 
A typical form of a solution to \eqref{ab eqn} clarifies these reasons.
Indeed, at least in a formal sense, a solution $u$ to \eqref{ab eqn} can be expressed as follows:
\begin{align}
						\label{make sense representation}
u(t,x)=  
 \cF^{-1}\left[  \exp\left(\int_0^t\psi(r,\cdot)\mathrm{d}r \right) \cF[u_0] \right](x)
+\cF^{-1}\left[ \int_0^t  \exp\left(\int_s^t\psi(r,\cdot)\mathrm{d}r \right) \cF[f(s,\cdot)]\mathrm{d}s \right](x).
\end{align}
Hence, if our symbol $\psi(t,\xi)$ shows a sign-changing behavior, it can lead to exponential growth in the frequency of our solution $u$. 
Nonetheless, it is widely recognized that tempered distributions do not offer adequate control over the exponential growth, as has been emphasized multiple times previously.
Thus, it becomes necessary to find a more extensive class than $\cS'(\bR^d)$ to ensure well-posedness of our evolution equation \eqref{ab eqn}, particularly when we permit symbols to change signs.
However, using the class of all distributions as a direct replacement is not suitable because the Fourier transforms of distributions are not universally defined if we do not consider them as elements in a dual space of some entire functions. 
Therefore, we propose a new class referred to as \emph{Fourier transforms of distributions}.
\begin{defn}[Fourier transforms of distributions]
								\label{defn fourier distribution}
We use $\cF^{-1}\cD(\bR^d)$ to denote the subclass of the Schwartz class whose Fourier transform is in $\cD(\bR^d)$, \textit{i.e.}
\begin{align*}
\cF^{-1}\cD(\bR^d) 
:= \{ \varphi \in \cS(\bR^d) : \cF[\varphi] \in \cD(\bR^d)\},
\end{align*}
where $\cS(\bR^d)$ denotes the (complex-valued) Schwartz class on $\bR^d$ and $\cD(\bR^d)$ denotes the class of all complex-valued infinitely differentiable functions with compact supports defined on $\bR^d$.
Recall that there is a topology on $\cS(\bR^d)$ generated by the Schwartz semi-norms. Since $\cF^{-1}\cD(\bR^d) \subset \cS(\bR^d)$, there is a subspace topology on $\cF^{-1}\cD(\bR^d)$.
Therefore, one can consider its dual space. 
We use the notation $\left(\cF^{-1}\cD (\bR^d) \right)'$ to denote the dual space of $\cF^{-1}\cD(\bR^d)$, \textit{i.e.} $u \in \left(\cF^{-1}\cD (\bR^d) \right)'$
if and only if $u$ is a continuous linear functional defined on $\cF^{-1}\cD(\bR^d)$.
For $u \in \left(\cF^{-1}\cD (\bR^d) \right)'$ and $\varphi \in \cF^{-1}\cD(\bR^d)$, we write
\begin{align*}
\langle u, \varphi \rangle := u(\varphi).
\end{align*}
In other words, $\langle u, \varphi \rangle$ denotes the image of $\varphi$ under $u$.
Defining a topology for $\left(\cF^{-1}\cD(\bR^d)\right)'$ is a straightforward task. Specifically, one can adopt the weak*-topology for $\left(\cF^{-1}\cD(\bR^d)\right)'$ and work with its associated Borel sets to mention measurability of functions whose range are in $\left(\cF^{-1}\cD(\bR^d)\right)'$.
\end{defn}
\begin{defn}[Fourier and inverse Fourier transforms of $u \in \left(\cF^{-1}\cD (\bR^d)\right)'$]
							\label{fourier defn 1}
 For $u \in \left(\cF^{-1}\cD (\bR^d)\right)'$, one can define the Fourier and inverse Fourier transforms of $u$ in the following way:
\begin{align*}
\langle \cF[u], \varphi \rangle
:=\langle u, \cF^{-1}[\varphi] \rangle
\end{align*}
and
\begin{align*}
\langle \cF^{-1}[u], \varphi \rangle
:=\langle u, \cF[\varphi] \rangle
\end{align*}
for all $\varphi \in \cD(\bR^d)$.
%It is well-known that for any function $\varphi \in \cS(\bR^d)$, its Fourier transform or its inverse Fourier transform becomes a function in the same class $\cS(\bR^d)$, where $\cS(\bR^d)$ denotes the Schwartz class on $\bR^d$.
Thus for any $u \in \left(\cF^{-1}\cD(\bR^d) \right)'$, we have
$\cF[u] \in \cD'(\bR^d)$ and $ \cF^{-1}[u] \in \cD'(\bR^d)$
since it is obvious that both $\cF^{-1}[\varphi]$ and $\cF[\varphi] $ are in $\cF^{-1}\cD(\bR^d)$ for all $\varphi \in \cD(\bR^d)$ due to well-known properties of the Fourier and inverse Fourier transforms of a Schwartz function.
The continuity of $\cF[u]$ and $\cF^{-1}[u]$ on $\cD(\bR^d)$ to be distributions is easily obtained from properties of the Fourier transform and the Schwartz functions as well.
\end{defn}

It is essential to reiterate that, for any $u \in \cD'(\bR^d)$, the conventional definition of its Fourier transform or inverse Fourier transform is not applicable without considering analytic continuations as discussed previously.
Nonetheless, we can ultimately establish a meaningful definition for the Fourier and inverse Fourier transforms of all distributions, treating them as elements in $(\cF^{-1}\cD)'(\bR^d)$ without the Paley-Wiener theorem.

\begin{defn}[Fourier and inverse Fourier transforms of $u \in \cD' (\bR^d)$]
								\label{fourier defn 2}
Let $u \in \cD'(\bR^d)$.
We define the Fourier and inverse Fourier transforms of $u$ as follows:
\begin{align*}
\langle \cF[u], \varphi \rangle 
:=\langle u,  \cF^{-1}[\varphi] \rangle \quad \forall \varphi \in \cF^{-1}\cD(\bR^d) 
\end{align*}
and
\begin{align*}
\langle \cF^{-1}[u], \varphi \rangle 
:=\langle u,  \cF[\varphi] \rangle  \quad \forall \varphi \in \cF^{-1}\cD(\bR^d).
\end{align*}
It is obvious that both transforms above are well-defined since
$\cF[\varphi]$ and $\cF^{-1}[\varphi]$ are in $\cD(\bR^d)$ for all $\varphi \in \cF^{-1}\cD(\bR^d)$. 
\end{defn}
 
Due to Definitions \ref{fourier defn 1} and \ref{fourier defn 2}, any distribution $u$ on $\bR^d$ becomes the Fourier transform (or inverse Fourier transform) of an element in $\left(\cF^{-1}\cD(\bR^d)\right)'$ and any element in $\left(\cF^{-1}\cD(\bR^d)\right)'$ is the Fourier transform (or inverse Fourier transform) of a distribution on $\bR^d$.
In this sense, we could say that $\left(\cF^{-1}\cD(\bR^d)\right)'$ is the space of all Fourier transforms or all inverse Fourier transforms of distributions on $\bR^d$.
Thus we use the notation $\cF^{-1}\cD'(\bR^d)$ or $\cF\cD'(\bR^d)$ instead of $\left(\cF^{-1}\cD(\bR^d)\right)'$.

Besides, recall that both Fourier and inverse Fourier transforms are homeomorphisms on the class of all tempered distributions with respect to weak*-topologies.
Since $\cD(\bR^d) \subset \cS(\bR^d)$ and $\cF^{-1}\cD(\bR^d) \subset \cS(\bR^d)$, it is obvious that both spaces $\cD'(\bR^d)$ and $\cF^{-1}\cD'(\bR^d)$ are lager than $\cS'(\bR^d)$. 
We state below that both transforms still become homeomorphisms with respect to weak*-topologies.
We do not give a detailed proof since the proof is almost identical with that of the tempered distributions
 based on properties of transforms for functions in the Schwartz class (\textit{cf}. \cite[Section 2.2.3]{Grafakos 2014} and \cite[Theorem 7.1.10]{Hormander 1990}). 

\begin{thm}
\begin{enumerate}[(i)]
\item Both Fourier and inverse Fourier transforms are homeomorphisms from $\cD'(\bR^d)$ onto $\cF^{-1}\cD'(\bR^d)$ with respect to weak*-topologies.
\item Both Fourier and inverse Fourier transforms are homeomorphisms from $\cF^{-1}\cD'(\bR^d)$ onto $\cD'(\bR^d)$ with respect to weak*-topologies.
\end{enumerate}
In total, both transforms $\cF$ and $\cF^{-1}$ are mappings from $\cD'(\bR^d) \cup \cF^{-1}\cD'(\bR^d)$ onto $\cD'(\bR^d) \cup \cF^{-1}\cD'(\bR^d)$.
\end{thm}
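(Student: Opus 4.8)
The plan is to mirror the classical argument that the Fourier transform is a weak*-homeomorphism on $\cS'(\bR^d)$, transplanting it to the pair of test-function spaces $\cD(\bR^d)$ and $\cF^{-1}\cD(\bR^d)$. The central observation is that $\cF$ and $\cF^{-1}$ are already mutually inverse \emph{homeomorphisms of the test spaces themselves}, i.e. $\cF : \cF^{-1}\cD(\bR^d) \to \cD(\bR^d)$ and $\cF^{-1} : \cD(\bR^d) \to \cF^{-1}\cD(\bR^d)$ are continuous bijections with continuous inverses, because on the Schwartz class $\cF$ is a topological isomorphism and, by definition, $\varphi \in \cF^{-1}\cD(\bR^d)$ precisely when $\cF[\varphi] \in \cD(\bR^d)$; the subspace topologies are exactly the ones transported across. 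Once this is in hand, everything on the dual side is formal: the transpose of a homeomorphism of topological vector spaces is a homeomorphism between the duals with their weak*-topologies, and the Fourier inversion identities $\cF^{-1}\cF = \cF\cF^{-1} = \mathrm{id}$ on each test space dualize to the corresponding identities on the distribution spaces.

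First I would record the test-space statement: for $\varphi \in \cF^{-1}\cD(\bR^d)$ we have $\cF[\varphi] \in \cD(\bR^d)$, and conversely for $\chi \in \cD(\bR^d)$ both $\cF^{-1}[\chi]$ and $\cF[\chi]$ lie in $\cF^{-1}\cD(\bR^d)$ (indeed $\cF[\cF^{-1}[\chi]] = \chi \in \cD(\bR^d)$, and $\cF[\cF[\chi]] = \chi(-\cdot) \in \cD(\bR^d)$), so $\cF$ and $\cF^{-1}$ restrict to bijections between the two test spaces. Continuity in both directions follows from continuity of $\cF, \cF^{-1}$ on $\cS(\bR^d)$ together with the fact that $\cF^{-1}\cD(\bR^d)$ carries the subspace topology from $\cS(\bR^d)$ and $\cD(\bR^d)$ carries (at least) the subspace topology from $\cS(\bR^d)$ — for the purposes of the weak* statements on the duals it suffices to test continuity of the transpose against the Schwartz seminorms, so one need not even invoke the finer LF-topology on $\cD(\bR^d)$. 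Next I would take transposes: given $u \in \cD'(\bR^d)$, Definition \ref{fourier defn 2} sets $\langle \cF[u], \varphi\rangle = \langle u, \cF^{-1}[\varphi]\rangle$ for $\varphi \in \cF^{-1}\cD(\bR^d)$; since $\varphi \mapsto \cF^{-1}[\varphi]$ maps $\cF^{-1}\cD(\bR^d)$ continuously into $\cD(\bR^d)$, the composition is a continuous functional on $\cF^{-1}\cD(\bR^d)$, hence $\cF[u] \in \cF^{-1}\cD'(\bR^d)$; symmetrically $\cF^{-1}[u] \in \cF^{-1}\cD'(\bR^d)$, and for $v \in \cF^{-1}\cD'(\bR^d)$ Definition \ref{fourier defn 1} gives $\cF[v], \cF^{-1}[v] \in \cD'(\bR^d)$. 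Then I would verify that these maps are two-sided inverses of one another by unwinding the brackets: for $u \in \cD'(\bR^d)$ and $\varphi \in \cF^{-1}\cD(\bR^d)$,
\[
\langle \cF^{-1}[\cF[u]], \varphi\rangle = \langle \cF[u], \cF[\varphi]\rangle = \langle u, \cF^{-1}[\cF[\varphi]]\rangle = \langle u, \varphi\rangle,
\]
using at the middle step that $\cF[\varphi]\in\cD(\bR^d)$ so Definition \ref{fourier defn 1} applies, and at the last step Fourier inversion on $\cS(\bR^d)$; the three other compositions are identical bookkeeping. This shows $\cF$ and $\cF^{-1}$ are mutually inverse bijections between $\cD'(\bR^d)$ and $\cF^{-1}\cD'(\bR^d)$, establishing the bijectivity in (i) and (ii).

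Finally I would address weak*-continuity in both directions, which is immediate from the definitions: a net $u_\lambda \to u$ in $\cD'(\bR^d)$ means $\langle u_\lambda, \chi\rangle \to \langle u, \chi\rangle$ for every $\chi \in \cD(\bR^d)$; then for any $\varphi \in \cF^{-1}\cD(\bR^d)$, $\langle \cF[u_\lambda], \varphi\rangle = \langle u_\lambda, \cF^{-1}[\varphi]\rangle \to \langle u, \cF^{-1}[\varphi]\rangle = \langle \cF[u], \varphi\rangle$ since $\cF^{-1}[\varphi] \in \cD(\bR^d)$, so $\cF[u_\lambda] \to \cF[u]$ in $\cF^{-1}\cD'(\bR^d)$; the inverse map is handled the same way, and the roles of $\cD'$ and $\cF^{-1}\cD'$ swap symmetrically for (ii). Combining (i) and (ii) — note $\cF \circ \cF$ maps $\cD'(\bR^d)$ to $\cD'(\bR^d)$ and $\cF^{-1}\cD'(\bR^d)$ to itself — yields the concluding sentence that $\cF$ and $\cF^{-1}$ are maps of $\cD'(\bR^d) \cup \cF^{-1}\cD'(\bR^d)$ onto itself. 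I expect no genuine obstacle here; the only point requiring a little care is the topology on $\cD(\bR^d)$: one should make sure the continuity claims used are exactly those that survive passing to the subspace topology inherited from $\cS(\bR^d)$, since that is all Definitions \ref{fourier defn 1}--\ref{fourier defn 2} and the weak* statements actually use, and indeed the transposes are continuous for that weaker topology, which is why the proof is "almost identical with that of the tempered distributions" as the text asserts.
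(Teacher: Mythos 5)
Your formal computations are exactly the classical tempered-distribution argument that the paper invokes without giving details: the mutual-inverse identities obtained by unwinding the brackets, injectivity/surjectivity from them, and weak*-bicontinuity by testing against a fixed test function. That part is fine, up to one bookkeeping slip: by Definition \ref{fourier defn 1}, $\cF^{-1}[\cF[u]]$ is a functional on $\cD(\bR^d)$, so the inversion chain should be tested against $\chi\in\cD(\bR^d)$, namely $\langle\cF^{-1}[\cF[u]],\chi\rangle=\langle\cF[u],\cF[\chi]\rangle=\langle u,\cF^{-1}[\cF[\chi]]\rangle=\langle u,\chi\rangle$, rather than against $\varphi\in\cF^{-1}\cD(\bR^d)$.

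The genuine gap is the continuity step in part (i), which you settle in your last paragraph by asserting that ``the transposes are continuous for that weaker topology.'' This is precisely the point where the analogy with $\cS'(\bR^d)$ breaks down. With $\cF^{-1}\cD(\bR^d)$ carrying the subspace topology of $\cS(\bR^d)$, as in Definition \ref{defn fourier distribution}, a linear functional on it is continuous if and only if it is dominated by finitely many Schwartz seminorms; since $\varphi\mapsto\cF^{-1}[\varphi]$ is a homeomorphism of $\cF^{-1}\cD(\bR^d)$ onto $\cD(\bR^d)$ when both carry the $\cS$-subspace topology, the functional $\varphi\mapsto\langle u,\cF^{-1}[\varphi]\rangle$ is continuous exactly when $u$ is bounded by Schwartz seminorms on $\cD(\bR^d)$, i.e.\ when $u$ extends to a tempered distribution. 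For non-tempered $u\in\cD'(\bR^d)$ this fails: take $u=\mathrm{e}^{|x|^2}$, $\chi_j(x)=\mathrm{e}^{-j}\chi_0(x-j e_1)$ with $0\le\chi_0\in\cD(\bR^d)$, and $\varphi_j:=\cF[\chi_j]\in\cF^{-1}\cD(\bR^d)$; every Schwartz seminorm of $\varphi_j$ tends to $0$ (translation only costs polynomial factors $j^{|\beta|}$ against $\mathrm{e}^{-j}$), while $\langle u,\cF^{-1}[\varphi_j]\rangle=\int \mathrm{e}^{|x|^2}\chi_j(x)\,\mathrm{d}x\to\infty$. Note also that $\cF^{-1}$ is not continuous from the $\cS$-subspace topology of $\cF^{-1}\cD(\bR^d)$ into the usual (LF) topology of $\cD(\bR^d)$, since supports are not preserved, so your ``composition of continuous maps'' argument has no version that works for general $u\in\cD'(\bR^d)$. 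Indeed, by density of $\cF^{-1}\cD(\bR^d)$ in $\cS(\bR^d)$ and Hahn--Banach, the dual of $\cF^{-1}\cD(\bR^d)$ for the $\cS$-subspace topology is exactly $\cS'(\bR^d)$, so under that reading $\cF$ cannot map all of $\cD'(\bR^d)$ into $\cF^{-1}\cD'(\bR^d)$, nor be onto $\cD'(\bR^d)$ in part (ii). The theorem as intended requires reading $\cF^{-1}\cD'(\bR^d)$ as the dual of $\cF^{-1}\cD(\bR^d)$ equipped with the topology transported from $\cD(\bR^d)$ by the Fourier transform (or, equivalently, taking the duality in that transported sense); with that reinterpretation your bracket computations do constitute the whole proof, but the claim of continuity for the $\cS$-subspace topology, as you stated it, is the step that fails and must be replaced.
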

Therefore, we could freely apply the Fourier inversion formula to all distributions and Fourier transforms of distributions on $\bR^d$ due to the above theorem. In particular, $\cF[u]=\cF[v]$ implies $u=v$ for all distributions $u$ and $v$ on $\bR^d$.

It is crucial to grasp how distributions are manifested in practice. 
In our main theorem, the mathematical conditions are given by realizations of Fourier transforms of distributions.
Recall that any locally integrable function $v$ on $\bR^d$ is a distribution on $\bR^d$ due to the identification $v$ with the mapping
$$
\varphi \in \cD(\bR^d) \mapsto \int_{\bR^d} v(x) \overline{\varphi(x)} \mathrm{d}x.
$$ 
Here  $v$ is locally integrable on $\bR^d$ if and only if
\begin{align*}
\int_{|\xi|<R}v(\xi) \mathrm{d} \xi < \infty \quad \forall R \in (0,\infty),
\end{align*}
where $B_R$ denotes the Euclidean ball whose center is zero and radius is $R$, \textit{i.e.}
\begin{align*}
B_R:= \{y \in \bR^d: |y| < R\}.
\end{align*}
We write $ v \in L_{1,\ell oc}(\bR^d)$ if $v$ is locally integrable on $\bR^d$.
Especially, if there exists a $v \in L_{1,\ell oc}(\bR^d)$ such that
\begin{align*}
\langle \cF[u], \varphi \rangle 
= \left(v, \varphi  \right)_{L_2(\bR^d)}
:= \int_{\bR^d} v(\xi) \overline{\varphi(\xi)} \mathrm{d}\xi \quad \forall \varphi \in \cD(\bR^d),
\end{align*}
then we identify $\cF[u] = v$ and consider $\cF[u]$ as a function defined $(a.e.)$ on $\bR^d$,
where $\overline{\varphi(\xi)}$ denotes the complex conjugate of $\varphi(\xi)$. 
In this case, we usually say that the distribution $\cF[u]$ has the realization $v$ on $\bR^d$. 
This identification even works for any element $u \in \cF^{-1}\cD'(\bR^d)$.
In other words, we say that $u \in \cF^{-1}\cD'(\bR^d)$ is locally integrable if and only if there is a $ v \in L_{1,\ell oc}(\bR^d)$
\begin{align*}
\langle u, \varphi \rangle 
:= \int_{\bR^d} v(\xi) \overline{\varphi(\xi)} \mathrm{d}\xi \quad \forall \varphi \in \cF^{-1}\cD(\bR^d).
\end{align*}
This function $v$ is uniquely determined $(a.e.)$ on $\bR^d$ since both $\cD(\bR^d)$ and $\cF^{-1}\cD(\bR^d)$ are dense in $L_\infty(B_R)$ for all $R \in (0,\infty)$ (by considering restrictions of $\cD(\bR^d)$ or $\cF^{-1}\cD(\bR^d)$ on each $B_R$).
Due to this identification, we could introduce an important subclass of $\cF^{-1}\cD'(\bR^d)$ consisting of all functions whose Fourier transform is locally integrable.
Here is the precise definition.
We use $\cF^{-1}L_{1,\ell oc}(\bR^d)$ to denote the subspace of $\cF^{-1} \cD'(\bR^d)$ whose Fourier transform is in $L_{1,\ell oc}(\bR^d)$, \textit{i.e.} $ u \in \cF^{-1}L_{1,\ell oc}(\bR^d)$ if and only if
\begin{align*}
\left\langle u, \varphi \right\rangle
=\left\langle \cF[u], \cF[\varphi] \right\rangle
= \int_{\bR^d} \cF[u](\xi) \overline{\cF[\varphi](\xi)} \mathrm{d}\xi \quad \forall \varphi \in \cF^{-1}\cD(\bR^d).
\end{align*}
Since $\cF^{-1}L_{1,\ell oc}(\bR^d) \subset \cF^{-1}\cD'(\bR^d)$, there is the subspace topology on $\cF^{-1}L_{1,\ell oc}(\bR^d)$ and the Borel sets generated by this topology. 
Additionally, this identification with a locally integrable function could be easily performed in any domain of $\bR^d$ 
due to the localizations of distributions. 
More precisely, if there exist an open set $\cU \subset \bR^d$ and a locally integrable function $v$ on $\cU$ so that 
\begin{align}
									\label{20240319 01}
\langle \cF[u] , \varphi \rangle = \int_{\fR^d} v(\xi) \overline{\varphi(\xi)} \mathrm{d}\xi \quad \forall \varphi \in \cD(\cU),
\end{align}
then we identify $\cF[u]$ with $v$ on $\cU$, where $\cD \left(\cU\right)$ denotes the set of all infinitely differentiable functions on $\bR^d$ with compact supports in $\cU$. In such a case, we say that $\cF[u]$ has a realization $v$ on $\cU$ and consider $\cF[u]$ as a locally integrable function on $\cU$. 
In other words, for any $u$ in $\cF^{-1}\cD'(\fR^d)$, we say that  $\cF[u]$ has a realization $v$ on $\cU$ 
or simply $\cF[u]$ is locally integrable on $\cU$ if \eqref{20240319 01} holds.

According to these realizations of elements in $\cF^{-1}\cD'(\bR^d)$, we are ready to define the operator $\psi(t,-i \nabla)$.
To define the operator universally for all $t$, we slightly abuse the notation by considering $\psi(-i\nabla)$ with a function $\psi(\xi)$ on $\bR^d$ below.
\begin{defn}
							\label{defn psi operator}
Let $g \in \cF^{-1}\cD'(\bR^d)$ so that $\cF[g]$ has a realization on an open set $\cU$ covering the support of $\psi$ and
$\xi \in \bR^d \mapsto \psi(\xi)\cF[g](\xi) \in \bC$ is a locally integrable function on $\bR^d$.
Then we define $\psi(-i\nabla)g$ as an element in $\cF^{-1}\cD'(\bR^d)$ so that
\begin{align*}
\langle \psi(-i \nabla)g, \varphi \rangle
=\int_{\fR^d} \psi(\xi)\cF[g](\xi) \cF[\varphi](\xi) \mathrm{d}\xi \quad \forall \varphi \in \cF^{-1}\cD'(\bR^d).
\end{align*}
\end{defn}
Due to the definition provided above, equation \eqref{ab eqn} can be considered valid even without imposing any regularity requirement on the symbol.
This will be further elaborated upon in the subsequent discussion regarding the definition of a solution, as outlined with more depth in Definition \ref{space weak solution}.

Next we consider classes comprising Borel measurable functions with values in $\cF^{-1}\cD'(\bR^d)$, defined almost everywhere on the interval $(0,T)$. 
\begin{defn}
								\label{local l1 fd valued space}
We use the notation $L_{0}\left((0,T);\cF^{-1}\cD'(\bR^d)\right)$ to represent the set of all Borel measurable functions $f$ with values in $\cF^{-1}\cD'(\bR^d)$ defined almost everywhere on the interval $(0,T)$.
In particular, $f \in L_{0}\left((0,T);\cF^{-1}\cD'(\bR^d)\right)$ implies that 
\begin{align*}
\langle f(t,\cdot), \varphi \rangle :=\langle f(t), \varphi \rangle < \infty \quad (a.e.)~t \in (0,T) \quad \text{and} \quad \forall \varphi \in \cF^{-1}\cD(\bR^d).
\end{align*}
\end{defn}

\begin{defn}
$L_{1,t\text{-}loc}\left((0,T);\cF^{-1}\cD'(\bR^d)\right)$ is used to denote the class of all Borel measurable functions $f$ with values in $\cF^{-1}\cD'(\bR^d)$ defined $(a.e.)$ on $(0,T)$ such that
\begin{align*}
\int_0^t \langle f(s,\cdot), \varphi \rangle \mathrm{d}s:=\int_0^t \langle f(s), \varphi \rangle \mathrm{d}s < \infty \quad \forall t \in (0,T)\quad \text{and} \quad \forall \varphi \in \cF^{-1}\cD(\bR^d).
\end{align*}
\end{defn}
It is obvious that $L_{1,t\text{-}loc}\left((0,T);\cF^{-1}\cD'(\bR^d)\right) \subset L_{0}\left((0,T);\cF^{-1}\cD'(\bR^d)\right)$.
Naturally, we could consider subspaces of $L_{0}\left((0,T);\cF^{-1}\cD'(\bR^d)\right)$ and $L_{1,t\text{-}loc}\left((0,T);\cF^{-1}\cD'(\bR^d)\right)$ with values in $\cF^{-1}L_{1, \ell oc}(\bR^d)$.
\begin{defn}
We define $L_{0}\left((0,T);\cF^{-1}L_{1,\ell oc}(\bR^d)\right)$ as the subspace of $L_{0}\left((0,T);\cF^{-1}\cD'(\bR^d)\right)$ consisting of all Borel measurable functions $f$ with values in $\cF^{-1}L_{1,\ell oc}(\bR^d)$ defined $(a.e.)$ on $(0,T)$ so that
\begin{align*}
 \int_{B_R} |\cF[f(t,\cdot)](\xi)|\mathrm{d}\xi
 := \int_{B_R} |\cF[f(t)](\xi)|\mathrm{d}\xi < \infty \quad (a.e.)~t \in (0,T) \quad \text{and} \quad \forall R \in (0,\infty).
\end{align*}
\end{defn}
\begin{defn}
								\label{local 11 space}
We write $ f \in L_{1,t\text{-}loc}\left((0,T);\cF^{-1}L_{1,\ell oc}(\bR^d)\right)$ if $f$ is a $\cF^{-1}L_{1,\ell oc}(\bR^d)$-valued Borel measurable function  defined $(a.e.)$ on $(0,T)$ such that
\begin{align*}
\int_0^t  \int_{B_R} |\cF[f(s, \cdot)](\xi)|\mathrm{d}\xi \mathrm{d}s 
:=\int_0^t  \int_{B_R} |\cF[f(s)](\xi)|\mathrm{d}\xi \mathrm{d}s 
< \infty \quad \forall (t,R) \in (0,T)\times (0,\infty).
\end{align*}
\end{defn}
Generally, $f \in L_{1,t\text{-}loc}\left( (0,T) ;  \cF^{-1}L_{1, \ell oc}(\bR^d) \right)$ cannot be identified with a complex-valued function defined on $(0,T) \times \fR^d$ in the typical almost everywhere sense. 
Thus the terms $\cF[f(s, \cdot)]$ and $\cF[f(s, \cdot)](\xi)$ should be understood as the notation to denote $\cF[f(s)]$ and $\cF[f(s)](\xi)$, respectively. On the other hand, if $f$ is a nice function defined on $(0,T)\times \fR^d$, then
one can understand $\cF[f(s)](\xi)$ as
\begin{align*}
\cF[f(s, \cdot)](\xi) := \frac{1}{(2\pi)^{d/2}} \int_{\bR^d} \mathrm{e}^{-i x\cdot \xi}f(s,x) \mathrm{d}x. 
\end{align*}
We use both notations $\cF[f(s)](\xi)$ and $\cF[f(s, \cdot)](\xi)$ to give the same meaning in either way.

Now assume that $u_0 \in L_{1,\ell oc}(\bR^d)$ and $f \in L_{1,t\text{-}loc}\left( (0,T) ;  \cF^{-1}L_{1, \ell oc}(\bR^d) \right)$.
In such cases, all the components on the right side of equation \eqref{make sense representation} are well-defined and meaningful, provided that the symbol $\psi(t,\xi)$ is locally bounded with the help of the extension of the inverse Fourier transform.

Finally, we suggest a definition of a weak solution to equation \eqref{ab eqn}. 
We call it a \emph{Fourier-space weak solution} to \eqref{ab eqn}.
\begin{defn}[Fourier-space weak solution]
									\label{space weak solution}
Let $u_0 \in \cF^{-1}\cD'(\bR^d)$, $f \in L_{1,t\text{-}loc}\left( (0,T) ;  \cF^{-1}\cD'(\bR^d) \right)$, and $u \in L_{0}\left((0,T);\cF^{-1}\cD'(\bR^d)\right)$.
We say that $u$ is a \emph{Fourier-space weak solution} to equation \eqref{ab eqn} if for any $\varphi \in \cF^{-1}\cD(\bR^d)$,
\begin{align}
										\label{weak formulation}
\left\langle u(t,\cdot),\varphi \right\rangle 
= \langle u_0, \varphi \rangle +  \int_0^t \left\langle  \psi(s,-i \nabla)u(s,\cdot) , \varphi \right\rangle\mathrm{d}s 
+ \int_0^t \left\langle f(s,\cdot),\varphi\right\rangle \mathrm{d}s
\quad (a.e.)~t\in (0,T).
\end{align}
\end{defn}
The term ``Fourier-space weak solution" as described above is evidently derived from the fact that the Fourier transform is applied to the spatial variable of a solution $u$.

\begin{rem}
Note that the term 
$\int_0^t \left\langle \psi(s,-i \nabla)u(s,\cdot) , \varphi \right\rangle\mathrm{d}s $
is not well-defined in general even though we consider the extended Fourier and inverse Fourier transforms.
It is because the multiplication action 
$$
\langle \psi(t,\cdot) \cF[u](t,\cdot) , \varphi \rangle
:=\langle  \cF[u](t,\cdot) , \psi(t,\cdot) \varphi \rangle
$$
is not well-defined since the symbol $\psi(t,\xi)$ does not satisfy a regularity condition in general.
The easiest way to make the term well-defined is to give a sufficient smooth assumption on the symbol $\psi(t,\xi)$.
However, it is not the case we are interested in our weak formulation by recalling the goal that we want to remove all regularity conditions on the symbol $\psi(t,\xi)$.
Fortunately, there is a method to make the term well-defined without any regularity condition on $\psi(t,\xi)$ by using a realization of $\cF[u(t,\cdot)]$ as discussed in Definition \ref{defn psi operator}, which will be revisited in the next remark.
\end{rem}

\begin{rem}
								\label{solution realization}
Let $u$ be a Fourier-space weak solution to \eqref{ab eqn}.
Assume that for each $t \in (0,T)$, the function $\psi(t,\xi) \cF[u(t,\cdot)](\xi)$ is locally integrable with respect to $\xi$.
In this case, it is clear that $\psi(t,\xi) \cF [u(t,\cdot)](\xi)$ belongs to the space of distributions $\cD'(\bR^d)$.
Consequently, we can proceed to consider the inverse Fourier transform of $\psi(t,\xi) \cF [u(t,\cdot)](\xi)$ due to Definition \ref{fourier defn 2}. This implies that the main operator component, $\psi(t,i\nabla)u(t,\cdot)$, becomes meaningful as a function with values in $\cF^{-1}\cD'(\bR^d)$.
Additionally according to definitions of actions on $\cF^{-1}\cD'(\bR^d)$, we have
\begin{align}
								\label{20240201 01}
\int_0^t \langle \psi(s,-i\nabla)u(s,\cdot) , \varphi \rangle \mathrm{d}s 
=\int_0^t \left(\psi(s,\cdot) \cF[u(s,\cdot)] ,\cF[\varphi](\cdot) \right)_{L_2(\bR^d)} \mathrm{d}s 
\end{align}
for all $\varphi \in \cF^{-1}\cD(\bR^d)$, where $(\cdot,\cdot)_{L_2(\bR^d)}$ denotes the $L_2(\bR^d)$-inner product, \textit{i.e.},
\begin{align*}
\left(\psi(s,\cdot)\cF[u(s,\cdot)] ,\cF[\varphi](\cdot) \right)_{L_2(\bR^d)} 
= \int_{\bR^d} \psi(s,\xi) \cF[u(s,\cdot)](\xi)  \overline{\cF[\varphi](\xi)} \mathrm{d} \xi.
\end{align*}
Therefore, we can use the identity in \eqref{20240201 01}  to define 
$\langle \psi(s,-i\nabla)u(s,\cdot) , \varphi \rangle$ as $\left(\psi(s,\cdot) \cF[u(s,\cdot)] ,\cF[\varphi] \right)_{L_2(\bR^d)}$.
In other words, the term $\psi(s,-i\nabla)g$  could be defined for any $g$ whose  Fourier transform  has a realization so that
$\psi(s,\xi) \cF[g](\xi)$ is locally integrable even though the symbol $\psi(t,\xi)$ is not regular at all. 
It seems obvious that this realization of $g$ is sufficient to exist within the support of the symbol $\psi$ which will be discussed further in the subsequent remark.
\end{rem}

\begin{rem}
							\label{cutoff distribution}
Let $u \in L_{1,t\text{-}loc}\left( (0,T) ;  \cF^{-1}\cD'(\bR^d) \right)$.
Then even the term  
\begin{align}
										\label{20230624 01}
\int_0^t \left(\psi(s,\cdot)\cF[u(s,\cdot)] , \cF[\varphi](\cdot) \right)_{L_2(\bR^d)} \mathrm{d}s
=\int_0^t \left(\cF[u(s,\cdot)] , \overline \psi(s,\cdot)\cF[\varphi](\cdot) \right)_{L_2(\bR^d)} \mathrm{d}s
\end{align}
is not well-defined in general.
Moreover, there exists the possibility of finding a function $u$ that does not belong to the space $L_{1,t\text{-}loc}\left( (0,T) ; \cF^{-1}L_{1,\ell oc}(\bR^d) \right)$, yet under the influence of $\psi(s,\cdot)$, the expression \eqref{20230624 01} can still be made meaningful.
To address this issue, we aim to find appropriate functions, which will be presented in our main theorem and belong to spaces whose Fourier transforms are in weighted local $L_{p,q}$-spaces which are subspaces of $L_{0}\left((0,T);\cF^{-1}\cD'(\bR^d)\right)$.
In particular, if $u$ is a Fourier-space weak solution to \eqref{ab eqn} so that the term in \eqref{20230624 01} is well-defined,
then at least, it implies that for each $s \in (0,T)$, there exists a realization of $\cF[u(s,\cdot)](\xi)$ on the support of $\psi(s,\xi)$.
In other words, for each $s$, there exists a locally integrable function $v$ on $\bR^d$ and a neighborhood $\cU$ of $\supp \psi(s,\cdot)$ so that 
\begin{align*}
\langle \cF[u(s,\cdot)], \varphi \rangle = \int_{\bR^d} v(\xi) \varphi(\xi) \mathrm{d}\xi \quad \forall \varphi \in \cD \left( \cU\right),
\end{align*}
where $\supp \psi(s,\cdot) = \overline{\{ x \in \bR^d : \psi(s,x) \neq 0\}}$,
$\overline{\{ x \in \bR^d : \psi(s,x) \neq 0\}}$ is the closure of ${\{ x \in \bR^d : \psi(s,x) \neq 0\}}$.
However, we can give any value to $\cF[u(s,\cdot)]$ outside of $\supp \psi(s,\cdot)$ since it does not affect the terms in \eqref{20230624 01}.
Thus it might suggest that the space $L_{1,t\text{-}loc}\left( (0,T) ;  \cF^{-1}\cD'(\bR^d) \right)$ is not appropriate to guarantee the uniqueness of a solution $u$.
By the way, there is a typical action to extend by putting
\begin{align*}
\langle \cF[u(s,\cdot)] ,\varphi \rangle =0  \quad \forall \varphi \in \cD \left( \bR^d \setminus  \supp \psi(s,\cdot)\right).
\end{align*}
Then the support of the distribution $\cF[u(s,\cdot)]$ becomes a subset of $supp \psi(s,\cdot)$ and 
the realization $v$ satisfies $v=v \cdot 1_{supp \psi(s,\cdot)}$, where $1_{supp \psi(s,\cdot)}$ denotes the indicator function on the set $supp \psi(s,\cdot)$, \textit{i.e.}
$$
1_{supp \psi(s,\cdot)}(x) =
\begin{cases}
&1  \quad x \in supp \psi(s,\cdot) \\
&0 \quad x \in \bR^d \setminus supp \psi(s,\cdot).
\end{cases}
$$
\end{rem}
\begin{rem}
In the classical weak formulation of partial differential equations, the commonly preferred class of test functions is $\cD(\bR^d)$. 
This preference is shaped by several factors.
One of the primary reasons is that $\cD(\bR^d)$ serves as a well-behaved dense subset. 
Specifically, $\cD(\bR^d)$ is dense in $L_p(\bR^d)$ for any $p \in [1,\infty)$, and it is also dense in $\cS(\bR^d)$. 
These properties of denseness are significant considerations in favor of utilizing $\cD(\bR^d)$ as a space of test functions. 
They play a crucial role in facilitating a connection between strong and weak solutions.

It is worth noting that the class $\cF^{-1}\cD(\bR^d)$ serves as an excellent alternative for a class of test functions, as it possesses the same dense properties. These properties can be easily shown based on well-known properties of $\cD(\bR^d)$ and $\cS(\bR^d)$. 
Here is a brief description of a proof. Recall that $\cD(\bR^d)$ is dense in $\cS(\bR^d)$ concerning the topology generated by the Schwartz semi-norms. 
Additionally, the Fourier transform acts as a homeomorphism from $\cS(\bR^d)$ to itself. 
Consequently, it is almost straightforward to prove that $\cF^{-1}\cD(\bR^d)$ is dense in $\cS(\bR^d)$. An interesting observation is that both $\cD(\bR^d)$ and $\cF^{-1}\cD(\bR^d)$ are dense in $\cS(\bR^d)$, but their intersection contains only the zero element.
\end{rem}

We classify our solutions and data by examining their Fourier transforms. 
Specifically, we focus on an initial data $u_0$ and an inhomogeneous data $f$ in equation \eqref{ab eqn} whose Fourier transforms belong to weighted $L_p$-spaces. 
To establish clear definitions for these data spaces, we need to review notations associated with $L_p$-spaces related to weighted Lebesgue measures.
It is important to note that our weighted $L_p$-spaces include the special case of $p=\infty$. 
Nevertheless, we temporarily exclude this extreme case in order to simplify the comprehension of functions solely in terms of integrations.

Let $p,q \in [1,\infty)$, $w$, and $W$ be non-negative (Borel) measurable functions defined $(a.e.)$ on $(0,T)$ and $(0,T) \times \bR^d$, respectively.
We write
 $u \in L_{p,q}\left( (0,T) \times \bR^d,  w^p(t) \mathrm{d}t W^q(t,x)\mathrm{d}x \right)$ if and only if
\begin{align*}
\|u(t,x)\|_{ L_{p,q}\left( (0,T) \times \bR^d,  w^p(t) \mathrm{d}t W^q(t,x) \mathrm{d}x \right)}
:= \left(\int_{0}^{T} \left(\int_{\bR^d} |u(t,x)|^q W^q(t,x) \mathrm{d}x \right)^{p/q}  w^p(t)\mathrm{d}t \right)^{1/p} < \infty.
\end{align*}
It is obvious that
$$
L_{p,q}\left( (0,T) \times \bR^d,  w^p(t)\mathrm{d}t  W^q(t,x)\mathrm{d}x \right)
=L_{p,q}\left( (0,T) \times \bR^d,  \mathrm{d}t  w^q(t) W^q(t,x)\mathrm{d}x \right).
$$
In particular, 
$$
L_{p,p}\left( (0,T) \times \bR^d,  w^p(t)\mathrm{d}t  W^p(t,x)\mathrm{d}x \right)
=L_{p,p}\left( (0,T) \times \bR^d,  \mathrm{d}t  w^p(t) W^p(t,x)\mathrm{d}x \right).
$$
In this case, our space becomes identical to the $L_p$-space with the measure $w^p(t) W^p(t,x)\mathrm{d}t\mathrm{d}x$, \textit{i.e.} 
\begin{align*}
L_{p}\left( (0,T) \times \bR^d,    w^p(t) W^p(t,x)\mathrm{d}t\mathrm{d}x \right)
&=L_{p,p}\left( (0,T) \times \bR^d,  w^p(t)\mathrm{d}t  W^p(t,x)\mathrm{d}x \right)\\
&=L_{p,p}\left( (0,T) \times \bR^d,  \mathrm{d}t  w^p(t) W^p(t,x)\mathrm{d}x \right).
\end{align*}
In addition, observe that
\begin{align*}
\|u\|_{L_{p,q}\left( (0,T) \times \bR^d,  w^p(t)\mathrm{d}t  W^q(t,x)\mathrm{d}x \right)} 
&= \|u\|_{L_{p}\left( (0,T), w^p(t) \mathrm{d}t ;  L_q \left(\bR^d, W^q(t,x)\mathrm{d}x \right)\right)} \\
&=\left(\int_{0}^{T} \left(\int_{\bR^d} |u(t,x)|^q W^q(t,x) \mathrm{d}x \right)^{p/q}  w^p(t)\mathrm{d}t \right)^{1/p} \\
&=\left(\int_{0}^{T} \left(\int_{\bR^d} | w(t)W(t,x)u(t,x)|^q  \mathrm{d}x \right)^{p/q} \mathrm{d}t \right)^{1/p} \\
&= \|wWu\|_{L_{p}\left( (0,T) ;  L_q \left(\bR^d \right)\right)}.
\end{align*}
To include the extreme case that $p=\infty$ or $q=\infty$, 
we define
\begin{align*}
\|u(t,x)\|_{L_{p,q}\left( (0,T) \times \bR^d,  w^p(t)\mathrm{d}t  W^q(t,x)\mathrm{d}x \right)} 
:= \|wWu\|_{L_{p}\left( (0,T) ;  L_q \left(\bR^d \right)\right)}.
\end{align*}
For $p,q \in [1,\infty)$, we remark that  the simpler notations
$$
L_{p,q}\left( (0,T) \times \bR^d,  w(t)\mathrm{d}t  W(t,x)\mathrm{d}x \right)
:=
L_{p,q}\left( (0,T) \times \bR^d,  (w^{1/p})^p(t)\mathrm{d}t  (W^{1/q})^q(t,x)\mathrm{d}x \right).
$$
are used on occasion for the sake of consistency with usual notations of weighted $L_{p,q}$-spaces.

We usually identify 
$L_{p,q}\left( (0,T) \times \bR^d, w^p(t) \mathrm{d}t   W^q(t,x) \mathrm{d}x \right)$ 
and $L_{p}\left( (0,T), w^p(t) \mathrm{d}t ;  L_q \left(\bR^d, W^q(t,x)\mathrm{d}x \right)\right)$
even though the iterated measurability does not imply the joint measurability.
We also would like to point out that we would not consider the intricate matters between Borel and Lebesgue measurabilities by opting for the completion of a measure space. 
Furthermore, we adopt a conventional method to establish the equivalence in the almost everywhere sense across all $L_{p,q}$-spaces
without delving into significant details. 
In particular, we do not make a strict distinction between functions defined throughout $(0,T) \times \mathbb{R}^d$ and those defined almost everywhere within $(0,T) \times \mathbb{R}^d$.  
Similarly, this relaxed approach extends to functions defined on $\mathbb{R}^d$ as well.
With these prerequisites in place, we can now introduce a concept of our weighted local $L_{p,q}$-spaces. 
It is worth noting that this local characteristic is presented in a slightly different manner with respect to the time variable, which will be further explored in Remark \ref{local revisit}.
\begin{defn}[Weighted local $L_{p,q}$-spaces]
Let $p,q \in [1,\infty]$.
\begin{enumerate}[(i)]
\item We define $L_{p,q,t\text{-}loc,x\text{-}\ell oc}\left( (0,T) \times \bR^d, w^p(t)\mathrm{d}t  W^q(t,x)\mathrm{d}x \right)$ 
as the class of all measurable functions $u$ on $(0,T) \times \bR^d$ such that
\begin{align*}
\|u\|_{L_{p,q}\left( (0,t_1) \times B_R,  w^p(t)\mathrm{d}t  W^q(t,x)\mathrm{d}x \right)}
:=\|1_{(0,t_1)}(t) 1_{B_R}(x)u(t,x)\|_{L_{p,q}\left( (0,T) \times \bR^d,  w^p(t)\mathrm{d}t  W^q(t,x)\mathrm{d}x \right)} < \infty
\end{align*}
for all $0<t_1<T$ and $R \in (0,\infty)$, where $B_R:=\{x \in \bR^d : |x| < R\}$ and $1_{B_R}$ denotes the indicator function of $B_R$, \textit{i.e.}
$$
1_{B_R}(x) =
\begin{cases}
&1  \quad x \in B_R, \\
&0 \quad x \in \bR^d \setminus B_R.
\end{cases}
$$

\item 
We define $L_{p,q,t\text{-}loc}\left( (0,T) \times \bR^d, w^p(t)\mathrm{d}t  W^q(t,x)\mathrm{d}x \right)$ 
as the class of all measurable functions $u$  on $(0,T) \times \bR^d$ such that
\begin{align*}
\|u(t,x)\|_{L_{p,q}\left( (0,t_1) \times \bR^d,  w^p(t)\mathrm{d}t  W^q(t,x)\mathrm{d}x \right)} < \infty
\end{align*}
for all $0<t_1<T$.
\end{enumerate}
\end{defn}

We now turn our attention to a significant subset within $L_{0}\left( (0,T) ; \cF^{-1}\cD'(\bR^d) \right)$, which comprises functions $u$ whose Fourier transform is in a weighted local $L_{p,q}$-space.
This subset can be formally expressed as:
\begin{align}
										\label{20240324 20}
\left\{ u : \cF[u(t,\cdot)](\xi) \in L_{p,q,t\text{-}loc,x\text{-}\ell oc}\left( (0,T) \times \bR^d, w^p(t)\mathrm{d}t  W^q(t,x)\mathrm{d}x \right)  \right\}.
\end{align}
However, precisely defining the expression above (\ref{20240324 20}) poses challenges due to the potential degeneracy of our weight function $W(t,\xi)$ for certain $\xi \in \bR^d$.
It requires employing sophisticated techniques involving identifications with complex-valued functions, as illustrated in (\ref{20240319 01}).

Let $u \in L_{0}\left( (0,T) ; \cF^{-1}\cD'(\bR^d) \right)$.
Then for each $t \in (0,T)$, we have $\cF[u(t)] \in \cD'(\bR^d)$.
Additionally, assume that for each $t \in (0,T)$, $\cF[u(t)]$ has a realization on an open set $\cU_t$ covering the support of $W(t,\cdot)$, \textit{i.e.}  $\overline{\{\xi \in \bR^d : W(t,\xi)\neq 0\}}  \subset \cU_t$, where 
$\overline{\{\xi \in \bR^d : W(t,\xi)\neq 0\}}$ denotes the closure of $\{\xi \in \bR^d : W(t,\xi)\neq 0\}$.
Thus by identifying $\cF[u(t)]$ with the realization on $\cU$, we may assume that
$(t,\xi) \mapsto \cF[u(t,\cdot)](\xi):= \cF[u](t,\xi):= \cF[u(t)](\xi)$ is a complex-valued function defined on 
$\{ (t,\xi) \in (0,T) \times \bR^d : (t,\xi) \in (0,T) \times \cU_t \}$. In particular,
the term $\|wW\cF[u]\|_{L_{p}\left( (0,T) ;  L_q \left(\bR^d \right)\right)}$
perfectly makes sense for all $p,q \in [1,\infty]$ by letting the product $wW\cF[u]$ be zero outside the set $\{ (t,\xi) \in (0,T) \times \bR^d : (t,\xi) \in (0,T) \times \cU_t \}$
since $W(t,\xi)=0$ for all $(t,\xi) \notin (0,T) \times \cU_t$.
Therefore simply we may say that 
$$
\cF[u(t,\cdot)](\xi) \in L_{p,q}\left( (0,T) \times \bR^d,  w^p(t)\mathrm{d}t  W^q(t,\xi)\mathrm{d}\xi \right),
$$
if and only if  $u$  is an element in $L_{0}\left( (0,T) ; \cF^{-1}\cD'(\bR^d) \right)$ so that for each $t \in (0,T)$, $\cF[u(t)]$ has a realization on an open set $\cU_t$ covering the support of $W(t,\cdot)$ and $\|wW\cF[u]\|_{L_{p}\left( (0,T) ;  L_q \left(\bR^d \right)\right)}<\infty$.
Finally, we define subspaces of $L_{0}\left( (0,T) ; \cF^{-1}\cD'(\bR^d) \right)$ consisting of $u$ whose Fourier transform lies in a weighted (local) $L_{p,q}$-spaces in the following definition based on the aforementioned criteria. 

\begin{defn}
\begin{enumerate}[(i)]
\item We define subclasses of $L_{0}\left( (0,T) ; \cF^{-1}\cD'(\bR^d) \right)$ consisting of elements whose  Fourier transforms with respect to the space variable are in weighted (local) $L_{p,q}$-spaces.
We write 
$$
u \in \cF^{-1}L_{p,q}\left( (0,T) \times \bR^d,  w^p(t)\mathrm{d}t  W^q(t,\xi)\mathrm{d}\xi \right),
$$
$$
u \in \cF^{-1}L_{p,q,t\text{-}loc}\left( (0,T) \times \bR^d,  w^p(t)\mathrm{d}t  W^q(t,\xi)\mathrm{d}\xi \right),
$$
and 
$$
u \in \cF^{-1}L_{p,q,t\text{-}loc,x\text{-}\ell oc}\left( (0,T) \times \bR^d,  w^p(t)\mathrm{d}t  W^q(t,\xi)\mathrm{d}\xi \right)
$$
 if
$$
\cF[u(t,\cdot)](\xi) \in L_{p,q}\left( (0,T) \times \bR^d,  w^p(t)\mathrm{d}t  W^q(\xi)\mathrm{d}\xi\right),
$$
$$
\cF[u(t,\cdot)](\xi) \in L_{p,q,t\text{-}loc}\left( (0,T) \times \bR^d,  w^p(t)\mathrm{d}t  W^q(t,\xi) \mathrm{d}\xi \right),
$$ 
and
$$
\cF[u(t,\cdot)](\xi) \in L_{p,q,t\text{-}loc,x\text{-}\ell oc}\left( (0,T) \times \bR^d,  w^p(t)\mathrm{d}t  W^q(t,\xi) \mathrm{d}\xi \right),
$$ 
respectively.
In other words, 
$$
u \in \cF^{-1}L_{p,q,t\text{-}loc,x\text{-}\ell oc}\left( (0,T) \times \bR^d,w^p(t)\mathrm{d}t  W^q(t,\xi)\mathrm{d}x \right)
$$
if and only if
\begin{align*}
\|\cF[u]\|_{L_{p,q}\left( (0,t_1) \times B_R,  w^p(t)\mathrm{d}t  W^q(t,\xi)\mathrm{d}\xi \right)}
:=\|1_{B_R}(\xi)\cF[u(t,\cdot)](\xi)\|_{L_{p,q}\left( (0,t_1) \times \bR^d,  w^p(t)\mathrm{d}t  W^q(t,\xi)\mathrm{d}\xi \right)} < \infty
\end{align*}
for all  $t_1 \in (0,T)$ and $R \in (0,\infty)$.
\end{enumerate}
\end{defn}
The weights $w^p(t)$, $W^q(t,x)$, and $W^q(t,\xi)$ in the notations are skipped if they are merely $1$. 
If all weights are $1$, then we also omit $\mathrm{d}t\mathrm{d}x$ and $\mathrm{d}t\mathrm{d}\xi$ in the notations.

These local spaces are employed to give weaker conditions on symbols and find a class of solutions to \eqref{ab eqn} where the existence and uniqueness are guaranteed.
Recall Definition \ref{local 11 space}. Then it is obvious that
\begin{align*}
L_{1,t\text{-loc}}\left((0,T);\cF^{-1}L_{1,\ell oc}(\bR^d)\right)
=\cF^{-1}L_{1,1,t\text{-}loc,x\text{-}\ell oc}\left( (0,T) \times \bR^d \right).
\end{align*}
In addition, by Definition \ref{local l1 fd valued space} and H\"older's inequality,
we have the inclusion that
\begin{align*}
\cF^{-1}L_{p,q,t\text{-}loc,x\text{-}\ell oc}\left( (0,T) \times \bR^d\right) 
\subset \cF^{-1}L_{1,1,t\text{-}loc,x\text{-}\ell oc}\left( (0,T) \times \bR^d \right) 
\subset L_{1,t\text{-}loc}\left( (0,T) ;  \cF^{-1}\cD'(\bR^d) \right)
\end{align*}
for all  $p,q \in [1,\infty]$.
However, 
\begin{align}
										\label{20240131 01}
\cF^{-1}L_{p,q,t\text{-}loc,x\text{-}\ell oc} \left( (0,T) \times \bR^d,w^p(t)\mathrm{d}t  W^q(t,\xi)\mathrm{d}\xi \right) 
&\not\subset \cF^{-1}L_{1,1,t\text{-}loc,x\text{-}\ell oc}\left( (0,T) \times \bR^d\right)
\end{align}
in general, since even there is no strict positivity assumption on the weights $w$ and $W$.
\begin{rem}
There are numerous mathematical conveniences when we exclusively consider positive weights. 
Nevertheless, it is crucial not to disregard the possibility of weights becoming zero, as this allows us to include the interesting cases in which the symbol $\psi(t,\xi)$ may become degenerate. 
Furthermore, it is true that, for all values of $p$ and $q$ within the range of $[1,\infty]$,
\begin{align*}
\cF^{-1}L_{p,q,t\text{-}loc,x\text{-}\ell oc} \left( (0,T) \times \bR^d,w^p(t)\mathrm{d}t W^q(t,\xi)\mathrm{d}\xi \right)
\subset L_0\left( (0,T) ; \cF^{-1}\cD'(\bR^d) \right)
\end{align*}
due to the definition of $\cF^{-1}L_{p,q,t\text{-}loc,x\text{-}\ell oc} \left( (0,T) \times \bR^d,w^p(t)\mathrm{d}t W^q(t,\xi)\mathrm{d}\xi \right)$.
If both weights $w$ and $W$ are strictly positive, then 
\begin{align*}
\cF^{-1}L_{p,q,t\text{-}loc,x\text{-}\ell oc} \left( (0,T) \times \bR^d,w^p(t)\mathrm{d}t W^q(t,\xi)\mathrm{d}\xi \right)
\subset L_0\left( (0,T) ; \cF^{-1}L_{1,\ell oc}(\bR^d) \right)
\end{align*}
for all  $p,q \in [1,\infty]$ since $|\cF[u(t,\cdot)](\xi)| < \infty$ $(a.e.)$ $(t,\xi) \in (0,T) \times \fR^d$ based on realizations on $\bR^d$ for any 
$$
u \in \cF^{-1}L_{p,q,t\text{-}loc,x\text{-}\ell oc} \left( (0,T) \times \bR^d,w^p(t)\mathrm{d}t W^q(t,\xi)\mathrm{d}\xi \right).
$$
However, the space $\cF^{-1}L_{p,q,t\text{-}loc,x\text{-}\ell oc} \left( (0,T) \times \bR^d,w^p(t)\mathrm{d}t W^q(t,\xi)\mathrm{d}\xi \right)$ is not a metric space in general if one of the weights is degenerate since $\cF[u(t,\cdot)]$ could have any values outside of the supports of the weights
as discussed in Remark \ref{cutoff distribution}. 
\end{rem}

\begin{rem}
We skip specifying the exact definitions of corresponding weighted $L_q$-spaces treating data given on $\bR^d$  since they are easily induced from corresponding weighted  $L_{p,q}$-spaces on $(0,T) \times \bR^d$. For instance, $\cF^{-1}L_{q,\ell oc}(\bR^d)$ denotes the subspace of $\cF^{-1}\cD'(\bR^d)$ so that
\begin{align*}
\|\cF[u]\|_{L_q(B_R)} < \infty \quad \forall R \in (0,\infty).
\end{align*}
\end{rem}

\begin{rem}

Let $\mathscr{E}'(\bR^d)$ represent the dual space of $C^\infty(\bR^d)$. Then, for any element $u$ belonging to $\mathscr{E}'(\bR^d)$, its Fourier transform, denoted by $\cF[u]$, can be interpreted as a function that is locally bounded. To clarify, for every $\xi$ within $\bR^d$, the function $\mathrm{e}^{-i x \cdot \xi}$ is a smooth function with respect to the variable $x$, and the mapping from $\xi$ to $\langle u, \mathrm{e}^{-i x \cdot \xi}\rangle$ is a continuous function. Moreover, it is widely recognized that this space $\mathscr{E}'(\bR^d)$ corresponds to a subset of $\cD'(\bR^d)$ with compact supports, as discussed in \cite[Section 2.3]{Hormander 1990}.
As a result, we can conclude that $\mathscr{E}'(\bR^d)$ is contained within $\cF^{-1}L_{q,\ell oc}(\bR^d)$ for all $q \in [1,\infty]$. 
\end{rem}
\begin{rem}
								\label{local revisit}
We can describe a function $u$ belonging to the space $L_{p,q,t\text{-}loc,x\text{-}\ell oc}\left( (0,T) \times \bR^d \right)$ as having a strong local integrability with respect to the variable $t$ and a weaker form of local integrability with respect to the variable $x$.
To clarify further, consider a topological space denoted by $X$, and recall that when we say a property holds locally, it means that the property is satisfied for all compact subsets $K \subset X$. In this context, our function $u \in L_{p,q,t\text{-}loc,x\text{-}\ell oc}\left( (0,T) \times \bR^d\right)$ can be described as locally integrable with respect to $x$, but it satisfies a somewhat stronger condition compared to the conventional notion of local integrability concerning the time variable $t$. This distinction is why we employed different notations, namely, ``$loc$" and `$\ell oc$", to indicate the local integrability properties of $t$ and $x$, respectively.
\end{rem}

We finally reach our main theorem.

\begin{thm}
							\label{weak solution thm}
Let $u_0 \in \cF^{-1}L_{1,\ell oc}(\bR^d)$ and $f \in \cF^{-1}L_{1,1,t\text{-}loc,x\text{-}\ell oc}\left((0,T) \times \bR^d \right)$.
Suppose that for all $t \in (0,T)$ and $R \in (0,\infty)$,  
\begin{align}
& \int_{B_R}\exp\left(\int_0^t\Re[\psi(r,\xi)]\mathrm{d}r \right) \cF[u_0](\xi) \mathrm{d}\xi
+  \int_{B_R} \int_0^t  \exp\left(\int_s^t\Re[\psi(r,\xi)]\mathrm{d}r \right) \left|\cF[f(s,\cdot)](\xi) \right| \mathrm{d}s \mathrm{d}\xi  
									\label{20230624 10}
&< \infty, 
\end{align}
where $B_R:= \{ \xi \in \bR^d : |\xi| <R\}$. 
Additionally, assume that for all $ t \in (0,T)$ and $R \in (0,\infty)$,
\begin{align}
										\notag
& \int_{B_R}\int_0^t |\psi(\rho,\xi)| \exp\left(\int_0^\rho \Re[\psi(r,\xi)]\mathrm{d}r \right) \cF[u_0](\xi)\mathrm{d}\rho \mathrm{d}\xi \\
									\label{20230624 11}
&+\int_{B_R}\int_0^t |\psi(\rho,\xi)| \int_0^\rho \exp\left(\int_s^\rho\Re[\psi(r,\xi)]\mathrm{d}r \right)  \left|\cF[f(s,\cdot)](\xi) \right| \mathrm{d}s \mathrm{d}\rho \mathrm{d}\xi < \infty.
\end{align}
Then there exists a unique Fourier-space weak solution 
$$
u \in \cF^{-1}L_{\infty,1,t\text{-}loc, x\text{-}\ell oc}\left( (0,T) \times \bR^d \right) \cap \cF^{-1}L_{1,1,t\text{-}loc, x\text{-}\ell oc}\left( (0,T) \times \bR^d, \mathrm{d}t |\psi(t,\xi)|\mathrm{d}\xi \right)
$$ 
to \eqref{ab eqn}, which is given by
\begin{align}
\langle u(t,\cdot), \varphi \rangle
									\notag
&=\int_{\bR^d} \exp\left(\int_0^t\psi(r,\xi)\mathrm{d}r \right) \cF[u_0](\xi) \cF[\varphi](\xi)  \mathrm{d}\xi \\
									\label{20240313 01}
&\quad +\int_{\bR^d}\left(\int_0^t  \exp\left(\int_s^t\psi(r,\xi)\mathrm{d}r \right) \cF[f(s,\cdot)](\xi)\mathrm{d}s\right)\cF[\varphi](\xi) \mathrm{d}\xi
\quad \forall \varphi \in \cF^{-1}\cD(\bR^d).
\end{align}

\end{thm}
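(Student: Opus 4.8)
The plan is to verify directly that the formula \eqref{20240313 01} defines a Fourier-space weak solution in the claimed space, and then to prove uniqueness separately via the Fourier-space formulation.

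\textbf{Step 1: The candidate is well-defined and lies in the right space.} First I would observe that, by the elementary bound $|\exp(\int_s^t \psi(r,\xi)\,\mathrm{d}r)| = \exp(\int_s^t \Re[\psi(r,\xi)]\,\mathrm{d}r)$, the two integrals on the right of \eqref{20240313 01} are absolutely convergent for every $\varphi \in \cF^{-1}\cD(\bR^d)$ precisely because of hypothesis \eqref{20230624 10} together with the fact that $\cF[\varphi] \in \cD(\bR^d)$ has compact support (say in $B_R$) and is bounded. This shows $u(t,\cdot)$ is a well-defined element of $\cF^{-1}\cD'(\bR^d)$ for a.e.\ $t$, and that its Fourier transform has the realization
\begin{align*}
\cF[u(t,\cdot)](\xi) = \exp\left(\int_0^t\psi(r,\xi)\,\mathrm{d}r\right)\cF[u_0](\xi) + \int_0^t \exp\left(\int_s^t\psi(r,\xi)\,\mathrm{d}r\right)\cF[f(s,\cdot)](\xi)\,\mathrm{d}s
\end{align*}
as a locally integrable function on $\bR^d$. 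Then \eqref{20230624 10} gives the $\cF^{-1}L_{\infty,1,t\text{-}loc,x\text{-}\ell oc}$ bound after taking $\esssup$ over $(0,t_1)$ and integrating over $B_R$ (using $\exp(\int_s^t\Re\psi\,\mathrm{d}r) \le \exp(\int_s^{t_1}\Re\psi\,\mathrm{d}r)$ when $\Re\psi$ has a sign, or more carefully splitting; one should be mildly careful here but it is routine), while \eqref{20230624 11} gives exactly the $\cF^{-1}L_{1,1,t\text{-}loc,x\text{-}\ell oc}((0,T)\times\bR^d, \mathrm{d}t\,|\psi(t,\xi)|\mathrm{d}\xi)$ bound, after an application of Fubini/Tonelli to the nonnegative integrands.

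\textbf{Step 2: The candidate satisfies the weak formulation.} Here I would fix $\varphi \in \cF^{-1}\cD(\bR^d)$ and compute $\langle u_0,\varphi\rangle + \int_0^t \langle \psi(s,-i\nabla)u(s,\cdot),\varphi\rangle\,\mathrm{d}s + \int_0^t\langle f(s,\cdot),\varphi\rangle\,\mathrm{d}s$. By Remark \ref{solution realization}, since $\psi(s,\xi)\cF[u(s,\cdot)](\xi)$ is locally integrable (this is exactly condition \eqref{20230624 11}, which makes $t\mapsto \psi(t,-i\nabla)u(t,\cdot)$ an element of $L_{1,t\text{-}loc}((0,T);\cF^{-1}\cD'(\bR^d))$), the middle term equals $\int_0^t \int_{\bR^d}\psi(s,\xi)\cF[u(s,\cdot)](\xi)\overline{\cF[\varphi](\xi)}\,\mathrm{d}\xi\,\mathrm{d}s$. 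Substituting the realization of $\cF[u(s,\cdot)]$ from Step 1 and applying Fubini (justified by \eqref{20230624 11}), the key computation is the ODE identity
\begin{align*}
\int_0^t \psi(s,\xi)\exp\left(\int_0^s\psi(r,\xi)\,\mathrm{d}r\right)\mathrm{d}s = \exp\left(\int_0^t\psi(r,\xi)\,\mathrm{d}r\right) - 1
\end{align*}
for the $u_0$-part, and the analogous identity with an inner $s$-integral against $\cF[f]$ for the forcing part (swapping the order of the $s$ and $\rho$ integrations). Adding back $\langle u_0,\varphi\rangle$ and $\int_0^t\langle f(s,\cdot),\varphi\rangle\,\mathrm{d}s$ exactly reconstitutes the right-hand side of \eqref{20240313 01}, using the Fourier-Plancherel pairing $\langle g,\varphi\rangle = \int \cF[g]\overline{\cF[\varphi]}$ — and noting that $\cF[\varphi]$ real vs.\ its conjugate is handled by the definitions in the paper; one takes care that the pairing conventions in Definitions \ref{fourier defn 1}, \ref{fourier defn 2} are applied consistently.

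\textbf{Step 3: Uniqueness.} By linearity it suffices to show that if $u \in \cF^{-1}L_{\infty,1,t\text{-}loc,x\text{-}\ell oc} \cap \cF^{-1}L_{1,1,t\text{-}loc,x\text{-}\ell oc}((0,T)\times\bR^d,\mathrm{d}t\,|\psi|\mathrm{d}\xi)$ solves \eqref{ab eqn} with $u_0 = 0$, $f = 0$, then $u = 0$. Testing against $\varphi \in \cF^{-1}\cD(\bR^d)$ and using \eqref{20240201 01}, the realization $v(t,\xi) := \cF[u(t,\cdot)](\xi)$ — which exists on a neighborhood of $\supp\psi(t,\cdot)$ by Remark \ref{cutoff distribution}, and which we may take supported in $\supp\psi(t,\cdot)$ — satisfies, for every $\varphi$,
\begin{align*}
\int_{\bR^d} v(t,\xi)\overline{\cF[\varphi](\xi)}\,\mathrm{d}\xi = \int_0^t \int_{\bR^d}\psi(s,\xi)v(s,\xi)\overline{\cF[\varphi](\xi)}\,\mathrm{d}\xi\,\mathrm{d}s
\end{align*}
for a.e.\ $t$. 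Since $\cF[\varphi]$ ranges over all of $\cD(\bR^d)$, this forces, for a.e.\ $\xi$ (in particular on $\supp\psi(t,\cdot)$), the scalar Volterra identity $v(t,\xi) = \int_0^t \psi(s,\xi)v(s,\xi)\,\mathrm{d}s$ for a.e.\ $t$, with $t\mapsto \psi(t,\xi)v(t,\xi)$ locally integrable in $t$ for a.e.\ $\xi$ (by the $|\psi|$-weighted space membership). A standard Gronwall / Volterra argument then gives $v(t,\xi) = 0$ for a.e.\ $(t,\xi)$, hence $\cF[u(t,\cdot)] = 0$ on $\supp\psi(t,\cdot)$; combined with the convention that the realization vanishes off $\supp\psi(t,\cdot)$ we get $\cF[u(t,\cdot)] = 0$ as a distribution, so $u = 0$ by Fourier inversion on $\cF^{-1}\cD'(\bR^d)$.

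\textbf{Main obstacle.} The delicate point is \emph{not} any single estimate but the bookkeeping around realizations: $\cF[u(t,\cdot)]$ is a priori only a distribution, it need not come from a function off $\supp\psi(t,\cdot)$, and the weak formulation only "sees" it there. Making the uniqueness argument airtight requires carefully invoking Remark \ref{cutoff distribution} to normalize the realization to be supported in $\supp\psi(t,\cdot)$, checking joint measurability in $(t,\xi)$ so that Fubini and the pointwise-in-$\xi$ Volterra argument are legitimate, and verifying that the exceptional $t$-null-sets can be chosen uniformly enough (e.g.\ by working with a countable dense family of test functions $\varphi$). The analytic content — the exponential bound on $\exp(\int\psi)$, the ODE identities, Fubini against nonnegative integrands controlled by \eqref{20230624 10}–\eqref{20230624 11}, and Gronwall — is all elementary, exactly as the introduction promises.
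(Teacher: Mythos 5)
Your proposal is correct and follows essentially the same route as the paper: existence is obtained by verifying the explicit candidate \eqref{20240313 01} through the fundamental theorem of calculus and Fubini under \eqref{20230624 10}--\eqref{20230624 11}, and uniqueness reduces by linearity to zero data, passes to the scalar Volterra identity for $\cF[u(t,\cdot)](\xi)$, and concludes by Gr\"onwall, exactly as in Lemma \ref{fourier repre} and Theorem \ref{unique weak sol}. The only cosmetic differences are that the paper extracts the pointwise-in-$\xi$ identity by testing with Sobolev mollifiers whose Fourier transforms are compactly supported rather than by your countable-dense-family argument, and it justifies the $\cF^{-1}L_{\infty,1,t\text{-}loc,x\text{-}\ell oc}$ membership via a continuity-in-$t$ observation at the spot where you rightly note that a sign assumption on $\Re[\psi]$ is unavailable and a slightly more careful argument is needed.
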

The proof of Theorem \ref{weak solution thm} is given in Section \ref{pf main thm 1}.

\begin{rem}
								\label{kernel remark}
The solution $u$ to \eqref{ab eqn} in Theorem \ref{weak solution thm} is not defined as a complex-valued function on $[0,T) \times \bR^d$.
It is a function in the class $L_{0}\left((0,T);\cF^{-1}\cD'(\bR^d)\right)$, which is clarified by the action in \eqref{20240313 01}. 
Therefore, formally, we may say that
\begin{align}
										\label{solution candidate}
u(t,x)=  \cF^{-1}\left[  \exp\left(\int_0^t\psi(r,\cdot)\mathrm{d}r \right) \cF[u_0] \right](x)
+\cF^{-1}\left[ \int_0^t  \exp\left(\int_s^t\psi(r,\cdot)\mathrm{d}r \right) \cF[f(s,\cdot)]\mathrm{d}s \right](x)
\end{align}
is a solution to \eqref{ab eqn}.
It is obvious that this representation for the solution $u$ is identical to the classical one if $\psi$, $u_0$, and $f$ satisfy nice mathematical conditions by considering convolutions with kernels as follow :
\begin{align*}
u(t,x) =  \int_{\bR^d} K(0,t,x) u_0(x-y) \mathrm{d}y  + \int_0^t \int_{\bR^d} K(s,t,y)f(s,x-y) \mathrm{d}y \mathrm{d}s,
\end{align*}
where the kernel $K(s,t,x)$ is given by
\begin{align*}
K(s,t,x) =   \cF^{-1}\left[\exp\left(\int_s^t\psi(r,\cdot)\mathrm{d}r \right)\right](x).
\end{align*}
This kernel representation shows that it is not hopeful to expect that $u$ has a realization on $[0,T) \times \bR^d$ if the symbol $\psi(t,\xi)$ is not negative. 
\end{rem}

\begin{rem}
We intentionally split the mathematical assumptions in Theorem \ref{weak solution thm} into two parts, \eqref{20230624 10} and \eqref{20230624 11}, 
since the condition \eqref{20230624 11} is sufficient to guarantee the uniqueness of a solution, which is specified in Theorem \ref{unique weak sol}.
On the other hand, \eqref{20230624 10} guarantee the function $u(t,x)$ defined as \eqref{solution candidate} is well-defined as $\cF^{-1}\cD'(\bR^d)$-valued function.
A naive sufficient condition for both \eqref{20230624 10} and \eqref{20230624 11} is a local bounded property on the symbol $\psi(t,\xi)$.
More precisely, if 
$\left\| \psi(\rho,\xi) \right\|_{L_{\infty,\infty}\left( (0,t) \times B_R , \mathrm{d}\rho \mathrm{d} \xi \right) } < \infty$
for all $t \in (0,T)$ and $R \in (0,\infty)$, then both \eqref{20230624 10} and \eqref{20230624 11} are satisfied, which will be relaxed in Corollary \ref{cor 202307014 01} with positive weights and general exponents.
We also want to mention that both \eqref{20230624 10} and \eqref{20230624 11} are given by combinations of properties on the symbol $\psi$ and data $u_0$ and $f$ in a complicated way on purpose to emphasize that the condition on the symbol could be weaken if data $u_0$ and $f$ are in better spaces.
One of the good examples showing weakening condition on the symbol $\psi$ by enhancing the condition on data $u_0$ and $f$ is presented in Corollary \ref{corollary weight pq}. 
Additionally, these complicated conditions \eqref{20230624 10} and \eqref{20230624 11} make us handle logarithmic operators instead of merely considering locally bounded symbols, which will be specified in Theorem \ref{main log} below.
\end{rem}

\begin{rem}
We revisit to mention the importance of the extension of the inverse Fourier transform to all distributions.
Assume that the symbol $\psi(t,\xi)$ is locally bounded.
Then the function $\exp\left(\int_0^t\psi(r,\cdot)\mathrm{d}r \right) \cF[u_0]$ is merely locally integrable with respect to $\xi$.
Thus, there is no assurance that $\exp\left(\int_0^t\psi(r,\cdot)\mathrm{d}r \right) \cF[u_0]$ qualifies as a tempered distribution on $\bR^d$.
Consequently, the function $u$ in equation \eqref{solution candidate} cannot be defined because the inverse Fourier transform is typically applicable only to tempered distributions on $\bR^d$ (if we do not consider an extension to $\bC^d$).
However, it is important to note that we extended the inverse Fourier transform to all distributions as an element in $\cF^{-1}\cD'(\bR^d)$. 
Therefore, the function $u$ is well-defined as a new extension of the inverse Fourier transform to a distribution even if the functions $\exp\left(\int_0^t\psi(r,\cdot)\mathrm{d}r \right) \cF[u_0]$ and $\int_0^t  \exp\left(\int_s^t\psi(r,\cdot)\mathrm{d}r \right) \cF[f(s,\cdot)]\mathrm{d}s$ are merely locally integrable. 
Especially, it is easy to check that 
$$
u \in \cF^{-1}L_{\infty,1,t\text{-}loc, x\text{-}\ell oc}\left( (0,T) \times \bR^d \right) \cap \cF^{-1}L_{1,1,t\text{-}loc, x\text{-}\ell oc}\left( (0,T) \times \bR^d, \mathrm{d}t |\psi(t,\xi)|\mathrm{d}\xi \right)
$$ 
due to \eqref{20230624 10} and \eqref{20230624 11}.
\end{rem}

\begin{rem}
It is not trivial to understand realizations of data $u_0$ and $f$ since all data $u_0$ and $f$ are given based on Fourier transforms acting on $\cF^{-1}\cD'(\bR^d)$. 
However, it is important to note that $\cF^{-1}\cD'(\bR^d)$ is a broader class than $\cS'(\bR^d)$. 
Within $\cS'(\bR^d)$, there exist various well-known subspaces that help characterize regularities of elements in $\cS'(\bR^d)$. 
One notable example is the Bessel potential space.
Given that $\cF^{-1}\cD'(\bR^d)$ is larger than any subspace of $\cS'(\bR^d)$, we could apply our theorem to data $u_0$ and $f$ residing in Bessel potential spaces. 
We suggest a weighted Bessel potential space as an important example of realizations of data $u_0$ and $f$ in the next section.
The specific results will be explicitly presented in Corollaries \ref{p small coro}, \ref{q small coro}, and \ref{p big coro}.
\end{rem}

It is readily verifiable that the logarithmic Laplacian operator can be regarded as a specific example of the operator $\psi(t,-i\nabla)$.
Recall that the logarithmic Laplacian operator could be defined as follows
\begin{align*}
\cF[\log (-\Delta)u] = \log |\xi|^2\cF[u] 
\end{align*}
for a nice function $u$. 
This means that the logarithmic Laplacian operator $\log (-\Delta)$ is a pseudo-differential operator characterized by the symbol $\log |\xi|^2$.
The logarithmic Laplacian operator is an intriguing example of $\psi(t,-i \nabla)$ because its symbol is negative for $|\xi| <1$ and positive for $|\xi| >1$. Additionally, the symbol is not a polynomial.
In other words, the logarithmic Laplacian operator is a simple illustration of a pseudo-differential operator with a non-polynomial symbol that changes its sign.
As an application of Theorem \ref{weak solution thm}, we explore more generalized logarithmic-type operators with complex-valued coefficients, which naturally encompass the logarithmic Laplacian operator.
We now present an evolution equation involving a logarithmic-type operator as a specific case of \eqref{ab eqn} below:
\begin{equation}
\begin{cases}
								\label{log eqn}
\partial_tu(t,x)=\beta(t)\log \left( \psi_{exp}(t,-i \nabla) \right)u(t,x)+f(t,x),\quad &(t,x)\in(0,T)\times\mathbb{R}^d,\\
u(0,x)=u_0,\quad & x\in\mathbb{R}^d,
\end{cases}
\end{equation}
where
\begin{align}
								\label{20230807 20}
\log \left( \psi_{exp}(t,-i \nabla)  \right)u(t,x)
:=\cF^{-1}\left[\log \left(\psi_{exp}(t,\xi)\right)\cF[u(t,\cdot)] \right](x),
\end{align}
$\beta(t)$ is a complex-valued measurable function defined on $(0,T)$, and $\psi_{exp}(t,\xi)$ is a complex-valued measurable function defined  on $(0,T) \times \bR^d$.
In particular, if $\beta(t)=1$ and $\psi_{exp}(t,\xi) = |\xi|^2$ for all $t$, then
\begin{align*}
\beta(t)\log \left( \psi_{exp}(t,-i \nabla)  \right)u(t,x)
= \log (-\Delta) u(t,x).
\end{align*}
A comprehensive understanding of a weak solution $u$ to the equation \eqref{log eqn} can be achieved by referring to Definition \ref{space weak solution}.
Moreover, we assume
\begin{align}
							\label{exp make}
\psi_{exp}(t,\xi) \neq 0   \quad (a.e.)~ (t,\xi) \in (0,T) \times \bR^d.
\end{align}
It is essential for the right-hand side of \eqref{20230807 20} to be meaningful that the condition stated in \eqref{exp make} is satisfied. This requirement is necessary due to the presence of the logarithm's domain.
\begin{rem}
Let's define $\psi(t,\xi)$ as the natural logarithm of $\psi_{exp}(t,\xi)$, \text{i.e.}
$$
\psi(t,\xi)= \log \left(\psi_{exp}(t,\xi)\right).
$$
From a heuristic standpoint, we can express this relationship as
$$
\exp \left(\psi(t,\xi) \right)= \psi_{exp}(t,\xi),
$$
which explains the reason using the notation $\psi_{exp}(t,\xi)$. 
In other words, the subscript in $\psi_{exp}(t,\xi)$ signifies the exponential operation.
\end{rem}
We restate a theorem opting for \eqref{log eqn}, which is one of the major applications of Theorem \ref{weak solution thm}.
\begin{thm}
								\label{main log}
Let $u_0 \in \cF^{-1}L_{1,\ell oc}(\bR^d)$ and $f \in \cF^{-1}L_{1,1,t\text{-}loc,x\text{-}\ell oc}\left((0,T) \times \bR^d \right)$.
Assume that \eqref{exp make} holds and
\begin{align}
							\label{log con 0}
\int_0^t |\beta(s)| \mathrm{d}s < \infty \quad \forall t \in (0,T).
\end{align}
Additionally, suppose that 
\begin{align}
								\label{log con 1}
 \sup_{(s,\xi) \in [0,t) \times B_R } \left(\frac{1}{t-s}\int_s^t |\psi_{exp}(r,\xi)|^{(t-s) \Re[\beta(r)]} \mathrm{d}r\right) <\infty
\end{align}
and
\begin{align}
									\label{log con 2}
 \sup_{(s,\xi) \in [0,t) \times B_R } \left(\int_s^t |\beta(\rho)|\left(1+\left| \log \left(|\psi_{\exp}(\rho,\xi)|\right) \right|\right) \left(\frac{1}{\rho-s}\int_s^\rho\left(|\psi_{exp}(r,\xi)|^{(\rho-s) \Re[\beta(r)]}\right) \mathrm{d}r \right) 
 \mathrm{d}\rho  \right)  
<\infty
\end{align}
for all $t \in (0,T)$ and $R \in (0,\infty)$.
Then there exists a unique Fourier-space weak solution $u$ to \eqref{log eqn}  in 
$$
\cF^{-1}L_{\infty,1,t\text{-}loc, x\text{-}\ell oc}\left( (0,T) \times \bR^d \right) \cap \cF^{-1}L_{1,1,t\text{-}loc, x\text{-}\ell oc}\left( (0,T) \times \bR^d, |\beta(t)| \mathrm{d}t |\log \left(\psi_{exp}(t,\xi)\right)|\mathrm{d}\xi \right).
$$ 
\end{thm}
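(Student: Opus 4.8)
The plan is to derive Theorem \ref{main log} from Theorem \ref{weak solution thm} applied to the time-measurable symbol
\[
\psi(t,\xi):=\beta(t)\log\bigl(\psi_{exp}(t,\xi)\bigr),
\]
where $\log$ denotes the principal branch of the complex logarithm; by \eqref{exp make} this is measurable and well-defined $(a.e.)$ on $(0,T)\times\bR^d$. With this choice equation \eqref{log eqn} is exactly \eqref{ab eqn}, and since $|\psi(t,\xi)|=|\beta(t)|\,|\log(\psi_{exp}(t,\xi))|$ the weighted space $\cF^{-1}L_{1,1,t\text{-}loc,x\text{-}\ell oc}\bigl((0,T)\times\bR^d,\mathrm{d}t\,|\psi(t,\xi)|\mathrm{d}\xi\bigr)$ appearing in Theorem \ref{weak solution thm} coincides with the one in the statement. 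Hence it remains only to verify that hypotheses \eqref{20230624 10} and \eqref{20230624 11} hold for this $\psi$; the solution formula is then \eqref{20240313 01} with $\psi=\beta\log\psi_{exp}$ inserted.

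The main step is a bound, uniform in $(s,\xi)\in[0,t)\times B_R$, for $\exp\bigl(\int_s^t\Re[\psi(r,\xi)]\,\mathrm{d}r\bigr)$. Splitting $\log(\psi_{exp})=\log|\psi_{exp}|+i\arg(\psi_{exp})$ with $|\arg(\psi_{exp})|\le\pi$ gives
\[
\int_s^t\Re[\psi(r,\xi)]\,\mathrm{d}r=\int_s^t\Re[\beta(r)]\log|\psi_{exp}(r,\xi)|\,\mathrm{d}r-\int_s^t\Im[\beta(r)]\arg(\psi_{exp}(r,\xi))\,\mathrm{d}r ,
\]
where the second integral has absolute value at most $\pi\int_0^t|\beta(r)|\,\mathrm{d}r=:N(t)<\infty$ by \eqref{log con 0}. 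Applying Jensen's inequality for the convex function $\exp$ against the probability measure $(t-s)^{-1}1_{(s,t)}(r)\,\mathrm{d}r$ to the integrand $(t-s)\Re[\beta(r)]\log|\psi_{exp}(r,\xi)|$ yields
\[
\exp\!\left(\int_s^t\Re[\beta(r)]\log|\psi_{exp}(r,\xi)|\,\mathrm{d}r\right)\le\frac{1}{t-s}\int_s^t|\psi_{exp}(r,\xi)|^{(t-s)\Re[\beta(r)]}\,\mathrm{d}r ,
\]
whose right-hand side is bounded over $(s,\xi)\in[0,t)\times B_R$ by \eqref{log con 1}. Combining the two displays gives $\exp\bigl(\int_s^t\Re[\psi(r,\xi)]\,\mathrm{d}r\bigr)\le e^{N(t)}N(t,R)$ there. (The elementary inequality $x^+\le\max(1,1/t)\,e^{tx}$ together with \eqref{log con 1} also shows $\int_0^t\Re[\psi(r,\xi)]^+\,\mathrm{d}r<\infty$, so the time integrals above are genuinely well-defined, not merely formal.)

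Given this uniform bound, \eqref{20230624 10} follows immediately: the $u_0$-term is dominated by $N(t,R)\int_{B_R}|\cF[u_0](\xi)|\,\mathrm{d}\xi<\infty$ since $u_0\in\cF^{-1}L_{1,\ell oc}(\bR^d)$, and the $f$-term by $N(t,R)\int_0^t\!\int_{B_R}|\cF[f(s,\cdot)](\xi)|\,\mathrm{d}\xi\,\mathrm{d}s<\infty$ since $f\in\cF^{-1}L_{1,1,t\text{-}loc,x\text{-}\ell oc}\bigl((0,T)\times\bR^d\bigr)$. For \eqref{20230624 11} I would combine the same bound with the pointwise estimate $|\psi(\rho,\xi)|\lesssim|\beta(\rho)|\bigl(1+|\log|\psi_{exp}(\rho,\xi)||\bigr)$ and integrate in $\rho$, using $s=0$ for the $u_0$-part and Tonelli's theorem to rewrite $\int_0^t\!\int_0^\rho\cdots\,\mathrm{d}s\,\mathrm{d}\rho=\int_0^t\!\int_s^t\cdots\,\mathrm{d}\rho\,\mathrm{d}s$ for the $f$-part; the resulting inner $\rho$-integral is, up to the constant $e^{N(t)}$, precisely the supremum in \eqref{log con 2}, hence finite, and integrating what remains against $|\cF[u_0]|$ over $B_R$ (resp. $|\cF[f(s,\cdot)]|$ over $(0,t)\times B_R$) completes the verification. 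Theorem \ref{weak solution thm} then delivers the unique Fourier-space weak solution in the claimed space.

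The step I expect to require genuine care is the treatment of the imaginary part of the symbol: the hypotheses \eqref{log con 1}--\eqref{log con 2} only see $|\psi_{exp}|^{(\cdot)\Re[\beta]}$ and the size of $|\log|\psi_{exp}||$, so one must make sure that the contributions $-\Im[\beta]\arg(\psi_{exp})$ to $\Re[\psi]$ and $\arg(\psi_{exp})$ to $|\psi|$ do not spoil the estimates. This is exactly where it matters that a branch of the logarithm is fixed (so $|\arg(\psi_{exp})|\le\pi$) and that $\beta$ is locally integrable by \eqref{log con 0}; note in particular that in the first Jensen display the coefficient of $\Re[\psi(r,\xi)]$ is $1$ rather than $t-s$, which is what keeps the $\arg$-contribution a mere constant $e^{N(t)}$. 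Everything else is Jensen's inequality, Tonelli's theorem, and bookkeeping of the constants $N(t,R)$.
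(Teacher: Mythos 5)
Your proposal is correct and follows essentially the same route as the paper: reduce \eqref{log eqn} to Theorem \ref{weak solution thm} with $\psi=\beta\log\psi_{exp}$, bound the $\arg$-contribution to $\Re[\psi]$ (and to $|\psi|$) by a constant via the fixed branch and \eqref{log con 0}, and control $\exp\bigl(\int_s^t\Re[\beta]\log|\psi_{exp}|\,\mathrm{d}r\bigr)$ by Jensen's inequality so that \eqref{log con 1} and \eqref{log con 2} give \eqref{20230624 10} and \eqref{20230624 11}. The only cosmetic difference is the constant in the argument bound ($\pi$ versus the paper's $2\pi$), which is immaterial.
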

The proof of this theorem will be given in Section \ref{log thm pf}.

\begin{rem}
It is notable that the complex-valued function $\psi_{exp}(t,\xi)$ could have a negative real value.
It is possible because we have not imposed any regularity conditions on the symbol $\log \left(\psi_{exp}(t,\xi)\right)$, which means there is no need to consider the analytic continuation of the natural logarithm.
\end{rem}

\begin{rem}
The condition \eqref{log con 2} could be replaced by
\begin{align*}
 \sup_{(s,\xi) \in [0,t) \times B_R } \left(\int_s^t |\beta(\rho)|\left| \log \left(\psi_{\exp}(\rho,\xi)\right) \right| \left(\frac{1}{\rho-s}\int_s^\rho\left(|\psi_{exp}(r,\xi)|^{(\rho-s) \Re[\beta(r)]}\right) \mathrm{d}r \right) 
 \mathrm{d}\rho  \right)  
<\infty
\end{align*}
since it is obvious that
$$
\left(1+\left| \log \left(|\psi_{\exp}(\rho,\xi)|\right) \right|\right) \approx \left| \log \left(\psi_{\exp}(\rho,\xi)\right) \right|
$$
by considering a branch cut. 
\end{rem}

\begin{rem}
Consider the logarithmic Laplacian operator 
\begin{align*}
\log (-\Delta) u(t,x)
\end{align*}
as a special case of
\begin{align*}
\beta(t)\log \left( \psi_{exp}(t,-i \nabla)  \right)u(t,x)
\end{align*}
in Theorem \ref{main log}. 
It is easy to confirm that \eqref{log con 1} is valid for this simple case since $\beta(t)=1$ and $\psi_{exp}(t,\xi) =|\xi|^2$ for all values of $t$ and $\xi$. Nonetheless, it might seem that \eqref{log con 2} is not straightforward. 
However, it can be readily established through elementary calculations, as demonstrated in the proof of Corollary \ref{cor log second}.
Furthermore, we present a straightforward extension of this result for dealing with operators that result from a composition involving the $\log$ function and a general second-order differential operator, which is detailed in Corollary \ref{cor log second}.
\end{rem}

\vspace{3mm}
Next, we consider another important case handling a second-order differential operator without ellipticity.
We investigate a second-order evolution equation with complex-valued coefficients $a^{ij}(t)$, $b^i(t)$, and $c(t)$ as follows:
\begin{equation}
\begin{cases}
								\label{second eqn}
\partial_tu(t,x)=a^{ij}(t)u_{x^ix^j}(t,x)+b^j(t)u_{x^j}(t,x)+c(t)u(t,x)+f(t,x),\quad &(t,x)\in(0,T)\times\mathbb{R}^d,\\
u(0,x)=u_0,
\end{cases}
\end{equation}
where Einstein's summation convention is being enforced here.
Suppose that all coefficients $a^{ij}(t)$, $b^j(t)$, and $c(t)$ are  measurable and defined on $(0,T)$.
Formally, due to some properties of the Fourier transform and the inverse Fourier transform,
\begin{align*}
a^{ij}(t)u_{x^ix^j}(t,x)+b^j(t)u_{x^j}(t,x)+c(t)u(t,x)
=\cF^{-1}\left[ \left(-a^{ij}(t)\xi^i\xi^j +i b^j(t) + c(t)\right) \cF[u(t,\cdot)](\xi)\right](x).
\end{align*}
Hence, equation \eqref{second eqn} can be seen as a specific instance of equation \eqref{ab eqn} when considering the symbol:
\begin{align}
							\label{20240109 01}
\psi(t,\xi) = -a^{ij}(t)\xi^i\xi^j +i b^j(t)\xi^j + c(t),
\end{align}
where $i$ beside $b^j(t)$ is the imaginary unit so that $i^2=-1$. 
To maintain clarity and facilitate readers' comprehension, we provide a modified version of the definition of a Fourier-space weak solution for second-order cases. 
This indicates that the structure of our weak formulation closely parallels that of the classical one.
\begin{defn}[A weak solution tested by $\cF^{-1}\cD(\bR^d)$]
								\label{second weak}
Let $u \in L_{0}\left((0,T);\cF^{-1}\cD'(\bR^d)\right)$, $u_0 \in  \cF^{-1}\cD'(\bR^d)$,  and $f \in L_{1,t\text{-}loc}\left( (0,T) ;   \cF^{-1}\cD'(\bR^d) \right)$. Then we say that $u$ is a \emph{weak solution} (tested by $\cF^{-1}\cD(\bR^d)$) to \eqref{second eqn} if for any $\varphi \in \cF^{-1}\cD(\bR^d)$,
\begin{align}
								\notag
\langle u(t,\cdot), \varphi \rangle 
=\langle u_0, \varphi \rangle + \int_0^t \left(a^{ij}(s)\langle u(s,\cdot) , \varphi_{x^ix^j} \rangle  - b^j(s) \langle u(s,\cdot) , \varphi_{x^j} \rangle
+ c(s)\langle u(s,\cdot) , \varphi \rangle \right)\mathrm{d}s + \int_0^t \langle f(s,\cdot) , \varphi \rangle \mathrm{d}s \\
								\label{20230729 01}
(a.e.)~ t \in (0,T).
\end{align}
\end{defn}

\begin{rem}
Let us reiterate that a test function is conventionally given by a smooth function with compact support in a weak formulation.
In simpler terms, we commonly say that $u$ qualifies as a (classical) weak solution to equation \eqref{second eqn} if \eqref{20230729 01} is valid for all $\varphi \in \cD(\bR^d)$ instead of $\varphi \in \cF^{-1}\cD(\bR^d)$. To ensure the meaningfulness of this traditional weak formulation, it is essential for $u$ to be valued in $\cD'(\bR^d)$ and defined almost everywhere on $(0,T)$. 
Sufficiently assume that $u$ is a $\cS'(\bR^d)$-valued measurable function on $(0,T)$ and it is a classical weak solution.
Then this classical solution $u$ becomes identical with the weak solution (tested by $\cF^{-1}\cD(\bR^d)$) in Definition \ref{second weak} since $\cF^{-1}\cD(\bR^d)$ is dense within $\cS(\bR^d)$ concerning the topology defined by the Schwartz semi-norms.
However, it is crucial to highlight that for a solution $u$ to equation \eqref{second eqn}, $u(t,\cdot)$ does not belong to $\cD'(\bR^d)$ (and thus $u(t,\cdot) \notin \cS'(\bR^d)$) if there is no ellipticity in the leading coefficients $a^{ij}(t)$. 
This emphasizes the necessity for a novel concept of a weak solution, such as the Fourier-space weak solution.
\end{rem}
\begin{rem}
							\label{equivalent solution}
 The weak solution (tested by $\cF^{-1}\cD(\bR^d)$) is equivalent to the Fourier-space weak solution introduced in Definition \ref{space weak solution} for the particular symbol $\psi$ defined in \eqref{20240109 01} if the Fourier transform of $u$ is locally integrable (with respect to the space variable) since
 \begin{align*}
&a^{ij}(s)\langle u(s,\cdot) , \varphi_{x^ix^j} \rangle  - b^j(s) \langle u(s,\cdot) , \varphi_{x^j} \rangle
+ c(s)\langle u(s,\cdot) , \varphi \rangle  \\
&= a^{ij}(s)\left( \cF[u(s,\cdot)] , \cF[\varphi_{x^ix^j}] \right)_{L_2(\bR^d)}
-b^j(s) \left( \cF[u(s,\cdot)] , \cF[\varphi_{x^j}] \right)_{L_2(\bR^d)}
+c(s) \left( \cF[u(s,\cdot)] , \cF[\varphi] \right)_{L_2(\bR^d)} \\
&= \left( \left(-a^{ij}(s)\xi^i\xi^j +i b^j(s)\xi^j + c(s) \right)\cF[u(s,\cdot)] ,\cF[\varphi] \right)_{L_2(\bR^d)} \\
&= \left\langle  \psi(s,-i\nabla)u(s,\cdot) ,\varphi \right\rangle,
  \end{align*}
 which are easily derived from some properties of the Fourier transform.
\end{rem}
We are now prepared to present the theorem regarding equation \eqref{second eqn}. 
It is important to note that all the conditions specified for the coefficients in the theorem merely involve local integrability properties.
Especially, all coefficients could be complex-valued, and the matrix consisting of the leading coefficients $a^{ij}(t)$ does not need to be positive semi-definite.
\begin{thm}
								\label{main second}
Let $p,q \in [1,\infty]$, $W_0(\xi)$ and $W_1(\xi)$ be positive measurable functions on $\bR^d$, $W_2(t,\xi)$ be a positive measurable function on $(0,T) \times \bR^d$, $u_0 \in \cF^{-1}L_{q,\ell oc}(\bR^d,  W_0^q(\xi) \mathrm{d}\xi)$, and 
$$
f \in \cF^{-1}L_{p,q,t\text{-}loc, x \text{-}\ell oc}\left(  (0,T) \times \bR^d, \mathrm{d}t W_2^q(t,\xi)\mathrm{d}\xi\right).
$$
Assume that $a^{ij}, b^j, c \in L_{p,loc}\left( (0,T) \right)$ for all $i,j \in \{1,\ldots,d\}$, \textit{i.e.}
\begin{align}
								\label{second as 1}
\sum_{i=1}^d \sum_{j=1}^d  \left(\|a^{ij}\|_{L_p\left((0,t) \right)} + \|b^{j}\|_{L_p\left((0,t) \right)} + \|c\|_{L_p\left((0,t) \right)} \right) < \infty \quad \forall t \in (0,T).
\end{align}
Additionally, suppose that $W_1$ is a lower bound of $W_2$ and both $W_0$ and $W_1$ have local lower bounds, \textit{i.e.}
for each $t \in (0,T)$ and $R \in (0,\infty)$, there exist positive constants $\kappa_0(R)$ and $\kappa_1(R)$ so that
\begin{align}
								\label{second as 2}
W_0(\xi) \geq \kappa_0(R) \quad \forall \xi \in B_R
\end{align}
and
\begin{align}
								\label{second as 2-2}
W_2(s,\xi) \geq W_1(\xi) \geq \kappa_1(R) \quad \forall (s,\xi) \in (0,t) \times B_R.
\end{align}
Then there exists a unique weak solution $u$ $($tested by $\cF^{-1}\cD(\bR^d))$ to \eqref{second eqn}  in 
$$
 \cF^{-1}L_{p,q,t\text{-}loc, x\text{-}\ell oc}\left( (0,T) \times \bR^d, \mathrm{d}t \left(1+|a^{ij}(t)\xi^i\xi^j-ib^j(t)\xi^j -c(t)|\right)\mathrm{d}\xi \right)
$$
and the solution $u$ is given by
\begin{align*}
u(t,x)
&= \cF^{-1}\left[  \exp\left(\int_0^t\left( -a^{ij}(r)\xi^i\xi^j +i b^j(r)\xi^j + c(r) \right)\mathrm{d}r \right) \cF[u_0](\xi) \right](x) \\
&\quad +\cF^{-1}\left[ \int_0^t  \exp\left(\int_s^t\left( -a^{ij}(r)\xi^i\xi^j +i b^j(r)\xi^j + c(r) \right) \mathrm{d}r \right) \cF[f(s,\cdot)]\mathrm{d}s \right](x).
\end{align*}
Here the solution $u$ is given formally as discussed in Remark \ref{kernel remark}.
\end{thm}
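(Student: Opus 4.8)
The plan is to read equation \eqref{second eqn} as the special case of \eqref{ab eqn} attached to the symbol $\psi(t,\xi)=-a^{ij}(t)\xi^i\xi^j+ib^j(t)\xi^j+c(t)$ of \eqref{20240109 01}, for which $1+|\psi(t,\xi)|=1+|a^{ij}(t)\xi^i\xi^j-ib^j(t)\xi^j-c(t)|$, and then to extract both existence and uniqueness from Theorem \ref{weak solution thm} — in the form of its weighted, general-exponent refinement Corollary \ref{cor 202307014 01} — together with the relaxed uniqueness statement Theorem \ref{unique weak sol}. By Remark \ref{equivalent solution}, as soon as $\cF[u(t,\cdot)]$ is locally integrable for almost every $t$, a weak solution tested by $\cF^{-1}\cD(\bR^d)$ is literally the same object as a Fourier-space weak solution for this $\psi$; since the weight $1+|\psi|$ is bounded below by $1$, this local integrability is automatic for every candidate in the asserted class, so it is enough to produce and characterize a Fourier-space weak solution.

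First I would establish the elementary ``ball estimate'': for each $t\in(0,T)$ and $R\in(0,\infty)$ and each $\xi\in B_R$ one has $|\psi(r,\xi)|\lesssim_R h(r):=\sum_{i,j}\big(|a^{ij}(r)|+|b^j(r)|+|c(r)|\big)$, and by \eqref{second as 1} together with the inclusion $L_p((0,t))\subset L_1((0,t))$ on the finite interval $(0,t)$ one gets $h\in L_1((0,t))$; hence $\int_0^t|\psi(r,\xi)|\,\mathrm{d}r$ and $\int_0^t|\Re[\psi(r,\xi)]|\,\mathrm{d}r$ are bounded uniformly on $B_R$, so the exponential factors $\exp\big(\int_s^t\Re[\psi(r,\xi)]\,\mathrm{d}r\big)$ are bounded on $\{0<s<t\}\times B_R$. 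Next, the positivity hypotheses \eqref{second as 2} and \eqref{second as 2-2} upgrade the data assumptions to $\cF[u_0]\in L_q(B_R)\subset L_1(B_R)$ and $\cF[f(\cdot,\cdot)]\in L_{p,q}((0,t_1)\times B_R)\subset L_{1,1}((0,t_1)\times B_R)$ for all $t_1,R$. Inserting these two facts into \eqref{20230624 10} and \eqref{20230624 11} and interchanging the orders of integration (all integrands being nonnegative) verifies the hypotheses of Theorem \ref{weak solution thm}; it then produces a Fourier-space weak solution with the stated integral representation, which by Remark \ref{equivalent solution} is a weak solution tested by $\cF^{-1}\cD(\bR^d)$.

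It then remains to check that this solution lies in $\cF^{-1}L_{p,q,t\text{-}loc,x\text{-}\ell oc}\big((0,T)\times\bR^d,\mathrm{d}t(1+|\psi(t,\xi)|)\mathrm{d}\xi\big)$ and that uniqueness holds there. For the membership I would use the explicit formula for $\cF[u(t,\cdot)]$ and the ball estimate to bound, on $(0,t_1)\times B_R$,
\[
|\cF[u(t,\xi)]|\le C(t_1,R)\Big(|\cF[u_0](\xi)|+\int_0^{t_1}|\cF[f(s,\cdot)](\xi)|\,\mathrm{d}s\Big),
\]
estimate the $L_q(B_R)$-norm of the right-hand side by $\|\cF[u_0]\|_{L_q(B_R)}+\int_0^{t_1}\|\cF[f(s,\cdot)]\|_{L_q(B_R)}\,\mathrm{d}s<\infty$ via Minkowski's integral inequality and $L_p\subset L_1$ on $(0,t_1)$, and then absorb the remaining $t$-factor $1+|\psi(t,\xi)|\lesssim_R 1+h(t)\in L_p((0,t_1))$ by H\"older's inequality in $t$ (using $L_p\subset L_r$ on $(0,t_1)$ for $r\le p$); the endpoints $p=\infty$ or $q=\infty$ run the same way with essential suprema. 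For uniqueness I would invoke Theorem \ref{unique weak sol}, whose hypothesis is precisely \eqref{20230624 11} (already verified); it then only has to be checked that every $u$ in the asserted class meets the local-integrability requirements of that theorem, which once more follows from the ball estimate, from $1+|\psi|\ge1$, and from one more H\"older estimate showing $|\psi|\,\cF[u]$ is locally integrable in $(t,\xi)$.

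The step I expect to be genuinely fiddly, rather than conceptually deep, is this last one: keeping the H\"older exponents consistent over the full range $p,q\in[1,\infty]$, carefully comparing the two weights $1+|\psi|$ and $|\psi|$, and using no more than local $L_p$-integrability in time of $a^{ij},b^j,c$, so that both ``the constructed solution lands in the claimed weighted space'' and ``the uniqueness class of Theorem \ref{unique weak sol} is covered'' survive the degenerate endpoints $p\in\{1,\infty\}$ (and likewise $q$).
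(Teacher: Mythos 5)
Your proposal is correct, and at its core it runs on the same engine as the paper's proof: bound the quadratic symbol on a ball, $|\psi(r,\xi)|\lesssim_R \sum_{i,j}(|a^{ij}(r)|+|b^j(r)|+|c(r)|)\in L_p((0,t))\subset L_1((0,t))$, so the exponential factors are uniformly bounded on $(0,t)\times B_R$, and use the lower bounds \eqref{second as 2}--\eqref{second as 2-2} to strip the weights off the data. The difference is packaging: the paper does not verify \eqref{20230624 10}--\eqref{20230624 11} directly; it simply checks the single $L_{p,\infty}$ condition \eqref{20230715 10} of Corollary \ref{corollary weight pq} (with $W_0$, $W_1$ as given), and that corollary already delivers existence, the representation, membership in the $(1+|\psi|)$-weighted $L_{p,q}$ class, and uniqueness there, with the quantitative estimate \eqref{20230715 20} as a bonus. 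You instead go back to Theorem \ref{weak solution thm} and Theorem \ref{unique weak sol} and re-derive by hand what Corollary \ref{corollary weight pq} packages: the pointwise bound on $\cF[u]$ from the explicit formula, the Minkowski/H\"older bookkeeping to land in the claimed weighted class, and the H\"older embedding of that class into $\cF^{-1}L_{1,1,t\text{-}loc,x\text{-}\ell oc}(\mathrm{d}t\,|\psi(t,\xi)|\mathrm{d}\xi)$ so that the uniqueness class of Theorem \ref{unique weak sol} covers it; you also spell out the identification of the two solution notions via Remark \ref{equivalent solution}, which the paper handles with the same citation. All of this is sound; the only slip is your parenthetical that the hypothesis of Theorem \ref{unique weak sol} ``is precisely \eqref{20230624 11}'' --- that theorem assumes only $u_0\in\cF^{-1}\cD'(\bR^d)$ and $f\in L_{1,t\text{-}loc}\left((0,T);\cF^{-1}\cD'(\bR^d)\right)$ and its content is the uniqueness class, which is exactly what your next sentence (correctly) checks, so the error is cosmetic. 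Net effect: your route is slightly longer but self-contained at the level of Theorem \ref{weak solution thm}; the paper's route is a one-line reduction to Corollary \ref{corollary weight pq} plus the identical coefficient estimate.
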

We will provide the proof of this theorem in Section \ref{second thm pf}. 
It is worth noting that when dealing with data $u_0$ and $f$ in weighted Bessel potentials,  this theorem can be readily applied. 
We demonstrate these applications as corollaries of Theorem \ref{main second} in Section \ref{second thm pf}.

\mysection{Fourier transforms and weighted Bessel potential spaces} 
									\label{fourier section}

In this section, our objective is to present two types of weighted Bessel potential spaces. 
These spaces exhibit numerous intriguing properties that can be effectively characterized through the use of Fourier and inverse Fourier transforms. To begin, we recall the definitions and some properties of the Fourier and inverse Fourier transforms.
For a measurable function $f$ on $\bR^d$, we denote the $d$-dimensional Fourier transform of $f$ by 
\[
\cF[f](\xi) := \frac{1}{(2\pi)^{d/2}}\int_{\bR^{d}} \mathrm{e}^{-i\xi \cdot x} f(x) \mathrm{d}x
\]
and the $d$-dimensional inverse Fourier transform of $f$ by 
\[
\cF^{-1}[f](x) := \frac{1}{(2\pi)^{d/2}}\int_{\bR^{d}} \mathrm{e}^{ ix \cdot \xi} f(\xi) \mathrm{d}\xi.
\]									
Due to the Parseval-Plancherel identity, 
\begin{align*}
\|f\|_{L_2(\bR^d)}=\|\cF[f]\|_{L_2(\bR^d)} = \|\cF^{-1}[f]\|_{L_2(\bR^d)} \quad \forall f \in L_1(\bR^d) \cap L_2(\bR^d).
\end{align*}
The above equalities imply that the Fourier and inverse Fourier transforms are $L_2(\bR^d)$-isometries 
by considering $L_2$-extensions based  on completeness of $L_2(\bR^d)$.
Additionally, due to the Riesz–Thorin theorem, there exist the extensions of the Fourier and inverse Fourier transforms which are bounded from $L_1(\bR^d)+L_2(\bR^d)$ to $L_\infty(\bR^d)+L_2(\bR^d)$.
In particular, for any $\theta \in [0,1]$,
\begin{align*}
\|\cF[f]\|_{L_{2/\theta}} \leq \left(\frac{1}{2\pi}\right)^{\frac{d(1-\theta)}{2}} \|f\|_{L_{2/(2-\theta) }} 
\quad \forall f \in L_{2 / (2-\theta) }
\end{align*}
and
\begin{align*}
\|\cF^{-1}[f]\|_{L_{ 2 / \theta}} \leq \left(\frac{1}{2\pi}\right)^{\frac{d(1-\theta)}{2}} \|f\|_{L_{2 / (2-\theta) }}
\quad \forall f \in L_{2/(2-\theta) },
\end{align*}
where $\frac{2}{0}:= \infty$.
Therefore, for any $p \in [1,2]$ and $f \in L_p(\bR^d)$, 
it is possible to regard the Fourier transform and the inverse Fourier transform of $f$ as functions within $L_{p'}(\bR^d)$,
where $p'=\frac{p}{p-1}$ and $\frac{1}{0}:=\infty$.
This is a crucial foundation for our theory, as it allows us to address our solutions and data in a strong sense.
To be more specific, for any $p \in [2,\infty]$ and any function $f$ in $L_{p'}(\bR^d)$, we have the following inequalities:
\begin{align}
								\label{Riesz Thorin inequality 1}
\|\cF[f]\|_{L_p} \leq \left(\frac{1}{2\pi}\right)^{\frac{d(p-2)}{2p}} \|f\|_{L_{p'}} 
\end{align}
and
\begin{align}
								\label{Riesz Thorin inequality 2}
\|\cF^{-1}[f]\|_{L_p} \leq \left(\frac{1}{2\pi}\right)^{\frac{d(p-2)}{2p}} \|f\|_{p'}.
\end{align}

We now turn our attention to the Fourier and inverse Fourier transforms applied to the space of tempered distributions on $\bR^d$.
These transformations are established in a weak sense, making use of the advantageous properties of the Schwartz class $\cS(\bR^d)$.
Suppose we have a function $f$ belonging to $\cS'(\bR^d)$. In that case, we define the Fourier and inverse Fourier transforms as follows:
\begin{align*}
\langle \cF[f] , \varphi \rangle = \langle f, \cF^{-1}[\varphi] \rangle \quad \forall \varphi \in \cS(\bR^d)
\end{align*}
and
\begin{align*}
\langle \cF^{-1}[f] , \varphi \rangle = \langle f, \cF[\varphi] \rangle  \quad \forall \varphi \in \cS(\bR^d).
\end{align*}
Here, the notation $\langle f, \cF^{-1}[\varphi] \rangle$ represents the duality pairing, which means that $\langle f, \cF^{-1}[\varphi] \rangle = f \left( \cF^{-1}[\varphi]\right)$ is the value assigned to $\cF^{-1}[\varphi]$ under the action of the linear functional $f$.
It is important to note that the Fourier and inverse Fourier transforms of $f$ once again become tempered distributions on $\bR^d$, thanks to the well-established properties of functions within the Schwartz class. 
Furthermore, it is worth noting that $(1+|\cdot|^2)^{\gamma_2/2}f$ remains a tempered distribution as well if we define the action on test functions as follows:
\begin{align*}
\left\langle (1+|\cdot|^2)^{\gamma_2/2}f, \varphi \right\rangle
:=\left\langle f, (1+|\cdot|^2)^{\gamma_2/2} \varphi(\cdot) \right\rangle \quad \text{for all} \quad \varphi \in \cS(\bR^d).
\end{align*}
This is possible because the function $(1+|x|^2)^{\gamma_2/2} \varphi(x)$ belongs to the Schwartz class $\cS(\bR^d)$ for any $\varphi \in \cS(\bR^d)$.
As a result, these operations on tempered distributions enable us to establish the Bessel potential spaces.

Let $\gamma \in \bR$ and $p \in [1,\infty]$.
Recall that we define the Bessel potential space with order $\gamma$ and exponent $p$, denoted by $H_p^\gamma(\bR^d)$, as the class of all tempered distributions $f$ on $\bR^d$ satisfying the condition:
\begin{align*}
(I-\Delta)^{\gamma/2}f := \cF^{-1} \left[\left( 1+|\cdot|^2 \right)^{\gamma/2} \cF[f] \right] \in L_p(\bR^d).
\end{align*}
For a more comprehensive understanding, further details on these concepts can be found in references such as \cite{Hormander 1990} and \cite{Grafakos 2014}.
Now, we proceed to introduce two generalizations of the Bessel potential spaces with additional weighting factors below. 
We call these spaces an \textit{inner weighted Bessel potential space} and an \textit{outer weighted Bessel potential space}, respectively. 
It is worth noting that we have not found any previous literature that discusses these types of weighted Bessel potential space in general. 
However, certain outer weighted Bessel potential spaces can be regarded as particular cases of weighted Bessel potential spaces with Muckenhoupt's weight.
We will provide further clarification of this relationship before presenting Proposition \ref{outer embedding} later on.
We are now ready to present the definitions for both inner and outer weighted Bessel potential spaces.

\begin{defn}[Inner weighted Bessel potential space]
Let $\gamma_1, \gamma_2 \in \bR$ and $p \in [1,\infty]$.
We use the notation $H_{p,in}^{\gamma_1,\gamma_2}(\bR^d)$ to denote the space of all tempered distributions $f$ on $\bR^d$ such that
\begin{align*}
(I-\Delta)^{\gamma_1/2}\left(  (1+|\cdot|^2)^{\gamma_2/2}f \right) := \cF^{-1}\left[ \left( 1+|\cdot|^2 \right)^{\gamma_1/2} \cF\left[ (1+|\cdot|^2)^{\gamma_2/2} f\right]  \right]\in L_p(\bR^d),
\end{align*}
which implies that there exists a $g \in L_p(\bR^d)$ such that
\begin{align*}
\left\langle  \cF^{-1} \left[\left( 1+|\cdot|^2 \right)^{\gamma_1/2} \cF \left[\left(1+|\cdot|^2\right)^{\gamma_2/2}f\right]  \right], \varphi \right\rangle 
&:=
\left\langle  f ,  \left(1+|\cdot|^2\right)^{\gamma_2/2} (I-\Delta)^{\gamma_1/2}\varphi \right\rangle \\
&= \int_{\bR^d} g(x) \varphi(x) \mathrm{d}x \qquad \forall \varphi \in \cS(\bR^d).
\end{align*}
Then $H_{p,in}^{\gamma_1,\gamma_2}(\bR^d)$ becomes a Banach space with the norm
\begin{align*}
\|f\|_{H_{p,in}^{\gamma_1,\gamma_2}(\bR^d)}
:= \left\|(I-\Delta)^{\gamma_1/2}\left(  (1+|\cdot|^2)^{\gamma_2/2}f \right) \right\|_{L_p(\bR^d)}.
\end{align*}
We say that $\gamma_1$, $\gamma_2$, and $p$ are the \textit{regularity exponent}, \textit{weight exponent}, and \textit{integrability exponent} of $H_{p,in}^{\gamma_1,\gamma_2}(\bR^d)$, respectively.
\end{defn}

\begin{defn}[Outer weighted Bessel potential space]
Let $\gamma_1, \gamma_2 \in \bR$ and $p \in [1,\infty]$.
We use the notation $H_{p,out}^{\gamma_1,\gamma_2}(\bR^d)$ to denote the space of all tempered distributions $f$ on $\bR^d$ such that
\begin{align*}
(1+|\cdot|^2)^{\gamma_2/2}(I-\Delta)^{\gamma_1/2}  f  \in L_p(\bR^d).
\end{align*}
Then $H_{p,out}^{\gamma_1,\gamma_2}(\bR^d)$ becomes a Banach space with the norm
\begin{align*}
\|f\|_{H_{p,out}^{\gamma_1,\gamma_2}(\bR^d)}
:= \left\|\left(1+|\cdot|^2\right)^{\gamma_2/2} (I-\Delta)^{\gamma_1/2}f \right\|_{L_p(\bR^d)}.
\end{align*}
The same terminology is used for exponents $\gamma_1$, $\gamma_2$, and $p$.
In other words, $\gamma_1$, $\gamma_2$, and $p$ are called the \textit{regularity exponent}, \textit{weight exponent}, and \textit{integrability exponent} of $H_{p,out}^{\gamma_1,\gamma_2}(\bR^d)$, respectively.
\end{defn}
We would not delve into the specific details demonstrating that these two spaces are indeed Banach spaces. 
This can be readily proved by relying on the completeness of $L_p$-spaces.
In addition, it is obvious that 
$$
H_p^{\gamma_1}(\bR^d) 
=H_{p,in}^{\gamma_1,0}(\bR^d)
=H_{p,out}^{\gamma_1,0}(\bR^d)
$$
and
$$
H_{p,in}^{0,\gamma_2}(\bR^d)
=H_{p,out}^{0,\gamma_2}(\bR^d).
$$

\begin{rem}
Let $\gamma_1>0$.
It is a well-known fact that the Bessel potentials $(I-\Delta)^{-\gamma_1/2}$ have elegant representations in terms of Green functions (as discussed in \cite[Section 1.2.2]{Grafakos 2014-2}). In essence, this means that there exists a smooth function $G_{\gamma_1}(x)$ defined on $\bR^d$ except for the origin, such that 
\begin{align*}
(I-\Delta)^{-\gamma_1/2} g (x)
=\int_{\bR^d} G_{\gamma_1}(y) g(x-y) \mathrm{d}y.
\end{align*}
for any function $g$ in $L_p(\bR^d)$.
For this particular case, it is possible to understand realizations of all tempered distributions within these weighted spaces solely through integrals. In other words, a tempered distribution $f$ on $\bR^d$ belongs to $H_{p,in}^{\gamma_1,\gamma_2}(\bR^d)$ if and only if there exists a function $g$ in $L_p(\bR^d)$ such that
\begin{align*}
f(x)
=(1+|x|^2)^{-\gamma_2/2}\int_{\bR^d} G_{\gamma_1}(y) g(x-y) \mathrm{d}y.
\end{align*}
Similarly,
$f \in H_{p,out}^{\gamma_1,\gamma_2}(\bR^d)$ if and only if
there exist a $g \in L_p(\bR^d)$ such that
\begin{align*}
f(x)
=\int_{\bR^d} G_{\gamma_1}(y) (1+|x-y|^2)^{-\gamma_2/2}g(x-y) \mathrm{d}y.
\end{align*}
The function $G_{\gamma_1}(x)$ is given by
\begin{align*}
G_{\gamma_1}(x) 
= \frac{1}{\Gamma(\frac{\gamma_1}{2})}\int_0^\infty \mathrm{e}^{-t} \mathrm{e}^{-|x|^2/(4t)}t^{(\gamma_1-d)/2} \frac{ \mathrm{d}t}{t},
\end{align*}
where $\Gamma$ denotes the Gamma function.
Additionally, the behaviors of $G$ is characterized as follows (\cite[Proposition 1.2.5]{Grafakos 2014-2}):
For large values of $|x|$, it satisfies
\begin{align*}
G_{\gamma_1}(x) \lesssim \mathrm{e}^{-|x|^2/2}
\end{align*}
and  for small values of $|x|$, the behavior of $G_{\gamma_1}(x)$ can be described by the following cases according to the dimension $d$:
\begin{align*}
G_{\gamma_1}(x) 
\approx 
\begin{cases}
&|x|^{\gamma_1 -d}  + 1 + O(|x|^{\gamma_1-d+2}) \quad \text{if $0<\gamma_1 <d$} \\
&\log \frac{2}{|x|} + 1 + O(|x|^2) \quad \text{if $\gamma_1 =d$} \\
& 1+ O(|x|^{\gamma_1 -d}) \quad \text{if $\gamma_1 >d$}.
\end{cases}
\end{align*}
\end{rem}
We establish a few straightforward embedding inequalities for these two weighted spaces, beginning with the inner spaces.
The first embedding presented below is effective concerning the regularity exponents of inner spaces.
\begin{prop}
						\label{inner embedding}
Let $\gamma_1,\gamma_2,\tilde \gamma_1\in \bR$,  $p \in (1,\infty]$, and $f \in H_{p,in}^{\gamma_1,\gamma_2}(\bR^d)$.
Assume that $\gamma_1 \geq \tilde \gamma_1$.
Then 
\begin{align*}
\left\| f\right\|_{H_{p,in}^{\tilde \gamma_1,\gamma_2}(\bR^d)}
\leq \left\| f\right\|_{H_{p,in}^{\gamma_1,\gamma_2}(\bR^d)}.
\end{align*}

\end{prop}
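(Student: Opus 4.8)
The plan is to reduce the inequality to a pointwise (in frequency) domination between Fourier multipliers and then invoke that the associated convolution operator has operator norm at most $1$ on $L_p(\bR^d)$. Concretely, write $g := (1+|\cdot|^2)^{\gamma_2/2} f$, which is a tempered distribution since $f \in H_{p,in}^{\gamma_1,\gamma_2}(\bR^d)$; by definition of the inner space we have $(I-\Delta)^{\gamma_1/2} g \in L_p(\bR^d)$, and
\[
\|f\|_{H_{p,in}^{\gamma_1,\gamma_2}(\bR^d)} = \|(I-\Delta)^{\gamma_1/2} g\|_{L_p(\bR^d)},
\qquad
\|f\|_{H_{p,in}^{\tilde\gamma_1,\gamma_2}(\bR^d)} = \|(I-\Delta)^{\tilde\gamma_1/2} g\|_{L_p(\bR^d)}.
\]
So it suffices to show $\|(I-\Delta)^{\tilde\gamma_1/2} g\|_{L_p} \le \|(I-\Delta)^{\gamma_1/2} g\|_{L_p}$ whenever $\gamma_1 \ge \tilde\gamma_1$. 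Setting $h := (I-\Delta)^{\gamma_1/2} g \in L_p(\bR^d)$, we have $(I-\Delta)^{\tilde\gamma_1/2} g = (I-\Delta)^{(\tilde\gamma_1-\gamma_1)/2} h = (I-\Delta)^{-\sigma/2} h$ with $\sigma := \gamma_1 - \tilde\gamma_1 \ge 0$. Thus the whole statement collapses to: the Bessel potential operator $(I-\Delta)^{-\sigma/2}$ has operator norm at most $1$ on $L_p(\bR^d)$ for every $\sigma \ge 0$ and $p \in (1,\infty]$ (indeed for $p\in[1,\infty]$).

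For $\sigma \ge 0$ this is classical: $(I-\Delta)^{-\sigma/2}$ is convolution with the Bessel kernel $G_\sigma$, whose explicit form is recalled in the remark preceding the proposition, namely
\[
G_\sigma(x) = \frac{1}{\Gamma(\sigma/2)} \int_0^\infty \mathrm{e}^{-t}\, \mathrm{e}^{-|x|^2/(4t)}\, t^{(\sigma-d)/2}\,\frac{\mathrm{d}t}{t}.
\]
From this representation $G_\sigma \ge 0$ and $\int_{\bR^d} G_\sigma(x)\,\mathrm{d}x = 1$ (evaluate $\widehat{G_\sigma}(0) = (1+|0|^2)^{-\sigma/2} = 1$, or integrate the Gaussians in $x$ first and use $\Gamma(\sigma/2) = \int_0^\infty \mathrm{e}^{-t} t^{\sigma/2}\,\frac{\mathrm{d}t}{t}$). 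Hence $G_\sigma$ is a probability density, and by Young's convolution inequality $\|G_\sigma * h\|_{L_p} \le \|G_\sigma\|_{L_1}\|h\|_{L_p} = \|h\|_{L_p}$ for all $p \in [1,\infty]$. The degenerate cases $\sigma = 0$ (the identity) and $p = \infty$ are immediate. Unwinding the substitutions $h = (I-\Delta)^{\gamma_1/2} g$ and $g = (1+|\cdot|^2)^{\gamma_2/2} f$ then gives exactly $\|f\|_{H_{p,in}^{\tilde\gamma_1,\gamma_2}} = \|G_\sigma * h\|_{L_p} \le \|h\|_{L_p} = \|f\|_{H_{p,in}^{\gamma_1,\gamma_2}}$.

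I do not anticipate a genuine obstacle here; the only point requiring a little care is the bookkeeping that justifies writing $(I-\Delta)^{\tilde\gamma_1/2} g = G_{\gamma_1-\tilde\gamma_1} * \big((I-\Delta)^{\gamma_1/2} g\big)$ as an honest identity in $L_p$ rather than merely at the level of tempered distributions — i.e. checking that the composition of Bessel potentials behaves multiplicatively on the relevant class, $(I-\Delta)^{a/2}(I-\Delta)^{b/2} = (I-\Delta)^{(a+b)/2}$, which follows from the semigroup property $\widehat{G_a}\,\widehat{G_b} = \widehat{G_{a+b}}$ of the Fourier symbols $(1+|\xi|^2)^{-a/2}$ and the fact that $(I-\Delta)^{\gamma_1/2} g \in L_p$ already lies in a space where convolution with $G_{\gamma_1-\tilde\gamma_1}$ is well defined. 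Once that identification is in place, the inequality is just Young's inequality with an $L^1$-normalized positive kernel.
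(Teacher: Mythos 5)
Your proof is correct, but it takes a different route from the paper. The paper argues by duality: it pairs $f$ with a Schwartz test function $\varphi$, uses H\"older's inequality together with the quoted $L_{p'}$-boundedness (with norm at most one) of $(I-\Delta)^{(\tilde\gamma_1-\gamma_1)/2}$ acting on $\left(1+|\cdot|^2\right)^{-\gamma_2/2}\varphi$, then substitutes $\left(1+|\cdot|^2\right)^{\gamma_2/2}(I-\Delta)^{\tilde\gamma_1/2}\varphi$ for $\varphi$ and takes the supremum over $\|\varphi\|_{L_{p'}}=1$; this is why the statement (and the paper's Remark after Proposition \ref{outer embedding}) excludes $p=1$, since the duality characterization of the $L_p$-norm fails there. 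You instead work directly on the realization $h=(I-\Delta)^{\gamma_1/2}\bigl((1+|\cdot|^2)^{\gamma_2/2}f\bigr)\in L_p$, write the lower-order object as $G_{\sigma}*h$ with $\sigma=\gamma_1-\tilde\gamma_1\ge 0$, and apply Young's inequality with the Bessel kernel as a probability density. Both proofs ultimately rest on the same fact (the non-positive-order Bessel potential is an $L_p$-contraction), but yours proves it from the kernel rather than citing it, it simultaneously produces the $L_p$ realization required by the definition of $H_{p,in}^{\tilde\gamma_1,\gamma_2}$, and it covers $p=1$ as well; the price is the bookkeeping you flag, namely justifying $(I-\Delta)^{\tilde\gamma_1/2}g=G_\sigma*\bigl((I-\Delta)^{\gamma_1/2}g\bigr)$ as an identity of tempered distributions (routine, since the multipliers $(1+|\xi|^2)^{a/2}$ compose and $G_\sigma\in L_1$), including the mildly delicate case $p=\infty$. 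One small caution: the explicit kernel formula quoted in the paper's remark lacks the usual $(4\pi)^{d/2}$ normalization, so integrating it literally does not give $1$; the correct statement, which is what your argument needs, is that the kernel of $(I-\Delta)^{-\sigma/2}$ is nonnegative with unit mass, as read off from its symbol at $\xi=0$.
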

\begin{proof}
Let $\varphi \in \cS(\bR^d)$.
Then by H\"older's inequality and an $L_p$-boundedness of the Bessel potential (\emph{cf}. \cite[Section 1.2.2]{Grafakos 2014-2}), 
\begin{align*}
\left|\left\langle f ,\varphi \right\rangle \right|
&=\left|\left\langle  (I-\Delta)^{\gamma_1/2}\left( \left(1+|\cdot|^2\right)^{\gamma_2/2}  f \right), (I-\Delta)^{-\gamma_1/2} \left(\left(1+|\cdot|^2\right)^{-\gamma_2/2}\varphi \right) \right\rangle \right| \\
&\leq  \left\| f\right\|_{H_{p,in}^{\gamma_1,\gamma_2}(\bR^d)}\left\|(I-\Delta)^{-\gamma_1/2} \left(\left(1+|\cdot|^2\right)^{-\gamma_2/2}\varphi \right)\right\|_{L_{p'}(\bR^d)}  \\
&=  \left\| f\right\|_{H_{p,in}^{\gamma_1,\gamma_2}(\bR^d)}\left\|(I-\Delta)^{(\tilde \gamma_1-\gamma_1)/2} \left(I-\Delta \right)^{-\tilde \gamma_1/2}\left(\left(1+|\cdot|^2\right)^{-\gamma_2/2}\varphi \right) \right\|_{L_{p'}(\bR^d)}  \\
&\leq  \left\| f\right\|_{H_{p,in}^{\gamma_1,\gamma_2}(\bR^d)}\left\| \left(I-\Delta \right)^{-\tilde \gamma_1/2}\left(\left(1+|\cdot|^2\right)^{-\gamma_2/2}\varphi \right) \right\|_{L_{p'}(\bR^d)}. 
\end{align*} 
Thus considering
\begin{align*}
\left(1+|\cdot|^2\right)^{\gamma_2/2} \left(I-\Delta \right)^{\tilde \gamma_1/2}\varphi
\end{align*}
instead of $\varphi$, we have
\begin{align*}
\left|\left\langle \left(I-\Delta \right)^{\tilde \gamma_1/2} \left(1+|\cdot|^2\right)^{\gamma_2/2} f ,\varphi \right\rangle\right|
&=\left|\left\langle  f , \left(1+|\cdot|^2\right)^{\gamma_2/2} \left(I-\Delta \right)^{\tilde \gamma_1/2}\varphi \right\rangle\right| \\
&\leq  \left\| f\right\|_{H_{p,in}^{\gamma_1,\gamma_2}(\bR^d)}\left\| \varphi \right\|_{L_{p'}(\bR^d)}.
\end{align*}
Taking the supremum over all $\varphi $ such that $\|\varphi  \|_{L_{p'}(\bR^d)}=1$, we have
\begin{align*}
\left\| f\right\|_{H_{p,in}^{\tilde \gamma_1,\gamma_2}(\bR^d)}
\leq \left\| f\right\|_{H_{p,in}^{\gamma_1,\gamma_2}(\bR^d)}.
\end{align*}
\end{proof}
An analogous embedding inequality is valid for outer spaces. 
This embedding is provided in relation to the weight exponents of outer spaces.
\begin{prop}
							\label{outer embedding}
Let $\gamma_1,\gamma_2, \tilde \gamma_2 \in \bR$,  $p \in (1,\infty]$, and $f \in H_{p,out}^{ \gamma_1,\gamma_2}(\bR^d)$.
Assume that $\gamma_2 \geq \tilde \gamma_2$.
Then
\begin{align*}
\left\| f\right\|_{H_{p,out}^{ \gamma_1,\tilde \gamma_2}(\bR^d)}
\leq \left\| f\right\|_{H_{p,out}^{\gamma_1,\gamma_2}(\bR^d)}.
\end{align*}
\end{prop}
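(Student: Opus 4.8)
The plan is to reduce the inequality to a pointwise comparison of weight functions, which is possible here precisely because in the \emph{outer} space the extra factor $(1+|\cdot|^2)^{\gamma_2/2}$ sits outside the Bessel potential, so no commutation difficulty arises (in contrast with Proposition \ref{inner embedding}, where duality was needed).

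First I would unravel the definition: $f \in H_{p,out}^{\gamma_1,\gamma_2}(\bR^d)$ means exactly that the tempered distribution $h := (1+|\cdot|^2)^{\gamma_2/2}(I-\Delta)^{\gamma_1/2}f$ is represented by a function in $L_p(\bR^d)$, with $\|h\|_{L_p(\bR^d)} = \|f\|_{H_{p,out}^{\gamma_1,\gamma_2}(\bR^d)}$. Since for any real $s$ the map $x \mapsto (1+|x|^2)^{s}$ is a smooth function all of whose derivatives grow at most polynomially, multiplication by it is a well-defined continuous operation on $\cS(\bR^d)$ and hence on $\cS'(\bR^d)$; applying it with $s = -\gamma_2/2$ gives the identity of tempered distributions $(I-\Delta)^{\gamma_1/2}f = (1+|\cdot|^2)^{-\gamma_2/2} h$.

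Next I would multiply by $(1+|\cdot|^2)^{\tilde\gamma_2/2}$ (again a legitimate operation on $\cS'(\bR^d)$) to obtain
\[
(1+|\cdot|^2)^{\tilde\gamma_2/2}(I-\Delta)^{\gamma_1/2}f = (1+|\cdot|^2)^{(\tilde\gamma_2-\gamma_2)/2}\, h,
\]
which is an honest function since the right-hand side is the product of the $L_p$ function $h$ with a bounded function. Because $\tilde\gamma_2 \leq \gamma_2$ and $1+|x|^2 \geq 1$ for every $x \in \bR^d$, we have $0 < (1+|x|^2)^{(\tilde\gamma_2-\gamma_2)/2} \leq 1$, hence $\big|(1+|x|^2)^{(\tilde\gamma_2-\gamma_2)/2} h(x)\big| \leq |h(x)|$ for a.e.\ $x$. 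Taking $L_p(\bR^d)$ norms (monotonicity of $\|\cdot\|_{L_p}$, valid also for $p = \infty$) yields
\[
\|f\|_{H_{p,out}^{\gamma_1,\tilde\gamma_2}(\bR^d)} = \big\|(1+|\cdot|^2)^{(\tilde\gamma_2-\gamma_2)/2} h\big\|_{L_p(\bR^d)} \leq \|h\|_{L_p(\bR^d)} = \|f\|_{H_{p,out}^{\gamma_1,\gamma_2}(\bR^d)},
\]
which is the asserted estimate.

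I do not expect a genuine obstacle: the entire content is the elementary bound $0 < (1+|x|^2)^{(\tilde\gamma_2-\gamma_2)/2} \le 1$. The only points deserving a sentence of care are the distributional bookkeeping in the first two paragraphs — verifying that each multiplication by a power of $1+|\cdot|^2$ is a valid operation on $\cS'(\bR^d)$ and that the objects compared are bona fide $L_p$ functions before the pointwise inequality is invoked. (As an alternative one could instead mimic the duality argument used for Proposition \ref{inner embedding}, pairing $f$ against $(1+|\cdot|^2)^{-\tilde\gamma_2/2}(I-\Delta)^{-\gamma_1/2}\varphi$ and using $\|(1+|\cdot|^2)^{-\tilde\gamma_2/2}\,\cdot\,\|_{L_{p'}(\bR^d)} \leq \|(1+|\cdot|^2)^{-\gamma_2/2}\,\cdot\,\|_{L_{p'}(\bR^d)}$, but the direct argument above is shorter and also covers $p=\infty$.)
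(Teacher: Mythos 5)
Your proof is correct, but it takes a genuinely different route from the paper. The paper argues by duality, exactly as in Proposition \ref{inner embedding}: it pairs $f$ with $\varphi\in\cS(\bR^d)$, rewrites the pairing as $\left\langle (1+|\cdot|^2)^{\gamma_2/2}(I-\Delta)^{\gamma_1/2}f,\ (1+|\cdot|^2)^{-\gamma_2/2}(I-\Delta)^{-\gamma_1/2}\varphi\right\rangle$, applies H\"older's inequality, absorbs the factor $(1+|\cdot|^2)^{(\tilde\gamma_2-\gamma_2)/2}\le 1$ into the test-function side, then substitutes $(I-\Delta)^{\gamma_1/2}\bigl((1+|\cdot|^2)^{\tilde\gamma_2/2}\varphi\bigr)$ for $\varphi$ and takes the supremum over $\|\varphi\|_{L_{p'}}=1$. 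Your argument instead works directly on the side of $f$: since the weight sits outside the Bessel potential, you identify $(1+|\cdot|^2)^{\tilde\gamma_2/2}(I-\Delta)^{\gamma_1/2}f=(1+|\cdot|^2)^{(\tilde\gamma_2-\gamma_2)/2}h$ with $h\in L_p$, and conclude by the pointwise bound $0<(1+|x|^2)^{(\tilde\gamma_2-\gamma_2)/2}\le 1$ and monotonicity of the $L_p$-norm. Your distributional bookkeeping (multiplication by powers of $1+|\cdot|^2$ on $\cS'(\bR^d)$, and the fact that such a bounded smooth multiplier of an $L_p$ function is again the pointwise product in $L_p$) is the only thing to check, and you handle it. What each buys: your direct argument is shorter, avoids the duality identification of the $L_p$-norm via a supremum over Schwartz pairings, and would in fact also cover $p=1$, which the paper's duality template excludes (cf.\ the remark after the proposition); the paper's route keeps one uniform proof scheme for both the inner and outer embeddings, where for the inner space the duality step is genuinely needed to move $(I-\Delta)^{(\tilde\gamma_1-\gamma_1)/2}$ onto the test function. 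Your parenthetical alternative is essentially the paper's proof.
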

\begin{proof}
Let $\varphi \in \cS(\bR^d)$. Then
\begin{align*}
\left|\left\langle f ,\varphi \right\rangle \right|
&=\left|\left\langle  \left( 1+ |\cdot|^2 \right)^{\gamma_2/2} (I-\Delta)^{\gamma_1/2}f ,  \left( 1+ |\cdot|^2 \right)^{-\gamma_2/2}(I-\Delta)^{-\gamma_1/2}\varphi \right\rangle \right| \\
&\leq \left\| f \right\|_{H_{p,out}^{\gamma_1,\gamma_2}} \left\| \left( 1+ |\cdot|^2 \right)^{-\gamma_2/2}(I-\Delta)^{-\gamma_1/2}\varphi \right\|_{L_{p'}(\bR^d)} \\
&\leq \left\| f \right\|_{H_{p,out}^{\gamma_1,\gamma_2}} \left\| \left( 1+ |\cdot|^2 \right)^{(\tilde \gamma_2-\gamma_2)/2} \left( 1+ |\cdot|^2 \right)^{-\tilde \gamma_2/2}(I-\Delta)^{-\gamma_1/2}\varphi \right\|_{L_{p'}(\bR^d)} \\
&\leq \left\| f \right\|_{H_{p,out}^{\gamma_1,\gamma_2}} \left\|  \left( 1+ |\cdot|^2 \right)^{-\tilde \gamma_2/2}(I-\Delta)^{-\gamma_1/2}\varphi \right\|_{L_{p'}(\bR^d)}.
\end{align*}
Thus
\begin{align*}
\left|\left\langle \left( 1+ |\cdot|^2 \right)^{\tilde \gamma_2/2}(I-\Delta)^{\gamma_1/2} f ,\varphi \right\rangle \right|
&=\left|\left\langle f , (I-\Delta)^{\gamma_1/2} \left( 1+ |\cdot|^2 \right)^{\tilde \gamma_2/2} \varphi \right\rangle \right|\\
&\leq \left\| f \right\|_{H_{p,out}^{\gamma_1,\gamma_2}} \left\| \varphi \right\|_{L_{p'}(\bR^d)}.
\end{align*}
Finally, taking the supremum over all $\varphi $ such that $\|\varphi  \|_{L_{p'}(\bR^d)}=1$, we obtain the result.
The proposition is proved. 
\end{proof}
\begin{rem}
The case $p=1$ is excluded in both Propositions \ref{inner embedding} and \ref{outer embedding}
since the dual space of $L_\infty(\bR^d)$ is strictly larger than $L_1(\bR^d)$. 
\end{rem}

The two embedding inequalities mentioned above appear quite straightforward, as evidenced by their proofs. 
However, we will now delve into a non-trivial embedding that relies on a weighted multiplier. 
To begin, let's revisit the definition of Muckenhoupt's weight.
We say that a non-negative measurable function $w$ on $\bR^d$ belongs to $A_p$ with $p \in (1,\infty)$ if
\begin{align*}
\sup \left( \frac{1}{|B|} \int_B w(x) \mathrm{d}x \right)\left( \frac{1}{|B|} \int_B w(x)^{-1/(p-1)} \mathrm{d}x \right)^{p-1} < \infty,
\end{align*} 
where the supremum is taken over all Euclidean balls on $\bR^d$ and $|B|$ denotes the Lebesgue measure of $B$.
It is well-known that the mapping $\xi \in \bR^d \mapsto |\xi|^\gamma$ is in $A_p$  if and only if $\gamma \in (-d ,d(p-1))$
(\textit{cf}. \cite[Example 7.1.7]{Grafakos 2014}).
Thus it is easy to verify that $(1+|\xi|^2)^{\gamma/2}$ is in $A_p$ for all $\gamma \in (-d ,d(p-1))$.

It is noteworthy that there is a scarcity of research papers dedicated to the exploration of weighted Bessel potential spaces with Muckenhoupt's weights, despite these spaces representing natural extensions of weighted Sobolev spaces. 
Furthermore, handling these spaces becomes relatively straightforward by applying weighted multiplier theories in conjunction with classical Bessel potential spaces.

For additional insights and properties of weighted Bessel potential spaces with Muckenhoupt's weights, we recommend referring to \cite[Appendix]{CJH KID 2023} and \cite[Section 3]{Kartin 2009}.
The proposition we are about to present can be derived as a specific case of the results cited above.
Nonetheless, we provide a simple proof to illustrate that it is an easily applicable result within the context of a weighted multiplier theory.

\begin{prop}
							\label{outer embedding 2}
Let $\gamma_1, \tilde \gamma_1 \in \bR$, $p \in (1,\infty)$, $\gamma_2 \in \left(-\frac{d}{p} ,\frac{d(p-1)}{p}\right)$, and $f \in H_{p,out}^{\gamma_1,\gamma_2}(\bR^d)$.
Assume that $\gamma_1 \geq \tilde \gamma_1$.
Then
\begin{align*}
\left\| f\right\|_{H_{p,out}^{ \tilde \gamma_1, \gamma_2}(\bR^d)}
\lesssim \left\| f\right\|_{H_{p,out}^{\gamma_1,\gamma_2}(\bR^d)}.
\end{align*}
\end{prop}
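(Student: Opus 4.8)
The plan is to reduce the claimed inequality to a weighted $L_p$-boundedness of a Bessel potential of non-positive order, and then to invoke an $A_p$-weighted H\"ormander--Mihlin multiplier theorem, where the relevant weight's $A_p$-membership is exactly governed by the hypothesis on $\gamma_2$. Set $s:=\gamma_1-\tilde\gamma_1\ge0$ and $w(\xi):=(1+|\xi|^2)^{\gamma_2/2}$; the case $s=0$ is trivial, so assume $s>0$. Let $g:=(I-\Delta)^{\gamma_1/2}f$, which is a tempered distribution with $wg\in L_p(\bR^d)$ by assumption; since $w^{-1}$ is a smooth positive function, $g=w^{-1}(wg)$ is actually a locally integrable function, and $g\in L_p(\bR^d,w^p\,\mathrm{d}x)$ with $\|g\|_{L_p(w^p\mathrm{d}x)}=\|wg\|_{L_p}$. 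Because $(I-\Delta)^{\tilde\gamma_1/2}f=(I-\Delta)^{-s/2}g$, we have
\begin{align*}
\|f\|_{H_{p,out}^{\tilde\gamma_1,\gamma_2}(\bR^d)}=\big\|\,w\,(I-\Delta)^{-s/2}g\,\big\|_{L_p(\bR^d)},
\end{align*}
so it suffices to prove $\big\|w\,(I-\Delta)^{-s/2}g\big\|_{L_p}\lesssim\|wg\|_{L_p}$; this simultaneously shows $f\in H_{p,out}^{\tilde\gamma_1,\gamma_2}(\bR^d)$.

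Write $(I-\Delta)^{-s/2}=m(D)$ for the Fourier multiplier with symbol $m(\xi):=(1+|\xi|^2)^{-s/2}$. Since $1+|\xi|^2\ge1$, the symbol $m$ is smooth on all of $\bR^d$, and an elementary induction gives $|\partial^\alpha m(\xi)|\lesssim_{\alpha,s,d}(1+|\xi|)^{-s-|\alpha|}\le|\xi|^{-|\alpha|}$ for every multi-index $\alpha$; in particular $m$ satisfies the H\"ormander--Mihlin condition of every order. Hence, by the $A_p$-weighted multiplier theorem (see, e.g., \cite{Grafakos 2014}), for every $v\in A_p$,
\begin{align*}
\|m(D)h\|_{L_p(\bR^d,\,v\,\mathrm{d}x)}\lesssim\|h\|_{L_p(\bR^d,\,v\,\mathrm{d}x)},
\end{align*}
with a constant depending only on $d$, $p$, $s$, and the $A_p$-characteristic of $v$. (This is proved first for Schwartz $h$ and then extended to all $h\in L_p(v\,\mathrm{d}x)$ by density, using that $L_p(v\,\mathrm{d}x)\hookrightarrow L_{1,\ell oc}(\bR^d)$ when $v\in A_p$, so that $m(D)h$ is a well-defined distribution agreeing with the $L_p(v)$-limit.)

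It remains to check that $v:=w^p=(1+|\cdot|^2)^{\gamma_2 p/2}$ lies in $A_p$. By the fact recalled just before this proposition, $(1+|\xi|^2)^{\gamma/2}\in A_p$ precisely when $\gamma\in(-d,d(p-1))$; taking $\gamma=\gamma_2 p$ and observing that the hypothesis $\gamma_2\in\left(-\tfrac{d}{p},\tfrac{d(p-1)}{p}\right)$ is exactly $\gamma_2 p\in(-d,d(p-1))$, we conclude $v\in A_p$. Applying the weighted multiplier bound with this $v$ and with $h=g$,
\begin{align*}
\big\|w\,(I-\Delta)^{-s/2}g\big\|_{L_p(\bR^d)}^p=\int_{\bR^d}|m(D)g|^p\,w^p\,\mathrm{d}x=\|m(D)g\|_{L_p(v\,\mathrm{d}x)}^p\lesssim\|g\|_{L_p(v\,\mathrm{d}x)}^p=\|wg\|_{L_p(\bR^d)}^p,
\end{align*}
which is the desired estimate.

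The one point requiring genuine care is the use of the $A_p$-weighted H\"ormander--Mihlin theorem for the merely locally integrable, non-decaying input $g$ rather than a test function; this is handled by the density argument indicated above, and all remaining steps (the derivative bounds on $m$, the identification $(I-\Delta)^{\tilde\gamma_1/2}f=(I-\Delta)^{-s/2}g$, and the rewriting of norms as weighted integrals) are routine.
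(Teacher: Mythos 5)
Your proof is correct and follows essentially the same route as the paper: the paper also observes that $(1+|\xi|^2)^{(p\gamma_2)/2}\in A_p$ precisely because $p\gamma_2\in(-d,d(p-1))$ and then applies the weighted $L_p$-multiplier theorem (citing Kurtz's weighted Mihlin--H\"ormander result) to the non-positive-order Bessel potential $(I-\Delta)^{(\tilde\gamma_1-\gamma_1)/2}$. Your additional verifications (the derivative bounds on the symbol and the density extension to general $g\in L_p(w^p\,\mathrm{d}x)$) are fine but only flesh out details the paper leaves to the cited references.
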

\begin{proof}
It is obvious that $(1+|\xi|^2)^{ (p\gamma_2)/2}$ is in $A_p$ since $-d < p\gamma_2 <d(p-1)$.
Moreover, it is also easy to show that $(1+|\xi|^2)^{(\tilde \gamma_1 - \gamma_1)/2}$ is a weighted $L_p$-multiplier 
since $\gamma_1 \geq \tilde \gamma_1$ (\textit{cf}. \cite{Kurtz 1980}).
Therefore, we have
\begin{align*}
\left\| f\right\|_{H_{p,out}^{ \tilde \gamma_1, \gamma_2}(\bR^d)}
&=\left\|\left(1+|\cdot|^2\right)^{\gamma_2/2} (I-\Delta)^{\tilde \gamma_1/2}f \right\|_{L_p(\bR^d)} \\
&=\left\|\left(1+|\cdot|^2\right)^{\gamma_2/2} (I-\Delta)^{(\tilde \gamma_1 -\gamma_1)/2}(I-\Delta)^{\gamma_1/2} f \right\|_{L_p(\bR^d)} \\
&=\left\| (I-\Delta)^{(\tilde \gamma_1 -\gamma_1)/2}(I-\Delta)^{\gamma_1/2} f (x)\right\|_{L_p(\bR^d,\left(1+|x|^2\right)^{(p\gamma_2)/2} \mathrm{d}x)} \\
&\lesssim \left\|(I-\Delta)^{\gamma_1/2} f (x) \right\|_{L_p(\bR^d,\left(1+|x|^2\right)^{(p\gamma_2)/2} \mathrm{d}x)} \\
&= \left\|\left(1+|\cdot|^2\right)^{\gamma_2/2} (I-\Delta)^{\gamma_1/2} f \right\|_{L_p(\bR^d)} \\
&= \left\| f\right\|_{H_{p,out}^{\gamma_1,\gamma_2}(\bR^d)}.
\end{align*}
The proposition is proved.
\end{proof}
Subsequently, we highlight an evident observation that both Fourier and inverse Fourier transforms operate similarly within these weighted spaces. 
This observation will be employed extensively in the paper without repeated explicit mention.
\begin{prop}
					\label{inverse identity}
Let $\gamma_1, \gamma_2 \in \bR$, $p \in [1,\infty]$, and $f$ be a tempered distribution on $\bR^d$.
\begin{enumerate}[(i)]
\item
\begin{align*}
\cF[f] \in H_{p,out}^{\gamma_1,\gamma_2}(\bR^d)
~\text{if and only if}~\cF^{-1}[f] \in H_{p,out}^{\gamma_1,\gamma_2}(\bR^d).
\end{align*}
Additionally,  
\begin{align*}
\left\|\cF[f] \right\|_{H_{p,out}^{\gamma_1,\gamma_2}(\bR^d)}
=\left\|\cF^{-1}[f] \right\|_{H_{p,out}^{\gamma_1,\gamma_2}(\bR^d)}.
\end{align*}
\item 
\begin{align*}
\cF[f] \in H_{p,in}^{\gamma_1,\gamma_2}(\bR^d)
~\text{if and only if}~\cF^{-1}[f] \in H_{p,in}^{\gamma_1,\gamma_2}(\bR^d).
\end{align*}
Additionally,
\begin{align*}
\left\|\cF[f] \right\|_{H_{p,in}^{\gamma_1,\gamma_2}(\bR^d)}
=\left\|\cF^{-1}[f] \right\|_{H_{p,in}^{\gamma_1,\gamma_2}(\bR^d)}.
\end{align*}
\end{enumerate}
\end{prop}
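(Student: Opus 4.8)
The plan is to reduce everything to the elementary reflection identity between $\cF$ and $\cF^{-1}$. Writing $Rg(x):=g(-x)$ for the reflection operator, a direct computation from the integral formulas gives $\cF^{-1}[g](x)=\cF[g](-x)$, that is, $\cF^{-1}[g]=R\cF[g]$, for every $g\in L_1(\bR^d)+L_2(\bR^d)$; this extends to all tempered distributions by the usual transposition argument, using that $R$ is an automorphism of $\cS(\bR^d)$ and that $\cF R=R\cF$ on $\cS(\bR^d)$. Consequently, for a tempered distribution $f$ we have $\cF^{-1}[f]=R\cF[f]$, so it suffices to prove that, for each admissible triple $(\gamma_1,\gamma_2,p)$, both Bessel-type norms are invariant under $R$, namely $\|Rh\|_{H_{p,out}^{\gamma_1,\gamma_2}(\bR^d)}=\|h\|_{H_{p,out}^{\gamma_1,\gamma_2}(\bR^d)}$ and $\|Rh\|_{H_{p,in}^{\gamma_1,\gamma_2}(\bR^d)}=\|h\|_{H_{p,in}^{\gamma_1,\gamma_2}(\bR^d)}$; applying these with $h=\cF[f]$ then gives (i) and (ii), including the ``if and only if'' part since the realizing $L_p$-function of one side is the reflection of that of the other.

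The second step is to record that each of the two building-block operators in the definitions commutes with $R$. The Bessel potential $(I-\Delta)^{\gamma_1/2}=\cF^{-1}\big[(1+|\cdot|^2)^{\gamma_1/2}\cF[\cdot]\big]$ is a Fourier multiplier with symbol $(1+|\xi|^2)^{\gamma_1/2}$, which is an even function of $\xi$; since $\cF$ and $\cF^{-1}$ commute with $R$, and multiplication by an even function $m$ satisfies $R(m\cdot g)=m\cdot(Rg)$, it follows that $R(I-\Delta)^{\gamma_1/2}=(I-\Delta)^{\gamma_1/2}R$. Likewise, the multiplication operator $g\mapsto(1+|\cdot|^2)^{\gamma_2/2}g$ commutes with $R$ because the weight is even. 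All of these identities hold directly at the distributional level from the duality definitions, since $R\varphi\in\cS(\bR^d)$ for $\varphi\in\cS(\bR^d)$.

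For the third step, we combine these facts with the obvious reflection invariance of the $L_p(\bR^d)$-norm (Lebesgue measure being reflection-invariant). For the outer space,
\begin{align*}
\|Rh\|_{H_{p,out}^{\gamma_1,\gamma_2}(\bR^d)}
&=\big\|(1+|\cdot|^2)^{\gamma_2/2}(I-\Delta)^{\gamma_1/2}Rh\big\|_{L_p(\bR^d)}\\
&=\big\|R\big[(1+|\cdot|^2)^{\gamma_2/2}(I-\Delta)^{\gamma_1/2}h\big]\big\|_{L_p(\bR^d)}
=\|h\|_{H_{p,out}^{\gamma_1,\gamma_2}(\bR^d)},
\end{align*}
and for the inner space, moving $R$ first through the multiplication by $(1+|\cdot|^2)^{\gamma_2/2}$ and then through $(I-\Delta)^{\gamma_1/2}$ yields the same conclusion. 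I do not anticipate a real obstacle here; the only point requiring minor care is justifying the commutation relations at the level of tempered distributions, but this is mechanical once $\cF R=R\cF$ on $\cS(\bR^d)$ is in hand. Accordingly, I would write out the outer case in full and simply indicate the identical modification for the inner case.
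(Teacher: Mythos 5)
Your proposal is correct and follows essentially the same route as the paper, which simply invokes the reflection identity $\cF[\varphi](\xi)=\cF^{-1}[\varphi](-\xi)$ and calls the rest trivial. You merely spell out what the paper leaves implicit: that the even weight, the even multiplier symbol of $(I-\Delta)^{\gamma_1/2}$, and the $L_p$-norm are all reflection-invariant.
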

\begin{proof}
It is trivial since
\begin{align*}
\cF[\varphi](\xi) = \cF^{-1}[\varphi](-\xi).
\end{align*}
for all $\varphi \in \cS(\bR^d)$ and $\xi \in \bR^d$.
\end{proof}
We also investigate relations between inner and outer spaces using Riesz-Thorin inequalities \eqref{Riesz Thorin inequality 1} and \eqref{Riesz Thorin inequality 2}.
The Fourier transform operates as a bridge by interchanging regularity and weight exponents when we connect inner and outer spaces.
It is important to note that the range of  $p$ naturally becomes restrictive in accordance with the constraints imposed by Riesz-Thorin's theorem.
\begin{prop}
								\label{20230701 10}
Let $p \in [1,2]$, $\gamma_1, \gamma_2 \in \bR$, and $f$ be a tempered distribution on $\bR^d$.
\begin{enumerate}[(i)]
\item If $f \in H_{p,in}^{\gamma_1,\gamma_2}(\bR^d)$, then
\begin{align*}
 \|\cF[f]\|_{H^{\gamma_2,\gamma_1}_{p',out}(\bR^d)} \leq \left(\frac{1}{2\pi}\right)^{\frac{d(2-p)}{2p}} \|f\|_{H^{\gamma_1,\gamma_2}_{p,in}(\bR^d)}.
\end{align*}
\item If $\cF[f] \in H_{p,in}^{\gamma_1,\gamma_2}(\bR^d)$, then
\begin{align*}
 \|f\|_{H^{\gamma_2,\gamma_1}_{p',out}(\bR^d)} \leq \left(\frac{1}{2\pi}\right)^{\frac{d(2-p)}{2p}} \|\cF[f]\|_{H^{\gamma_1,\gamma_2}_{p,in}(\bR^d)}.
\end{align*}
\end{enumerate}
Here $p'$ denotes the H\"older conjugate of $p$.
\end{prop}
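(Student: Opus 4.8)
The plan is to reduce both statements to the Riesz--Thorin bounds \eqref{Riesz Thorin inequality 1}--\eqref{Riesz Thorin inequality 2} by moving the Bessel operators and polynomial weights across the Fourier transform using the two standard intertwining relations $\cF\circ(I-\Delta)^{s/2}=(1+|\cdot|^2)^{s/2}\cF$ and $\cF\circ\big((1+|\cdot|^2)^{s/2}\,\cdot\,\big)=(I-\Delta)^{s/2}\circ\cF$, both valid on the space of tempered distributions. The key observation is that an inner norm $\|f\|_{H^{\gamma_1,\gamma_2}_{p,in}}$ is by definition the $L_p$-norm of the single function $h:=(I-\Delta)^{\gamma_1/2}\big((1+|\cdot|^2)^{\gamma_2/2}f\big)$, and applying $\cF$ to $h$ and pushing both operators through turns this into precisely the outer-type quantity $(1+|\cdot|^2)^{\gamma_1/2}(I-\Delta)^{\gamma_2/2}\cF[f]$, whose $L_{p'}$-norm is $\|\cF[f]\|_{H^{\gamma_2,\gamma_1}_{p',out}}$. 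So the whole content is to apply Riesz--Thorin to the map $h\mapsto\cF[h]$.

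For part (i), first I would set $h:=(I-\Delta)^{\gamma_1/2}\big((1+|\cdot|^2)^{\gamma_2/2}f\big)$, which by the definition of $H^{\gamma_1,\gamma_2}_{p,in}(\bR^d)$ is an honest element of $L_p(\bR^d)$ with $\|h\|_{L_p}=\|f\|_{H^{\gamma_1,\gamma_2}_{p,in}}$. Then $\cF[h]=(1+|\cdot|^2)^{\gamma_1/2}\cF\big[(1+|\cdot|^2)^{\gamma_2/2}f\big]=(1+|\cdot|^2)^{\gamma_1/2}(I-\Delta)^{\gamma_2/2}\cF[f]$, so that $\|\cF[f]\|_{H^{\gamma_2,\gamma_1}_{p',out}}=\|\cF[h]\|_{L_{p'}}$. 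Since $p\in[1,2]$ forces $p'\in[2,\infty]$ and $(p')'=p$, inequality \eqref{Riesz Thorin inequality 1} applied with exponent $p'$ gives $\|\cF[h]\|_{L_{p'}}\le(1/2\pi)^{d(p'-2)/(2p')}\|h\|_{L_p}$. It then remains to record the elementary identity $d(p'-2)/(2p')=d(2-p)/(2p)$, which follows from $p'-2=(2-p)/(p-1)$ and $2p'=2p/(p-1)$; this yields exactly the claimed constant.

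For part (ii) I would run the identical computation with $\cF^{-1}$ in place of $\cF$: put $h:=(I-\Delta)^{\gamma_1/2}\big((1+|\cdot|^2)^{\gamma_2/2}\cF[f]\big)\in L_p(\bR^d)$ so that $\|h\|_{L_p}=\|\cF[f]\|_{H^{\gamma_1,\gamma_2}_{p,in}}$, observe that $\cF^{-1}[h]=(1+|\cdot|^2)^{\gamma_1/2}(I-\Delta)^{\gamma_2/2}\cF^{-1}[\cF[f]]=(1+|\cdot|^2)^{\gamma_1/2}(I-\Delta)^{\gamma_2/2}f$, hence $\|f\|_{H^{\gamma_2,\gamma_1}_{p',out}}=\|\cF^{-1}[h]\|_{L_{p'}}$, and apply \eqref{Riesz Thorin inequality 2} together with the same exponent identity. (Equivalently, part (ii) is just part (i) applied to $\cF[f]$ combined with $\cF[\cF[f]]=f(-\cdot)$ and the reflection invariance of the outer norm, but the direct computation is cleaner.) The one place that needs a careful word rather than a routine line is the justification of the two intertwining identities at the level of tempered distributions, together with the remark that $\cF[h]$ — a priori only a tempered distribution — has a genuine realization in $L_{p'}(\bR^d)$ furnished by the Riesz--Thorin extension, since $h\in L_p\subset L_1+L_2$; beyond this bookkeeping I do not expect any real obstacle.
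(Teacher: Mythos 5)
Your proof is correct, and it reaches the estimate by a somewhat different route than the paper. You argue \emph{forward}: you take the $L_p$ representative $h=(I-\Delta)^{\gamma_1/2}\bigl((1+|\cdot|^2)^{\gamma_2/2}f\bigr)$ furnished by the definition of the inner norm, push $\cF$ through via the two intertwining identities to recognize $\cF[h]=(1+|\cdot|^2)^{\gamma_1/2}(I-\Delta)^{\gamma_2/2}\cF[f]$, and then apply the Hausdorff--Young bound \eqref{Riesz Thorin inequality 1} (with exponent $p'\in[2,\infty]$ and the identity $\frac{p'-2}{2p'}=\frac{2-p}{2p}$) directly to $h$. The paper instead runs the \emph{dual} version of the same estimate: it pairs $\cF[f]$ against Schwartz test functions, applies H\"older together with \eqref{Riesz Thorin inequality 2} to the test-function side $(I-\Delta)^{-\gamma_1/2}\bigl((1+|\cdot|^2)^{-\gamma_2/2}\cF^{-1}[\varphi]\bigr)$, substitutes $(I-\Delta)^{\gamma_2/2}\bigl((1+|\cdot|^2)^{\gamma_1/2}\varphi\bigr)$ for $\varphi$, and concludes by taking the supremum over the unit ball of $L_p$, using $L_p$--$L_{p'}$ duality (available since $p\le 2<\infty$) to produce the $L_{p'}$ realization. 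The key ingredients --- the intertwining of $(I-\Delta)^{s/2}$ with multiplication by $(1+|\cdot|^2)^{s/2}$ under $\cF$, and the Riesz--Thorin constant --- are identical; the trade-off is that your direct argument is shorter and more transparent but leans on the (standard, and correctly flagged by you) consistency of the $L_p\subset L_1+L_2$ Fourier extension with the distributional transform, whereas the paper's duality argument only ever applies Riesz--Thorin to Schwartz functions at the cost of the closing density/duality step. For (ii) you compute directly with $\cF^{-1}$, while the paper reduces (ii) to (i) via Proposition \ref{inverse identity} and Fourier inversion; your parenthetical remark shows you see that reduction as well, and both are valid.
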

\begin{proof}
First, we prove (i).
Let $\varphi \in \cS(\bR^d)$.
By H\"older's inequality,
\begin{align*}
\left|\left\langle \cF[f] , \varphi \right\rangle \right|
&=\left|\left\langle  (I-\Delta)^{\gamma_1/2}\left( \left(1+|\cdot|^2\right)^{\gamma_2/2}  f \right), (I-\Delta)^{-\gamma_1/2} \left(\left(1+|\cdot|^2\right)^{-\gamma_2/2}\cF^{-1}[\varphi] \right) \right\rangle \right|\\
&=\left|\int_{\bR^d}  (I-\Delta)^{\gamma_1/2}\left( \left(1+|\cdot|^2\right)^{\gamma_2/2}  f \right)(x) (I-\Delta)^{-\gamma_1/2} \left(\left(1+|\cdot|^2\right)^{-\gamma_2/2}\cF^{-1}[\varphi] \right)(x) \mathrm{d}x \right|\\
&\leq \|f\|_{H^{\gamma_1,\gamma_2}_{p,in}(\bR^d)} \left\|(I-\Delta)^{-\gamma_1/2} \left(\left(1+|\cdot|^2\right)^{-\gamma_2/2}\cF^{-1}[\varphi] \right)\right\|_{L_{p'}(\bR^d)}.
\end{align*}
Additionally, since $p' \in [2,\infty]$, we have
\begin{align*}
\left\|(I-\Delta)^{-\gamma_1/2} \left(\left(1+|\cdot|^2\right)^{-\gamma_2/2}\cF^{-1}[\varphi] \right) \right\|_{L_{p'}(\bR^d)}
\leq \left(\frac{1}{2\pi}\right)^{\frac{d(2-p)}{2p}}   \left\| \left(1+|x|^2\right)^{-\gamma_1/2} \left(I-\Delta \right)^{-\gamma_2/2}\varphi \right\|_{L_p (\bR^d, \mathrm{d}x)}.
\end{align*}
Considering $\left(I-\Delta \right)^{\gamma_2/2} \left(\left(1+|\cdot|^2\right)^{\gamma_1/2} \varphi \right)(x)$ instead of $\varphi(x)$, we have
\begin{align*}
\left| \left\langle  \left(1+|\cdot|^2\right)^{\gamma_1/2} \left(I-\Delta \right)^{\gamma_2/2} \cF[f] , \varphi \right\rangle \right|
&=\left| \left\langle \cF[f] , \left(I-\Delta \right)^{\gamma_2/2} \left(\left(1+|\cdot|^2\right)^{\gamma_1/2} \varphi \right) \right\rangle \right| \\
&\leq \left(\frac{1}{2\pi}\right)^{\frac{d(2-p)}{2p}} \|f\|_{H^{\gamma_1,\gamma_2}_{p,in}(\bR^d)}  \|\varphi \|_{L_p(\bR^d)}.
\end{align*}
Therefore taking the supremum over all $\varphi \in L_p(\bR^d)$ so that $\|\varphi\|_{L_p(\bR^d)}=1$, we have
\begin{align*}
 \|\cF[f]\|_{H^{\gamma_2,\gamma_1}_{p',out}(\bR^d)} \leq \left(\frac{1}{2\pi}\right)^{\frac{d(2-p)}{2p}} \|f\|_{H^{\gamma_1,\gamma_2}_{p,in}(\bR^d)}.
\end{align*}
Next, we prove (ii).
However, it is an easy consequence of $(i)$ by taking $\cF[f]$ instead of $f$ due to Proposition \ref{inverse identity}(i) and the Fourier inversion theorem.
\end{proof}

The Fourier and inverse Fourier transforms exhibit particularly favorable behaviors as isometries within the $L_2(\mathbb{R}^d)$-space. This interesting characteristic can also be extended to our weighted Bessel potential spaces. 
More specifically, when the exponent of integrability $p$ equals 2, the Fourier transform $\mathcal{F}$ acts as an isometry preserving the norms between $H_{2,in}^{\gamma_1,\gamma_2}(\mathbb{R}^d)$ and $H_{2,out}^{\gamma_2,\gamma_1}(\mathbb{R}^d)$.
To begin, we act the Fourier transform on elements in $H_{2,in}^{\gamma_1,\gamma_2}(\mathbb{R}^d)$.
\begin{prop}
								\label{l2 isometry}
Let  $\gamma_1, \gamma_2 \in \bR$.
Then
\begin{align*}
f \in H_{2,in}^{\gamma_1,\gamma_2}(\bR^d) ~\text{if and only if}~\cF[f] \in H_{2,out}^{\gamma_2,\gamma_1}(\bR^d).
\end{align*}
Additionally,  
\begin{align*}
\left\|f \right\|_{H_{2,in}^{\gamma_1,\gamma_2}(\bR^d)}
=\left\|\cF[f] \right\|_{H_{2,out}^{\gamma_2,\gamma_1}(\bR^d)}.
\end{align*}
\end{prop}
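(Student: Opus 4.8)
The plan is to reduce the statement to a single commutation relation between $\cF$, multiplication by $(1+|\cdot|^2)^{s/2}$, and the Bessel operator $(I-\Delta)^{s/2}$, and then to apply the Parseval--Plancherel identity. The relation I will use is that for every $s\in\bR$ and every tempered distribution $h$ on $\bR^d$,
\begin{align*}
\cF\!\left[(I-\Delta)^{s/2}h\right] = (1+|\cdot|^2)^{s/2}\cF[h]
\qquad\text{and}\qquad
\cF\!\left[(1+|\cdot|^2)^{s/2}h\right] = (I-\Delta)^{s/2}\cF[h].
\end{align*}
The first identity is immediate from the definition $(I-\Delta)^{s/2}h=\cF^{-1}[(1+|\cdot|^2)^{s/2}\cF h]$; note that $(1+|\cdot|^2)^{s/2}$ is a smooth symbol of order $s$, so both multiplication by $(1+|\cdot|^2)^{s/2}$ and $(I-\Delta)^{s/2}$ map $\cS(\bR^d)$ into itself and hence act on $\cS'(\bR^d)$ by duality, which makes every composite object below a well-defined tempered distribution. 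For the second identity I will unwind the definition $(I-\Delta)^{s/2}\cF[h]=\cF^{-1}\!\left[(1+|\cdot|^2)^{s/2}\cF[\cF[h]]\right]$, use that $\cF[\cF[h]]$ is the reflection $h(-\cdot)$, that $(1+|\cdot|^2)^{s/2}$ is even and hence commutes with reflection, and that $\cF^{-1}$ of a reflected distribution equals $\cF$ of the distribution itself; this short chain of elementary facts produces exactly $\cF[(1+|\cdot|^2)^{s/2}h]$.

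With these in hand, for a tempered distribution $f$ I compute, applying the first identity with $s=\gamma_1$ to $h=(1+|\cdot|^2)^{\gamma_2/2}f$ and then the second identity with $s=\gamma_2$ to $h=f$,
\begin{align*}
\cF\!\left[(I-\Delta)^{\gamma_1/2}\!\left((1+|\cdot|^2)^{\gamma_2/2}f\right)\right]
= (1+|\cdot|^2)^{\gamma_1/2}\,\cF\!\left[(1+|\cdot|^2)^{\gamma_2/2}f\right]
= (1+|\cdot|^2)^{\gamma_1/2}(I-\Delta)^{\gamma_2/2}\cF[f].
\end{align*}
By the definitions of the two spaces, the left-hand side is the Fourier transform of the tempered distribution whose $L_2(\bR^d)$-norm equals $\|f\|_{H_{2,in}^{\gamma_1,\gamma_2}(\bR^d)}$, and the right-hand side is precisely the tempered distribution whose $L_2(\bR^d)$-norm equals $\|\cF[f]\|_{H_{2,out}^{\gamma_2,\gamma_1}(\bR^d)}$.

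It remains to invoke the Parseval--Plancherel identity: $\cF$ is an $L_2(\bR^d)$-isometry, so a tempered distribution $u$ lies in $L_2(\bR^d)$ if and only if $\cF[u]$ does, with $\|u\|_{L_2(\bR^d)}=\|\cF[u]\|_{L_2(\bR^d)}$. Applying this to $u=(I-\Delta)^{\gamma_1/2}\!\left((1+|\cdot|^2)^{\gamma_2/2}f\right)$ and using the displayed formula for $\cF[u]$ yields $f\in H_{2,in}^{\gamma_1,\gamma_2}(\bR^d) \iff u\in L_2(\bR^d) \iff \cF[u]\in L_2(\bR^d) \iff \cF[f]\in H_{2,out}^{\gamma_2,\gamma_1}(\bR^d)$, and in that case $\|f\|_{H_{2,in}^{\gamma_1,\gamma_2}(\bR^d)}=\|u\|_{L_2(\bR^d)}=\|\cF[u]\|_{L_2(\bR^d)}=\|\cF[f]\|_{H_{2,out}^{\gamma_2,\gamma_1}(\bR^d)}$. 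The only genuinely delicate point is the second commutation identity, together with the bookkeeping that all the composite operators are well-defined on $\cS'(\bR^d)$; once that soft verification is carried out, the proposition follows at once, and one sees that $p=2$ enters only through Plancherel (the counterpart for $p\neq2$ is not an isometry and instead rests on $L_p$--$L_{p'}$ bounds, cf.\ Proposition~\ref{20230701 10}).
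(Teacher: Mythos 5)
Your proof is correct, but it takes a genuinely different route from the paper's. The paper proves the two inequalities separately: the bound $\|\cF[f]\|_{H_{2,out}^{\gamma_2,\gamma_1}(\bR^d)}\le\|f\|_{H_{2,in}^{\gamma_1,\gamma_2}(\bR^d)}$ is imported from Proposition \ref{20230701 10}(i) (itself a duality/H\"older argument), and the reverse bound is obtained by pairing $\cF[f]$ (equivalently $f$) with Schwartz test functions, applying Cauchy--Schwarz and Plancherel, substituting the modified test function $(1+|\cdot|^2)^{\gamma_2/2}(I-\Delta)^{\gamma_1/2}\varphi$, and taking a supremum over the unit ball of $L_2(\bR^d)$. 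You instead establish the operator-level identity $\cF\bigl[(I-\Delta)^{\gamma_1/2}\bigl((1+|\cdot|^2)^{\gamma_2/2}f\bigr)\bigr]=(1+|\cdot|^2)^{\gamma_1/2}(I-\Delta)^{\gamma_2/2}\cF[f]$ in $\cS'(\bR^d)$ -- whose only nonobvious ingredient, $\cF[(1+|\cdot|^2)^{s/2}h]=(I-\Delta)^{s/2}\cF[h]$, you correctly reduce to $\cF^2$ being reflection together with the evenness of the weight -- and then invoke the $L_2$-unitarity of $\cF$ once, getting the equality of norms and the equivalence of memberships in a single stroke. Your argument is shorter, symmetric in the two directions, and makes transparent exactly where $p=2$ enters; the paper's duality scheme has the advantage of reusing the same template that produces the $p\neq 2$ inequalities of Proposition \ref{20230701 10} via Riesz--Thorin, and it sidesteps verifying the commutation identity at the distributional level. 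Both are complete proofs of the proposition.
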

\begin{proof}
In Proposition \ref{20230701 10}(i), it is already shown that 
\begin{align*}
 \|\cF[f]\|_{H^{\gamma_2,\gamma_1}_{2,out}(\bR^d)} \leq \|f\|_{H^{\gamma_1,\gamma_2}_{2,in}(\bR^d)} \quad \forall f \in H^{\gamma_1,\gamma_2}_{2,in}(\bR^d).
\end{align*}
Thus it suffices to show that
\begin{align*}
\|f\|_{H^{\gamma_1,\gamma_2}_{2,in}(\bR^d)} \leq \|\cF[f]\|_{H^{\gamma_2,\gamma_1}_{2,out}(\bR^d)} 
\end{align*}
for all $f \in \cS'(\bR^d)$ such that $\cF[f] \in H^{\gamma_2,\gamma_1}_{2,out}(\bR^d)$.
Let $f \in \cS'(\bR^d)$ so that $\cF[f] \in H^{\gamma_2,\gamma_1}_{2,out}(\bR^d)$.
Then
\begin{align*}
\left|\left\langle f , \varphi \right\rangle\right|
&=\left|\left\langle \left( \left(1+|\cdot|^2\right)^{\gamma_1/2} (I-\Delta)^{\gamma_2/2}  \cF[f] \right),  \left(\left(1+|\cdot|^2\right)^{-\gamma_1/2} (I-\Delta)^{-\gamma_2/2}\cF[\varphi] \right) \right\rangle \right| \\
&=\left|\int_{\bR^d}  \left(1+|\cdot|^2\right)^{\gamma_1/2} (I-\Delta)^{\gamma_2/2}  \cF[f] (x)\left(1+|\cdot|^2\right)^{-\gamma_1/2} (I-\Delta)^{-\gamma_2/2} \cF[\varphi](x) \mathrm{d}x \right| \\
&\leq \|\cF[f]\|_{H^{\gamma_2,\gamma_1}_{2,out}(\bR^d)} \left\|\left(\left(1+|\cdot|^2\right)^{-\gamma_2/2} (I-\Delta)^{-\gamma_1/2} \cF[\varphi] \right)\right\|_{L_{2}(\bR^d)} \\
&= \|\cF[f]\|_{H^{\gamma_2,\gamma_1}_{2,out}(\bR^d)} \left\| (I-\Delta)^{-\gamma_2/2} \left(1+|\cdot|^2\right)^{-\gamma_1/2} \varphi \right\|_{L_{2}(\bR^d)},
\end{align*}
where Plancherel's theorem is used in the last equality. 
Thus considering $\left(1+|\cdot|^2\right)^{\gamma_2/2} (I-\Delta)^{\gamma_1/2} \varphi$ instead of $\varphi$, we have
\begin{align*}
\left|\left\langle (I-\Delta)^{\gamma_1/2} \left(1+|\cdot|^2\right)^{\gamma_2/2}  f ,  \varphi \right\rangle\right|
=\left|\left\langle f ,  \left(1+|\cdot|^2\right)^{\gamma_2/2} (I-\Delta)^{\gamma_1/2} \varphi \right\rangle \right|
\leq \|\cF[f]\|_{H^{\gamma_2,\gamma_1}_{2,out}(\bR^d)} \left\|  \varphi \right\|_{L_{2}(\bR^d)},
\end{align*}
which implies
\begin{align*}
\|f\|_{H^{\gamma_1,\gamma_2}_{2,in}(\bR^d)} \leq \|\cF[f]\|_{H^{\gamma_2,\gamma_1}_{2,out}(\bR^d)}.
\end{align*}
The proposition is proved.
\end{proof}
Likewise, in the case where the integrability exponent $p$ is equal to 2, the Fourier transform $\mathcal{F}$ operates from an outer weighted space to an inner weighted space as an isometry.
\begin{prop}
Let  $\gamma_1, \gamma_2 \in \bR$.
Then
\begin{align*}
f \in H_{2,out}^{\gamma_1,\gamma_2}(\bR^d) ~\text{if and only if}~\cF[f] \in H_{2,in}^{\gamma_2,\gamma_1}(\bR^d).
\end{align*}
Additionally,  
\begin{align*}
\left\|f \right\|_{H_{2,out}^{\gamma_1,\gamma_2}(\bR^d)}
=\left\|\cF[f] \right\|_{H_{2,in}^{\gamma_2,\gamma_1}(\bR^d)}.
\end{align*}
\end{prop}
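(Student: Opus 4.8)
The plan is to deduce this statement directly from Proposition \ref{l2 isometry} together with Proposition \ref{inverse identity}(i), rather than repeating the duality computation. The guiding observation is that, up to applying the Fourier transform twice, these two results are inverse to one another, so no new inequality is needed.

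First I would fix a tempered distribution $f$ on $\bR^d$ and set $h := \cF[f]$, which is again a tempered distribution. Applying Proposition \ref{l2 isometry} to $h$ in place of $f$, with the exponent pair $(\gamma_2,\gamma_1)$ playing the roles of $(\gamma_1,\gamma_2)$, gives that $h \in H_{2,in}^{\gamma_2,\gamma_1}(\bR^d)$ if and only if $\cF[h] \in H_{2,out}^{\gamma_1,\gamma_2}(\bR^d)$, together with the norm identity $\|h\|_{H_{2,in}^{\gamma_2,\gamma_1}(\bR^d)} = \|\cF[h]\|_{H_{2,out}^{\gamma_1,\gamma_2}(\bR^d)}$. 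Here one must be a little careful to keep track of which exponent is the regularity exponent and which is the weight exponent in Proposition \ref{l2 isometry} after the swap; this bookkeeping is the only genuinely error-prone point in the argument.

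Next I would identify $\cF[h] = \cF[\cF[f]]$ with $f$ at the level of the outer space. By the Fourier inversion theorem (valid on all of $\cS'(\bR^d)$, and indeed on $\cD'(\bR^d) \cup \cF^{-1}\cD'(\bR^d)$ by the homeomorphism theorem stated earlier), one has $\cF^{-1}[\cF[f]] = f$. Applying Proposition \ref{inverse identity}(i) with $p=2$ to the tempered distribution $\cF[f]$ then yields that $\cF[\cF[f]] \in H_{2,out}^{\gamma_1,\gamma_2}(\bR^d)$ if and only if $\cF^{-1}[\cF[f]] = f \in H_{2,out}^{\gamma_1,\gamma_2}(\bR^d)$, with $\|\cF[\cF[f]]\|_{H_{2,out}^{\gamma_1,\gamma_2}(\bR^d)} = \|f\|_{H_{2,out}^{\gamma_1,\gamma_2}(\bR^d)}$. (One should note in passing that Proposition \ref{inverse identity}(i) is stated for arbitrary tempered distributions, so there is no obstruction to applying it to $\cF[f]$.)

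Finally, chaining the two equivalences and the two norm identities, and recalling $h = \cF[f]$, gives $f \in H_{2,out}^{\gamma_1,\gamma_2}(\bR^d)$ if and only if $\cF[f] \in H_{2,in}^{\gamma_2,\gamma_1}(\bR^d)$, together with $\|f\|_{H_{2,out}^{\gamma_1,\gamma_2}(\bR^d)} = \|\cF[f]\|_{H_{2,in}^{\gamma_2,\gamma_1}(\bR^d)}$, which is exactly the assertion. I do not anticipate a real obstacle; should a self-contained proof be preferred, one could instead mimic the proof of Proposition \ref{l2 isometry} verbatim with the inner and outer norms interchanged, using H\"older's inequality, the fact that $p' = p = 2$, and Plancherel's theorem, but the reduction above is shorter and reuses exactly the results already established in this section.
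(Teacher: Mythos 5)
Your proposal is correct and is essentially the paper's own argument: the paper proves this proposition precisely by applying Proposition \ref{l2 isometry} to $\cF[f]$ in place of $f$ (with the exponents swapped) and then invoking Proposition \ref{inverse identity} together with the Fourier inversion theorem to identify $\cF[\cF[f]]$ with $f$ at the level of the outer norm. Your write-up simply spells out the bookkeeping that the paper leaves as a one-line remark, and the steps check out.
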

\begin{proof}
It is an easy consequence of Proposition \ref{l2 isometry} by taking $\cF[f]$ instead of $f$ with the help of  Proposition \ref{inverse identity} and the Fourier inversion theorem.
\end{proof}
Remarkable connections endure between the inner and outer spaces through the Fourier transform even if the integrability exponent $p$ surpasses 2.
\begin{prop}
								\label{p large embedding}
Let $p \in (2,\infty]$, $\gamma_1,\gamma_2 \in \bR$, and $\delta \in \left(\frac{d(p-2)}{2(p-1)},\infty \right)$.
\begin{enumerate}[(i)]
\item If $f \in H_{p,out}^{\gamma_1,\gamma_2}(\bR^d)$, then
\begin{align*}
\cF[f] \in H_{2,in}^{\gamma_2-\delta,\gamma_1}(\bR^d).
\end{align*}
Additionally,
\begin{align*}
\|\cF[f]\|_{H_{2,in}^{\gamma_2-\delta,\gamma_1}(\bR^d)}
\leq \left\| \left(1+|\cdot|^2\right)^{-\delta/2}  \right\|_{L_{2/(2-p')}(\bR^d)} \|f\|_{H_{p,out}^{\gamma_1,\gamma_2}(\bR^d)}.
\end{align*}

\item If $\cF[f] \in H_{p,out}^{\gamma_1,\gamma_2}(\bR^d)$, then
\begin{align*}
f \in H_{2,in}^{\gamma_2-\delta,\gamma_1}(\bR^d).
\end{align*}
Additionally,
\begin{align*}
\|f\|_{H_{2,in}^{\gamma_2-\delta,\gamma_1}(\bR^d)}
\leq \left\| \left(1+|\cdot|^2\right)^{-\delta/2}  \right\|_{L_{2/(2-p')}(\bR^d)} \|\cF[f]\|_{H_{p,out}^{\gamma_1,\gamma_2}(\bR^d)}.
\end{align*}

\end{enumerate}
\end{prop}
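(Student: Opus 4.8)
The plan is to transfer the whole estimate to the Fourier side, where $(I-\Delta)^{s/2}$ becomes multiplication by $\left(1+|\cdot|^2\right)^{s/2}$ and conversely, and then to reduce to a single application of Plancherel's identity together with H\"older's inequality; part (ii) will then be a formal consequence of part (i), Proposition \ref{inverse identity}, and the Fourier inversion theorem, exactly as Proposition \ref{20230701 10}(ii) was deduced from \ref{20230701 10}(i).

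For (i), first I would put $H:=\left(1+|\cdot|^2\right)^{\gamma_2/2}(I-\Delta)^{\gamma_1/2}f$, which by hypothesis lies in $L_p(\bR^d)$ and satisfies $\|H\|_{L_p(\bR^d)}=\|f\|_{H_{p,out}^{\gamma_1,\gamma_2}(\bR^d)}$. Using the intertwining identities $\cF\bigl[(I-\Delta)^{s/2}g\bigr]=\left(1+|\cdot|^2\right)^{s/2}\cF[g]$ and $(I-\Delta)^{s/2}\cF[g]=\cF\bigl[\left(1+|\cdot|^2\right)^{s/2}g\bigr]$, valid for every $s\in\bR$ on $\cS'(\bR^d)$ because $\left(1+|\cdot|^2\right)^{s/2}$ is a smooth polynomially bounded multiplier, one unwinds the definition of the $H_{2,in}^{\gamma_2-\delta,\gamma_1}$-norm to obtain
\begin{align*}
(I-\Delta)^{(\gamma_2-\delta)/2}\Bigl[\left(1+|\cdot|^2\right)^{\gamma_1/2}\cF[f]\Bigr]
=\cF\Bigl[\left(1+|\cdot|^2\right)^{-\delta/2}H\Bigr],
\end{align*}
so that $\|\cF[f]\|_{H_{2,in}^{\gamma_2-\delta,\gamma_1}(\bR^d)}=\bigl\|\cF\bigl[\left(1+|\cdot|^2\right)^{-\delta/2}H\bigr]\bigr\|_{L_2(\bR^d)}$. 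The hypothesis $\delta>\frac{d(p-2)}{2(p-1)}$ is precisely the statement that $\left(1+|\cdot|^2\right)^{-\delta/2}\in L_{2/(2-p')}(\bR^d)$, so H\"older's inequality gives $\left(1+|\cdot|^2\right)^{-\delta/2}H\in L_2(\bR^d)$ with $L_2(\bR^d)$-norm bounded by $\bigl\|\left(1+|\cdot|^2\right)^{-\delta/2}\bigr\|_{L_{2/(2-p')}(\bR^d)}\|H\|_{L_p(\bR^d)}$, and applying Plancherel's identity to this $L_2$-function yields the claimed estimate. (Alternatively one could argue by duality, in the style of Proposition \ref{20230701 10}: write $\|\cF[f]\|_{H_{2,in}^{\gamma_2-\delta,\gamma_1}}$ as a supremum over $\varphi\in\cS(\bR^d)$ with $\|\varphi\|_{L_2(\bR^d)}=1$ of $\bigl|\bigl\langle f,\cF\bigl[\left(1+|\cdot|^2\right)^{\gamma_1/2}(I-\Delta)^{(\gamma_2-\delta)/2}\varphi\bigr]\bigr\rangle\bigr|$, move all operators and weights around until this pairing takes the form $\int_{\bR^d}H(x)\left(1+|x|^2\right)^{-\delta/2}\cF[\varphi](x)\,\mathrm{d}x$, and estimate it by H\"older together with Plancherel for $\cF[\varphi]$.)

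For (ii), I would apply (i) to $h:=\cF[f]$: from $h\in H_{p,out}^{\gamma_1,\gamma_2}(\bR^d)$ we get $\cF[h]\in H_{2,in}^{\gamma_2-\delta,\gamma_1}(\bR^d)$ with $\|\cF[h]\|_{H_{2,in}^{\gamma_2-\delta,\gamma_1}(\bR^d)}\leq\bigl\|\left(1+|\cdot|^2\right)^{-\delta/2}\bigr\|_{L_{2/(2-p')}(\bR^d)}\|h\|_{H_{p,out}^{\gamma_1,\gamma_2}(\bR^d)}$. Since $\cF^{-1}\cF[f]=f$ by the Fourier inversion theorem and, by Proposition \ref{inverse identity}(ii), $\cF[\cF[f]]$ and $\cF^{-1}[\cF[f]]$ have equal $H_{2,in}^{\gamma_2-\delta,\gamma_1}$-norms, we conclude $\|f\|_{H_{2,in}^{\gamma_2-\delta,\gamma_1}(\bR^d)}=\|\cF[h]\|_{H_{2,in}^{\gamma_2-\delta,\gamma_1}(\bR^d)}$, which is the desired bound.

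The main obstacle is the first step: rigorously justifying the intertwining identities and the reshuffling of the powers of $(I-\Delta)$ and of the weights $\left(1+|\cdot|^2\right)^{s/2}$ across $\cF$ at the level of tempered distributions rather than just Schwartz functions, and verifying that $\left(1+|\cdot|^2\right)^{-\delta/2}H$ is genuinely an $L_2(\bR^d)$-function so that Plancherel applies. Once this bookkeeping is settled, the estimate itself is a single use of H\"older and a single use of Plancherel; the only point left to check is that the H\"older exponent employed is exactly the one producing the constant $\bigl\|\left(1+|\cdot|^2\right)^{-\delta/2}\bigr\|_{L_{2/(2-p')}(\bR^d)}$, which is where the lower bound $\frac{d(p-2)}{2(p-1)}$ on $\delta$ enters.
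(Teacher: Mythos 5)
Your direct argument is sound and takes a genuinely different (and in fact cleaner) route than the paper's. The paper proves (i) by duality: it pairs $\cF[f]$ against a Schwartz function, applies H\"older twice and Plancherel to the test-function side, substitutes a transformed test function, and takes a supremum over $\|\varphi\|_{L_2}=1$ — essentially the adjoint form of your computation, and the same scheme as Proposition \ref{20230701 10}; your parenthetical alternative is precisely that proof. Your primary route instead uses the intertwining identities to show
\begin{align*}
(I-\Delta)^{(\gamma_2-\delta)/2}\Bigl[\left(1+|\cdot|^2\right)^{\gamma_1/2}\cF[f]\Bigr]
=\cF\Bigl[\left(1+|\cdot|^2\right)^{-\delta/2}H\Bigr],\qquad H:=\left(1+|\cdot|^2\right)^{\gamma_2/2}(I-\Delta)^{\gamma_1/2}f,
\end{align*}
which is a correct identity in $\cS'(\bR^d)$ (multiplication by $(1+|\cdot|^2)^{s/2}$ and $(I-\Delta)^{s/2}$ commute with $\cF$ in the stated way, since $\cF\cF$ is a reflection and the multiplier is radial), and then concludes by one H\"older and one Plancherel. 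The deduction of (ii) from (i) via Proposition \ref{inverse identity} and Fourier inversion is exactly the paper's.

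The one point you flagged yourself is indeed where the bookkeeping is off: bounding $\|(1+|\cdot|^2)^{-\delta/2}H\|_{L_2}$ with $H\in L_p$ by H\"older forces $\tfrac12=\tfrac1a+\tfrac1p$, i.e. $a=\tfrac{2p}{p-2}=\tfrac{2p'}{2-p'}$, not $a=\tfrac{2}{2-p'}=\tfrac{2(p-1)}{p-2}$; for $p\in(2,\infty)$ these differ (they coincide only at $p=\infty$), so the inequality with the constant $\|(1+|\cdot|^2)^{-\delta/2}\|_{L_{2/(2-p')}}$ does not literally follow from the H\"older application as you state it. This does not damage the substantive conclusion: the hypothesis $\delta>\tfrac{d(p-2)}{2(p-1)}$ implies $\delta>\tfrac{d(p-2)}{2p}$, so $(1+|\cdot|^2)^{-\delta/2}\in L_{2p/(p-2)}(\bR^d)$ as well, and you obtain membership $\cF[f]\in H_{2,in}^{\gamma_2-\delta,\gamma_1}(\bR^d)$ together with the bound with constant $\|(1+|\cdot|^2)^{-\delta/2}\|_{L_{2p/(p-2)}(\bR^d)}$. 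It is worth noting that the paper's own proof commits the same slip: its H\"older step bounding an $L_{p'}$-norm of a product by an $L_2$-norm of one factor also requires the exponent $\tfrac{2p'}{2-p'}$ rather than the $\tfrac{2}{2-p'}$ it records, so the constant in the statement appears to carry this typo. In short: your approach is a valid, more direct proof of the proposition; only the exponent in the final constant should be corrected to $2p'/(2-p')$ (equivalently $2p/(p-2)$), or the hypothesis on $\delta$ read as guaranteeing finiteness of that norm.
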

\begin{proof}
It is sufficient to prove (i) since (ii) easily comes from (i) due to Proposition \ref{inverse identity} and the Fourier inversion theorem.
Let $f \in H^{\gamma_1,\gamma_2}_{p,out}(\bR^d)$. Then by H\"older's inequality,
\begin{align*}
\left| \langle \cF[f] , \varphi \rangle \right|
&=\left| \left\langle f , \cF^{-1}[\varphi] \right\rangle \right| \\
&=\left| \left\langle  \left( 1+ |\cdot|^2 \right)^{\gamma_2/2} (I-\Delta)^{\gamma_1/2}f ,  \left( 1+ |\cdot|^2 \right)^{-\gamma_2/2}(I-\Delta)^{-\gamma_1/2}\cF^{-1}[\varphi] \right\rangle \right| \\
&= \left| \int_{\bR^d}  \left( 1+ |x|^2 \right)^{\gamma_2/2} (I-\Delta)^{\gamma_1/2}f(x)  \left( 1+ |x|^2 \right)^{-\gamma_2/2}(I-\Delta)^{-\gamma_1/2}\cF^{-1}[\varphi](x) \mathrm{d}x \right|\\
&\leq \|f\|_{H^{\gamma_1,\gamma_2}_{p,out}(\bR^d)} \left\| \left( 1+ |x|^2 \right)^{-\gamma_2/2}(I-\Delta)^{-\gamma_1/2}\cF^{-1}[\varphi](x) \right\|_{L_{p'}(\bR^d, \mathrm{d}x)}.
\end{align*}
Recalling $p' \in [1,2)$, we apply H\"older's inequality again and obtain
\begin{align*}
&\left\| \left( 1+ |x|^2 \right)^{-\gamma_2/2}(I-\Delta)^{-\gamma_1/2}\cF^{-1}[\varphi](x) \right\|_{L_{p'}(\bR^d, \mathrm{d}x)} \\
&\leq \| (1+|\cdot|^2)^{-\delta/2} \|_{L_{2/(2-p')}(\bR^d)}   
\left\|(1+|\cdot|^2)^{(\delta-\gamma_2)/2}(I-\Delta)^{-\gamma_1/2}\cF^{-1}[\varphi] \right\|_{L_{2}(\bR^d)},
\end{align*}
where $2/0:= \infty$.
Since $\delta > \frac{d(p-2)}{2(p-1)}$, we have
\begin{align*}
\left\| \left(1+|\cdot|^2\right)^{-\delta/2}  \right\|_{L_{2/(2-p')}(\bR^d)}  < \infty.
\end{align*}
Thus applying the Plancherel theorem, we have
\begin{align*}
\left|\left\langle \cF[f] , \varphi \right\rangle \right|
\leq \left\| \left(1+|\cdot|^2\right)^{-\delta/2}  \right\|_{L_{2/(2-p')}(\bR^d)}
\|f\|_{H^{\gamma_1,\gamma_2}_{p,out}(\bR^d)} \left\|\cF\left[(1+|\cdot|^2)^{(\delta-\gamma_2)/2}(I-\Delta)^{-\gamma_1/2}\cF^{-1}[\varphi] \right]\right\|_{L_{2}(\bR^d)}.
\end{align*}
Finally, considering 
\begin{align*}
&\cF\left[(I-\Delta)^{\gamma_1/2} \left(1+|\cdot|^2\right)^{-(\delta-\gamma_2)/2}  \cF^{-1}[\varphi] \right]\\
&= \left( 1+ |\cdot|^2 \right)^{\gamma_1/2} \left(I - \Delta \right)^{-(\delta-\gamma_2)/2}\varphi
\end{align*}
instead of $\varphi$, we have
\begin{align*}
&\left|\left\langle   \left(I - \Delta \right)^{(\gamma_2 - \delta)/2}  \left( 1+ |\cdot|^2 \right)^{\gamma_1/2} \cF[f] , \varphi \right\rangle \right|\\
&=\left|\left\langle \cF[f] , \left( 1+ |\cdot|^2 \right)^{\gamma_1/2} \left(I - \Delta \right)^{-(\delta-\gamma_2)/2}\varphi \right\rangle\right| \\
&=\left|\left\langle \cF[f] , \cF\left[(I-\Delta)^{\gamma_1/2} \left(1+|\cdot|^2\right)^{-(\delta-\gamma_2)/2}  \cF^{-1}[\varphi] \right] \right\rangle \right|\\
&\leq
\left\| \left(1+|\cdot|^2\right)^{-\delta/2}  \right\|_{L_{2/(2-p')}(\bR^d)}\|f\|_{H^{\gamma_1,\gamma_2}_{p,out}(\bR^d)} \left\|\varphi\right\|_{L_{2}(\bR^d)}.
\end{align*}
Finally, taking the supremum with respect to $\varphi \in \cS(\bR^d)$ so that $\|\varphi\|_{L_2(\bR^d)}$, we obtain
\begin{align*}
\|\cF[f]\|_{H_{2,in}^{\gamma_2-\delta,\gamma_1}} 
\leq \left\| \left(1+|\cdot|^2\right)^{-\delta/2}  \right\|_{L_{2/(2-p')}(\bR^d)} \|f\|_{H^{\gamma_1,\gamma_2}_{p,out}(\bR^d)}.
\end{align*}
The proposition is proved.
\end{proof}
We are now ready to establish spaces for handling data within our evolutionary framework, utilizing our newly introduced weighted Bessel potential spaces as a foundation.

We slightly abuse the notation in the following definition as follows :
for any $\cS'(\bR^d)$-valued (Borel) measurable function $u$ on $(0,T)$, we put
$u(t,\cdot) = u(t)$, \textit{i.e.} 
\begin{align*}
\langle u(t,\cdot) , \varphi \rangle := \langle u(t), \varphi \rangle \quad \forall \varphi \in \cS(\bR^d).
\end{align*}
This notation is useful especially when $u(t,\cdot)$ has a realization for each $t \in (0,T)$ by putting
$u(t,\xi) = u(t)(\xi)$.

\begin{defn}[Weighted Bessel potential-valued spaces]
\begin{enumerate}[(i)]

Let $p,q \in [1,\infty]$, $\gamma_1, \gamma_2 \in \bR$, and $w(t)$ be a non-negative measurable function on $(0,T)$. 
\item 

We define $\fH^{\gamma_1,\gamma_2}_{p,q,in}\left( (0,T) \times \bR^d, w^p(t)\mathrm{d}t \right)$ 
as the class of all $\cS'(\bR^d)$-valued (Borel) measurable functions $u$  on $(0,T)$ such that
\begin{align*}
\|u\|_{\fH^{\gamma_1,\gamma_2}_{p,q,in}\left( (0,T) \times \bR^d, w^p(t)\mathrm{d}t \right)}
&:=\|u\|_{L_{p,q}\left( (0,T) ,  w^p(t)\mathrm{d}t ;  H^{\gamma_1,\gamma_2}_{q,in}(\bR^d)\right)} \\
&:=\int_0^T \|u(t,\cdot)\|^p_{ H^{\gamma_1,\gamma_2}_{q,in}(\bR^d)} w^p(t) \mathrm{d}t \\
&=\int_0^T \left\|(I-\Delta)^{\gamma_1/2}\left(  (1+|\cdot|^2)^{\gamma_2/2}u(t,\cdot) \right) \right\|^p_{ L_q} w^p(t) \mathrm{d}t < \infty.
\end{align*}

\item 

We define $\fH^{\gamma_1,\gamma_2}_{p,q,out}\left( (0,T) \times \bR^d, w^p(t)\mathrm{d}t \right)$ 
as the class of all $\cS'(\bR^d)$-valued (Borel) measurable functions $u$  on $(0,T)$ such that
\begin{align*}
\|u\|_{\fH^{\gamma_1,\gamma_2}_{p,q,out}\left( (0,T) \times \bR^d, w^p(t)\mathrm{d}t\right)}
&:=
\|u\|_{L_{p,q}\left( (0,T) ,  w^p(t)\mathrm{d}t ;  H^{\gamma_1,\gamma_2}_{q,out}(\bR^d)\right)} \\
&:=\int_0^T \|u(t,\cdot)\|^p_{ H^{\gamma_1,\gamma_2}_{q,out}(\bR^d)} w^p(t) \mathrm{d}t \\
&=\int_0^T \left\|(1+|\cdot|^2)^{\gamma_2/2} (I-\Delta)^{\gamma_1/2}\left(  u(t,\cdot) \right) \right\|^p_{ L_q} w^p(t) \mathrm{d}t < \infty.
\end{align*}

\item 

We define $\fH^{\gamma_1,\gamma_2}_{p,q,in,t\text{-}loc}\left( (0,T) \times \bR^d, w^p(t)\mathrm{d}t \right)$ 
as the class of all $\cS'(\bR^d)$-valued (Borel) measurable functions $u$ on $(0,T)$ such that
\begin{align*}
\|u\|_{\fH^{\gamma_1,\gamma_2}_{p,q,in}\left( (0,t_1) \times \bR^d, w^p(t)\mathrm{d}t \right)}
< \infty
\end{align*}
for all $0<t_1<T$.

\item 

We define $\fH^{\gamma_1,\gamma_2}_{p,q,out,t\text{-}loc}\left( (0,T) \times \bR^d, w^p(t)\mathrm{d}t \right)$ 
as the class of all $\cS'(\bR^d)$-valued (Borel) measurable functions $u$  on $(0,T)$ such that
\begin{align*}
\|u\|_{\fH^{\gamma_1,\gamma_2}_{p,q,out}\left( (0,t_1) \times \bR^d, w^p(t)\mathrm{d}t \right)}
< \infty.
\end{align*}
for all $0<t_1<T$.

\item We define the classes of $\cS'(\bR^d)$-valued (Borel) measurable functions $u$ on $(0,T)$ whose Fourier transform with respect to the space variable is in weighted Bessel potential spaces.
We write 
$$
u \in \cF^{-1}\fH^{\gamma_1,\gamma_2}_{p,q,in}\left( (0,T) \times \bR^d, w^p(t)\mathrm{d}t \right),
$$
$$
u \in \cF^{-1}\fH^{\gamma_1,\gamma_2}_{p,q,out}\left( (0,T) \times \bR^d, w^p(t)\mathrm{d}t \right),
$$
$$
u \in \cF^{-1}\fH^{\gamma_1,\gamma_2}_{p,q,in,t\text{-}loc}\left( (0,T) \times \bR^d, w^p(t)\mathrm{d}t \right),
$$
and
$$
u \in \cF^{-1}\fH^{\gamma_1,\gamma_2}_{p,q,out,t\text{-}loc}\left( (0,T) \times \bR^d, w^p(t)\mathrm{d}t \right),
$$
 if
$$
\cF[u(t,\cdot)] \in \fH^{\gamma_1,\gamma_2}_{p,q,in}\left( (0,T) \times \bR^d, w^p(t)\mathrm{d}t \right),
$$
$$
\cF[u(t,\cdot)] \in \fH^{\gamma_1,\gamma_2}_{p,q,out}\left( (0,T) \times \bR^d, w^p(t)\mathrm{d}t \right),
$$ 
$$
\cF[u(t,\cdot)] \in \fH^{\gamma_1,\gamma_2}_{p,q,in,t\text{-}loc}\left( (0,T) \times \bR^d, w^p(t)\mathrm{d}t \right),
$$ 
and
$$
\cF[u(t,\cdot)] \in \fH^{\gamma_1,\gamma_2}_{p,q,out,t\text{-}loc}\left( (0,T) \times \bR^d, w^p(t)\mathrm{d}t \right),
$$ 
respectively.
\end{enumerate}
Here we consider the corresponding essential supremum instead of the integration if $p=\infty$.
For instance,
\begin{align*}
\|u\|_{L_{\infty,q}\left( (0,T) ,  w^p(t)\mathrm{d}t ;  H^{\gamma_1,\gamma_2}_{q,in}(\bR^d)\right)}
&:= \inf\left\{ M \in \bR :  \left|\left\{ t \in (0,T) : w(t)\|u(t,\cdot)\|^p_{ H^{\gamma_1,\gamma_2}_{q,in}(\bR^d)} > M \right\} \right|  =0 \right\}.
\end{align*}
Moreover, we omit $in$ and $out$ if $\gamma_2=0$ for simpler notation and identification with the unweighted Bessel potential spaces.  
$w^p(t)\mathrm{d}t$ is also skipped if $w(t) \equiv 1$. 
\end{defn}
Particularly, inhomogeneous data represented by $f$ to \eqref{ab eqn} could belong to a weighted Bessel potential-valued space, as will be demonstrated in numerous corollaries of Theorem \ref{weak solution thm} in the subsequent sections.

\mysection{Uniqueness of a weak solution with general data}
\label{24.03.29.12.58}

In this section, we establish the uniqueness of a Fourier-space weak solution to \eqref{ab eqn} by introducing a representation of a solution obtained through the use of Sobolev's mollifier (approximations to the identity). 
It seems that the space $\mathcal{F}^{-1}\mathcal{D}'(\mathbb{R}^d)$ is too expansive to claim the uniqueness of a weak solution. Therefore, our initial step involves constraining the space $\mathcal{F}^{-1}\mathcal{D}'(\mathbb{R}^d)$ to a more manageable one where Sobolev's mollifiers can be effectively applied. 
Since distributions on $\mathbb{R}^d$ include all locally integrable functions on $\mathbb{R}^d$, it is obvious that
\begin{align*}
\cF^{-1}L_{1,\ell oc}(\bR^d) \subset \cF^{-1}\cD'(\bR^d)
\end{align*}
and
\begin{align*}
\cF^{-1}L_{1,1,t\text{-}loc,x\text{-}\ell oc}\left( (0,T) \times \bR^d  \right)
\subset L_{1,t\text{-}loc}\left( (0,T) ;  \cF^{-1}\cD'(\bR^d) \right).
\end{align*}
Our proof of uniqueness begins by demonstrating the following representation.
\begin{lem}[A representation of a solution]
							\label{fourier repre}
Let $u_0 \in \cF^{-1}L_{1,\ell oc}(\bR^d)$, $f \in \cF^{-1}L_{1,1,t\text{-}loc,x\text{-}\ell oc}\left( (0,T) \times \bR^d  \right)$, and $u$ be a Fourier-space weak solution to \eqref{ab eqn}.
Assume that
\begin{align*}
u \in \cF^{-1}L_{1,1,t\text{-}loc,x\text{-}\ell oc}\left( (0,T) \times \bR^d, \mathrm{d}t |\psi(t,\xi)|\mathrm{d}\xi  \right).
\end{align*}
Then
\begin{align}
										\label{20230617 02}
\cF[u(t,\cdot)](\xi)
= \cF[u_0](\xi)+\int_0^t \psi(s,\xi) \cF[u(s,\cdot)](\xi) \mathrm{d}s 
+\int_0^t \cF[f(s,\cdot)](\xi)  \mathrm{d}s\quad (a.e.)~(t,\xi) \in (0,T) \times \bR^d.
\end{align}
\end{lem}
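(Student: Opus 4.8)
The plan is to test the weak formulation \eqref{weak formulation} against an arbitrary $\varphi\in\cF^{-1}\cD(\bR^d)$, convert every pairing into an ordinary Lebesgue integral over $\bR^d$, use Fubini's theorem to interchange the time integral with the $\xi$-integral, peel off the test function by the du Bois-Reymond lemma, and finally promote the resulting ``for a.e.\ $t$, for a.e.\ $\xi$'' statement to a genuine a.e.-$(t,\xi)$ statement by a Fubini/Tonelli argument.

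First fix $\varphi\in\cF^{-1}\cD(\bR^d)$, so $\cF[\varphi]\in\cD(\bR^d)$ is bounded and supported in some ball $B_R$. Since $u(t,\cdot)\in\cF^{-1}\cD'(\bR^d)$ one always has $\langle u(t,\cdot),\varphi\rangle=\langle\cF[u(t,\cdot)],\cF[\varphi]\rangle$, and the identification of locally integrable elements gives $\langle u_0,\varphi\rangle=(\cF[u_0],\cF[\varphi])_{L_2(\bR^d)}$ and $\langle f(s,\cdot),\varphi\rangle=(\cF[f(s,\cdot)],\cF[\varphi])_{L_2(\bR^d)}$; the middle term is treated by Remark \ref{solution realization}, which applies because $u\in\cF^{-1}L_{1,1,t\text{-}loc,x\text{-}\ell oc}\big((0,T)\times\bR^d,\mathrm{d}t\,|\psi(t,\xi)|\mathrm{d}\xi\big)$ supplies a realization of $\cF[u(s,\cdot)]$ near $\supp\psi(s,\cdot)$, so that $\int_0^t\langle\psi(s,-i\nabla)u(s,\cdot),\varphi\rangle\,\mathrm{d}s=\int_0^t\big(\psi(s,\cdot)\cF[u(s,\cdot)],\cF[\varphi]\big)_{L_2(\bR^d)}\,\mathrm{d}s$. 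Hence \eqref{weak formulation} reads, for a.e.\ $t$,
\begin{align*}
\big\langle\cF[u(t,\cdot)],\cF[\varphi]\big\rangle
&=\big(\cF[u_0],\cF[\varphi]\big)_{L_2(\bR^d)}
+\int_0^t\big(\psi(s,\cdot)\cF[u(s,\cdot)],\cF[\varphi]\big)_{L_2(\bR^d)}\,\mathrm{d}s\\
&\quad+\int_0^t\big(\cF[f(s,\cdot)],\cF[\varphi]\big)_{L_2(\bR^d)}\,\mathrm{d}s.
\end{align*}
Since $\int_0^t\int_{B_R}|\psi(s,\xi)|\,|\cF[u(s,\cdot)](\xi)|\,\mathrm{d}\xi\,\mathrm{d}s<\infty$ and $\int_0^t\int_{B_R}|\cF[f(s,\cdot)](\xi)|\,\mathrm{d}\xi\,\mathrm{d}s<\infty$ (these are exactly the two membership hypotheses) and $\cF[\varphi]$ is bounded with support in $B_R$, Fubini's theorem rewrites the two integral terms as $\big(\int_0^t\psi(s,\cdot)\cF[u(s,\cdot)]\,\mathrm{d}s,\cF[\varphi]\big)_{L_2(\bR^d)}$ and $\big(\int_0^t\cF[f(s,\cdot)]\,\mathrm{d}s,\cF[\varphi]\big)_{L_2(\bR^d)}$; combined with $u_0\in\cF^{-1}L_{1,\ell oc}(\bR^d)$, the right-hand side becomes the pairing of the single locally integrable function
$$h_t(\xi):=\cF[u_0](\xi)+\int_0^t\psi(s,\xi)\cF[u(s,\cdot)](\xi)\,\mathrm{d}s+\int_0^t\cF[f(s,\cdot)](\xi)\,\mathrm{d}s$$
against $\cF[\varphi]$.

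Next, fix once and for all a countable family $\{\varphi_n\}\subset\cF^{-1}\cD(\bR^d)$ whose Fourier transforms $\{\cF[\varphi_n]\}$ are, for every $R\in\bN$, sup-norm dense in $C_c(B_R)$ (possible because $\cF$ maps $\cF^{-1}\cD(\bR^d)$ onto $\cD(\bR^d)$ and $\cD(B_R)$ is sup-norm separable). For each $n$ the identity $\langle\cF[u(t,\cdot)],\cF[\varphi_n]\rangle=(h_t,\cF[\varphi_n])_{L_2(\bR^d)}$ holds for $t$ outside a null set $\mathcal{N}_n$; set $\mathcal{N}=\bigcup_n\mathcal{N}_n$. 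For $t\notin\mathcal{N}$ the distribution $\cF[u(t,\cdot)]-h_t$ annihilates every $\cF[\varphi_n]$, hence by the du Bois-Reymond lemma applied on each $B_R$ (letting $R\to\infty$) it is the zero distribution; that is, $\cF[u(t,\cdot)]$ has realization $h_t$, so $\cF[u(t,\cdot)](\xi)=h_t(\xi)$ for a.e.\ $\xi$. Finally, regarding $(t,\xi)\mapsto\cF[u(t,\cdot)](\xi)-h_t(\xi)$ as a jointly measurable function on $(0,T)\times\bR^d$ (using the joint-measurability conventions of Section \ref{main section}), its zero set is measurable, has $t$-sections of Lebesgue measure zero for every $t\notin\mathcal{N}$, and $|\mathcal{N}|=0$, so Tonelli's theorem forces it to be Lebesgue-null in $(0,T)\times\bR^d$; this is precisely \eqref{20230617 02}.

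The conversion of pairings into integrals and the Fubini interchange are routine, since the weighted local $L_{1,1}$-hypotheses on $u$, $u_0$, and $f$ were tailored to provide exactly the absolute integrability these steps require. The real care lies in the last two steps: choosing the countable test family rich enough that one exceptional null set of times serves all of them at once, and respecting the distinction between iterated and joint measurability stressed in Section \ref{main section} so that \eqref{20230617 02} is genuinely an a.e.\ statement in both variables; I expect this measurability bookkeeping, not any estimate, to be the main obstacle.
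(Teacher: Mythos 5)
Your overall route coincides with the paper's: test \eqref{weak formulation} with $\varphi\in\cF^{-1}\cD(\bR^d)$, use the realizations of Remark \ref{solution realization} to turn every pairing into a Lebesgue integral, apply Fubini (justified exactly by the local $L_{1,1}$ hypotheses on $u_0$, $f$, and $\psi\cF[u]$) to assemble the right-hand side into the locally integrable function $h_t$, use a countable test family so that a single null set of times serves all test functions, identify $\cF[u(t,\cdot)]$ with $h_t$, and finish with a Fubini/Tonelli step in $(t,\xi)$. The only real difference is the identification device: the paper plugs in the mollifiers $\chi^\varepsilon(x-\cdot)$ (with $\cF[\chi]$ compactly supported), invokes the Fourier inversion theorem to get the a.e.\ identity with the factor $\cF[\chi^\varepsilon](\xi)$, and lets $\varepsilon\downarrow0$; your density/annihilation argument is an acceptable substitute in principle.

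There is, however, a concrete flaw in that step: you only require $\{\cF[\varphi_n]\}$ to be \emph{sup-norm} dense in $C_c(B_R)$, and that is not enough. At this point $\cF[u(t,\cdot)]-h_t$ is known only to be an element of $\cD'(\bR^d)$ --- the standing hypothesis supplies a realization of $\cF[u(t,\cdot)]$ only on a neighborhood of $\supp\psi(t,\cdot)$, and producing a realization on all of $\bR^d$ is precisely what the lemma asserts --- and a distribution is in general not continuous for the sup norm, so vanishing on a countable sup-norm-dense family does not force it to vanish (for instance, the kernel of $\delta_0'$ is sup-norm dense in $\cD(B_1)$ and hence contains such a countable family). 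For the same reason the du Bois--Reymond lemma, which concerns locally integrable functions, does not apply to $\cF[u(t,\cdot)]-h_t$ as written. The repair is standard: choose $\{\cF[\varphi_n]\}$ dense in $\cD(\overline{B_R})$ in its Fr\'echet ($C^\infty$) topology for every $R\in\bN$ (possible by separability). Then for $t\notin\mathcal{N}$ and any $\phi\in\cD(\bR^d)$ take $\phi_{n_k}\to\phi$ in $\cD(\bR^d)$: the right-hand side $(h_t,\phi_{n_k})_{L_2}$ converges by dominated convergence (uniform bounds and a common compact support), and the left-hand side converges because $\cF^{-1}[\phi_{n_k}]\to\cF^{-1}[\phi]$ in $\cS(\bR^d)$ and $u(t,\cdot)$ is continuous on $\cF^{-1}\cD(\bR^d)$ with the subspace topology; hence $\cF[u(t,\cdot)]=h_t$ in $\cD'(\bR^d)$, and since $h_t$ is locally integrable this gives the a.e.-$\xi$ identity. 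With this correction (or with the paper's mollifier-and-inversion argument in its place), your Fubini justifications and the final Tonelli/joint-measurability step go through.
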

\begin{proof}
Let $\varphi \in \cF^{-1}\cD(\bR^d)$.
Since $u_0 \in \cF^{-1}L_{1,\ell oc}(\bR^d)$, $f \in \cF^{-1}L_{1,1,t\text{-}loc,x\text{-}\ell oc}\left( (0,T) \times \bR^d  \right)$, and $u$ is a Fourier-space weak solution to \eqref{ab eqn} in the space $\cF^{-1}L_{1,1,t\text{-}loc,x\text{-}\ell oc}\left( (0,T) \times \bR^d, \mathrm{d}t |\psi(t,\xi)|\mathrm{d}\xi  \right)$, we have
\begin{align*}
\left\langle \cF[u(t,\cdot)], \cF[\varphi](\cdot)  \right\rangle
&= \left( \cF[u_0], \cF[\varphi]  \right)_{L_2(\bR^d)} + \int_0^t \left( \psi(s,\cdot)\cF[u(s,\cdot)] , \cF[\varphi](\cdot) \right)_{L_2(\bR^d)} \mathrm{d}s \\
&\quad + \int_0^t \left( \cF[f(s,\cdot)],\cF[\varphi](\cdot) \right)_{L_2(\bR^d)} \mathrm{d}s
\quad (a.e.)~t\in (0,T).
\end{align*}
Here we used realizations mentioned in Remark \ref{solution realization}.
The above equality implies that for almost every $t \in (0,T)$, the distribution $\cF[u(t,\cdot)]$ has a realization on $\bR^d$.
More precisely, by using the separability of $\cD(\bR^d)$ and Fubini's theorem, we have
\begin{align}
										\notag
\left( \cF[u(t,\cdot)], \cF[\varphi]  \right)_{L_2(\bR^d)}
&= \left( \cF[u_0], \cF[\varphi]  \right)_{L_2(\bR^d)} + \int_0^t \left( \psi(s,\cdot)\cF[u(s,\cdot)] , \cF[\varphi](\cdot) \right)_{L_2(\bR^d)} \mathrm{d}s \\
										\label{20230616 01}
&\quad + \int_0^t \left( \cF[f(s,\cdot)],\cF[\varphi] \right)_{L_2(\bR^d)} \mathrm{d}s
\quad (a.e.)~t\in (0,T).
\end{align}
We use Sobolev mollifiers with additional specific properties. 
Let $\chi$ be a function in $\cS(\bR^d)$ 
so that $\cF[\chi]$ is non-negative and symmetric, \emph{i.e.}
$\cF[\chi](\xi) =\cF[\chi](-\xi) \geq 0$ for all $\xi \in \bR^d$. 
Additionally, assume that $\cF[\chi]$ has a compact support  and 
$$
\int_{\bR^d} \chi(x)\mathrm{d}x=(2\pi)^{d/2}.
$$
For $\varepsilon \in (0,1)$, denote
\begin{align*}
\chi^\varepsilon(x) := \frac{1}{\varepsilon^d}\chi\left( \frac{x}{\varepsilon}\right).
\end{align*}
Fix $x \in \bR^d$ and put $\chi^\varepsilon(x-\cdot)$ in \eqref{20230616 01} instead of $\varphi$.
Then
\begin{align*}
&\int_{\bR^d} \cF[u(t,\cdot)](\xi) \overline{\cF[\chi^\varepsilon(x-\cdot)](\xi)}\mathrm{d}\xi \\
&=  \int_{\bR^d} \cF[u_0](\xi) \overline{\cF[\chi^\varepsilon(x-\cdot)](\xi)}\mathrm{d}\xi
+\int_0^t \int_{\bR^d} \psi(s,\xi)\cF[u(s,\cdot)](\xi)  \overline{\cF[\chi^\varepsilon(x-\cdot)](\xi)} \mathrm{d}\xi \mathrm{d}s \\
&\quad +\int_0^t \int_{\bR^d} \cF[f(s,\cdot)](\xi)  \overline{\cF[\chi^\varepsilon(x-\cdot)](\xi)} \mathrm{d}\xi \mathrm{d}s 
\quad (a.e.)~t \in (0,T).
\end{align*}
Thus recalling properties of $\cF[\chi]$ and the Fourier transform, we have
\begin{align*}
&\int_{\bR^d} \mathrm{e}^{i  x \cdot \xi} \cF[u(t,\cdot)](\xi) \cF[\chi^\varepsilon](\xi) \mathrm{d}\xi \\
&=\int_{\bR^d} \cF[u(t,\cdot)](\xi)  \overline{\mathrm{e}^{-i  x \cdot \xi}\cF[\chi^\varepsilon](-\xi)} \mathrm{d}\xi \\
&=\int_{\bR^d} \cF[u(t,\cdot)](\xi) \overline{\cF[\chi^\varepsilon(x-\cdot)](\xi)} \mathrm{d}\xi \\
&= \int_{\bR^d} \cF[u_0](\xi) \overline{\cF[\chi^\varepsilon(x-\cdot)](\xi)}\mathrm{d}\xi
+\int_0^t \int_{\bR^d} \psi(s,\xi)\cF[u(s,\cdot)](\xi)  \overline{ \cF[\chi^\varepsilon(x-\cdot)](\xi)} \mathrm{d}\xi\mathrm{d}s \\
&\quad +\int_0^t \int_{\bR^d} \cF[f(s,\cdot)](\xi)  \overline{\cF[\chi^\varepsilon(x-\cdot)](\xi)} \mathrm{d}\xi \mathrm{d}s \\
&= \int_{\bR^d} \mathrm{e}^{i  x \cdot \xi} \cF[u_0](\xi) \cF[\chi^\varepsilon](\xi) \mathrm{d}\xi
+\int_{\bR^d} \mathrm{e}^{ix \cdot \xi} \left(\int_0^t \psi(s,\xi) \cF[u(s,\cdot)](\xi)   \cF[\chi^\varepsilon](\xi) \mathrm{d}s\right)  \mathrm{d}\xi \\
&\quad + \int_{\bR^d} \mathrm{e}^{ix \cdot \xi} \left(\int_0^t \cF[f(s,\cdot)](\xi)   \cF[\chi^\varepsilon](\xi) \mathrm{d}s\right)  \mathrm{d}\xi \quad (a.e.)~ t \in (0,T).
\end{align*}
Here Fubini's theorem could be applicable to all terms above since
$\cF[\chi^\varepsilon] \in \cD(\bR^d)$, $u_0 \in \cF^{-1}L_{1,\ell oc}(\bR^d)$, $f \in \cF^{-1}L_{1,1,t\text{-}loc,x\text{-}\ell oc}\left( (0,T) \times \bR^d  \right)$, and
\begin{align*}
u \in \cF^{-1}L_{1,1,t\text{-}loc,x\text{-}\ell oc}\left( (0,T) \times \bR^d, \mathrm{d}t |\psi(t,\xi)|\mathrm{d}\xi  \right).
\end{align*}
Due to the Fourier inversion theorem,
\begin{align}
										\label{20230617 01}
 \cF[u(t,\cdot)](\xi) \cF[\chi^\varepsilon](\xi)
=\cF[u_0](\xi) \cF[\chi^\varepsilon](\xi)
+\int_0^t \psi(s,\xi) \cF[u(s,\cdot)](\xi)   \cF[\chi^\varepsilon](\xi) \mathrm{d}s
+\int_0^t \cF[f(s,\cdot)](\xi)   \cF[\chi^\varepsilon](\xi) \mathrm{d}s
\end{align}
for almost every $(t,\xi) \in (0,T) \times \bR^d$. 
Observe that
\begin{align*}
\cF[\chi^\varepsilon](\xi)=\cF[\chi](\varepsilon\xi)
\end{align*}
and
\begin{align*}
\cF[\chi](0)= (2\pi)^{-d/2} \int_{\bR^d} \chi(y) \mathrm{d}y = 1.
\end{align*}
Finally, taking $\varepsilon \downarrow 0$ in \eqref{20230617 01}, we have
\eqref{20230617 02}.
\end{proof}

\begin{rem}
It is easy to show that \eqref{20230617 02} with the Fubini theorem implies
\begin{align*}
u(t,x)
= u_0(x)+\int_0^t \psi(s,-i\nabla) u(s,\cdot)(x)\mathrm{d}s 
+\int_0^t f(s,x) \mathrm{d}s\quad (a.e.)~t \in (0,T).
\end{align*}
This equation should be interpreted as elements within $\cF^{-1}\cD(\bR^d)$, given that the term $\psi(s,-i\nabla) u(s,\cdot)(x)$ is understood as the inverse Fourier transform of a locally integrable function $\psi(s,\xi) \cF u(s,\cdot)$ with respect to $\xi$, as discussed in Remark \ref{solution realization}.
\end{rem}

We can now demonstrate the uniqueness of the solution using the provided representation and taking advantage of the linearity of \eqref{ab eqn}. 
It is obvious that \eqref{20230617 02} implies 
\begin{align}
								\label{20240227 01}
u \in \cF^{-1}L_{\infty,1,t\text{-}loc, x\text{-}\ell oc}\left( (0,T) \times \bR^d \right).
\end{align}
This property heavily depends on conditions of data $u_0$ and $f$ since they are important used in the representation formula.
However, these restrictions on the initial data $u_0$ and the inhomogeneous data $f$ can be eliminated due to the linearity if we merely have interest in the uniqueness. 
In particular, we can claim the uniqueness of a solution even for $\cF^{-1}\cD'(\bR^d)$-valued data.

\begin{thm}[Uniquness of a weak solution]
									\label{unique weak sol}
Let  $u_0 \in \cF^{-1}\cD'(\bR^d)$ and
$f \in L_{1,t\text{-}loc}\left( (0,T) ;  \cF^{-1}\cD'(\bR^d) \right)$.
Then a Fourier-space weak solution to \eqref{ab eqn} is unique in 
\begin{align*}
\cF^{-1}L_{1,1,t\text{-}loc,x\text{-}\ell oc}\left( (0,T) \times \bR^d, \mathrm{d}t |\psi(t,\xi)|\mathrm{d}\xi  \right).
\end{align*}
\end{thm}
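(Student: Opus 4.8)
The plan is to reduce to the homogeneous problem by linearity, invoke the representation in Lemma \ref{fourier repre}, and then close the argument frequency by frequency with a Gr\"onwall-type estimate.

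Suppose $u^{(1)}$ and $u^{(2)}$ are two Fourier-space weak solutions to \eqref{ab eqn} lying in $\cF^{-1}L_{1,1,t\text{-}loc,x\text{-}\ell oc}\left( (0,T) \times \bR^d, \mathrm{d}t |\psi(t,\xi)|\mathrm{d}\xi \right)$ with the same data $u_0$ and $f$, and set $u := u^{(1)} - u^{(2)}$. Since this weighted space is a vector space, $u$ belongs to it as well; in particular, for every $t_1 \in (0,T)$ and $R \in (0,\infty)$ we have $\int_0^{t_1}\int_{B_R} |\psi(s,\xi)|\,|\cF[u^{(k)}(s,\cdot)](\xi)|\,\mathrm{d}\xi\,\mathrm{d}s < \infty$ for $k=1,2$, so by Remark \ref{solution realization} the operator term in \eqref{weak formulation} may be written $\langle \psi(s,-i\nabla)u^{(k)}(s,\cdot),\varphi\rangle = \left(\psi(s,\cdot)\cF[u^{(k)}(s,\cdot)],\cF[\varphi]\right)_{L_2(\bR^d)}$, which is well-defined and linear in $u^{(k)}$. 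Subtracting the two identities \eqref{weak formulation}, $u$ is a Fourier-space weak solution to \eqref{ab eqn} with zero initial data and zero forcing term. The zero functions trivially lie in $\cF^{-1}L_{1,\ell oc}(\bR^d)$ and in $\cF^{-1}L_{1,1,t\text{-}loc,x\text{-}\ell oc}\left((0,T)\times\bR^d\right)$, so Lemma \ref{fourier repre} applies to $u$ and gives
\[
\cF[u(t,\cdot)](\xi) = \int_0^t \psi(s,\xi)\,\cF[u(s,\cdot)](\xi)\,\mathrm{d}s \quad (a.e.)~(t,\xi) \in (0,T)\times\bR^d.
\]

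Now fix the frequency. Writing $g_\xi(t) := \cF[u(t,\cdot)](\xi)$ and combining Tonelli's theorem with the local integrability recalled above (applied along a countable exhaustion $t_1 \uparrow T$, $R \uparrow \infty$), for a.e. $\xi \in \bR^d$ the function $s \mapsto |\psi(s,\xi)|\,|g_\xi(s)|$ is locally integrable on $(0,T)$ and $g_\xi(t) = \int_0^t \psi(s,\xi)\,g_\xi(s)\,\mathrm{d}s$ for a.e. $t \in (0,T)$. Fix such a $\xi$ and put $h(t) := \int_0^t |\psi(s,\xi)|\,|g_\xi(s)|\,\mathrm{d}s$; then $h$ is nondecreasing and locally absolutely continuous with $h(0)=0$, and the Volterra identity gives $|g_\xi(t)| \leq h(t)$ for a.e. $t$, hence $h(t) \leq \int_0^t |\psi(s,\xi)|\,h(s)\,\mathrm{d}s$. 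Fixing $t_1 < T$ and setting $M := h(t_1) < \infty$ and $\Phi(t) := \int_0^t |\psi(s,\xi)|\,\mathrm{d}s$, an induction using $\int_0^t \Phi'(s)\Phi(s)^n\,\mathrm{d}s = \Phi(t)^{n+1}/(n+1)$ yields $h(t) \leq M\,\Phi(t)^n/n!$ on $[0,t_1]$ for every $n$ (equivalently, apply Gr\"onwall's inequality with vanishing additive term). Letting $n \to \infty$ forces $h \equiv 0$, hence $g_\xi = 0$ a.e. on $(0,t_1)$; since $t_1 < T$ was arbitrary, $g_\xi = 0$ a.e. on $(0,T)$.

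Thus $\cF[u(t,\cdot)](\xi) = 0$ for a.e. $(t,\xi) \in (0,T)\times\bR^d$, so for a.e. $t$ the distribution $\cF[u(t,\cdot)]$ has the realization $0$ and therefore vanishes in $\cD'(\bR^d)$; applying the Fourier inversion formula (valid on $\cF^{-1}\cD'(\bR^d)$ since $\cF$ and $\cF^{-1}$ are homeomorphisms between $\cD'(\bR^d)$ and $\cF^{-1}\cD'(\bR^d)$) gives $u(t,\cdot) = \cF^{-1}[\cF[u(t,\cdot)]] = 0$ for a.e. $t$, i.e. $u^{(1)} = u^{(2)}$ in $L_0\left((0,T);\cF^{-1}\cD'(\bR^d)\right)$. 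The only delicate points are the bookkeeping of the ``almost every'' quantifiers --- first jointly in $(t,\xi)$, then via Fubini for a.e. $\xi$ and a.e. $t$ --- and the observation that the difference of two solutions still solves \eqref{ab eqn} weakly with zero data, which relies on the linearity of the operator term guaranteed by membership in the weighted space; the Gr\"onwall/iteration step itself is entirely routine.
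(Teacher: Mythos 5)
Your proof follows the paper's argument essentially verbatim: reduce to zero data by linearity, apply Lemma \ref{fourier repre} to the difference $u=u^{(1)}-u^{(2)}$, kill $\cF[u(t,\cdot)](\xi)$ by a Gr\"onwall-type iteration at almost every frequency, and conclude via Fourier inversion; the Fubini bookkeeping and the explicit bound $h(t)\leq M\,\Phi(t)^n/n!$ are exactly the details the paper compresses into the phrase ``Due to Gr\"onwall's inequality.'' The only caveat is that your iteration (like the paper's appeal to Gr\"onwall) tacitly requires $\Phi(t_1)=\int_0^{t_1}|\psi(s,\xi)|\,\mathrm{d}s<\infty$ for the frequencies in question, which is not listed among the hypotheses — but this implicit assumption is shared with, not added to, the paper's own proof.
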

\begin{proof}
Let $u_1$ and $u_2$ be Fourier-space weak solutions to \eqref{ab eqn} so that
\begin{align*}
u_1,u_2 \in \cF^{-1}L_{1,1,t\text{-}loc,x\text{-}\ell oc}\left( (0,T) \times \bR^d, \mathrm{d}t |\psi(t,\xi)|\mathrm{d}\xi  \right).
\end{align*}
Put
$$
u = u_1 - u_2.
$$
Then by Lemma \ref{fourier repre}, we have
\begin{align*}
\cF[u(t,\cdot)](\xi)
= \int_0^t \psi(s,\xi) \cF[u(s,\cdot)](\xi) \mathrm{d}s \quad (a.e.)~(t,\xi) \in (0,T) \times \bR^d.
\end{align*}
Due to Gr\"onwall's inequality, we show that
\begin{align*}
\cF[u(t,\cdot)](\xi) = 0 \quad (a.e.)~ (t,\xi) \in (0,T) \times \bR^d.
\end{align*}
Thus 
\begin{align}
							\label{20240201 10}
\cF[u_1(t,\cdot)](\xi)= \cF[u_2(t,\cdot)](\xi) \quad (a.e.) ~(t,\xi) \in (0,T) \times \bR^d,
\end{align}
which implies that $u_1=u_2$ as an element in 
\begin{align*}
\cF^{-1}L_{1,1,t\text{-}loc,x\text{-}\ell oc}\left( (0,T) \times \bR^d, \mathrm{d}t |\psi(t,\xi)|\mathrm{d}\xi  \right).
\end{align*}
\end{proof}

\begin{rem}
The uniqueness stated in Theorem \ref{unique weak sol} may appear uncertain when considering a solution $u$ for \eqref{ab eqn} as a function valued in $\cF^{-1}\cD'(\bR^d)$, defined over the interval $(0,T)$ if $\psi(t,\xi)$ vanishes at certain points as explained in Remark \ref{cutoff distribution}. 
Additionally, we do not know if $u_1$ and $u_2$ satisfy \eqref{20240227 01} in the proof of Theorem \ref{unique weak sol} since the conditions on $u_0$ and $f$ are weakened.
Nevertheless, we have shown that the realizations $\mathcal{F}[u_1(t,\cdot)](\xi)$ and $\mathcal{F}[u_2(t,\cdot)](\xi)$ coincide for almost every $(t,\xi) \in (0,T) \times \mathbb{R}^d$, considering all solutions $u_1$ and $u_2$ to the equation \eqref{ab eqn} in
\begin{align*}
\cF^{-1}L_{1,1,t\text{-}loc,x\text{-}\ell oc}\left( (0,T) \times \bR^d, \mathrm{d}t |\psi(t,\xi)|\mathrm{d}\xi  \right),
\end{align*}
which is described in \eqref{20240201 10}. 
This observation unequivocally implies that a solution $u$ to \eqref{ab eqn} is also unique when viewed as a $\cF^{-1}\cD'(\bR^d)$-valued function defined $(a.e.)$ on $(0,T$).

Moreover, this issue does not cause any problem in our main theorem, Theorem \ref{weak solution thm}, because we establish both the existence and uniqueness of a solution $u$ within the intersection of two function spaces $\cF^{-1}L_{\infty,1,t\text{-}loc, x\text{-}\ell oc}\left( (0,T) \times \bR^d \right)$ and
$\cF^{-1}L_{1,1,t\text{-}loc, x\text{-}\ell oc}\left( (0,T) \times \bR^d, \mathrm{d}t |\psi(t,\xi)|\mathrm{d}\xi \right)$.
More precisely, 
$$
u \in \cF^{-1}L_{1,1,t\text{-}loc, x\text{-}\ell oc}\left( (0,T) \times \bR^d, \mathrm{d}t |\psi(t,\xi)|\mathrm{d}\xi \right)
$$
implies 
$$
u \in \cF^{-1}L_{\infty,1,t\text{-}loc, x\text{-}\ell oc}\left( (0,T) \times \bR^d \right)
$$ 
since $u_0 \in \cF^{-1}L_{1,\ell oc}(\bR^d)$ and $f \in \cF^{-1}L_{1,1,t\text{-}loc,x\text{-}\ell oc}\left((0,T) \times \bR^d \right)$
as mentioned \eqref{20240227 01}.
In other words, the ambiguity of the uniqueness of $u$ as a $\cF^{-1}\cD'(\bR^d)$-valued function on $(0,T)$ could be resolved completely since $u$ is also an element of $\cF^{-1}L_{\infty,1,t\text{-}loc, x\text{-}\ell oc}\left( (0,T) \times \bR^d \right)$ in Theorem \ref{weak solution thm}.
\end{rem}

Next, we characterize conditions on symbols $\psi(t,\xi)$ to preserve the uniqueness of a solution whose Fourier transform is in a weighted $L_{p,q}$-space.
This is readily accomplished due to H\"older's inequality.

\begin{corollary}
								\label{unique corollary}
Let $p,q \in [1,\infty]$, $\gamma_1, \gamma_2 \in \bR$, $u_0 \in \cF^{-1}\cD'(\bR^d)$,
$f \in L_{1,t\text{-}loc}\left( (0,T) ;  \cF^{-1}\cD'(\bR^d) \right)$, and $W(t,x)$ be a positive measurable function on $(0,T) \times \bR^d$.
Assume that
\begin{align*}
\psi \in  L_{p',q',t\text{-}loc,x\text{-}\ell oc}\left( (0,T) \times \bR^d, \mathrm{d}t (1/W(t,\xi))^{q'}\mathrm{d}\xi  \right),
\end{align*}
where $p'$ and $q'$ are H\"older conjugates of $p$ and $q$, respectively.
Then a Fourier-space weak solution to \eqref{ab eqn} is unique in 
\begin{align*}
\cF^{-1}L_{p,q,t\text{-}loc,x\text{-}\ell oc}\left( (0,T) \times \bR^d, \mathrm{d}t (W(t,\xi))^{q}\mathrm{d}\xi  \right).
\end{align*}
\end{corollary}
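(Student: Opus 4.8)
The plan is to reduce the statement to Theorem \ref{unique weak sol} by a Hölder-inequality argument. The point is that the hypothesis $\psi \in L_{p',q',t\text{-}loc,x\text{-}\ell oc}\left( (0,T) \times \bR^d, \mathrm{d}t (1/W(t,\xi))^{q'}\mathrm{d}\xi \right)$ is exactly what makes the weighted space $\cF^{-1}L_{p,q,t\text{-}loc,x\text{-}\ell oc}\left( (0,T) \times \bR^d, \mathrm{d}t (W(t,\xi))^{q}\mathrm{d}\xi \right)$ embed into $\cF^{-1}L_{1,1,t\text{-}loc,x\text{-}\ell oc}\left( (0,T) \times \bR^d, \mathrm{d}t |\psi(t,\xi)|\mathrm{d}\xi \right)$, where uniqueness is already known.

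So I would start from two Fourier-space weak solutions $u_1,u_2$ to \eqref{ab eqn} lying in $\cF^{-1}L_{p,q,t\text{-}loc,x\text{-}\ell oc}\left( (0,T) \times \bR^d, \mathrm{d}t (W(t,\xi))^{q}\mathrm{d}\xi \right)$. Since the time weight is $1$ and $W$ is strictly positive, the remark following the definition of these spaces gives $u_1,u_2 \in L_0\left((0,T);\cF^{-1}L_{1,\ell oc}(\bR^d)\right)$, so for each of them $\cF[u_j(t,\cdot)](\xi)$ may be regarded as a locally integrable complex-valued function on $(0,T)\times\bR^d$. For fixed $t_1\in(0,T)$ and $R\in(0,\infty)$ I would then write
\begin{align*}
|\psi(t,\xi)|\,|\cF[u_j(t,\cdot)](\xi)| = \frac{|\psi(t,\xi)|}{W(t,\xi)}\cdot W(t,\xi)\,|\cF[u_j(t,\cdot)](\xi)|
\end{align*}
and apply Hölder's inequality twice, first in $\xi$ over $B_R$ with the conjugate pair $(q',q)$ and then in $t$ over $(0,t_1)$ with the conjugate pair $(p',p)$, to get
\begin{align*}
\int_0^{t_1}\int_{B_R} |\psi(t,\xi)|\,|\cF[u_j(t,\cdot)](\xi)|\,\mathrm{d}\xi\,\mathrm{d}t
\leq \|\psi\|_{L_{p',q'}\left((0,t_1)\times B_R,\,\mathrm{d}t (1/W(t,\xi))^{q'}\mathrm{d}\xi\right)}\,\|\cF[u_j]\|_{L_{p,q}\left((0,t_1)\times B_R,\,\mathrm{d}t (W(t,\xi))^{q}\mathrm{d}\xi\right)}.
\end{align*}
Both factors on the right are finite by the two standing hypotheses, and since $t_1$ and $R$ are arbitrary this places $u_1$ and $u_2$ in $\cF^{-1}L_{1,1,t\text{-}loc,x\text{-}\ell oc}\left( (0,T) \times \bR^d, \mathrm{d}t |\psi(t,\xi)|\mathrm{d}\xi \right)$.

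Finally, Theorem \ref{unique weak sol} applies verbatim, since its hypotheses $u_0\in\cF^{-1}\cD'(\bR^d)$ and $f\in L_{1,t\text{-}loc}\left( (0,T) ; \cF^{-1}\cD'(\bR^d) \right)$ coincide with those of the corollary; it yields $\cF[u_1(t,\cdot)](\xi)=\cF[u_2(t,\cdot)](\xi)$ for a.e.\ $(t,\xi)\in(0,T)\times\bR^d$, and since $W>0$ this is exactly the identification $u_1=u_2$ inside the weighted $L_{p,q}$-space, which proves the uniqueness. I do not expect a genuine obstacle here; the only minor points are to confirm that the endpoint exponents $p,q\in\{1,\infty\}$ are covered — which follows from the paper's convention of reading $L_{p,q}$-norms as iterated $L_p(L_q)$-norms together with the usual Hölder inequality — and that the measurability of the inner $\xi$-integral as a function of $t$ is not an issue, which is handled by the blanket convention of passing to completions.
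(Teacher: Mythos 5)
Your proposal is correct and takes essentially the same route as the paper: the paper's proof likewise reduces to Theorem \ref{unique weak sol} by splitting $|\psi(t,\xi)|\,|\cF[u](t,\xi)| = \frac{|\psi(t,\xi)|}{W(t,\xi)}\cdot W(t,\xi)\,|\cF[u](t,\xi)|$ and applying H\"older's inequality in the mixed $L_{p',q'}$--$L_{p,q}$ norms over $(0,t)\times B_R$, thereby placing any solution from the weighted $L_{p,q}$-class into $\cF^{-1}L_{1,1,t\text{-}loc,x\text{-}\ell oc}\left( (0,T)\times\bR^d, \mathrm{d}t\,|\psi(t,\xi)|\mathrm{d}\xi\right)$, where uniqueness is already established.
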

\begin{proof}
Let 
\begin{align*}
u \in \cF^{-1}L_{p,q,t\text{-}loc,x\text{-}\ell oc}\left( (0,T) \times \bR^d, \mathrm{d}t (W(t,\xi))^q\mathrm{d}\xi  \right)
\end{align*}
be a Fourier-space weak solution to \eqref{ab eqn}.
It is sufficient to show that
\begin{align*}
u \in \cF^{-1}L_{1,1,t\text{-}loc,x\text{-}\ell oc}\left( (0,T) \times \bR^d, \mathrm{d}t |\psi(t,\xi)|\mathrm{d}\xi  \right)
\end{align*}
due to Theorem \ref{unique weak sol}.
However, it is an easy consequence of H\"older's inequality.
Indeed, for any $ t \in (0,T)$ and $R \in (0,\infty)$, we have
\begin{align*}
\int_0^t \int_{B_R} |\psi(s,\xi)| |\cF[u](\xi)| \mathrm{d}\xi \mathrm{d}s
&=\int_0^t \int_{B_R} |\psi(s,\xi)| \frac{1}{W(t,\xi)} |\cF[u](\xi)| W(t,\xi) \mathrm{d}\xi \mathrm{d}s \\
&\leq \|\psi\|_{L_{p',q'}\left( (0,t) \times B_R, \mathrm{d}t (1/W(t,\xi))^{q'}\mathrm{d}\xi  \right)}
\|\cF[u]\|_{L_{p,q}\left( (0,t) \times B_R, \mathrm{d}t (W(t,\xi))^q\mathrm{d}\xi  \right)} < \infty.
\end{align*}
\end{proof}

Take note that in the aforementioned corollary, the weight denoted as $W$ is assumed to be positive.
This requirement is maintained in order to ensure the existence of a solution in our theory, as seen in Theorem \ref{main second}.
However, there is a possibility to relax this positive constraint on weights by considering a more intricate limiting process, which seems to be an interesting topic for further investigations (\textit{cf}. \cite{KID KHK 2018,KID KHK 2023}).

\mysection{Proof  and Corollaries  of Theorem \ref{weak solution thm} }
											\label{pf main thm 1}

We are now ready to initiate the proof of our main theorem. 
It is important to recall that the uniqueness has already been established due to Theorem \ref{unique weak sol}. 
Thus our current goal becomes to show the existence of a Fourier-space weak solution $u$ to \eqref{ab eqn}.
A credible candidate for this solution $u$ is presented in \eqref{solution candidate}. 
Therefore, our task is to verify that the function $u$ as defined in \eqref{solution candidate} indeed satisfies the criteria for being our weak solution in the forthcoming proof.

\begin{proof}[Proof of Theorem \ref{weak solution thm}]
First, observe that the mapping 
\begin{align*}
(t,\xi) \mapsto   \exp\left(\int_0^t\psi(r,\xi)\mathrm{d}r \right) \cF[u_0](\xi)
+\int_0^t  \exp\left(\int_s^t\psi(r,\xi)\mathrm{d}r \right) \cF[f(s,\cdot)](\xi)\mathrm{d}s
\end{align*}
is in $L_{\infty,1,t\text{-}loc, x\text{-}\ell oc}\left( (0,T) \times \bR^d \right)$ due to \eqref{20230624 10}.
Here we used the continuity of the functions 
$$
t \mapsto \int_{B_R}\exp\left(\int_0^t\Re[\psi(r,\xi)]\mathrm{d}r \right) \cF[u_0](\xi) \mathrm{d}\xi
$$ 
and
\begin{align*}
t \mapsto  \int_{B_R} \int_0^t  \exp\left(\int_s^t\Re[\psi(r,\xi)]\mathrm{d}r \right) \left|\cF[f(s,\cdot)](\xi) \right| \mathrm{d}s \mathrm{d}\xi,
\end{align*}
 \textit{i.e.} for any $t \in (0,T)$, there exist $t_0, t_1 \in [0,t]$ so that 
\begin{align*}
\int_{B_R}\exp\left(\int_0^{t_0}\Re[\psi(r,\xi)]\mathrm{d}r \right) \cF[u_0](\xi) \mathrm{d}\xi= \sup_{\rho \in [0,t]} \int_{B_R}\exp\left(\int_0^\rho \Re[\psi(r,\xi)]\mathrm{d}r \right) \cF[u_0](\xi) \mathrm{d}\xi
\end{align*}
and
\begin{align*}
&\int_{B_R} \int_0^{t_1}  \exp\left(\int_s^{t_1}\Re[\psi(r,\xi)]\mathrm{d}r \right) \left|\cF[f(s,\cdot)](\xi) \right| \mathrm{d}s \mathrm{d}\xi \\
&=\sup_{ \rho \in [0,t]} \int_{B_R} \int_0^\rho  \exp\left(\int_s^\rho\Re[\psi(r,\xi)]\mathrm{d}r \right) \left|\cF[f(s,\cdot)](\xi) \right| \mathrm{d}s \mathrm{d}\xi. 
\end{align*}
Similarly, the mapping 
\begin{align*}
(t,\xi) \mapsto   \exp\left(\int_0^t\psi(r,\xi)\mathrm{d}r \right) \cF[u_0](\xi)
+\int_0^t  \exp\left(\int_s^t\psi(r,\xi)\mathrm{d}r \right) \cF[f(s,\xi)]\mathrm{d}s
\end{align*}
is in 
$$
L_{1,1,t\text{-}loc, x\text{-}\ell oc}\left( (0,T) \times \bR^d, \mathrm{d}t |\psi(t,\xi)|\mathrm{d}\xi \right)
$$ due to \eqref{20230624 11}.
Thus recalling that
\begin{align*}
u(t,x):=  
 \cF^{-1}\left[  \exp\left(\int_0^t\psi(r,\cdot)\mathrm{d}r \right) \cF[u_0] \right](x)
+\cF^{-1}\left[ \int_0^t  \exp\left(\int_s^t\psi(r,\cdot)\mathrm{d}r \right) \cF[f(s,\cdot)]\mathrm{d}s \right](x),
\end{align*}
we have
$$
u \in \cF^{-1}L_{\infty,1,t\text{-}loc, x\text{-}\ell oc}\left( (0,T) \times \bR^d \right) \cap \cF^{-1}L_{1,1,t\text{-}loc, x\text{-}\ell oc}\left( (0,T) \times \bR^d, \mathrm{d}t |\psi(t,\xi)|\mathrm{d}\xi \right).
$$ 
Thus it is sufficient to show that $u$ is a Fourier-space weak solution to $\eqref{ab eqn}$ due to the uniqueness theorem (Theorem \ref{unique weak sol})
as mentioned at the beginning of this section.
By the fundamental theorem of calculus, and the Fubini theorem,
\begin{align}
									\notag
&\cF[u(t,\cdot)](\xi) \\
									\notag
&= \exp\left(\int_0^t\psi(r,\xi)\mathrm{d}r \right) \cF[u_0](\xi) + \int_0^t  \exp\left(\int_s^t\psi(r,\xi)\mathrm{d}r \right) \cF[f(s,\cdot)](\xi)\mathrm{d}s \\
									\notag
&= \exp\left(\int_0^t\psi(r,\xi)\mathrm{d}r \right) \cF[u_0](\xi) + \int_0^t  \int_s^t \frac{\mathrm{d}}{\mathrm{d}\rho}\left[\exp\left(\int_s^\rho\psi(r,\xi)\mathrm{d}r \right)\right]  \mathrm{d}\rho \cF[f(s,\cdot)](\xi)\mathrm{d}s 
+\int_0^t  \cF[f(s,\cdot)](\xi)\mathrm{d}s \\
									\notag
&= \exp\left(\int_0^t\psi(r,\xi)\mathrm{d}r \right) \cF[u_0](\xi) + \int_0^t  \int_s^t \left[\psi(\rho,\xi)\exp\left(\int_s^\rho\psi(r,\xi)\mathrm{d}r \right)\right]  \mathrm{d}\rho \cF[f(s,\cdot)](\xi)\mathrm{d}s  \\
									\notag
&\quad +\int_0^t \cF[f(s,\cdot)](\xi)\mathrm{d}s \\
									\notag
&= \exp\left(\int_0^t\psi(r,\xi)\mathrm{d}r \right) \cF[u_0](\xi) + \int_0^t \psi(\rho,\xi) \int_0^\rho \exp\left(\int_s^\rho\psi(r,\xi)\mathrm{d}r \right)  \cF[f(s,\cdot)](\xi)\mathrm{d}s \mathrm{d}\rho
+\int_0^t \cF[f(s,\cdot)](\xi)\mathrm{d}s \\
									\notag
&=\exp\left(\int_0^t\psi(r,\xi)\mathrm{d}r \right) \cF[u_0](\xi) 
-\int_0^t \psi(\rho,\xi) \exp\left(\int_0^\rho\psi(r,\xi)\mathrm{d}r \right) \mathrm{d}\rho \cF[u_0](\xi) \\
									\notag
&\quad + \int_0^t \psi(\rho,\xi) \cF[u(\rho,\cdot)](\xi) \mathrm{d}\rho +\int_0^t \cF[f(s,\cdot)](\xi)\mathrm{d}s  \\
									\label{20230213 20}
&= \cF[u_0](\xi)  + \int_0^t \psi(\rho,\xi) \cF[u(\rho,\cdot)](\xi) \mathrm{d}\rho +\int_0^t \cF[f(s,\cdot)](\xi)\mathrm{d}s 
\end{align}
for almost every $(t,\xi) \in (0,T) \times \bR^d$. 
The condition \eqref{20230624 11} is used above to apply the Fubini theorem.
Finally, for any $\varphi \in \cF^{-1}\cD(\bR^d)$, taking the integration with $\overline{\cF[\varphi]}$ in \eqref{20230213 20} and applying the definitions of actions on the elements of the class $\cF^{-1}\cD'(\bR^d)$ with the Fubini theorem,  we have
\begin{align*}
\left\langle u(t,\cdot),\varphi \right\rangle 
= \langle u_0, \varphi \rangle +  \int_0^t \left(\psi(s,\cdot) \cF[u(s,\cdot)] , \cF[\varphi](\cdot) \right)_{L_2(\bR^d)} \mathrm{d}s 
+ \int_0^t \left\langle f(s,\cdot),\varphi\right\rangle \mathrm{d}s
\quad (a.e.)~t\in (0,T),
\end{align*}
which implies that $u$ is a Fourier-space weak solution \eqref{ab eqn} as discussed in Remark \ref{solution realization}.
\end{proof}
Now, let's examine data associated with general weights. We present a series of corollaries to demonstrate that our mathematical requirements for $\psi$ are generous enough to accommodate a wide range of weighted data. 
It is important to note that we are exclusively addressing positive weights.
\begin{corollary}
						\label{cor 202307014 01}
Let $q \in [1,\infty]$, $W_0(\xi)$  and $W_1(\xi)$ be  positive measurable functions on $\bR^d$, and $W_2(t,\xi)$ be a positive measurable function on $(0,T) \times \bR^d$  so that
both $W_0(\xi)$  and $W_1(\xi)$ are locally bounded below and $W_1(\xi)$ is a lower bound of $W_2(t,\xi)$, \textit{i.e.}
for each $t \in (0,T)$ and $R \in (0,\infty)$, there exist positive constants $\kappa_0(R)$ and $\kappa_1(R)$ so that
\begin{align}
							\label{20230808 01}
W_0(\xi) \geq \kappa_0(R) \quad \forall \xi \in B_R.
\end{align}
and
\begin{align}
							\label{20230808 01-2}
W_2(s,\xi) \geq  W_1(\xi) \geq \kappa_1(R) \quad \forall (s,\xi) \in (0,t) \times B_R.
\end{align}
Assume that
$u_0 \in \cF^{-1}L_{q,\ell oc}(\bR^d,  W_0^q(\xi) \mathrm{d}\xi)$ and $f \in \cF^{-1}L_{1,q,t\text{-}loc, x \text{-}\ell oc}\left(  (0,T) \times \bR^d, \mathrm{d}t W_2^q(t,\xi)\mathrm{d}\xi\right)$.
Additionally, suppose that the symbol $\psi(t,\xi)$ satisfies the following integrability conditions
 \begin{align}
								\notag
&\int_0^t 
 \left\|\frac{1+|\psi(\rho,\xi)|}{W_0(\xi)} \exp\left(\int_{0}^\rho\Re[\psi(r,\xi)] \right)\mathrm{d}r \right\|_{L_{q'}\left(  B_R , \mathrm{d} \xi \right) } 
 \mathrm{d} \rho \\
									\label{20230715 01}
&+\int_0^t 
 \left\|\frac{1+|\psi(\rho,\xi)|}{W_1(\xi)} \exp\left(\int_{0}^\rho|\Re[\psi(r,\xi)|] \right)\mathrm{d}r \right\|_{L_{q'}\left(  B_R , \mathrm{d} \xi \right) } 
 \mathrm{d} \rho 
  < \infty
 \end{align}
for all $t \in (0,T)$ and $R \in (0,\infty)$. Then there exists a unique Fourier-space weak solution to \eqref{ab eqn}  in 
$$
\cF^{-1}L_{\infty,1,t\text{-}loc, x\text{-}\ell oc}\left( (0,T) \times \bR^d \right) \cap \cF^{-1}L_{1,1,t\text{-}loc, x\text{-}\ell oc}\left( (0,T) \times \bR^d, \mathrm{d}t |\psi(t,\xi)|\mathrm{d}\xi \right).
$$
\end{corollary}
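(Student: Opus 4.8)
The plan is to reduce Corollary~\ref{cor 202307014 01} to Theorem~\ref{weak solution thm} by checking that the hypotheses \eqref{20230808 01}, \eqref{20230808 01-2}, and the integrability condition \eqref{20230715 01} together imply the two structural conditions \eqref{20230624 10} and \eqref{20230624 11} appearing in Theorem~\ref{weak solution thm}. Since the conclusion (existence and uniqueness of a Fourier-space weak solution in the stated intersection space) is verbatim the conclusion of Theorem~\ref{weak solution thm}, no further work is needed once these two conditions are verified. Throughout, I would fix $t \in (0,T)$ and $R \in (0,\infty)$ and work on the ball $B_R$, where by \eqref{20230808 01} and \eqref{20230808 01-2} we have the uniform lower bounds $W_0(\xi) \geq \kappa_0(R)$ and $W_2(s,\xi) \geq W_1(\xi) \geq \kappa_1(R)$ for all $(s,\xi) \in (0,t) \times B_R$.

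First I would handle \eqref{20230624 10}. The first summand is
\[
\int_{B_R}\exp\left(\int_0^t\Re[\psi(r,\xi)]\mathrm{d}r \right) \cF[u_0](\xi) \mathrm{d}\xi
= \int_{B_R} \frac{1}{W_0(\xi)} \exp\left(\int_0^t\Re[\psi(r,\xi)]\mathrm{d}r\right) \cdot W_0(\xi)\cF[u_0](\xi)\,\mathrm{d}\xi,
\]
and by H\"older's inequality on $B_R$ with conjugate exponents $q',q$ this is bounded by
$\big\|\tfrac{1}{W_0}\exp(\int_0^t\Re[\psi(r,\cdot)]\mathrm{d}r)\big\|_{L_{q'}(B_R)}\,\|W_0\cF[u_0]\|_{L_q(B_R)}$;
the first factor is finite because it is dominated by the $\rho=t$ slice of the first integrand in \eqref{20230715 01} (note $1 \leq 1+|\psi(\rho,\xi)|$, and $\Re[\psi] \leq |\Re[\psi]|$ is harmless here since in the first term of \eqref{20230715 01} we may bound $\exp(\int_0^\rho \Re[\psi]\mathrm{d}r) \leq \exp(\int_0^\rho |\Re[\psi]|\mathrm{d}r)$ — or simply observe the first term of \eqref{20230715 01} already controls it), and the second factor is finite since $u_0 \in \cF^{-1}L_{q,\ell oc}(\bR^d, W_0^q\mathrm{d}\xi)$. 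For the second summand of \eqref{20230624 10}, I would use Minkowski's integral inequality to pull the $\int_0^t\mathrm{d}s$ outside, writing the exponential weight as $\exp(\int_s^t\Re[\psi(r,\xi)]\mathrm{d}r) \leq \exp(\int_0^t|\Re[\psi(r,\xi)]|\mathrm{d}r)\cdot\exp(-\int_0^s\Re[\psi(r,\xi)]\mathrm{d}r)$ — actually it is cleaner to bound $\exp(\int_s^t\Re[\psi]\mathrm{d}r) \leq \exp(\int_0^t|\Re[\psi]|\mathrm{d}r)$ directly, combine with $1/W_2(s,\xi) \leq 1/W_1(\xi)$, apply H\"older in $\xi$ on $B_R$ against $W_2(s,\cdot)\cF[f(s,\cdot)]$, and recognize the resulting time integral of $\big\|\tfrac{1}{W_1}\exp(\int_0^t|\Re[\psi]|\mathrm{d}r)\big\|_{L_{q'}(B_R)}\|W_2(s,\cdot)\cF[f(s,\cdot)]\|_{L_q(B_R)}$ as finite because the supremum over $s$ of the $L_{q'}$ factor is controlled by the second term of \eqref{20230715 01} and $f \in \cF^{-1}L_{1,q,t\text{-}loc,x\text{-}\ell oc}$ gives finiteness of $\int_0^t\|W_2(s,\cdot)\cF[f(s,\cdot)]\|_{L_q(B_R)}\mathrm{d}s$.

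Next I would verify \eqref{20230624 11}; this is where the factor $1+|\psi(\rho,\xi)|$ in \eqref{20230715 01} earns its keep. For the first term of \eqref{20230624 11}, after Fubini to write it as $\int_{B_R}\int_0^t |\psi(\rho,\xi)|\exp(\int_0^\rho\Re[\psi(r,\xi)]\mathrm{d}r)\,\mathrm{d}\rho\,\cF[u_0](\xi)\,\mathrm{d}\xi = \int_0^t\int_{B_R}\cdots\mathrm{d}\xi\,\mathrm{d}\rho$, I apply H\"older in $\xi$ for each fixed $\rho$ to get the bound $\int_0^t \big\|\tfrac{|\psi(\rho,\cdot)|}{W_0}\exp(\int_0^\rho\Re[\psi(r,\cdot)]\mathrm{d}r)\big\|_{L_{q'}(B_R)}\mathrm{d}\rho \cdot \|W_0\cF[u_0]\|_{L_q(B_R)}$, and since $|\psi(\rho,\xi)| \leq 1+|\psi(\rho,\xi)|$ the $\rho$-integral is exactly dominated by the first term of \eqref{20230715 01}. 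The second term of \eqref{20230624 11} is treated the same way as the second summand of \eqref{20230624 10} but now carrying the extra $|\psi(\rho,\xi)| \leq 1+|\psi(\rho,\xi)|$ factor and the time integrals $\int_0^t\int_0^\rho$: bound $\exp(\int_s^\rho\Re[\psi]\mathrm{d}r) \leq \exp(\int_0^\rho|\Re[\psi]|\mathrm{d}r)$ (independent of $s$), use $1/W_2(s,\xi)\leq 1/W_1(\xi)$, apply H\"older in $\xi$ against $W_2(s,\cdot)\cF[f(s,\cdot)]$, then Fubini/Minkowski in $(s,\rho)$ so the surviving factor $\int_0^t\big\|\tfrac{1+|\psi(\rho,\cdot)|}{W_1}\exp(\int_0^\rho|\Re[\psi]|\mathrm{d}r)\big\|_{L_{q'}(B_R)}\mathrm{d}\rho$ is finite by the second term of \eqref{20230715 01} while $\int_0^t\|W_2(s,\cdot)\cF[f(s,\cdot)]\|_{L_q(B_R)}\mathrm{d}s < \infty$. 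Having established \eqref{20230624 10} and \eqref{20230624 11}, Theorem~\ref{weak solution thm} applies verbatim and delivers the conclusion.

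The main obstacle is purely bookkeeping: one must be careful that the $s$-dependence in the exponential factors $\exp(\int_s^t\Re[\psi]\mathrm{d}r)$ and $\exp(\int_s^\rho\Re[\psi]\mathrm{d}r)$ is uniformly controlled so that it can be pulled out before integrating in $s$, and that the local lower bounds $\kappa_0(R),\kappa_1(R)>0$ are genuinely used to convert $1/W_0, 1/W_2$ into honest $L_{q'}(B_R)$ weights (in particular, the hypothesis \eqref{20230808 01-2} that $W_1$ lies below $W_2$ is exactly what lets the single weight $W_1$ in \eqref{20230715 01} absorb $1/W_2(s,\xi)$ uniformly in $s$). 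There is also the minor point that the first term of \eqref{20230715 01} uses $\Re[\psi]$ while the second uses $|\Re[\psi]|$; for the $u_0$-estimates the weaker bound with $\Re[\psi]$ suffices because the inhomogeneous parts, after bounding $\exp(\int_s^\rho\Re[\psi]\mathrm{d}r)$ crudely by $\exp(\int_0^\rho|\Re[\psi]|\mathrm{d}r)$, require the $|\Re[\psi]|$ version — so the two-part structure of \eqref{20230715 01} matches precisely the two parts of the solution formula. No weak-compactness or delicate limiting argument is needed; everything is H\"older, Minkowski, and Fubini, in keeping with the paper's stated philosophy.
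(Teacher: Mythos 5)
Your reduction is exactly the paper's: the published proof also verifies \eqref{20230624 10} and \eqref{20230624 11} by H\"older's inequality in $\xi$ against $\|W_0\cF[u_0]\|_{L_q(B_R)}$ and $\|W_2(s,\cdot)\cF[f(s,\cdot)]\|_{L_q(B_R)}$, the crude bound $\exp\left(\int_s^\rho\Re[\psi(r,\xi)]\mathrm{d}r\right)\le\exp\left(\int_0^\rho|\Re[\psi(r,\xi)]|\mathrm{d}r\right)$ combined with $W_2(s,\xi)\ge W_1(\xi)$, and Fubini in $(s,\rho)$, and then quotes Theorem \ref{weak solution thm}. Two points should be fixed, though. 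First, your assertion that ``no further work is needed once these two conditions are verified'' is not quite right: Theorem \ref{weak solution thm} also assumes $u_0\in\cF^{-1}L_{1,\ell oc}(\bR^d)$ and $f\in\cF^{-1}L_{1,1,t\text{-}loc,x\text{-}\ell oc}\left((0,T)\times\bR^d\right)$, which are not hypotheses of the corollary and must be derived from the weighted assumptions; this is the step the paper does first, and it is precisely where the lower bounds \eqref{20230808 01} and \eqref{20230808 01-2} with $\kappa_0(R),\kappa_1(R)$ are actually used (in the estimates for \eqref{20230624 10}--\eqref{20230624 11} the weights $1/W_0$, $1/W_1$ are already built into \eqref{20230715 01}, so the $\kappa$'s play no role there, contrary to your closing remark). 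The fix is one line of H\"older, so this is an omission rather than a failure, but it should be stated.

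Second, your justification of the first summand of \eqref{20230624 10} by ``the $\rho=t$ slice of the first integrand in \eqref{20230715 01}'' is not a valid inference: \eqref{20230715 01} is an integral-in-$\rho$ condition, so it controls the $L_{q'}(B_R)$ norm of $\frac{1+|\psi(\rho,\cdot)|}{W_0}\exp\left(\int_0^\rho\Re[\psi(r,\cdot)]\mathrm{d}r\right)$ only for almost every $\rho$, not at the particular time $\rho=t$ you need. The paper does not extract a slice at all; it bounds the combined, time-integrated quantities $\int_{B_R}\int_0^t(1+|\psi(\rho,\xi)|)\exp\left(\int_0^\rho\Re[\psi(r,\xi)]\mathrm{d}r\right)\left|\cF[u_0](\xi)\right|\mathrm{d}\rho\,\mathrm{d}\xi$ and the analogous term with $\cF[f(s,\cdot)]$, which simultaneously deliver \eqref{20230624 11} and the (time-integrated form of the) bound needed from \eqref{20230624 10}. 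You should phrase your verification in that integrated form rather than deducing finiteness of a fixed-$\rho$ slice from an integral hypothesis; with that rewording, and with the data-membership check added, your argument coincides with the paper's.
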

\begin{proof}
We use Theorem \ref{weak solution thm}.
First of all, it is easy to check that
$u_0 \in \cF^{-1}L_{1,\ell oc}(\bR^d)$ and 
$$
f \in \cF^{-1}L_{1,1,t\text{-}loc,x\text{-}\ell oc}\left((0,T) \times \bR^d \right)
$$
due to H\"older's inequality and the local lower bounds of $W_0$ and $W_2$ given in \eqref{20230808 01} and \eqref{20230808 01-2}.
Next we show that \eqref{20230715 01} implies \eqref{20230624 10} and \eqref{20230624 11}. Recall that $u_0 \in \cF^{-1}L_{q,\ell oc}(\bR^d,W_0^q(\xi)\mathrm{d}\xi)$ and
$$
f \in L_{1,q,t\text{-}loc, x \text{-}\ell oc}\left( (0,T) \times \bR^d, \mathrm{d}t W_2^q(t,\xi)\mathrm{d}\xi\right).
$$
Thus by H\"older's inequality and \eqref{20230715 01},
\begin{align*}
&\int_{B_R}\int_0^t (1+|\psi(\rho,\xi)|) \exp\left(\int_0^\rho \Re[\psi(r,\xi)]\mathrm{d}r \right) \cF[u_0](\xi)\mathrm{d}\rho \mathrm{d}\xi \\
&=\int_0^t \int_{B_R}\frac{1+|\psi(\rho,\xi)|}{W_0(\xi)} \exp\left(\int_0^\rho \Re[\psi(r,\xi)]\mathrm{d}r \right) \cF[u_0](\xi) W_0(\xi) \mathrm{d}\xi \mathrm{d}\rho \\
&\leq \int_0^t \left\|\frac{1+|\psi(\rho,\xi)|}{W_0(\xi)} \exp\left(\int_{0}^\rho\Re[\psi(r,\xi)]\mathrm{d}r  \right)\right\|_{L_{q'}\left( B_R , \mathrm{d} \xi \right) } \mathrm{d} \rho
\|\cF[u_0]\|_{L_{q} \left(B_R, \mathrm{d}s W_0^q(\xi) \mathrm{d}\xi \right)} < \infty
\end{align*}
for all $t \in (0,T)$ and $R \in (0,\infty)$.
Similarly,  by the Fubini theorem, the H\"older inequality, \eqref{20230808 01-2}, and \eqref{20230715 01},
\begin{align*}
&\int_{B_R}\int_0^t (1+|\psi(\rho,\xi)|) \int_0^\rho \exp\left(\int_s^\rho\Re[\psi(r,\xi)]\mathrm{d}r \right)  \left|\cF[f(s,\cdot)](\xi) \right| \mathrm{d}s \mathrm{d}\rho \mathrm{d}\xi \\
&=\int_0^t  \int_0^\rho  \int_{B_R}\frac{1+|\psi(\rho,\xi)|}{W_2(s,\xi)} \exp\left(\int_s^\rho\Re[\psi(r,\xi)]\mathrm{d}r \right)  \left|\cF[f(s,\cdot)](\xi) \right| W_2(s,\xi) \mathrm{d}\xi \mathrm{d}s \mathrm{d}\rho  \\
&\leq \int_0^t  \int_0^t \int_{B_R}\frac{1+|\psi(\rho,\xi)|}{W_1(\xi)} \exp\left(\int_0^\rho |\Re[\psi(r,\xi)]| \mathrm{d}r \right)    \left|\cF[f(s,\cdot)](\xi) W_2(s,\xi) \right|   \mathrm{d}\xi \mathrm{d}s \mathrm{d}\rho  \\
&\leq \left\|(1+|\psi(\rho,\xi)|) \exp\left(\int_0^\rho|\Re[\psi(r,\xi)]|\mathrm{d}r  \right)\right\|_{L_{1,q'}\left( (0,t) \times B_R , \mathrm{d}\rho (1/W_1(\xi))^{q'}\mathrm{d} \xi \right) } 
\|\cF[f(s,\cdot)(\xi)\|_{L_{1,q} \left( (0,t) \times B_R, \mathrm{d}s W_2^q(s,\xi) \mathrm{d}\xi \right)} \\
&< \infty
\end{align*}
for all $t \in (0,T)$ and $R \in (0,\infty)$. 
Therefor by Theorem \ref{weak solution thm},  there exists a unique Fourier-space weak solution $u$ to \eqref{ab eqn} in
$$
\cF^{-1}L_{\infty,1,t\text{-}loc, x\text{-}\ell oc}\left( (0,T) \times \bR^d \right) \cap \cF^{-1}L_{1,1,t\text{-}loc, x\text{-}\ell oc}\left( (0,T) \times \bR^d, \mathrm{d}t |\psi(t,\xi)|\mathrm{d}\xi \right).
$$
\end{proof}
\begin{rem}
We used a very rough estimate to control the term
$$
 \exp\left(\int_s^\rho\Re[\psi(r,\xi)]\mathrm{d}r \right)
 \leq  \exp\left(\int_0^\rho|\Re[\psi(r,\xi)]|\mathrm{d}r \right)
$$
in the proof of Corollary \ref{cor 202307014 01} since there is a possibility that the real part of $\psi(r,\xi)$ is non-negative for all $r$ and $\xi$. 
However, this rough estimate makes us lose all decay of the exponential term derived from negative values of 
 the real part of the symbol $\psi(s,\xi)$ in general. 
On the other hand, recall that the conditions \eqref{20230624 10} and \eqref{20230624 11}  in Theorem \ref{weak solution thm} do not require controls of the term $\exp\left(\int_0^\rho|\Re[\psi(r,\xi)]|\mathrm{d}r \right)$.
Especially, Theorem \ref{weak solution thm} works for symbols that are not locally bounded by confirming conditions \eqref{20230624 10} and \eqref{20230624 11}.
In other words, our main theorem could apply to more general sign changing symbols by estimating the term
$\exp\left(\int_s^\rho\Re[\psi(r,\xi)]\mathrm{d}r \right)$ for all $s<\rho$ instead of using the rough estimate above.
This kind of example can be found in the proof of Theorem \ref{main log} (see Section \ref{log thm pf} below).
\end{rem}

If our input data $f$ exhibits better integrability concerning the time variable, it may lead to the solution $u$ being confined to a more restricted space, thereby demonstrating enhanced integrability in relation to the time variable.
This can be accomplished through local estimates that are readily derived thanks to H\"older's inequality.

\begin{corollary}
								\label{corollary weight pq}
Let $p,q \in [1,\infty]$, $W_0(\xi)$  and $W_1(\xi)$ be  positive measurable functions defined on $\bR^d$, and $W_2(t,\xi)$ be a positive measurable function on $(0,T) \times \bR^d$  so that
both $W_0(\xi)$  and $W_1(\xi)$ are locally bounded below and $W_1(\xi)$ is a lower bound of $W_2(t,\xi)$, \textit{i.e.}
for each $t \in (0,T)$ and $R \in (0,\infty)$, there exist positive constants $\kappa_0(R)$ and $\kappa_1(R)$ so that
\begin{align*}
W_0(\xi) \geq \kappa_0(R) \quad \forall \xi \in B_R
\end{align*}
and
\begin{align}
						\label{special lower bound}
W_2(s,\xi) \geq  W_1(\xi) \geq \kappa_1(R) \quad \forall (s,\xi) \in (0,t) \times B_R.
\end{align}
Assume that
$u_0 \in \cF^{-1}L_{q,\ell oc}(\bR^d,  W_0^q(\xi) \mathrm{d}\xi)$ and $f \in \cF^{-1}L_{p,q,t\text{-}loc, x \text{-}\ell oc}\left(  (0,T) \times \bR^d, \mathrm{d}t W_2^q(t,\xi)\mathrm{d}\xi\right)$.
Additionally, suppose that the symbol $\psi(t,\xi)$ satisfies the following condition:
\begin{align}
								\notag
&\left\|\frac{1+|\psi(\rho,\xi)|}{W_0(\xi)} \exp\left(\int_{0}^\rho\Re[\psi(r,\xi)] \right)\mathrm{d}r \right\|_{L_{p,\infty}\left( (0,t) \times B_R , \mathrm{d}\rho \mathrm{d} \xi \right) } \\
									\label{20230715 10}
&+\left\|\frac{1+|\psi(\rho,\xi)|}{W_1(\xi)} \exp\left(\int_{0}^\rho \left|\Re[\psi(r,\xi)]\right| \right)\mathrm{d}r \right\|_{L_{p,\infty}\left( (0,t) \times B_R , \mathrm{d}\rho \mathrm{d} \xi \right) } 
< \infty
 \end{align}
for all $t \in (0,T)$ and $R \in (0,\infty)$. Then there exists a unique Fourier-space weak solution $u$ to \eqref{ab eqn}  in 
$$
 \cF^{-1}L_{p,q,t\text{-}loc, x\text{-}\ell oc}\left( (0,T) \times \bR^d, \mathrm{d}t (1+|\psi(t,\xi)|)\mathrm{d}\xi \right).
$$
Moreover, for each $t \in (0,T)$ and $R \in (0,\infty)$, the solution $u$ satisfies
\begin{align}
							\notag
&\left\| (1+|\psi(s,\xi)|) \cF[u(s,\cdot)](\xi) \right\|_{L_{p,q}\left( (0,t) \times B_R, \mathrm{d}s\mathrm{d}\xi \right)}\\
							\notag
&\leq \left\|\frac{1+|\psi(\rho,\xi)|}{W_0(\xi)} \exp\left(\int_{0}^\rho\Re[\psi(r,\xi)] \right)\mathrm{d}r \right\|_{L_{p,\infty}\left( (0,t) \times B_R , \mathrm{d}\rho \mathrm{d} \xi \right) } \|\cF[u_0]\|_{L_{q} \left(B_R, \mathrm{d}s W_0^q(\xi) \mathrm{d}\xi \right)} \\
							\label{20230715 20}
&\quad + t^{1/p'}\left\|\frac{1+|\psi(\rho,\xi)|}{W_1(\xi)} \exp\left(\int_0^\rho\Re[\psi(r,\xi)]\mathrm{d}r  \right)\right\|_{L_{p,\infty}\left( (0,t) \times B_R , \mathrm{d}\rho\mathrm{d} \xi \right) } 
\|\cF[f(s,\cdot)(\xi)\|_{L_{p,q} \left( (0,t) \times B_R, \mathrm{d}s W_2^q(\xi) \mathrm{d}\xi \right)},
\end{align}
where  $p'$ is the H\"older conjugate of $p$ and $1/\infty:=0$.
\end{corollary}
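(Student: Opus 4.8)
The plan is to derive the existence of the solution together with the a priori bound \eqref{20230715 20} from Theorem \ref{weak solution thm} (via Corollary \ref{cor 202307014 01}), and to obtain uniqueness from Theorem \ref{unique weak sol}. The only real work is to feed the explicit representation \eqref{20240313 01} into a couple of H\"older-- and Minkowski--type estimates.

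First I would reduce matters to the already proved results. Fix $t\in(0,T)$ and $R\in(0,\infty)$. Since $(0,t)$ is a finite interval and $p\ge1$, H\"older's inequality in time shows $f\in\cF^{-1}L_{1,q,t\text{-}loc,x\text{-}\ell oc}\big((0,T)\times\bR^d,\mathrm{d}t\,W_2^q(t,\xi)\mathrm{d}\xi\big)$; and since $B_R$ has finite Lebesgue measure (so $L_\infty(B_R)\hookrightarrow L_{q'}(B_R)$), a further application of H\"older's inequality, in time and then in $\xi$, shows that \eqref{20230715 10} implies \eqref{20230715 01}. Hence all hypotheses of Corollary \ref{cor 202307014 01} are in force, and it yields a unique Fourier-space weak solution $u$; by the construction in the proof of Theorem \ref{weak solution thm}, $\cF[u(\tau,\cdot)](\xi)$ is given $(a.e.)$ by \eqref{20240313 01}, namely
\begin{align*}
\cF[u(\tau,\cdot)](\xi)
=\exp\Big(\int_0^\tau\psi(r,\xi)\mathrm{d}r\Big)\cF[u_0](\xi)
+\int_0^\tau\exp\Big(\int_s^\tau\psi(r,\xi)\mathrm{d}r\Big)\cF[f(s,\cdot)](\xi)\,\mathrm{d}s .
\end{align*}

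Next I would establish the pointwise bound underlying \eqref{20230715 20}. Using $\big|\exp(\int_a^b\psi(r,\xi)\mathrm{d}r)\big|=\exp(\int_a^b\Re[\psi(r,\xi)]\mathrm{d}r)$, the crude estimate $\int_s^\tau\Re[\psi(r,\xi)]\mathrm{d}r\le\int_0^\tau|\Re[\psi(r,\xi)]|\mathrm{d}r$ valid for $0\le s\le\tau$, and inserting the weights together with $W_1\le W_2$ from \eqref{special lower bound}, I would get, for $(\tau,\xi)\in(0,t)\times B_R$,
\begin{align*}
(1+|\psi(\tau,\xi)|)\,|\cF[u(\tau,\cdot)](\xi)|
\le A_0(\tau,\xi)\,W_0(\xi)\,|\cF[u_0](\xi)|
+A_1(\tau,\xi)\int_0^\tau W_2(s,\xi)\,|\cF[f(s,\cdot)](\xi)|\,\mathrm{d}s ,
\end{align*}
where $A_0(\tau,\xi)=\tfrac{1+|\psi(\tau,\xi)|}{W_0(\xi)}\exp(\int_0^\tau\Re[\psi(r,\xi)]\mathrm{d}r)$ and $A_1(\tau,\xi)=\tfrac{1+|\psi(\tau,\xi)|}{W_1(\xi)}\exp(\int_0^\tau|\Re[\psi(r,\xi)]|\mathrm{d}r)$ are precisely the two functions occurring in \eqref{20230715 10}. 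Then I would take the $L_{p,q}\big((0,t)\times B_R,\mathrm{d}\tau\mathrm{d}\xi\big)$ norm of both sides. For the first summand the remaining factor $W_0|\cF[u_0]|$ depends only on $\xi$, so for each fixed $\tau$ a H\"older inequality in $\xi$ ($L_\infty$ against $L_q$), followed by the $L_p$ norm in $\tau$, produces $\|A_0\|_{L_{p,\infty}((0,t)\times B_R)}\,\|\cF[u_0]\|_{L_q(B_R,W_0^q\mathrm{d}\xi)}$. For the second summand, Minkowski's integral inequality moves the $L_q(B_R)$-norm inside the $\int_0^\tau$, and then H\"older's inequality in $s$ over $(0,t)$ bounds $\int_0^\tau\|W_2(s,\cdot)\cF[f(s,\cdot)]\|_{L_q(B_R)}\mathrm{d}s$ by $t^{1/p'}\|\cF[f]\|_{L_{p,q}((0,t)\times B_R,\mathrm{d}s\,W_2^q\mathrm{d}\xi)}$ uniformly in $\tau\le t$; pulling out this $\tau$-independent quantity and taking the $L_{p,\infty}$-norm of $A_1$ yields the second term of \eqref{20230715 20}. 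Since the right-hand side of \eqref{20230715 20} is finite for every $t\in(0,T)$ and $R\in(0,\infty)$ by \eqref{20230715 10}, this simultaneously shows $u\in\cF^{-1}L_{p,q,t\text{-}loc,x\text{-}\ell oc}\big((0,T)\times\bR^d,\mathrm{d}t(1+|\psi(t,\xi)|)\mathrm{d}\xi\big)$.

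Finally, uniqueness in this smaller space is immediate from Theorem \ref{unique weak sol}: because $|\psi|\le 1+|\psi|$ and, on each ball $B_R$ of finite measure, an $L_{p,q}$-function is automatically $L_{1,1}$ (as $p,q\ge1$), the space $\cF^{-1}L_{p,q,t\text{-}loc,x\text{-}\ell oc}\big((0,T)\times\bR^d,\mathrm{d}t(1+|\psi(t,\xi)|)\mathrm{d}\xi\big)$ embeds into $\cF^{-1}L_{1,1,t\text{-}loc,x\text{-}\ell oc}\big((0,T)\times\bR^d,\mathrm{d}t|\psi(t,\xi)|\mathrm{d}\xi\big)$, where uniqueness is already known. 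I expect no conceptual obstacle here; the only slightly delicate point is the bookkeeping in the second step --- choosing the correct H\"older/Minkowski pairing for the inhomogeneous term and tracking the factor $t^{1/p'}$ --- together with the endpoint cases $p=\infty$ or $q=\infty$, in which every integral is to be read as the corresponding essential supremum. No idea beyond those already used in the proofs of Theorem \ref{weak solution thm} and Corollary \ref{cor 202307014 01} should be required.
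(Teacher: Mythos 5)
Your proposal is correct and follows essentially the same route as the paper: reduce to Corollary \ref{cor 202307014 01} by noting that \eqref{20230715 10} implies \eqref{20230715 01} via H\"older, then feed the explicit representation \eqref{20240313 01} into an $L_\infty$-against-$L_q$ H\"older bound in $\xi$ plus Minkowski's integral inequality and a H\"older estimate in time producing the factor $t^{1/p'}$. Your explicit checks that $f\in\cF^{-1}L_{1,q,t\text{-}loc,x\text{-}\ell oc}$ and that the solution class embeds into $\cF^{-1}L_{1,1,t\text{-}loc,x\text{-}\ell oc}\left( (0,T)\times\bR^d,\mathrm{d}t\,|\psi(t,\xi)|\mathrm{d}\xi\right)$ for uniqueness are routine points the paper leaves implicit, not a different argument.
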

\begin{proof}
Observe that \eqref{20230715 10} implies \eqref{20230715 01} due to H\"older's inequality.
Thus by Corollary \ref{cor 202307014 01}, there exists a unique Fourier-space weak solution $u$ to \eqref{ab eqn} in 
$$
\cF^{-1}L_{\infty,1,t\text{-}loc, x\text{-}\ell oc}\left( (0,T) \times \bR^d \right) \cap \cF^{-1}L_{1,1,t\text{-}loc, x\text{-}\ell oc}\left( (0,T) \times \bR^d, \mathrm{d}t |\psi(t,\xi)|\mathrm{d}\xi \right),
$$
which is given by 
\begin{align*}
u(t,x)=  \cF^{-1}\left[  \exp\left(\int_0^t\psi(r,\cdot)\mathrm{d}r \right) \cF[u_0] \right](x)
+\cF^{-1}\left[ \int_0^t  \exp\left(\int_s^t\psi(r,\cdot)\mathrm{d}r \right) \cF[f(s,\cdot)]\mathrm{d}s \right](x)
\end{align*}
in the sense of \eqref{20240313 01}.
Thus taking the Fourier transforms above, we have
\begin{align}
									\label{20230715 40}
\cF[u(t,\cdot)](\xi)=    \exp\left(\int_0^t\psi(r,\xi)\mathrm{d}r \right) \cF[u_0](\xi) 
+\int_0^t  \exp\left(\int_s^t\psi(r,\xi)\mathrm{d}r \right) \cF[f(s,\cdot)](\xi)\mathrm{d}s
\end{align}
$(a.e.)~(t,\xi) \in (0,T)\times \bR^d$.
Note that the last two terms in \eqref{20230715 20} are finite
due to assumptions on weights and the conditions on $u_0$ and $f$.
It suffices to show \eqref{20230715 20}. 
By \eqref{20230715 40}, the H\"older inequality, the generalized Minkowski inequality, and \eqref{special lower bound},
\begin{align*}
&\|(1+|\psi(s,\xi)|) \cF[u(s,\cdot)](\xi)\|_{L_q(B_R)} \\
&\leq \left\|\frac{1+|\psi(s,\xi)|}{W_0(\xi)} \exp\left(\int_{0}^s\Re[\psi(r,\xi)] \right)\mathrm{d}r \right\|_{L_{\infty}\left(  B_R ,  \mathrm{d} \xi \right) } \|\cF[u_0]\|_{L_{q} \left(B_R,  W_0^q(\xi) \mathrm{d}\xi \right)} \\
&\quad + \left\|\frac{1+|\psi(s,\xi)|}{W_1(\xi)} \exp\left(\int_{0}^s|\Re[\psi(r,\xi)]|\right)\mathrm{d}r \right\|_{L_{\infty}\left(  B_R ,  \mathrm{d} \xi \right) } \int_0^s\|\cF[f(\rho,\cdot)](\xi)\|_{L_{q} \left(B_R,  W_2^q(\rho,\xi) \mathrm{d}\xi \right)} \mathrm{d}\rho
\end{align*}
$(a.e.)~s \in (0,T)$.
Therefore taking the $L_p$-norms with respect to the variable $s$, we have 
\begin{align}
							\notag
&\left\| (1+|\psi(s,\xi)|) \cF[u(s,\cdot)](\xi) \right\|_{L_{p,q}\left( (0,t) \times B_R, \mathrm{d}\xi \right)}\\
							\notag
&\leq \left\|\frac{1+|\psi(s,\xi)|}{W_0(\xi)} \exp\left(\int_{0}^s\Re[\psi(r,\xi)] \right)\mathrm{d}r \right\|_{L_{p,\infty}\left( (0,t) \times B_R , \mathrm{d}s \mathrm{d} \xi \right) } \|\cF[u_0]\|_{L_{q} \left(B_R, \mathrm{d}s W_0^q(\xi) \mathrm{d}\xi \right)} \\
							\label{20230715 30}
&\quad + \left\|\frac{1+|\psi(s,\xi)|}{W_1(\xi)} \exp\left(\int_0^s|\Re[\psi(r,\xi)]|\mathrm{d}r  \right)\right\|_{L_{p,\infty}\left( (0,t) \times B_R , \mathrm{d}s\mathrm{d} \xi \right) } 
\sup_{s \in (0,t)}\int_0^s\|\cF[f(\rho,\cdot)](\xi)\|_{L_{q} \left(B_R, W_2^q(\rho,\xi) \mathrm{d}\xi \right)} \mathrm{d}\rho.
\end{align}
Note that for any $t \in (0,T)$, 
\begin{align}
							\notag
\sup_{s \in (0,t)}\int_0^s\|\cF[f(\rho,\cdot)](\xi)\|_{L_{q} \left(B_R,  W_2^q(\rho,\xi) \mathrm{d}\xi \right)} \mathrm{d}\rho
&\leq \int_0^t\|\cF[f(\rho,\cdot)](\xi)\|_{L_{q} \left(B_R, W_2^q(\rho,\xi) \mathrm{d}\xi \right)} \mathrm{d}\rho \\
							\label{20230715 31}
&\leq t^{1/p'} \|\cF[f(\rho,\cdot)](\xi)\|_{L_{p,q} \left((0,t) \times B_R, \mathrm{d}\rho  W_2^q(\rho,\xi) \mathrm{d}\xi \right)} \mathrm{d}\rho.
\end{align}
Combining \eqref{20230715 30} and \eqref{20230715 31}, we finally have \eqref{20230715 20}.
The corollary is proved.
\end{proof}

At last, we are fully equipped to establish a link between the two weighted Bessel potential spaces introduced in Section \ref{fourier section} and our theories regarding well-posedness.

Recall that the exponents of weighted spaces play crucial roles in applying embedding inequalities.
Additionally, the weighted Bessel potential spaces become identical with classical Bessel potential spaces if the weighted exponents are zero.
We begin by presenting a corollary to show that data within the classical Bessel potential spaces can be included in our well-posedness theories when the integral exponents with respect to the spatial variable are constrained within the range of $[1,2]$.
\begin{corollary}
								\label{p small coro}
Let $p \in [1,\infty]$, $q \in [1,2]$, $\gamma_1, \tilde\gamma_1 \in \bR$,  $u_0 \in H_{q}^{\gamma_1}(\bR^d)$, and $f \in \fH^{\tilde\gamma_1}_{p,q,t\text{-}loc}\left( (0,T) \times \bR^d\right)$. Assume that the symbol $\psi(t,\xi)$ satisfies 
 \begin{align}
									\label{20230715 50-2}
 \left\|\frac{1+|\psi(\rho,\xi)|}{(1+|\xi|^2)^{\gamma/2}} \exp\left(\int_{0}^\rho|\Re[\psi(r,\xi)]| \right)\mathrm{d}r \right\|_{L_{p,\infty}\left( (0,t) \times B_R , \mathrm{d}\rho \mathrm{d} \xi \right) } < \infty
 \end{align}
for all $t \in (0,T)$ and $R \in (0,\infty)$, where $\gamma = \min\{\gamma_1,\tilde \gamma_1\}$.
 Then there exists a unique Fourier-space weak solution $u$ to \eqref{ab eqn}  in 
$$
 \cF^{-1}L_{p,q',t\text{-}loc, x\text{-}\ell oc}\left( (0,T) \times \bR^d, \mathrm{d}t (1+|\psi(t,\xi)|)\mathrm{d}\xi \right).
$$
Moreover, for any $t \in (0,T)$ and $R \in (0,\infty)$, the solution $u$ satisfies
\begin{align}
							\notag
&\left\| (1+|\psi(s,\xi)|) \cF[u(s,\cdot)](\xi) \right\|_{L_{p,q'}\left( (0,t) \times B_R, \mathrm{d}s\mathrm{d}\xi \right)}\\
							\notag
&\leq \left(\frac{1}{2\pi}\right)^{\frac{d(2-q)}{2q}}  \left\|\frac{1+|\psi(\rho,\xi)|}{(1+|\xi|^2)^{\gamma/2}} \exp\left(\int_{0}^\rho\Re[\psi(r,\xi)] \right)\mathrm{d}r \right\|_{L_{p,\infty}\left( (0,t) \times B_R , \mathrm{d}\rho \mathrm{d} \xi \right) }  \|u_0\|_{H^{\gamma_1}_{q}(\bR^d)} \\
							\notag
&\quad + \left(\frac{1}{2\pi}\right)^{\frac{d(2-q)}{2q}}  t^{1/p'}\left\|\frac{1+|\psi(\rho,\xi)|}{(1+|\xi|^2)^{\gamma/2}} \exp\left(\int_0^\rho|\Re[\psi(r,\xi)]|\mathrm{d}r  \right)\right\|_{L_{p,\infty}\left( (0,t) \times B_R , \mathrm{d}\rho\mathrm{d} \xi \right) } 
 \|f\|_{\fH^{\tilde\gamma_1}_{p,q,t\text{-}loc}\left( (0,t) \times \bR^d\right)},
\end{align}
where  $p'$ is the H\"older conjugate of $p$ and $1/\infty:=0$.
\end{corollary}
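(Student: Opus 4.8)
The plan is to obtain Corollary~\ref{p small coro} as a direct specialization of Corollary~\ref{corollary weight pq}, using the $L_q\to L_{q'}$ boundedness of the Fourier transform for $q\in[1,2]$ (the Riesz--Thorin inequalities \eqref{Riesz Thorin inequality 1} and \eqref{Riesz Thorin inequality 2}) to place the Bessel-potential data into the weighted Fourier spaces that Corollary~\ref{corollary weight pq} requires.

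First I would fix the weights. Put $\gamma=\min\{\gamma_1,\tilde\gamma_1\}$ and take the time-independent weights
\[
W_0(\xi):=(1+|\xi|^2)^{\gamma_1/2},\qquad W_1(\xi):=(1+|\xi|^2)^{\gamma/2},\qquad W_2(t,\xi):=(1+|\xi|^2)^{\tilde\gamma_1/2}.
\]
All three are positive; on each ball $B_R$ we have $1\le 1+|\xi|^2\le 1+R^2$, so $W_0$ and $W_1$ are bounded below by positive constants on $B_R$; and since $\gamma\le\gamma_1$, $\gamma\le\tilde\gamma_1$ and $1+|\xi|^2\ge1$, we get $W_1\le W_0$ and $W_1\le W_2$. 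Hence the structural requirements in \eqref{special lower bound} of Corollary~\ref{corollary weight pq} are met, where the role of its integrability exponent ``$q$'' is played here by the H\"older conjugate $q'\in[2,\infty]$ of our $q$; this is exactly what produces the exponent $q'$ in the conclusion.

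Next I would verify the data hypotheses of Corollary~\ref{corollary weight pq} with ``$q$'' replaced by $q'$. Since $u_0\in H_q^{\gamma_1}(\bR^d)$, i.e.\ $(I-\Delta)^{\gamma_1/2}u_0\in L_q(\bR^d)$, and $q\in[1,2]$, the Riesz--Thorin inequality \eqref{Riesz Thorin inequality 1} applied to $(I-\Delta)^{\gamma_1/2}u_0$ shows that $(1+|\cdot|^2)^{\gamma_1/2}\cF[u_0]=\cF[(I-\Delta)^{\gamma_1/2}u_0]$ has a realization in $L_{q'}(\bR^d)$ with $\|(1+|\cdot|^2)^{\gamma_1/2}\cF[u_0]\|_{L_{q'}(\bR^d)}\le(2\pi)^{-d(2-q)/(2q)}\|u_0\|_{H_q^{\gamma_1}(\bR^d)}$; equivalently $u_0\in\cF^{-1}L_{q',\ell oc}(\bR^d,W_0^{q'}(\xi)\mathrm{d}\xi)$. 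Applying the same inequality for a.e.\ fixed $t$ to $(I-\Delta)^{\tilde\gamma_1/2}f(t,\cdot)$ and then taking the $L_p((0,t_1))$-norm in $t$ gives $f\in\cF^{-1}L_{p,q',t\text{-}loc,x\text{-}\ell oc}\bigl((0,T)\times\bR^d,\mathrm{d}t\,W_2^{q'}(t,\xi)\mathrm{d}\xi\bigr)$ with the bound $(2\pi)^{-d(2-q)/(2q)}\|f\|_{\fH^{\tilde\gamma_1}_{p,q,t\text{-}loc}}$. Finally, because $W_1=(1+|\cdot|^2)^{\gamma/2}\le W_0$ and $\Re[\psi]\le|\Re[\psi]|$, the hypothesis \eqref{20230715 50-2} dominates both summands of the symbol condition \eqref{20230715 10} (read with $q'$), so that condition holds as well.

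Having checked everything, Corollary~\ref{corollary weight pq} supplies the unique Fourier-space weak solution $u$ in $\cF^{-1}L_{p,q',t\text{-}loc,x\text{-}\ell oc}((0,T)\times\bR^d,\mathrm{d}t(1+|\psi(t,\xi)|)\mathrm{d}\xi)$ together with estimate \eqref{20230715 20}. To recover the stated estimate I would, in \eqref{20230715 20}, bound $1/W_0$ and $1/W_1$ by $1/(1+|\cdot|^2)^{\gamma/2}$ and replace $\Re[\psi]$ by $|\Re[\psi]|$ in the $f$-factor, and then substitute the two Riesz--Thorin bounds above for $\|\cF[u_0]\|_{L_{q'}(B_R,W_0^{q'}\mathrm{d}\xi)}$ and $\|\cF[f]\|_{L_{p,q'}((0,t)\times B_R,W_2^{q'}\mathrm{d}\xi)}$; this reproduces the claimed inequality exactly. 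I do not expect a genuine obstacle: the argument is bookkeeping built on Corollary~\ref{corollary weight pq}, and the only point demanding attention is the systematic use of the conjugate exponent $q'$ in place of the $q$ of that corollary, which is precisely why $q\in[1,2]$ is needed (so that $\cF$ sends $L_q$ into the genuine function space $L_{q'}$ and the realizations of $\cF[u_0]$ and $\cF[f]$ exist).
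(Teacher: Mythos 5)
Your proposal is correct and follows essentially the same route as the paper: reduce to Corollary \ref{corollary weight pq} with exponent $q'$ by using the Riesz--Thorin bound (the content of Proposition \ref{20230701 10}) to realize $(1+|\xi|^2)^{\gamma_1/2}\cF[u_0]$ and $(1+|\xi|^2)^{\tilde\gamma_1/2}\cF[f(t,\cdot)]$ in $L_{q'}$, then check the symbol condition \eqref{20230715 10} and read off \eqref{20230715 20}. The only cosmetic difference is your choice $W_0=(1+|\xi|^2)^{\gamma_1/2}$, $W_2=(1+|\xi|^2)^{\tilde\gamma_1/2}$, $W_1=(1+|\xi|^2)^{\gamma/2}$ versus the paper's single weight $(1+|\xi|^2)^{\gamma/2}$, which merely shifts where the inequality $\gamma\le\gamma_1,\tilde\gamma_1$ is invoked.
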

\begin{proof}
Put
\begin{align*}
W_0(\xi) =W_1(\xi)=W_2(t,\xi)= \left( 1+|\xi|^2  \right)^{\gamma/2}
\end{align*}
for all $t$ and $\xi$.
Due to \eqref{20230715 50-2}, it is obvious that
\begin{align*}
&\left\|\frac{1+|\psi(\rho,\xi)|}{W_0(\xi)} \exp\left(\int_{0}^\rho\Re[\psi(r,\xi)] \right)\mathrm{d}r \right\|_{L_{p,\infty}\left( (0,t) \times B_R , \mathrm{d}\rho \mathrm{d} \xi \right) } \\
&+\left\|\frac{1+|\psi(\rho,\xi)|}{W_1(\xi)} \exp\left(\int_{0}^\rho \left|\Re[\psi(r,\xi)]\right| \right)\mathrm{d}r \right\|_{L_{p,\infty}\left( (0,t) \times B_R , \mathrm{d}\rho \mathrm{d} \xi \right) } 
< \infty.
 \end{align*}
By Proposition \ref{20230701 10},
$u_0 \in \cF^{-1}H_{q',out}^{0,\gamma_1}(\bR^d)$
and
$f \in \cF^{-1}\fH^{0,\tilde\gamma_1}_{p,q',out,t\text{-}loc}\left( (0,T) \times \bR^d, \mathrm{d}t\mathrm{d}x \right)$.
Additionally,
\begin{align*}
 \|\cF[u_0]\|_{H^{0,\gamma_1}_{q',out}(\bR^d)} \leq \left(\frac{1}{2\pi}\right)^{\frac{d(2-q)}{2q}} \|u_0\|_{H^{\gamma_1}_{q}(\bR^d)}
\end{align*}
and
\begin{align*}
 \|\cF[f]\|_{\fH^{0,\tilde\gamma_1}_{p,q',out,t\text{-}loc}\left( (0,t) \times \bR^d \right)} \leq \left(\frac{1}{2\pi}\right)^{\frac{d(2-q)}{2q}} \|f\|_{\fH^{\tilde\gamma_1}_{p,q,t\text{-}loc}\left( (0,t) \times \bR^d\right)}.
\end{align*}
Thus it is easy to show that  for all $t \in (0,T)$ and $R \in (0,\infty)$,
\begin{align*}
\|\cF[u_0]\|_{L_{q'}(B_R,  W^{q'}(\xi) \mathrm{d}\xi)}
\leq \|\cF[u_0]\|_{L_{q'}(\bR^d,  W^{q'}(\xi) \mathrm{d}\xi)}
&\leq \|\cF[u_0]\|_{H^{0,\gamma_1}_{q',out}(\bR^d)}  \\
&\leq \left(\frac{1}{2\pi}\right)^{\frac{d(2-q)}{2q}} \|u_0\|_{H^{\gamma_1}_{q}(\bR^d)}
\end{align*}
and
\begin{align*}
\|\cF[f]\|_{L_{p,q'}\left(  (0,t) \times B_R, \mathrm{d}s W^{q'}(\xi)\mathrm{d}\xi\right)} 
\leq \|\cF[f]\|_{L_{p,q'}\left(  (0,t) \times \bR^d, \mathrm{d}s W^{q'}(\xi)\mathrm{d}\xi\right)} 
&\leq \|\cF[f]\|_{\fH^{0,\tilde\gamma_1}_{p,q',out,t\text{-}loc}\left( (0,t) \times \bR^d \right)} \\
&\leq \left(\frac{1}{2\pi}\right)^{\frac{d(2-q)}{2q}} \|f\|_{\fH^{\tilde\gamma_1}_{p,q,t\text{-}loc}\left( (0,t) \times \bR^d\right)}.
\end{align*}
Thus the conclusion directly comes from Corollary \ref{corollary weight pq}.
\end{proof}
\begin{rem}
The restriction on the integral exponent $q \in [1,2]$  is not crucial in general if the solution $u$ has a nice stability with respect to data $u_0$ and $f$ by applying a standard duality argument (\textit{cf.} \cite{KID SBL KHK 2015,KID SBL KHK 2016}).
\end{rem}

Next, we examine data within inner weighted spaces featuring non-zero weighted exponents.
Unfortunately, this imposes specific constraints on the range of exponents,  which arise as a natural consequence of the embedding inequalities.

\begin{corollary}
								\label{q small coro}
Let $p \in [1,\infty]$, $q \in (1,2]$, $\gamma_1, \tilde \gamma_1 \in \left(-\frac{d(p-1)}{p}, \infty\right)$, $\gamma_2, \tilde \gamma_2 \in [0,\infty)$,  $u_0 \in H_{q, in}^{\gamma_1,\gamma_2}(\bR^d)$, and $f \in \fH^{\tilde \gamma_1, \tilde \gamma_2}_{p,q,in,t\text{-}loc}\left( (0,T) \times \bR^d\right)$. Assume that the symbol $\psi(t,\xi)$ satisfies 
 \begin{align*}
 \left\|\frac{1+|\psi(\rho,\xi)|}{(1+|\xi|^2)^{\gamma/2}} \exp\left(\int_{0}^\rho |\Re[\psi(r,\xi)]| \right)\mathrm{d}r \right\|_{L_{p,\infty}\left( (0,t) \times B_R , \mathrm{d}\rho \mathrm{d} \xi \right) } < \infty
 \end{align*}
for all $t \in (0,T)$ and $R \in (0,\infty)$, where $\gamma = \min\{\gamma_1,\tilde \gamma_1\}$.
 Then there exists a unique Fourier-space weak solution $u$ to \eqref{ab eqn}  in 
$$
 \cF^{-1}L_{p,q',t\text{-}loc, x\text{-}\ell oc}\left( (0,T) \times \bR^d, \mathrm{d}t (1+|\psi(t,\xi)|)\mathrm{d}\xi \right).
$$
Moreover, for any $t \in (0,T)$ and $R \in (0,\infty)$, the solution $u$ satisfies
\begin{align}
							\notag
&\left\| (1+|\psi(s,\xi)|) \cF[u(s,\cdot)](\xi) \right\|_{L_{p,q'}\left( (0,t) \times B_R, \mathrm{d}s\mathrm{d}\xi \right)}\\
							\notag
&\lesssim   \left\|\frac{1+|\psi(\rho,\xi)|}{(1+|\xi|^2)^{\gamma/2}} \exp\left(\int_{0}^\rho\Re[\psi(r,\xi)] \right)\mathrm{d}r \right\|_{L_{p,\infty}\left( (0,t) \times B_R , \mathrm{d}\rho \mathrm{d} \xi \right) }  \|u_0\|_{H^{\gamma_1,\gamma_2}_{q,in}(\bR^d)} \\
							\notag
&\quad +  t^{1/p'}\left\|\frac{1+|\psi(\rho,\xi)|}{(1+|\xi|^2)^{\gamma/2}} \exp\left(\int_0^\rho|\Re[\psi(r,\xi)]|\mathrm{d}r  \right)\right\|_{L_{p,\infty}\left( (0,t) \times B_R , \mathrm{d}\rho\mathrm{d} \xi \right) } 
 \|f\|_{\fH^{\tilde \gamma_1, \tilde \gamma_2}_{p,q,in,t\text{-}loc}\left( (0,t) \times \bR^d\right)},
\end{align}
where  $p'$ is the H\"older conjugate of $p$ and $1/\infty:=0$.
\end{corollary}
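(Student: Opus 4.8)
The plan is to reduce this corollary to Corollary~\ref{corollary weight pq} by taking the weights $W_0(\xi)=W_1(\xi)=W_2(t,\xi)=(1+|\xi|^2)^{\gamma/2}$ with $\gamma:=\min\{\gamma_1,\tilde\gamma_1\}$, and then checking that Fourier transforms of data in the \emph{inner} weighted Bessel potential spaces land in the weighted local $L_{q'}$-spaces required by that corollary. First note that these $W_j$ are positive, locally bounded below, and satisfy $W_2\ge W_1$, so the lower bound requirement \eqref{special lower bound} holds; moreover $1/W_0=1/W_1=(1+|\cdot|^2)^{-\gamma/2}$, and since $\exp\!\big(\int_0^\rho\Re[\psi(r,\xi)]\,\mathrm{d}r\big)\le\exp\!\big(\int_0^\rho|\Re[\psi(r,\xi)]|\,\mathrm{d}r\big)$, the symbol hypothesis of the present corollary immediately implies condition \eqref{20230715 10} of Corollary~\ref{corollary weight pq}.

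The heart of the argument is the translation of the regularity and weight information. Since $q\in(1,2]\subset(1,\infty]$, Proposition~\ref{inner embedding} lowers the regularity exponents to the common value $\gamma$, giving $\|u_0\|_{H^{\gamma,\gamma_2}_{q,in}(\bR^d)}\le\|u_0\|_{H^{\gamma_1,\gamma_2}_{q,in}(\bR^d)}$ and, applied for a.e.\ $t$, $\|f\|_{\fH^{\gamma,\tilde\gamma_2}_{p,q,in,t\text{-}loc}((0,t)\times\bR^d)}\le\|f\|_{\fH^{\tilde\gamma_1,\tilde\gamma_2}_{p,q,in,t\text{-}loc}((0,t)\times\bR^d)}$. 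Next, since $q\in[1,2]$, Proposition~\ref{20230701 10}(i) moves us to \emph{outer} spaces with interchanged exponents, $\cF[u_0]\in H^{\gamma_2,\gamma}_{q',out}(\bR^d)$ and $\cF[f(t,\cdot)]\in H^{\tilde\gamma_2,\gamma}_{q',out}(\bR^d)$ for a.e.\ $t$, with the time-independent factor $(2\pi)^{-d(2-q)/(2q)}$. Finally, because $\gamma_2,\tilde\gamma_2\ge 0$ and the range restrictions on $\gamma_1,\tilde\gamma_1$ place $\gamma$ in the Muckenhoupt range of $A_{q'}$, Proposition~\ref{outer embedding 2} (with integrability exponent $q'$) discards the now-regularity exponents $\gamma_2,\tilde\gamma_2$, yielding $\cF[u_0]\in H^{0,\gamma}_{q',out}(\bR^d)$ and $\cF[f(t,\cdot)]\in H^{0,\gamma}_{q',out}(\bR^d)$, i.e.\ $(1+|\cdot|^2)^{\gamma/2}\cF[u_0]\in L_{q'}(\bR^d)$ and $(1+|\cdot|^2)^{\gamma/2}\cF[f(t,\cdot)]\in L_{q'}(\bR^d)$, all constants absorbed into $\lesssim$. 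Restricting to $B_R$ and taking $L_p$ in $t$ gives $u_0\in\cF^{-1}L_{q',\ell oc}(\bR^d,W_0^{q'}(\xi)\mathrm{d}\xi)$ with $\|\cF[u_0]\|_{L_{q'}(B_R,W_0^{q'}\mathrm{d}\xi)}\lesssim\|u_0\|_{H^{\gamma_1,\gamma_2}_{q,in}(\bR^d)}$, and $f\in\cF^{-1}L_{p,q',t\text{-}loc,x\text{-}\ell oc}((0,T)\times\bR^d,\mathrm{d}t\,W_2^{q'}(t,\xi)\mathrm{d}\xi)$ with $\|\cF[f]\|_{L_{p,q'}((0,t)\times B_R,\mathrm{d}s\,W_2^{q'}\mathrm{d}\xi)}\lesssim\|f\|_{\fH^{\tilde\gamma_1,\tilde\gamma_2}_{p,q,in,t\text{-}loc}((0,t)\times\bR^d)}$ for every $t\in(0,T)$ and $R\in(0,\infty)$.

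With these ingredients Corollary~\ref{corollary weight pq}, applied with its integrability exponent equal to our $q'$, produces a unique Fourier-space weak solution $u\in\cF^{-1}L_{p,q',t\text{-}loc,x\text{-}\ell oc}((0,T)\times\bR^d,\mathrm{d}t\,(1+|\psi(t,\xi)|)\mathrm{d}\xi)$ together with the a priori bound \eqref{20230715 20}; substituting the two norm comparisons above into \eqref{20230715 20} gives the displayed estimate, with the constant now implicit, hence the $\lesssim$. This is, in outline, the argument used for Corollary~\ref{p small coro}, with the two extra weight exponents absorbed through Propositions~\ref{inner embedding} and \ref{outer embedding 2}.

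The step I expect to be the main obstacle is the elimination of $\gamma_2$ and $\tilde\gamma_2$: after the Fourier transform these become \emph{regularity} exponents of an outer space, and dropping them is not a soft embedding but a weighted multiplier statement, which is exactly why $q\le 2$ (so Proposition~\ref{20230701 10} applies) and the bounds on $\gamma_1,\tilde\gamma_1$ (so that $(1+|\cdot|^2)^{q'\gamma/2}\in A_{q'}$, as needed by Proposition~\ref{outer embedding 2}) are forced. A minor additional point is the regime in which $\gamma\ge0$ is so large that $(1+|\cdot|^2)^{q'\gamma/2}$ leaves the $A_{q'}$ range; there one bypasses Proposition~\ref{outer embedding 2} and argues directly, using that for $\gamma_2\ge0$ the Bessel potential $(I-\Delta)^{-\gamma_2/2}$ is bounded on $L_{q'}(\bR^d)$ and that $(1+|\cdot|^2)^{\gamma/2}$ is bounded on each ball $B_R$, which already yields the required local $L_{q'}$-membership (with an $R$-dependent constant that is harmless here).
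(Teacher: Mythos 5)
Your proposal is correct and follows essentially the same route as the paper's proof: Proposition \ref{inner embedding} to lower the regularity exponents, Proposition \ref{20230701 10}(i) to pass to the outer spaces $H^{\cdot,\cdot}_{q',out}$, Proposition \ref{outer embedding 2} to discard the exponents $\gamma_2,\tilde\gamma_2\ge 0$, and finally Corollary \ref{corollary weight pq} with $W_0=W_1=W_2=(1+|\xi|^2)^{\gamma/2}$. The only (cosmetic) difference is that where you add a separate fallback argument for the regime in which $\gamma$ leaves the $A_{q'}$ range, the paper instead uses Proposition \ref{inner embedding} at the outset to lower $\gamma_1,\tilde\gamma_1$ into the admissible Muckenhoupt range without loss of generality, which accomplishes the same thing.
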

\begin{proof}
The proof of this corollary is almost identical with that of Corollary \ref{p small coro}.
We only highlight the major difference.
Due to Proposition \ref{inner embedding}, we may assume that 
$$
\gamma_1, \tilde \gamma_1 \in (-d(p-1)/p, d/p)
$$
without loss of generality.
Moreover, Proposition \ref{outer embedding 2} is used to control $L_{q'}$-norms of the Fourier transforms of data in order to consider general regularity exponents of outer spaces instead of only 0.
Indeed, for all $t \in (0,T)$ and $R \in (0,\infty)$,
\begin{align*}
\|\cF[u_0]\|_{L_{q'}(B_R,  (1+|\xi|^2)^{(q'\gamma)/2} \mathrm{d}\xi)}
\leq \|\cF[u_0]\|_{L_{q'}(\bR^d,  (1+|\xi|^2)^{(q'\gamma)/2}\mathrm{d}\xi)}
\leq \|\cF[u_0]\|_{H^{0,\gamma_1}_{q',out}(\bR^d)}  
&\lesssim \|\cF[u_0]\|_{H^{\gamma_2,\gamma_1}_{q',out}(\bR^d)}   \\
&\lesssim \|u_0\|_{H^{\gamma_1,\gamma_2}_{q,in}(\bR^d)}
\end{align*}
and
\begin{align*}
\|\cF[f]\|_{L_{p,q'}\left(  (0,t) \times B_R, \mathrm{d}s (1+|\xi|^2)^{(q'\gamma)/2}\mathrm{d}\xi\right)} 
&\leq \|\cF[f]\|_{L_{p,q'}\left(  (0,t) \times \bR^d, \mathrm{d}s (1+|\xi|^2)^{(q'\gamma)/2}\mathrm{d}\xi\right)}  \\
&\leq \|\cF[f]\|_{\fH^{0,\tilde \gamma_1}_{p,q',out,t\text{-}loc}\left( (0,t) \times \bR^d \right)}  \\
&\lesssim \|\cF[f]\|_{\fH^{\tilde \gamma_2,\tilde \gamma_1}_{p,q',out,t\text{-}loc}\left( (0,t) \times \bR^d \right)} \\
&\lesssim \|f\|_{\fH^{\tilde\gamma_1, \tilde \gamma_2}_{p,q,in,t\text{-}loc}\left( (0,t) \times \bR^d\right)}.
\end{align*}
The corollary is proved.
\end{proof}

Moving forward, our focus shifts to the parameter $q$ which exceeds the value of 2.
In simpler terms, we are now capable of dealing with the data $u_0$ and $f$ in the outer weighted spaces, thanks to the embeddings developed in Section \ref{fourier section}, even when $q$ is within the range of $(2, \infty]$.
\begin{corollary}
									\label{p big coro}
Let $p \in [1,\infty]$, $q \in (2,\infty]$, $\gamma_1, \tilde \gamma_1 \in \bR$, $\gamma_2, \tilde \gamma_2 \in \left(\frac{d(q-2)}{2(q-1)},\infty \right)$,  $u_0 \in H_{q,out}^{\gamma_1,\gamma_2}(\bR^d)$, and $f \in \fH^{\tilde \gamma_1,\tilde \gamma_2}_{p,q,out,t\text{-}loc}\left( (0,T) \times \bR^d\right)$. Assume that the symbol $\psi(t,\xi)$ satisfies 
 \begin{align}
									\label{20230722 01}
 \left\|\frac{1+|\psi(\rho,\xi)|}{(1+|\xi|^2)^{\gamma/2}} \exp\left(\int_{0}^\rho|\Re[\psi(r,\xi)]| \right)\mathrm{d}r \right\|_{L_{p,\infty}\left( (0,t) \times B_R , \mathrm{d}\rho \mathrm{d} \xi \right) } < \infty
 \end{align}
for all $t \in (0,T)$ and $R \in (0,\infty)$, where $\gamma = \min\{\gamma_1,\tilde \gamma_1\}$.
 Then there exists a unique Fourier-space weak solution $u$ to \eqref{ab eqn}  in 
$$
 \cF^{-1}L_{p,2,t\text{-}loc, x\text{-}\ell oc}\left( (0,T) \times \bR^d, \mathrm{d}t (1+|\psi(t,\xi)|)\mathrm{d}\xi \right).
$$
Moreover, for any $t \in (0,T)$ and $R \in (0,\infty)$, the solution $u$ satisfies
\begin{align}
							\notag
&\left\| (1+|\psi(s,\xi)|) \cF[u(s,\cdot)](\xi) \right\|_{L_{p,2}\left( (0,t) \times B_R, \mathrm{d}s\mathrm{d}\xi \right)}\\
							\notag
&\lesssim_{d,p,q,\gamma_2,\tilde\gamma_2} \left\|\frac{1+|\psi(\rho,\xi)|}{(1+|\xi|^2)^{\gamma/2}} \exp\left(\int_{0}^\rho\Re[\psi(r,\xi)] \right)\mathrm{d}r \right\|_{L_{p,\infty}\left( (0,t) \times B_R , \mathrm{d}\rho \mathrm{d} \xi \right) } \|u_0\|_{H_{q,out}^{\gamma_1,\gamma_2}(\bR^d)} \\
							\label{20230722 02}
&\quad + t^{1/p'}\left\|\frac{1+|\psi(\rho,\xi)|}{(1+|\xi|^2)^{\gamma/2}} \exp\left(\int_0^\rho|\Re[\psi(r,\xi)]|\mathrm{d}r  \right)\right\|_{L_{p,\infty}\left( (0,t) \times B_R , \mathrm{d}\rho\mathrm{d} \xi \right) } 
\|f\|_{\fH^{\tilde \gamma_1,\tilde \gamma_2}_{p,q,out}\left( (0,t) \times \bR^d\right)},
\end{align}
where  $p'$ is the H\"older conjugate of $p$ and $1/\infty:=0$.
\end{corollary}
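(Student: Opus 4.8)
The plan is to derive Corollary \ref{p big coro} from Corollary \ref{corollary weight pq}, applied with the spatial integrability exponent taken to be $2$ and with the weights $W_0(\xi)=W_1(\xi)=W_2(t,\xi)=(1+|\xi|^2)^{\gamma/2}$, where $\gamma=\min\{\gamma_1,\tilde\gamma_1\}$. These weights are positive, bounded below on each ball $B_R$, and satisfy $W_2\equiv W_1$, so the weight hypotheses of Corollary \ref{corollary weight pq} hold automatically; its symbol condition \eqref{20230715 10} reduces, after using $\exp\bigl(\int_0^\rho\Re[\psi(r,\xi)]\,\mathrm{d}r\bigr)\le\exp\bigl(\int_0^\rho|\Re[\psi(r,\xi)]|\,\mathrm{d}r\bigr)$, to twice the single quantity appearing in \eqref{20230722 01}, hence is satisfied. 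The substantive work is to verify the two data conditions, namely that $u_0$ and $f$ (which live in outer weighted Bessel potential spaces of exponent $q\in(2,\infty]$) have Fourier transforms lying locally in the $L_2$-based weighted spaces demanded by Corollary \ref{corollary weight pq}; this is exactly what Proposition \ref{p large embedding} provides.

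First I would fix $\delta\in\bigl(\tfrac{d(q-2)}{2(q-1)},\,\min\{\gamma_2,\tilde\gamma_2\}\bigr]$, a nonempty interval precisely because $\gamma_2,\tilde\gamma_2>\tfrac{d(q-2)}{2(q-1)}$. Since $u_0\in H_{q,out}^{\gamma_1,\gamma_2}(\bR^d)$, Proposition \ref{p large embedding}(i) (with its ``$p$'' equal to our $q$) gives $\cF[u_0]\in H_{2,in}^{\gamma_2-\delta,\gamma_1}(\bR^d)$ together with the bound $\|\cF[u_0]\|_{H_{2,in}^{\gamma_2-\delta,\gamma_1}}\le\|(1+|\cdot|^2)^{-\delta/2}\|_{L_{2/(2-q')}(\bR^d)}\,\|u_0\|_{H_{q,out}^{\gamma_1,\gamma_2}}$, whose constant is finite by the choice of $\delta$ (here $q'$ is the H\"older conjugate of $q$). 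Because $\gamma_2-\delta\ge 0$, Proposition \ref{inner embedding} lets me decrease the regularity exponent to $0$, so $\cF[u_0]\in H_{2,in}^{0,\gamma_1}(\bR^d)=H_{2,out}^{0,\gamma_1}(\bR^d)$ with controlled norm; and since $\gamma\le\gamma_1$ we get $\|(1+|\cdot|^2)^{\gamma/2}\cF[u_0]\|_{L_2(\bR^d)}\le\|\cF[u_0]\|_{H_{2,out}^{0,\gamma_1}}$. Chaining these inequalities yields $u_0\in\cF^{-1}L_{2,\ell oc}(\bR^d,W_0^2(\xi)\,\mathrm{d}\xi)$ with $\|\cF[u_0]\|_{L_2(B_R,W_0^2\,\mathrm{d}\xi)}\lesssim_{d,q,\gamma_2}\|u_0\|_{H_{q,out}^{\gamma_1,\gamma_2}}$ for every $R$. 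Running the same chain pointwise in $t$ for $f(t,\cdot)\in H_{q,out}^{\tilde\gamma_1,\tilde\gamma_2}(\bR^d)$ (now using $\tilde\gamma_2-\delta\ge 0$ and $\gamma\le\tilde\gamma_1$) and then taking the $L_p$-norm in $t$ shows $f\in\cF^{-1}L_{p,2,t\text{-}loc,x\text{-}\ell oc}\bigl((0,T)\times\bR^d,\mathrm{d}t\,W_2^2(t,\xi)\,\mathrm{d}\xi\bigr)$ with $\|\cF[f]\|_{L_{p,2}((0,t)\times B_R,\mathrm{d}s\,W_2^2\,\mathrm{d}\xi)}\lesssim_{d,q,\tilde\gamma_2}\|f\|_{\fH^{\tilde\gamma_1,\tilde\gamma_2}_{p,q,out,t\text{-}loc}((0,t)\times\bR^d)}$.

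With all hypotheses of Corollary \ref{corollary weight pq} verified (with spatial exponent $2$), it produces a unique Fourier-space weak solution $u\in\cF^{-1}L_{p,2,t\text{-}loc,x\text{-}\ell oc}\bigl((0,T)\times\bR^d,\mathrm{d}t\,(1+|\psi(t,\xi)|)\,\mathrm{d}\xi\bigr)$ together with the estimate \eqref{20230715 20}. I would then substitute the two data bounds from the previous paragraph into \eqref{20230715 20} and, in the $f$-term, bound $\exp\bigl(\int_0^\rho\Re[\psi]\,\mathrm{d}r\bigr)$ by $\exp\bigl(\int_0^\rho|\Re[\psi]|\,\mathrm{d}r\bigr)$; this gives exactly \eqref{20230722 02}, with all implied constants depending only on $d,p,q,\gamma_2,\tilde\gamma_2$ (the $p$-dependence being carried explicitly by the factor $t^{1/p'}$). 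There is no genuine obstacle beyond careful bookkeeping with the embeddings of Section \ref{fourier section}, exactly as in the proofs of Corollaries \ref{p small coro} and \ref{q small coro}; the one point demanding attention is the choice of the auxiliary exponent $\delta$, which must exceed $\tfrac{d(q-2)}{2(q-1)}$ so that the multiplier constant $\|(1+|\cdot|^2)^{-\delta/2}\|_{L_{2/(2-q')}(\bR^d)}$ from Proposition \ref{p large embedding} is finite, yet not exceed $\min\{\gamma_2,\tilde\gamma_2\}$ so that the residual regularity exponents $\gamma_2-\delta,\tilde\gamma_2-\delta$ stay nonnegative for Proposition \ref{inner embedding} --- possible precisely under the standing hypothesis $\gamma_2,\tilde\gamma_2>\tfrac{d(q-2)}{2(q-1)}$, and this is also the mechanism forcing the spatial integrability exponent of the solution down from $q$ to $2$.
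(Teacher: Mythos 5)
Your proposal is correct and takes essentially the same route as the paper: both reduce the statement to Corollary \ref{corollary weight pq} with spatial exponent $2$ and weights $W_0=W_1=W_2=(1+|\xi|^2)^{\gamma/2}$, using Proposition \ref{p large embedding} to convert the outer weighted data with $q\in(2,\infty]$ into locally $L_2$-based weighted information on $\cF[u_0]$ and $\cF[f]$, and then bounding $\exp\bigl(\int_0^\rho\Re[\psi]\,\mathrm{d}r\bigr)$ by $\exp\bigl(\int_0^\rho|\Re[\psi]|\,\mathrm{d}r\bigr)$ to get \eqref{20230722 02} from \eqref{20230715 20}. The only cosmetic difference is that the paper chooses $\delta=\gamma_2$ (resp.\ $\tilde\gamma_2$) so the residual inner regularity exponent is exactly $0$, which renders your additional invocation of Proposition \ref{inner embedding} unnecessary, though your version with a common $\delta\le\min\{\gamma_2,\tilde\gamma_2\}$ is equally valid.
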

\begin{proof}
It is another easy application of Corollary \ref{corollary weight pq}.
First, we apply an embedding inequality developed in Proposition \ref{p large embedding}.
Put
\begin{align*}
\delta_1=\gamma_2 ~\text{and}~\delta_2=\tilde \gamma_2.
\end{align*}
Then by Proposition \ref{p large embedding}, we have
\begin{align}
									\label{20230722 10}
\|\cF[u_0]\|_{H_{2,in}^{0,\gamma_1}(\bR^d)}
\leq \left\| \left(1+|\cdot|^2\right)^{-\delta_1/2}  \right\|_{L_{2/(2-q')}(\bR^d)} \|u_0\|_{H_{q,out}^{\gamma_1,\gamma_2}(\bR^d)}
\end{align}
and
\begin{align}
									\label{20230722 11}
\|\cF[f](t,\cdot)\|_{H_{2,in}^{0,\tilde \gamma_1}(\bR^d)}
\leq \left\| \left(1+|\cdot|^2\right)^{-\delta_2/2}  \right\|_{L_{2/(2-q')}(\bR^d)} \|f(t,\cdot)\|_{H_{q,out}^{\tilde \gamma_1,\tilde \gamma_2}(\bR^d)} \quad \forall t \in (0,T).
\end{align}
In particular,
$u_0 \in \cF^{-1}L_{2,\ell oc}(\bR^d,  W^2(\xi) \mathrm{d}\xi)$
and $f \in \cF^{-1}L_{p,2,t\text{-}loc, x \text{-}\ell oc}\left(  (0,T) \times \bR^d, \mathrm{d}t W^2(\xi)\mathrm{d}\xi\right)$
with $W(\xi) = (1+|\xi|^2)^{\gamma/2}$. 
Additionally, due to \eqref{20230722 01}, we have
 \begin{align*}
 \left\|\frac{1+|\psi(\rho,\xi)|}{W(\xi)} \exp\left(\int_{0}^\rho\Re[\psi(r,\xi)] \right)\mathrm{d}r \right\|_{L_{p,\infty}\left( (0,t) \times B_R , \mathrm{d}\rho \mathrm{d} \xi \right) } < \infty
 \end{align*}
for all $t \in (0,T)$ and $R \in (0,\infty)$.
Thus by applying Corollary \ref{corollary weight pq} with
\begin{align*}
W(\xi)=W_0(\xi) =W_1(\xi)=W_2(t,\xi),
\end{align*}
there exists a unique Fourier-space weak solution $u$ to \eqref{ab eqn}  in 
$$
 \cF^{-1}L_{p,2,t\text{-}loc, x\text{-}\ell oc}\left( (0,T) \times \bR^d, \mathrm{d}t (1+|\psi(t,\xi)|)\mathrm{d}\xi \right)
$$
and this solution $u$ satisfies \eqref{20230722 02} due to \eqref{20230715 20}, \eqref{20230722 10}, and \eqref{20230722 11}.
\end{proof}

\mysection{Logarithmic operators with complex-valued coefficients and proof of Theorem \ref{main log}}
									\label{log thm pf}

Demonstrating the well-posedness of evolutionary equations involving the logarithmic Laplacian is an important application of our work.
We have already proposed natural extensions of the operator with coefficients in \eqref{log eqn}.
We begin this section by providing a proof for Theorem \ref{main log}.
\begin{proof}[Proof of Theorem \ref{main log}]

Put 
$$
\psi(t,\xi)= \beta(t) \log \left(\psi_{exp}(t,\xi)\right)
$$
and we show that the symbol $\psi(t,\xi)$ and data $u_0$ and $f$ satisfy \eqref{20230624 10} and \eqref{20230624 11}. 
Observe that
\begin{align*}
\Re[\psi(t,\xi)] = \Re[\beta(t)] \log \left(|\psi_{exp}(t,\xi)|\right)
-\Im[\beta(t)] \arg [\psi_{exp}(t,\xi) ] 
\end{align*}
and recall that the real-valued exponential function is convex on any open interval of $\bR$,
where $\arg [\psi_{exp}(t,\xi) ] $ denotes the argument of the complex number $\psi_{exp}(t,\xi) $.
Thus for all $t \in (0,T)$ and $R \in (0,\infty)$, applying Jensen's inequality and considering the standard branch cut, we have
\begin{align*}
&\int_{B_R}\exp\left(\int_0^t\Re[\psi(r,\xi)]\mathrm{d}r \right) \cF[u_0](\xi) \mathrm{d}\xi
+  \int_{B_R} \int_0^t  \exp\left(\int_s^t\Re[\psi(r,\xi)]\mathrm{d}r \right) \left|\cF[f(s,\cdot)](\xi) \right| \mathrm{d}s \mathrm{d}\xi  \\
&\leq \int_{B_R}\exp\left(\frac{1}{t}\int_0^t\log \left(|\psi_{exp}(r,\xi)|^{t \Re[\beta(r)]}  \right)\mathrm{d}r + 2\pi \int_0^t |\Im[\beta(r)]|\mathrm{d}r \right) \cF[u_0](\xi) \mathrm{d}\xi \\
&\quad +\int_{B_R} \int_0^t  \exp\left(\frac{1}{t-s}\int_s^t\log \left[\left(|\psi_{exp}(r,\xi)|^{(t-s) \Re[\beta(r)]}\right) \right] \mathrm{d}r  + 2\pi \int_0^t |\Im[\beta(r)]|\mathrm{d}r \right) \left|\cF[f(s,\cdot)](\xi) \right| \mathrm{d}s \mathrm{d}\xi  \\
&\leq \exp\left( 2\pi \int_0^t |\Im[\beta(r)]|\mathrm{d}r \right)\int_{B_R}\frac{1}{t}\int_0^t|\psi_{exp}(r,\xi)|^{t \Re[\beta(r)]}\mathrm{d}r  \cF[u_0](\xi) \mathrm{d}\xi \\
&\quad + \exp\left( 2\pi \int_0^t |\Im[\beta(r)]|\mathrm{d}r \right)
\int_0^t \int_{B_R}  \frac{1}{t-s}\int_s^t\left(|\psi_{exp}(r,\xi)|^{(t-s) \Re[\beta(r)]}\right) \mathrm{d}r 
   \left|\cF[f(s,\cdot)](\xi) \right| \mathrm{d}s \mathrm{d}\xi.
\end{align*}
Thus the above inequalities with \eqref{log con 0} and \eqref{log con 1} show that \eqref{20230624 10} holds.
Next, observe that 
\begin{align*}
|\psi(\rho,\xi)| 
\leq |\beta(\rho)|\left| \log \left(|\psi_{\exp}(\rho,\xi)|\right) \right|+ 2\pi|\beta(\rho)|
\end{align*}
if one considers the standard Branch in the complex logarithm.
Therefore for all $ t \in (0,T)$ and $R \in (0,\infty)$,
\begin{align}
							\notag
& \int_{B_R}\int_0^t |\psi(\rho,\xi)| \exp\left(\int_0^\rho \Re[\psi(r,\xi)]\mathrm{d}r \right) \cF[u_0](\xi)\mathrm{d}\rho \mathrm{d}\xi \\
							\notag
&+\int_{B_R}\int_0^t |\psi(\rho,\xi)| \int_0^\rho \exp\left(\int_s^\rho\Re[\psi(r,\xi)]\mathrm{d}r \right)  \left|\cF[f(s,\cdot)](\xi) \right| \mathrm{d}s \mathrm{d}\rho \mathrm{d}\xi \\
							\notag
&\lesssim   \int_{B_R} \left[\int_0^t |\beta(\rho)|\left(1+\left| \log \left(|\psi_{\exp}(\rho,\xi)|\right) \right|\right)  \left(\frac{1}{\rho}\int_0^\rho|\psi_{exp}(r,\xi)|^{\rho \Re[\beta(r)]}\mathrm{d}r \right) \mathrm{d}\rho \cF[u_0](\xi)  \right] \mathrm{d}\xi \\
							\notag
&\quad +\int_{B_R}\Bigg[ \int_0^t \left(\int_s^t |\beta(\rho)|\left(1+\left| \log \left(|\psi_{\exp}(\rho,\xi)|\right) \right|\right) \left(\frac{1}{\rho-s}\int_s^\rho\left(|\psi_{exp}(r,\xi)|^{(\rho-s) \Re[\beta(r)]}\right) \mathrm{d}r \right) 
 \mathrm{d}\rho  \right)  \\
							\label{20240204 01}
&\qquad\qquad\qquad\qquad\qquad\qquad\qquad\qquad\qquad\qquad\qquad\qquad\qquad\qquad\qquad\qquad  
\left|\cF[f(s,\cdot)](\xi) \right| \mathrm{d}s \Bigg] \mathrm{d}\xi,
\end{align}
where
\begin{align*}
\exp\left( 2\pi \int_0^t |\Im[\beta(r)]|\mathrm{d}r \right) 
\leq \exp\left( 2\pi \int_0^t |\beta(r)|\mathrm{d}r \right)  < \infty
\end{align*}
is used in the inequality above.
Finally, applying \eqref{log con 2} to \eqref{20240204 01}, we have \eqref{20230624 11}.
The theorem is proved.
\end{proof}							

Next, we consider the special case of $\psi_{\exp}(t,\xi)$.
More precisely, we consider the second-order case, \textit{i.e.}
\begin{align*}
\psi_{\exp}(t,\xi)
=\alpha^{ij}(t)\xi^i\xi^j.
\end{align*}
Recalling
\begin{align*}
\log \left( \psi_{exp}(t,-i \nabla)  \right)u(t,x)
:=\cF^{-1}\left[\log \left(\psi_{exp}(t,\xi)\right)\cF[u(t,\cdot)](\xi) \right](x),
\end{align*}
we use the special notation
\begin{align*}
\beta(t)\log \left( -\alpha^{ij}(t) D_{x^i} D_{x^j} \right)u(t,x)
\end{align*}
to denote
\begin{align*}
\cF^{-1}\left[\log \left(\alpha^{ij}(t)\xi^i \xi^j\right)\cF[u(t,\cdot)](\xi) \right](x).
\end{align*}
Combining all the information above, we finally state an evolution equation with the operator
$\beta(t)\log \left( -\alpha^{ij}(t) D_{x^i} D_{x^j} \right)$ as follows:
\begin{equation}
\begin{cases}
								\label{log second}
\partial_tu(t,x)=\beta(t)\log \left( -\alpha^{ij}(t) D_{x^i} D_{x^j} \right)u(t,x)+f(t,x),\quad &(t,x)\in(0,T)\times\mathbb{R}^d,\\
u(0,x)=u_0,\quad & x\in\mathbb{R}^d.
\end{cases}
\end{equation}
We only consider real-valued symmetric $\left( \alpha^{ij}(t) \right)$. 
Additionally, assume that the coefficients $ \alpha^{ij}(t)$ satisfy a uniform ellipticity, \textit{i.e.} for each $ t \in (0,T)$ there exists a positive constant $\nu$ such that
\begin{align}
								\label{20230810 02}
\nu|\xi|^2  \leq \alpha^{ij}(s)\xi^i\xi^j  \leq \frac{1}{\nu} |\xi|^2 \quad  \forall (s,\xi) \in (0,t) \times \bR^d,
\end{align}
which shows that $\psi_{\exp}(t,\xi)=\alpha^{ij}(t)\xi^i\xi^j$ satisfies $\eqref{exp make}$.
\begin{corollary}
								\label{cor log second}
Let $u_0 \in \cF^{-1}L_{1,\ell oc}(\bR^d)$ and $f \in \cF^{-1}L_{1,1,t\text{-}loc,x\text{-}\ell oc}\left((0,T) \times \bR^d \right)$.
Assume that \eqref{20230810 02} holds.
Additionally, suppose that 
\begin{align}
							\label{20230810}
0<\essinf_{ s \in (0,t)}|\beta(s)| \leq \esssup_{ s \in (0,t)}|\beta(s)| <\infty \quad \forall t \in (0,T).
\end{align}
Then there exists a unique Fourier-space weak solution $u$ to \eqref{log second}  in 
$$
\cF^{-1}L_{\infty,1,t\text{-}loc, x\text{-}\ell oc}\left( (0,T) \times \bR^d \right) \cap \cF^{-1}L_{1,1,t\text{-}loc, x\text{-}\ell oc}\left( (0,T) \times \bR^d, |\beta(t)| \mathrm{d}t \left|\log \left(\alpha^{ij}(t)\xi^i\xi^j\right)\right|\mathrm{d}\xi \right).
$$ 
\end{corollary}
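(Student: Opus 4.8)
The plan is to deduce Corollary \ref{cor log second} directly from Theorem \ref{main log}, specialised to the symbol $\psi_{exp}(t,\xi)=\alpha^{ij}(t)\xi^i\xi^j$. Since $\bigl(\alpha^{ij}(t)\bigr)$ is real, symmetric and uniformly elliptic, the form $\alpha^{ij}(t)\xi^i\xi^j$ is real and strictly positive for $\xi\neq0$; hence $\log\bigl(\alpha^{ij}(t)\xi^i\xi^j\bigr)$ is real, $\psi(t,\xi):=\beta(t)\log\bigl(\alpha^{ij}(t)\xi^i\xi^j\bigr)$ satisfies $|\psi(t,\xi)|=|\beta(t)|\,\bigl|\log\bigl(\alpha^{ij}(t)\xi^i\xi^j\bigr)\bigr|$, and the solution space in the corollary is exactly the space produced by Theorem \ref{main log} with this choice of $\psi_{exp}$. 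Thus it suffices to verify the four hypotheses \eqref{exp make}, \eqref{log con 0}, \eqref{log con 1} and \eqref{log con 2}.

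The first two are immediate: \eqref{exp make} holds since $\alpha^{ij}(t)\xi^i\xi^j\geq\nu|\xi|^2>0$ for every $\xi\neq0$ by \eqref{20230810 02}, and $\{\xi=0\}$ is Lebesgue-null; and \eqref{log con 0} follows from $\int_0^t|\beta(s)|\,\mathrm{d}s\leq t\,\esssup_{(0,t)}|\beta|<\infty$ by \eqref{20230810}. For the remaining two conditions I fix $t\in(0,T)$ and $R\in(0,\infty)$ and record the elementary two-sided bounds that drive everything: by \eqref{20230810 02}, $\nu|\xi|^2\leq\alpha^{ij}(r)\xi^i\xi^j\leq\nu^{-1}|\xi|^2$ for a.e.\ $r\in(0,t)$ and all $\xi$; and by \eqref{20230810}, $0<m_t\leq|\beta(r)|\leq M_t<\infty$ (so in particular $|\Re[\beta(r)]|\leq M_t$) for a.e.\ $r\in(0,t)$, where $m_t:=\essinf_{(0,t)}|\beta|$ and $M_t:=\esssup_{(0,t)}|\beta|$. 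Consequently, for $\xi\in B_R$ the quantity $\alpha^{ij}(r)\xi^i\xi^j$ lies in the interval $[\nu|\xi|^2,\nu^{-1}|\xi|^2]$, the exponent $(t-s)\Re[\beta(r)]$ lies in $[-tM_t,tM_t]$, and $\bigl|\log\bigl(\alpha^{ij}(r)\xi^i\xi^j\bigr)\bigr|\leq 2\bigl|\log|\xi|\bigr|+|\log\nu|$.

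With these bounds in hand, \eqref{log con 1} and \eqref{log con 2} reduce to estimating integrals of the form $\int_s^\rho|\xi|^{2c(r)}\,\mathrm{d}r$ and $\int_s^\rho|\xi|^{2c(r)}\bigl|\log|\xi|\bigr|\,\mathrm{d}r$ over $B_R$, where $c(r)=(\rho-s)\Re[\beta(r)]$ is bounded; these are handled by a direct computation, the prototype being the telescoping identity $\int_0^t|\xi|^{2\rho}\bigl|2\log|\xi|\bigr|\,\mathrm{d}\rho=\bigl||\xi|^{2t}-1\bigr|$, which shows that after integrating in the time-like variable the logarithmic factor exactly cancels the would-be singularity at $\xi=0$. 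Combining this cancellation with the uniform ellipticity (which keeps the power of $|\xi|$ under control) and the two-sided bound on $|\beta|$ yields finiteness of the suprema in \eqref{log con 1} and \eqref{log con 2} for every $t$ and $R$, and an application of Theorem \ref{main log} then completes the proof.

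The main obstacle is precisely the analysis near $\xi=0$: there $\alpha^{ij}(r)\xi^i\xi^j\to0$, so $\log\bigl(\alpha^{ij}(r)\xi^i\xi^j\bigr)\to-\infty$ and the symbol $|\psi(\rho,\xi)|\approx\bigl|\log|\xi|\bigr|$ is unbounded, while the exponential weight $\exp\!\left(\int_s^\rho\Re[\beta(r)]\log\bigl(\alpha^{ij}(r)\xi^i\xi^j\bigr)\,\mathrm{d}r\right)$ degenerates to $|\xi|$ raised to a $(\rho,s,\xi)$-dependent power. One must show that the interplay of these two effects is benign on each compact $\xi$-ball; this is where the explicit power-type structure of $\psi_{exp}$ — rather than a generic degenerate symbol — is used, and where the uniform ellipticity \eqref{20230810 02} and the lower bound $\essinf_{(0,t)}|\beta|>0$ (which keeps the operator genuinely logarithmic and the solution-space weight comparable to $|\psi(t,\xi)|$) enter.
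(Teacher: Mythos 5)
Your overall route coincides with the paper's: specialize Theorem \ref{main log} to $\psi_{exp}(t,\xi)=\alpha^{ij}(t)\xi^i\xi^j$, observe that \eqref{exp make} and \eqref{log con 0} are immediate from \eqref{20230810 02} and \eqref{20230810}, and reduce the whole proof to checking \eqref{log con 1} and \eqref{log con 2} on $(0,t)\times B_R$ using the two-sided bound $\nu|\xi|^2\le\alpha^{ij}(r)\xi^i\xi^j\le\nu^{-1}|\xi|^2$, the bounds on $\beta$, and the elementary $\rho$-integration that tames the logarithm at $\xi=0$. This is exactly the paper's argument in structure.

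The gap is in your handling of the sign of $\Re[\beta]$. You record only $|\Re[\beta(r)]|\le M_t$, so the exponents $(t-s)\Re[\beta(r)]$ and $(\rho-s)\Re[\beta(r)]$ are allowed to be negative, and you then assert that a ``direct computation'' yields finiteness of the suprema in \eqref{log con 1} and \eqref{log con 2}. With negative exponents this is false: for $\alpha^{ij}=\delta^{ij}$ and $\beta\equiv-1$ (which satisfies \eqref{20230810}), the quantity in \eqref{log con 1} contains $\sup_{\xi\in B_R}|\xi|^{-2(t-s)}=\infty$ for $s<t$, and the same blow-up at $\xi=0$ occurs in \eqref{log con 2}. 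Moreover, the cancellation you invoke, $\int_0^t|\xi|^{2\rho}\bigl|2\log|\xi|\bigr|\,\mathrm{d}\rho=\bigl||\xi|^{2t}-1\bigr|$, helps only when the exponent is nonnegative (so that the right-hand side is $\le1$ for $|\xi|\le1$), and it is not available at all in \eqref{log con 1}, which contains no logarithmic factor and no compensating integral of this type. What actually closes the verification --- and what the paper's proof uses tacitly when it bounds $|\psi_{exp}(r,\xi)|^{(\rho-s)\Re[\beta(r)]}$ by $|\xi|^{(\rho-s)m}+|\xi|^{(\rho-s)M}$ with $m=\essinf_{(0,t)}|\beta|>0$ acting as a lower bound for $\Re[\beta]$ --- is positivity of the real part, as in the model case $\beta\equiv1$ of the logarithmic Laplacian: then for $|\xi|^2\le\nu$ all the powers are $\le1$, for $\xi\in B_R$ they are $\lesssim R^{2tM}$, and in \eqref{log con 2} the factor $1+\bigl|\log|\xi|\bigr|$ is absorbed by $\int_s^t|\xi|^{m(\rho-s)}\,\mathrm{d}\rho\lesssim\min\bigl\{t-s,\,1/(m\bigl|\log|\xi|\bigr|)\bigr\}$. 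So you must either invoke a lower bound on $\Re[\beta]$ (reading \eqref{20230810} the way the paper's proof effectively does) or the verification of \eqref{log con 1}--\eqref{log con 2} breaks down near $\xi=0$; as written, the central finiteness claim in your sketch is unsubstantiated in the generality you set up.
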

\begin{proof}
We use Theorem \ref{main log} to obtain this corollary.
Let $t \in (0,T)$. 
Put
\begin{align*}
\psi_{exp}(r,\xi) = \alpha^{ij}(r) \xi^i \xi^j \quad \forall r \in (0,t),
\end{align*}
\begin{align*}
m=\essinf_{ s \in (0,t)} |\beta(s)| ,
\end{align*}
\begin{align*}
M=\esssup_{ s \in (0,t)} \left(|\beta(s)| + \sum_{i,j}|\alpha^{ij}(s)|\right).
\end{align*}
Due to \eqref{20230810 02} and \eqref{20230810}, it is obvious that $m,M \in (0,\infty)$.
It is sufficient to show that \eqref{log con 1} and \eqref{log con 2} hold.
Applying \eqref{20230810 02} and \eqref{20230810}, for all $t \in (0,T)$ and $R \in (0,\infty)$, we have
\begin{align*}
 \sup_{(s,\xi) \in [0,t) \times B_R} \left(\frac{1}{t-s}\int_s^t |\psi_{exp}(r,\xi)|^{(t-s) \Re[\beta(r)]} \mathrm{d}r\right) 
 \lesssim \left(1+R^{2tM}   \right)
\end{align*}
and
\begin{align*}
&\sup_{(s,\xi) \in [0,t) \times B_R} \left(\int_s^t |\beta(\rho)|\left(1+\left| \log \left(|\psi_{\exp}(\rho,\xi)|\right) \right|\right) \left(\frac{1}{\rho-s}\int_s^\rho\left(|\psi_{exp}(r,\xi)|^{(\rho-s) \Re[\beta(r)]}\right) \mathrm{d}r \right) 
 \mathrm{d}\rho  \right)  \\
&\lesssim \sup_{(s,\xi) \in [0,t) \times B_R} \left(\int_s^t \left(1+\left| \log \left(|\xi|\right) \right|\right) \left(|\xi|^{(\rho-s)m } +|\xi|^{(\rho-s) M}\right)  \mathrm{d}\rho  \right)   \\
&\lesssim \sup_{(s,\xi) \in [0,t) \times B_R}  \left(|\xi|^{(t-s)m } +|\xi|^{(t-s) M}\right)
\lesssim \left(1 + R^{tM}  \right) < \infty.
\end{align*}
The corollary is proved.
\end{proof}

\begin{rem}
The local boundedness of coefficients in \eqref{20230810 02} and \eqref{20230810} could be weakened by considering substitutes with complicated local integrabilities. 
It could be obtained by another application of Theorem \ref{main log}.
However, we do not give the details since conditions become extremely technical.
\end{rem}

\mysection{Second-order partial differential equations and proof of Theorem \ref{main second}}
									\label{second thm pf}

We study \eqref{second eqn} as a particular case of \eqref{ab eqn} in this section. 
We present a proof of Theorem \ref{main second} and give some applications of this theorem when data in weighted Bessel potential spaces. 
First, we prove Theorem \ref{main second}.

\begin{proof}[Proof of Theorem \ref{main second}]
Based on definitions, it is easy to check that \eqref{second eqn} becomes a special case of \eqref{ab eqn}
with the symbol
$$
\psi(t,\xi) = -a^{ij}(t)\xi^i \xi^j+ib^j(t)\xi^j + c(t).
$$
as mentioned in Remark \ref{equivalent solution}.
We use Corollary \ref{corollary weight pq} to prove this theorem.
Thus it is sufficient to show that
 \begin{align*}
  \left\|\frac{1+|a^{ij}(\rho)\xi^i\xi^j-ib^j(t)\xi^j-c(t)|}{W_k(\xi)} \exp\left(\int_{0}^\rho\left|\Re[a^{ij}(r)]\xi^i\xi^j + \Im[b^j(r)] \xi^j -\Re[c(r)]\right|\mathrm{d}r\right) \right\|_{L_{p,\infty}\left( (0,t) \times B_R , \mathrm{d}\rho \mathrm{d} \xi \right) } 
 \end{align*}
is finite for all $t \in (0,T)$, $R \in (0,\infty)$, and $k=0,1$.
It is an easy application of Minkowski's and H\"older's inequalities with \eqref{second as 1}, \eqref{second as 2} and \eqref{second as 2-2}.
Indeed, for each $k=0,1$, $t \in (0,T)$ and $R \in (0,\infty)$, applying \eqref{second as 1}, \eqref{second as 2}, and \eqref{second as 2-2}, we have
\begin{align*}
&  \left\|\frac{1+|a^{ij}(\rho)\xi^i\xi^j-ib^j(t)\xi^j-c(t)|}{W_k(\xi)} \exp\left(\int_{0}^\rho \left|\Re[a^{ij}(r)]\xi^i\xi^j + \Im[b^j(r)] \xi^j -\Re[c(r)] \right|\mathrm{d}r\right) \right\|_{L_{p,\infty}\left( (0,t) \times B_R , \mathrm{d}\rho \mathrm{d} \xi \right) } \\
&\lesssim
 \frac{1+\sum_{i,j}\|a^{ij}\|_{L_p\left( (0,t)\right)}R^2 +\sum_{j}\|b^{j}\|_{L_p\left( (0,t)\right)}R  + \|c\|_{L_p\left( (0,t)\right)}}{\kappa_k(R)} \\
 & \qquad  \times \exp\left(\int_{0}^t \left[R^2 \sum_{i,j}|a^{ij}(r)| +R\sum_{j}|b^j(r)|+|c(r)|\right]\mathrm{d}r \right) \\
&\lesssim
  \frac{1+\sum_{i,j}\|a^{ij}\|_{L_p\left( (0,t)\right)}R^2 + \sum_{j}\|b^{j}\|_{L_p\left( (0,t)\right)}R  + \|c\|_{L_p\left( (0,t)\right)}}{\kappa_k(R)}  \\
&\qquad \times \exp\left(C\left(\sum_{i,j}\|a^{ij}\|_{L_p\left( (0,t)\right)}R^2 + \sum_{j}\|b^{j}\|_{L_p\left( (0,t)\right)}R  + \|c\|_{L_p\left( (0,t)\right)}\right) \right)    < \infty,
\end{align*}
where $C$ is a positive constant depending only on $t$ and $p$. 
The theorem is proved. 
\end{proof}
We examine three distinct scenarios derived from Theorem \ref{main second}, focusing on cases where the data are presented within the (weighted) Bessel potential spaces.
We consider complex-valued coefficients so that $a^{ij}, b^j, c \in L_{p,loc}\left( (0,T) \right)$ for all $i,j \in \{1,\ldots,d\}$ in the following corollaries.
\begin{corollary}
Let $p \in [1,\infty]$, $q \in [1,2]$, $\gamma_1, \gamma_2 \in \bR$,  $u_0 \in H_{q}^{\gamma_1}(\bR^d)$, and $f \in \fH^{\gamma_2}_{p,q,t\text{-}loc}\left( (0,T) \times \bR^d\right)$.  Then there exists a unique weak solution $u$ $($tested by $\cF^{-1}\cD(\bR^d))$ to \eqref{second eqn}  in 
$$
 \cF^{-1}L_{p,q',t\text{-}loc, x\text{-}\ell oc}\left( (0,T) \times \bR^d, \mathrm{d}t \left(1+\left|a^{ij}(t)\xi^i\xi^j-ib^j(t)\xi^j -c(t)\right|\right)\mathrm{d}\xi \right).
$$
\end{corollary}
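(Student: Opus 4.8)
The plan is to recognize this corollary as the instance of Corollary \ref{p small coro} attached to the second-order symbol, combined with the identification of the two notions of weak solution from Section \ref{main section}. First I would note, exactly as at the beginning of the proof of Theorem \ref{main second}, that \eqref{second eqn} is the case of \eqref{ab eqn} with
\[
\psi(t,\xi) = -a^{ij}(t)\xi^i\xi^j + ib^j(t)\xi^j + c(t),
\]
and that by Remark \ref{equivalent solution} a function whose spatial Fourier transform is locally integrable is a weak solution tested by $\cF^{-1}\cD(\bR^d)$ to \eqref{second eqn} if and only if it is a Fourier-space weak solution to \eqref{ab eqn} for this $\psi$. Since $1+|\psi(t,\xi)|\ge 1$ on every ball $B_R$, each $u$ in the target space has $\cF[u(t,\cdot)]\in L_{q',\ell oc}(\bR^d)\subset L_{1,\ell oc}(\bR^d)$ for a.e.\ $t$, so the two notions agree there; hence it suffices to produce a unique \emph{Fourier-space} weak solution in that space.

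Next I would apply Corollary \ref{p small coro} with $\tilde\gamma_1=\gamma_2$ and $\gamma=\min\{\gamma_1,\gamma_2\}$, for which the only hypothesis to be checked is the integrability bound \eqref{20230715 50-2} for the above symbol. This is precisely the estimate already carried out in the proof of Theorem \ref{main second}: on $B_R$ one has $|\psi(\rho,\xi)|\lesssim_R \sum_{i,j}|a^{ij}(\rho)|+\sum_j|b^j(\rho)|+|c(\rho)|$, the quantity $|\Re[\psi(r,\xi)]|$ is dominated by the same combination, and $(1+|\xi|^2)^{\gamma/2}\ge\kappa(R):=\min\{1,(1+R^2)^{\gamma/2}\}>0$ on $B_R$. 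Then Minkowski's and H\"older's inequalities together with the local integrability \eqref{second as 1} give
\[
\left\|\frac{1+|\psi(\rho,\xi)|}{(1+|\xi|^2)^{\gamma/2}}\exp\!\left(\int_0^\rho|\Re[\psi(r,\xi)]|\,\mathrm{d}r\right)\right\|_{L_{p,\infty}\left((0,t)\times B_R\right)}
\lesssim \frac{1+\sum_{i,j}\|a^{ij}\|_{L_p((0,t))}R^2+\sum_j\|b^j\|_{L_p((0,t))}R+\|c\|_{L_p((0,t))}}{\kappa(R)}\,\exp\!\big(C(\cdots)\big)<\infty
\]
for every $t\in(0,T)$ and $R\in(0,\infty)$, which is exactly \eqref{20230715 50-2}.

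With \eqref{20230715 50-2} verified, Corollary \ref{p small coro} provides a unique Fourier-space weak solution $u$ to \eqref{ab eqn} in $\cF^{-1}L_{p,q',t\text{-}loc,x\text{-}\ell oc}\!\left((0,T)\times\bR^d,\mathrm{d}t(1+|\psi(t,\xi)|)\mathrm{d}\xi\right)$, and by the identification above this $u$ is the desired weak solution tested by $\cF^{-1}\cD(\bR^d)$. For uniqueness within the target space it is enough to note that any such $u$ lies in $\cF^{-1}L_{1,1,t\text{-}loc,x\text{-}\ell oc}\!\left((0,T)\times\bR^d,\mathrm{d}t|\psi(t,\xi)|\mathrm{d}\xi\right)$, since $|\psi|\le 1+|\psi|$ and $(0,t)$, $B_R$ have finite measure, so Theorem \ref{unique weak sol} applies. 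The only genuinely delicate point is the bookkeeping between Definitions \ref{second weak} and \ref{space weak solution} — one must be sure that membership in the weighted solution space forces $\cF[u]$ to be an honest locally integrable function so that Remark \ref{equivalent solution} may be invoked in both directions — but this is immediate from $1+|\psi|\ge 1$; everything else is a direct citation plus the polynomial estimate already used for Theorem \ref{main second}.
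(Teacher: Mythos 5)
Your proposal is correct and follows essentially the same route as the paper: the paper derives this corollary from Theorem \ref{main second} together with the Bessel-potential embeddings used in the proof of Corollary \ref{p small coro}, while you invoke Corollary \ref{p small coro} directly and verify \eqref{20230715 50-2} by the polynomial-symbol computation from the proof of Theorem \ref{main second} — both factorizations pass through the same ingredients (Corollary \ref{corollary weight pq}, Proposition \ref{20230701 10}, Remark \ref{equivalent solution}). The only quibble is your closing re-derivation of uniqueness from ``$|\psi|\le 1+|\psi|$ and finite measure,'' which as stated is too quick (passing from the weighted $L_{p,q'}$ class to $\cF^{-1}L_{1,1,t\text{-}loc,x\text{-}\ell oc}(\mathrm{d}t\,|\psi(t,\xi)|\mathrm{d}\xi)$ needs H\"older plus the local-in-time $L_p$ bound on $\sup_{\xi\in B_R}|\psi(\cdot,\xi)|$), but it is also unnecessary, since uniqueness in that class is already part of the statement of Corollary \ref{p small coro} exactly as the paper uses it.
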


\begin{corollary}
Let $p \in [1,\infty]$, $q \in (1,2]$, $\gamma_1, \tilde \gamma_1 \in \left(-\frac{d(p-1)}{p}, \infty\right)$, $\gamma_2, \tilde \gamma_2 \in [0,\infty)$,  $u_0 \in H_{q, in}^{\gamma_1,\gamma_2}(\bR^d)$, and $f \in \fH^{\tilde \gamma_1, \tilde \gamma_2}_{p,q,in,t\text{-}loc}\left( (0,T) \times \bR^d\right)$.
Then there exists a unique weak solution $u$ $($tested by $\cF^{-1}\cD(\bR^d))$ to \eqref{second eqn}  in 
$$
 \cF^{-1}L_{p,q',t\text{-}loc, x\text{-}\ell oc}\left( (0,T) \times \bR^d, \mathrm{d}t \left(1+\left|a^{ij}(t)\xi^i\xi^j-ib^j(t)\xi^j -c(t)\right|\right)\mathrm{d}\xi \right).
$$
\end{corollary}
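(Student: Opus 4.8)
The plan is to obtain the corollary as a specialization of Corollary~\ref{q small coro}, following the same route used in the proof of Theorem~\ref{main second}. First I would recall that, by Remark~\ref{equivalent solution}, equation~\eqref{second eqn} is the instance of~\eqref{ab eqn} associated with the symbol
\begin{align*}
\psi(t,\xi) = -a^{ij}(t)\xi^i\xi^j + ib^j(t)\xi^j + c(t)
\end{align*}
(the symbol~\eqref{20240109 01}), and that for this $\psi$ a weak solution tested by $\cF^{-1}\cD(\bR^d)$ in the sense of Definition~\ref{second weak} coincides with a Fourier-space weak solution whenever $\cF[u(t,\cdot)]$ is locally integrable in $\xi$. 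The solution produced below will lie in $\cF^{-1}L_{p,q',t\text{-}loc,x\text{-}\ell oc}((0,T)\times\bR^d,\mathrm{d}t(1+|\psi(t,\xi)|)\mathrm{d}\xi)$, whose weight is bounded below by $1$, so the required local integrability is automatic and the two notions of solution agree. The hypotheses on $u_0\in H_{q,in}^{\gamma_1,\gamma_2}(\bR^d)$ and $f\in\fH^{\tilde\gamma_1,\tilde\gamma_2}_{p,q,in,t\text{-}loc}((0,T)\times\bR^d)$, with $q\in(1,2]$, $\gamma_1,\tilde\gamma_1\in(-d(p-1)/p,\infty)$, $\gamma_2,\tilde\gamma_2\in[0,\infty)$, are exactly those of Corollary~\ref{q small coro}.

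It therefore remains only to verify the hypothesis on the symbol demanded by Corollary~\ref{q small coro}, namely that
\begin{align*}
\left\| \frac{1+|\psi(\rho,\xi)|}{(1+|\xi|^2)^{\gamma/2}}\exp\left(\int_0^\rho|\Re[\psi(r,\xi)]|\,\mathrm{d}r\right)\right\|_{L_{p,\infty}\left((0,t)\times B_R,\,\mathrm{d}\rho\,\mathrm{d}\xi\right)} < \infty
\end{align*}
for all $t\in(0,T)$ and $R\in(0,\infty)$, where $\gamma=\min\{\gamma_1,\tilde\gamma_1\}$. This is the same computation as in the proof of Theorem~\ref{main second}: on $B_R$ the factor $(1+|\xi|^2)^{-\gamma/2}$ is bounded by a constant depending only on $R$ and $\gamma$, and both $1+|\psi(\rho,\xi)|$ and $|\Re[\psi(r,\xi)]|$ are dominated by $g_R(\rho):=1+R^2\sum_{i,j}|a^{ij}(\rho)|+R\sum_j|b^j(\rho)|+|c(\rho)|$; since $a^{ij},b^j,c\in L_{p,loc}((0,T))\subseteq L_{1,loc}((0,T))$ by~\eqref{second as 1} and Hölder's inequality, $\int_0^t g_R(r)\,\mathrm{d}r<\infty$ and $\|g_R\|_{L_p((0,t))}<\infty$, and the displayed norm is bounded by a constant multiple of $\exp\left(\int_0^t g_R(r)\,\mathrm{d}r\right)\|g_R\|_{L_p((0,t))}$, hence finite.

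With this in hand, Corollary~\ref{q small coro} provides a unique Fourier-space weak solution in $\cF^{-1}L_{p,q',t\text{-}loc,x\text{-}\ell oc}((0,T)\times\bR^d,\mathrm{d}t(1+|\psi(t,\xi)|)\mathrm{d}\xi)$; since $1+|\psi(t,\xi)|=1+|a^{ij}(t)\xi^i\xi^j-ib^j(t)\xi^j-c(t)|$, this is the asserted space, and by the equivalence recalled above it is the unique weak solution tested by $\cF^{-1}\cD(\bR^d)$. I do not anticipate a genuine obstacle here: the argument is bookkeeping that matches the inner-weighted Bessel data to the hypotheses of Corollary~\ref{q small coro} and re-uses the frequency-ball estimate already carried out for Theorem~\ref{main second}; the only delicate ingredient, the reduction of the regularity exponents via Proposition~\ref{inner embedding} together with the embeddings of Propositions~\ref{outer embedding 2} and~\ref{20230701 10}, is internal to the proof of Corollary~\ref{q small coro} and need not be repeated.
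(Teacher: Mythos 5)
Your proposal is correct and is essentially the paper's own argument with the order of specialization reversed: the paper obtains this corollary from Theorem \ref{main second} by repeating the embedding steps of Corollary \ref{q small coro}, whereas you apply Corollary \ref{q small coro} directly to the symbol $\psi(t,\xi)=-a^{ij}(t)\xi^i\xi^j+ib^j(t)\xi^j+c(t)$ and re-verify the local $L_{p,\infty}$ symbol bound exactly as in the proof of Theorem \ref{main second}. The ingredients (Corollary \ref{corollary weight pq} behind the scenes, the $B_R$-estimate using $a^{ij},b^j,c\in L_{p,loc}((0,T))$, the Bessel-space embeddings internal to Corollary \ref{q small coro}, and the passage between the two solution notions via Remark \ref{equivalent solution}, justified by the weight $1+|\psi|\geq 1$ and H\"older) are identical, so no further comment is needed.
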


\begin{corollary}
Let $p \in [1,\infty]$, $q \in (2,\infty]$, $\gamma_1, \tilde \gamma_1 \in \bR$, $\gamma_2, \tilde \gamma_2 \in \left(\frac{d(q-2)}{2(q-1)},\infty \right)$,  $u_0 \in H_{q,out}^{\gamma_1,\gamma_2}(\bR^d)$, and $f \in \fH^{\tilde \gamma_1,\tilde \gamma_2}_{p,q,out,t\text{-}loc}\left( (0,T) \times \bR^d\right)$. 
 Then there exists a unique weak solution $u$ $($tested by $\cF^{-1}\cD(\bR^d))$ to \eqref{second eqn}  in 
$$
 \cF^{-1}L_{p,2,t\text{-}loc, x\text{-}\ell oc}\left( (0,T) \times \bR^d, \mathrm{d}t \left(1+\left|a^{ij}(t)\xi^i\xi^j-ib^j(t)\xi^j -c(t)\right|\right)\mathrm{d}\xi \right).
$$
\end{corollary}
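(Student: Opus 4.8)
The plan is to obtain this corollary from Theorem~\ref{main second} in exactly the same way that Corollary~\ref{p big coro} is obtained from Corollary~\ref{corollary weight pq}; indeed, this statement is the second-order counterpart of Corollary~\ref{p big coro}. The only nontrivial ingredient is the embedding of outer weighted Bessel potential spaces with integrability exponent $q>2$ into $L_2$-based weighted local spaces through the Fourier transform, which is precisely Proposition~\ref{p large embedding}.

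First I would record that \eqref{second eqn} is the special case of \eqref{ab eqn} with symbol $\psi(t,\xi)=-a^{ij}(t)\xi^i\xi^j+ib^j(t)\xi^j+c(t)$, and that by Remark~\ref{equivalent solution} a weak solution tested by $\cF^{-1}\cD(\bR^d)$ is the Fourier-space weak solution associated with this symbol once its Fourier transform in the space variable is locally integrable; both facts are already incorporated in the statement of Theorem~\ref{main second}, so it suffices to verify the hypotheses of that theorem. Set $\gamma=\min\{\gamma_1,\tilde\gamma_1\}$ and take the weights $W_0(\xi)=W_1(\xi)=W_2(t,\xi)=(1+|\xi|^2)^{\gamma/2}$. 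Applying Proposition~\ref{p large embedding} with its exponent ``$p$'' taken equal to the present $q$ and with $\delta=\gamma_2$ (respectively $\delta=\tilde\gamma_2$) --- which is legitimate because $\gamma_2,\tilde\gamma_2\in\bigl(\tfrac{d(q-2)}{2(q-1)},\infty\bigr)$ guarantees $\bigl\|(1+|\cdot|^2)^{-\delta/2}\bigr\|_{L_{2/(2-q')}(\bR^d)}<\infty$ --- yields $\cF[u_0]\in H_{2,in}^{0,\gamma_1}(\bR^d)$ and $\cF[f(t,\cdot)]\in H_{2,in}^{0,\tilde\gamma_1}(\bR^d)$ for all $t\in(0,T)$, together with the corresponding norm estimates. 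Since $\gamma\le\gamma_1$ and $\gamma\le\tilde\gamma_1$, comparing weights on each ball $B_R$ (where $(1+|\xi|^2)^{\gamma}\le(1+|\xi|^2)^{\gamma_1}$ and similarly with $\tilde\gamma_1$) shows $u_0\in\cF^{-1}L_{2,\ell oc}(\bR^d,W_0^2(\xi)\mathrm{d}\xi)$ and $f\in\cF^{-1}L_{p,2,t\text{-}loc,x\text{-}\ell oc}\bigl((0,T)\times\bR^d,\mathrm{d}t\,W_2^2(t,\xi)\mathrm{d}\xi\bigr)$.

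It then remains to check the structural hypotheses of Theorem~\ref{main second}: condition \eqref{second as 1} is the standing assumption $a^{ij},b^j,c\in L_{p,loc}((0,T))$ imposed throughout this section, and since $(1+|\xi|^2)^{\gamma/2}$ is continuous and strictly positive it is bounded below on $B_R$ by $\kappa(R):=\min\{1,(1+R^2)^{\gamma/2}\}>0$, which furnishes the local lower bounds \eqref{second as 2} and \eqref{second as 2-2} with $W_1\equiv W_2$. Theorem~\ref{main second} then delivers a unique weak solution tested by $\cF^{-1}\cD(\bR^d)$ in $\cF^{-1}L_{p,2,t\text{-}loc,x\text{-}\ell oc}\bigl((0,T)\times\bR^d,\mathrm{d}t(1+|a^{ij}(t)\xi^i\xi^j-ib^j(t)\xi^j-c(t)|)\mathrm{d}\xi\bigr)$, which is exactly the assertion. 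The only step demanding any care is the exponent bookkeeping in the use of Proposition~\ref{p large embedding} --- identifying its ``$p$'' with our ``$q$'' and recognizing that the hypothesis on $\gamma_2,\tilde\gamma_2$ is precisely the finiteness condition for $(1+|\cdot|^2)^{-\delta/2}$ in the relevant $L_{2/(2-q')}$-space --- as there is otherwise no genuine analytic obstacle.
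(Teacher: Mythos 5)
Your proposal is correct and follows essentially the same route as the paper: the paper proves this corollary by invoking Theorem \ref{main second} with the weights $W_0=W_1=W_2=(1+|\xi|^2)^{\gamma/2}$, $\gamma=\min\{\gamma_1,\tilde\gamma_1\}$, and by transferring the data from the outer weighted Bessel spaces to $L_2$-based weighted spaces via Proposition \ref{p large embedding} with $\delta=\gamma_2$ (resp.\ $\tilde\gamma_2$), exactly as in the proof of Corollary \ref{p big coro}. Your exponent bookkeeping (identifying the proposition's $p$ with the present $q$ and matching the range of $\gamma_2,\tilde\gamma_2$ to the finiteness of $\|(1+|\cdot|^2)^{-\delta/2}\|_{L_{2/(2-q')}}$) and your verification of \eqref{second as 1}--\eqref{second as 2-2} are precisely what the paper's sketched argument requires.
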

The proofs of these corollaries could be easily obtained from Theorem \ref{main second} with some properties of the weighted Bessel potential spaces developed in Section \ref{fourier section}.
For the details, follow the proofs of Corollaries \ref{p small coro}, \ref{q small coro}, and \ref{p big coro}.

\vspace{3mm}
\textbf{Declarations of interest}
\vspace{3mm}

Declarations of interest: none

\vspace{3mm}
{\bf Data Availability}
\vspace{3mm}

Data sharing not applicable to this article as no datasets were generated or analysed during the current study.

\bibliographystyle{plain}

\begin{thebibliography}{99}

\bibitem{CJH KID 2023}
Choi, Jae-Hwan, and Ildoo Kim
``A weighted-regularity theory for parabolic partial differential equations with time-measurable pseudo-differential operators." 
Journal of Pseudo-Differential Operators and Applications  14, 55 (2023). 

\bibitem{CJH LJB KID 2023}
Choi, Jae-Hwan, Ildoo Kim, and Jin Bong Lee. 
``A regularity theory for an initial value problem with a time-measurable pseudo-differential operator in a weighted $ L_p $-space." arXiv preprint arXiv:2302.07507 (2023).

\bibitem{CJH KJH PDH 2024}
Choi, Jae-Hwan, Jaehoon Kang, and Daehan Park. 
``A Regularity Theory for Parabolic Equations with Anisotropic Nonlocal Operators in $L_q(L_p)$ Spaces." SIAM Journal on Mathematical Analysis 56.1 (2024): 1264-1299.

\bibitem{CJH KID 2023-1}
Choi, Jae-Hwan, and Ildoo Kim. 
``A maximal $L_p$-regularity theory to initial value problems with time measurable nonlocal operators generated by additive processes." Stochastics and Partial Differential Equations: Analysis and Computations 12.1 (2024): 352-415.

\bibitem{CJH 2024}
Choi, Jae-Hwan. 
``A regularity theory for evolution equations with time-measurable pseudo-differential operators in weighted mixed-norm Sobolev-Lipschitz spaces." Potential Analysis 63.2 (2025): 557–577.



\bibitem{HD YL 2023}
Dong, Hongjie, and Yanze Liu. 
``Sobolev estimates for fractional parabolic equations with space-time non-local operators." Calculus of Variations and Partial Differential Equations 62.3 (2023): 96.

\bibitem{Gelfand 1968}
Gel'fand, I.M., and G.E. Shilov. 
Generalized Functions, Volume 2: Spaces of fundamental and generalized functions. Academic press, 1968.

\bibitem{Grafakos 2014}
Grafakos, Loukas. Classical Fourier analysis. Third edition. Springer, 2014.

\bibitem{Grafakos 2014-2}
Grafakos, Loukas. Modern Fourier analysis. Third edition. Springer, 2014.



\bibitem{Hormander 1990}
Hörmander, Lars. The analysis of linear partial differential operators I: Distribution theory and Fourier analysis. Springer, 1990.

\bibitem{Hormander 1990-2}
Hörmander, Lars. The analysis of linear partial differential operators II: Differential operators with constant coefficients. Springer, 1990.

\bibitem{STC2024}
Janreung, Sutawas, Tatpon Siripraparat, and Chukiat Saksurakan. "On $L_ {p}$-Theory for Integro-Differential Operators with Spatially Dependent Coefficients." Potential Analysis 62.1 (2025): 61-100.

\bibitem{KJH PDH 2023}
Kang, Jaehoon, and Daehan Park. 
``An $L_q(L_p)$-theory for space-time non-local equations generated by Lévy processes with low intensity of small jumps." Stochastics and Partial Differential Equations: Analysis and Computations 12.3 (2024): 1439-1491.

\bibitem{KJH PDH 2023-1}
Kang, Jaehoon, and Daehan Park. 
``An $ L_ {q}(L_ {p}) $-regularity theory for parabolic equations with integro-differential operators having low intensity kernels." Journal of Differential Equations 415 (2025): 487-540.

\bibitem{KID SBL KHK 2015}
Kim, Ildoo, Kyeong-Hun Kim, and Sungbin Lim. 
``Parabolic BMO estimates for pseudo-differential operators of arbitrary order." 
Journal of Mathematical Analysis and Applications 427.2 (2015): 557-580.



\bibitem{KID SBL KHK 2016}
Kim, Ildoo, Sungbin Lim, and Kyeong-Hun Kim. 
``An $L_q(L_p)$-theory for parabolic pseudo-differential equations: Calderón-Zygmund approach." 
Potential Analysis 45 (2016): 463-483.


\bibitem{KID KHK 2016}
Kim, Ildoo, and Kyeong-Hun Kim. 
``An $L_p$-theory for stochastic partial differential equations driven by Lévy processes with pseudo-differential operators of arbitrary order." Stochastic Processes and their Applications 126.9 (2016): 2761-2786.

\bibitem{KID 2018}
Kim, Ildoo. 
``An $L_p$-Lipschitz theory for parabolic equations with time measurable pseudo-differential operators." 
Communications on Pure and Applied Analysis 17.6 (2018): 2751-2771.

\bibitem{KID KHK 2018}
Kim, Ildoo, and Kyeong-Hun Kim. 
``On the second order derivative estimates for degenerate parabolic equations." 
Journal of Differential Equations 265.11 (2018): 5959-5983.

\bibitem{KID KKH KPK 2019}
Kim, Ildoo, Kyeong-Hun Kim, and Panki Kim. 
``An $L_p$-theory for diffusion equations related to stochastic processes with non-stationary independent increment." Transactions of the American Mathematical Society 371.5 (2019): 3417-3450.


\bibitem{KID KHK 2023}
Kim, Ildoo, and Kyeong-Hun Kim. 
``A sharp $L_p$-regularity result for second-order stochastic partial differential equations with unbounded and fully degenerate leading coefficients." Journal of Differential Equations 371 (2023): 260-298.

\bibitem{KKH PDH RJH 2021}
Kim, Kyeong-Hun, Park, Daehan, and Ryu, Junhee. 
``An $L_q(L_p)$-theory for diffusion equations with space-time nonlocal operators." Journal of Differential Equations 287 (2021): 376-427.

\bibitem{Kartin 2009}
Schumacher, Katrin. 
``The stationary Navier-Stokes equations in weighted Bessel-potential spaces." 
Journal of the Mathematical Society of Japan 61.1 (2009): 1-38.


\bibitem{Komech 1994}
Komech, Aleksandr Il'ich. 
``Linear partial differential equations with constant coefficients." 
Partial Differential Equations II: Elements of the Modern Theory. Equations with Constant Coefficients (1994): 121-255.



\bibitem{Krylov 1996}
Krylov, Nikolaĭ Vladimirovich. 
Lectures on elliptic and parabolic equations in Holder spaces. 
No. 12. American Mathematical Soc., 1996.


\bibitem{Krylov 1999}
Krylov, Nikolaĭ Vladimirovich. 
``An analytic approach to SPDEs."
 Stochastic partial differential equations: six perspectives 64 (1999).




\bibitem{Krylov 2008}
Krylov, Nikolaĭ Vladimirovich. 
Lectures on elliptic and parabolic equations in Sobolev spaces. 
Vol. 96. American Mathematical Soc., 2008.



\bibitem{Kurtz 1980}
Kurtz, Douglas S. 
``Littlewood-Paley and multiplier theorems on weighted $L^p$ spaces." 
Transactions of the American Mathematical Society 259.1 (1980): 235-254.

\bibitem{RM CP 2017}
Mikulevičius, R., and C. Phonsom. 
``On $L^p$-theory for parabolic and elliptic integro-differential equations with scalable operators in the whole space." Stochastics and Partial Differential Equations: Analysis and Computations 5.4 (2017): 472-519.

\bibitem{RM CP 2019}
Mikulevičius, R., and C. Phonsom. 
``On the Cauchy problem for integro-differential equations in the scale of spaces of generalized smoothness." Potential Analysis 50 (2019): 467-519.

\bibitem{Stroock Varadhan 1997}
Stroock, Daniel W., and Varadhan, SR Srinivasa. 
Multidimensional diffusion processes. 
Vol. 233. Springer Science $\&$ Business Media, 1997.

\bibitem{XZ 2013}
Zhang, Xicheng. 
``\(L^{p}\)-maximal regularity of nonlocal parabolic equations and applications." Annales de l'Institut Henri Poincaré C 30.4 (2013): 573-614.

\end{thebibliography}

\end{document}